\tikzset{commutative diagrams/.cd, every label/.append style = {font = \normalsize}}
\tikzset{partition/.style={fill,circle,inner sep=1pt}}
\tikzset{partition/.style={fill,circle,inner sep=1pt},part/.style={baseline=0,scale=0.5,bend left=45},partlabel/.style={below}}
\tikzstyle{pnt}=[draw,ellipse,fill,inner sep=1pt]
\tikzstyle{opnt}=[draw,ellipse,inner sep=1pt]
\tikzstyle{opnt}=[ ]
\tikzstyle{pntt}=[draw,ellipse,fill,inner sep=0.5pt]
\tikzstyle{point}=[draw,ellipse,fill,inner sep=2pt]
\tikzset{commutative diagrams/.cd, every label/.append style = {font = \normalsize}}
\numberwithin{equation}{section}
\newtheorem{thm}{Theorem}
\numberwithin{thm}{section}
\newtheorem{cor}[thm]{Corollary}
\newtheorem{lem}[thm]{Lemma}
\newtheorem{prob}[thm]{Problem}
\newtheorem{prop}[thm]{Proposition}
\theoremstyle{definition}
\newtheorem{defn}[thm]{Definition}
\newtheorem{eg_no_qed}[thm]{Example}
\newenvironment{eg}[1][]{\begin{eg_no_qed}[#1]\pushQED{\qed}}{\popQED\end{eg_no_qed}}
\newtheorem{rmk_no_qed}[thm]{Remark}
\newenvironment{rmk}[1][]{\begin{rmk_no_qed}[#1]\pushQED{\qed}}{\popQED\end{rmk_no_qed}}
\theoremstyle{remark}
\newtheorem*{claimpf_no_qed}{Proof of Claim}
\crefname{thm}{Theorem}{Theorems}
\crefname{conj}{Conjecture}{Conjectures}
\crefname{cor}{Corollary}{Corollaries}
\crefname{lem}{Lemma}{Lemmas}
\crefname{prob}{Problem}{Problems}
\crefname{prop}{Proposition}{Propositions}
\crefname{defn}{Definition}{Definitions}
\crefname{eg}{Example}{Examples}
\crefname{eg_no_qed}{Example}{Examples}
\crefname{rmk}{Remark}{Remarks}
\newcommand{\boldit}[1]{\textbf{\textit{#1}}}
\renewcommand{\eqref}[1]{\hyperref[#1]{\textup{(\ref*{#1})}}}
\DeclareMathOperator{\Fl}{Fl}
\DeclareMathOperator{\GL}{GL}
\newcommand{\gl}{\mathfrak{gl}}
\DeclareMathOperator{\inv}{inv}
\DeclareMathOperator{\maj}{maj}
\DeclareMathOperator{\op}{op}
\newcommand{\qbinom}[2]{\begin{bmatrix}#1 \\ #2\end{bmatrix}_q}
\newcommand{\qbinomsmall}[2]{\scalebox{0.7}{$\begin{bmatrix}{#1} \\ {#2}\end{bmatrix}$}_q}
\newcommand{\qfac}[1]{[#1]_q!}
\newcommand{\qn}[1]{[#1]_q}
\newcommand{\qinvbinom}[2]{\begin{bmatrix}#1 \\ #2\end{bmatrix}_{1/q}}
\newcommand{\qinvbinomsmall}[2]{\scalebox{0.7}{$\begin{bmatrix}{#1} \\ {#2}\end{bmatrix}$}_{1/q}}
\newcommand{\spn}[1]{\text{span}(#1)}
\newcommand{\transpose}[1]{#1^t}
\newcommand{\codej}[2]{\mathsf{c}_{#2}(#1)}
\newcommand{\code}[1]{\mathsf{c}(#1)}
\DeclareMathOperator{\crop}{\mathsf{crop}}
\newcommand{\mincoset}[2]{#1^{#2}}
\newcommand{\mincosetsub}[2]{#1_{#2}}
\newcommand{\n}[1]{\mathsf{n}(#1)}
\newcommand{\newrow}[2]{\mathsf{b}_{#1}(#2)}
\newcommand{\newrownoarg}[1]{\mathsf{b}_{#1}}
\newcommand{\ogfbiggest}[1]{\Phi_{\Rbiggest{#1}}}
\newcommand{\ogfprime}[1]{\Phi_{\Rprime{#1}}}
\newcommand{\rowjword}[2]{\mathsf{r}_{#2}(#1)}
\newcommand{\rowword}[1]{\mathsf{r}(#1)}
\newcommand{\Rbiggest}[1]{\mathsf{W}_{#1}}
\newcommand{\Rcell}[2]{\mathring{R}_{#1,#2}}
\newcommand{\Rprime}[1]{\mathsf{P}_{#1}}
\DeclareMathOperator{\RS}{RS}
\newcommand{\Rtabs}[1]{\mathcal{R}(#1)}
\newcommand{\Rtp}[2]{R_{#1,#2}^{>0}}
\newcommand{\Rvar}[2]{R_{#1,#2}}
\newcommand{\Rwords}[1]{\mathsf{W}(#1)}
\newcommand{\Scell}[1]{\mathring{X}_{#1}}
\newcommand{\Scellopp}[1]{\mathring{X}_{#1}^{\op}}
\newcommand{\SF}[1]{\mathcal{B}_{#1}}
\newcommand{\SFcell}[1]{\mathring{\mathcal{B}}_{#1}}
\newcommand{\SFtnn}[1]{\SF{#1}^{\ge 0}}
\newcommand{\Svar}[1]{X_{#1}}
\newcommand{\Svaropp}[1]{X_{#1}^{\op}}
\DeclareMathOperator{\sing}{sing}
\DeclareMathOperator{\sumone}{\vartheta}
\newcommand{\ytableausmall}[1]{\ytableausetup{smalltableaux}\, \begin{ytableau}#1\end{ytableau}\, \ytableausetup{nosmalltableaux}}
\newcommand{\C}{\mathbb{C}}
\newcommand{\SYT}{\mathrm{SYT}}
\newcommand{\fu}{\mathfrak{u}}
\title[Richardson tableaux and Springer fibers]{Richardson tableaux and components of Springer fibers equal to Richardson varieties}
\author{Steven N. Karp}
\address{Department of Mathematics, University of Notre Dame}
\email{\href{mailto:skarp2@nd.edu}{skarp2@nd.edu}}
\author{Martha E. Precup}
\address{Department of Mathematics, Washington University in St.\ Louis}
\email{\href{mailto:martha.precup@wustl.edu}{martha.precup@wustl.edu}}
\begin{document}

\begin{abstract}
Motivated by the study of Springer fibers and their totally nonnegative counterparts, we define a new subset of standard tableaux called \emph{Richardson tableaux}. We characterize Richardson tableaux combinatorially using evacuation as well as in terms of a pair of associated reading words. We also characterize Richardson tableaux geometrically, proving that a tableau is Richardson if and only if the corresponding component of a Springer fiber is a Richardson variety, which in turn holds if and only if its positive part is a top-dimensional cell of the totally nonnegative Springer fiber studied by Lusztig (2021). We prove that each such component is smooth by leveraging a combinatorial description of the corresponding pair of reading words, generalizing a result of Graham--Zierau (2011). Another application is that the cohomology classes of these components can be computed in the Schubert basis using Schubert calculus. Finally, we show that the enumeration of Richardson tableaux is surprisingly elegant: the number of Richardson tableaux of fixed partition shape is a product of binomial coefficients, and the number of Richardson tableaux of size $n$ is the $n$th Motzkin number. As a result, we obtain a new refinement for the Motzkin numbers, as well as a formula for the number of top-dimensional cells in the totally nonnegative Springer fiber.
\end{abstract}

\maketitle
\setcounter{tocdepth}{1}
\tableofcontents


\section{Introduction}\label{sec_intro}

\noindent The purpose of this paper is to explore the connection between Springer fibers and Richardson varieties, leading to interesting new geometry and combinatorics.
Springer fibers are subvarieties of the flag variety $\Fl_n(\C)$ of $\C^n$ which are fibers of Springer's resolution of the nilpotent cone \cite{springer69}, and are indexed by partitions $\lambda = (\lambda_1, \lambda_2, \dots, \lambda_\ell)$ of size $n$. Explicitly, if $N$ is a nilpotent matrix of Jordan type $\lambda$ which is in Jordan canonical form with block sizes weakly decreasing, then we define the Springer fiber to be
\begin{align}\label{intro_SF}
\SF{\lambda} := \{ (0 = F_0 \subset F_1 \subset \cdots \subset F_n = \C^n)\in \Fl_n(\C) \mid N(F_j) \subset F_{j-1} \text{ for all } j>0 \}.
\end{align}
Spaltenstein \cite{spaltenstein76} proved that the irreducible components $\SF{\sigma}$ of $\SF{\lambda}$ are labeled by standard Young tableaux $\sigma$ of shape $\lambda$, and that every $\SF{\sigma}$ has dimension $n(\lambda):=\sum_{i=1}^{\ell} (i-1)\lambda_i$. Springer showed that the cohomology of each Springer fiber carries an action of the symmetric group $S_n$ \cite{springer78}. In this way, Springer fibers naturally embody the interaction between tableau combinatorics, representation theory, and geometry. Although they play a key role in a seminal construction of geometric representation theory, basic questions about their geometry and the geometry of their components remain open. While Springer fibers can be defined for more general algebraic groups $G$, we focus here on the case $G = \GL_n(\C)$.

On the other hand, Richardson varieties $\Rvar{v}{w}$ are irreducible subvarieties of $\Fl_n(\C)$ indexed by pairs of permutations $v,w\in S_n$ such that $v \le w$ in Bruhat order, obtained by intersecting a Schubert variety with an opposite Schubert variety. They are central objects in Kazhdan--Lusztig theory \cite{kazhdan_lusztig79}. For example, the Kazhdan--Lusztig $R$-polynomial is given by the point count of $\Rvar{v}{w}$ over the finite field $\mathbb{F}_q$ \cite{kazhdan_lusztig79,deodhar85a}. Richardson varieties also play an essential role in Lusztig's theory of total positivity \cite{lusztig94}: the totally nonnegative part $\Fl_n^{\ge 0}$ of the flag variety has a regular cell decomposition (see \eqref{decomposition_flag} below) whose cells are the totally positive Richardson cells $\Rtp{v}{w}$. Finally, they are also important in Schubert calculus, since the cohomology class of $\Rvar{v}{w}$ is a product of Schubert classes.

Recently, Lusztig \cite{lusztig21} studied the totally nonnegative Springer fiber $\SF{\lambda}^{\geq 0} := \SF{\lambda}\cap\Fl_n^{\ge 0}$ and proved that, unlike the Springer fiber $\SF{\lambda}$ in $\Fl_n(\C^n)$, its geometric structure is relatively simple. In particular, $\SFtnn{\lambda}$ has a cell decomposition whose cells are of the form $\Rtp{v}{w}$ for pairs $(v,w)$ satisfying a certain Bruhat-theoretic condition. A straightforward consequence of Lusztig's work (see \cref{geometry_positivity} below) is that the top-dimensional cells of $\SF{\lambda}^{\geq 0}$ correspond to the Richardson varieties $\Rvar{v}{w}$ which are irreducible components of $\SF{\lambda}$. As the components of $\SF{\lambda}$ are indexed by standard tableaux of shape $\lambda$, we obtain the following natural question:
\begin{align}\label{intro_question}
\parbox{4.8in}{\centering\emph{For which standard tableaux $\sigma$ of shape $\lambda$ is the associated irreducible component $\SF{\sigma}$ of $\SF{\lambda}$ equal to a Richardson variety $\Rvar{v}{w}$?}}
\end{align}
We give a complete answer to this question, in two steps. First, for every standard tableau $\sigma$ we show that the corresponding irreducible component $\SF{\sigma}$ of the Springer fiber $\SF{\lambda}$ is contained a unique Richardson variety $\Rvar{v_\sigma}{w_\sigma}$ which is minimal with respect to containment. In particular, $\SF{\sigma}$ is a Richardson variety if and only if it is equal to $\Rvar{v_\sigma}{w_\sigma}$. The second step is to characterize all such tableaux $\sigma$, which we do by introducing a new subset of standard tableaux called \emph{Richardson tableaux}. 

To state our first main result, we introduce two permutations $v_\sigma, w_\sigma \in S_n$ associated to a standard tableau $\sigma$ of size $n$. Let $\sigma^\vee$ denote the evacuation tableau of $\sigma$, and let $w_0\in S_n$ denote the longest permutation. Then $v_\sigma^{-1}$ is obtained in one-line notation by reading the entries of $\sigma^\vee$ row by row from top to bottom, and within each row from left to right. We define $w_0w_\sigma^{-1}w_0$ to be obtained similarly by reading the entries of $\sigma$, except that we read the rows from bottom to top (rather than top to bottom). The permutation $w_\sigma$ appeared in work of Pagnon and Ressayre \cite{pagnon_ressayre06}, who showed that the intersection of the Schubert cell for $w_\sigma$ with $\SF{\lambda}$ is a dense subset of the component $\SF{\sigma}$.
\begin{eg}\label{eg_permutations_intro}
Let $n = 8$. Consider the following standard tableau $\sigma$ of shape $\lambda = (4,2,2)$, along with its evacuation $\sigma^{\vee}$:
\[
\sigma = \;\begin{ytableau}
1 & 3 & 4 & 6 \\
2 & 7 \\
5 & 8
\end{ytableau}\;
\quad\text{ and }\quad
\sigma^{\vee} = 
\;\begin{ytableau}
1 & 4 & 6 & 7 \\
2 & 5 \\
3 & 8
\end{ytableau}\;.
\]
By definition, 
\[
v_\sigma^{-1} = 14672538, \quad w_0w_{\sigma}^{-1}w_0 = 58271346, \quad \text{ and } \quad w_0 = 87654321.
\]
Thus $v_{\sigma} = 15726348$ and $w_{\sigma} = 75182364$.
\end{eg}

The pair $(v_\sigma, w_\sigma)$ indexes the `Richardson envelope' of the component $\SF{\sigma}$: 
\begin{thm}[\cref{main_geometric} below]\label{intro.goemetric}
Let $\sigma$ be a standard tableau of shape $\lambda$. Then $\Rvar{v_\sigma}{w_\sigma}$ is the unique minimal Richardson variety containing the irreducible component $\SF{\sigma}$ of $\SF{\lambda}$.
\end{thm}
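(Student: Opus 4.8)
The plan is to deduce the theorem from two density statements about intersections of (opposite) Schubert cells with $\SF\lambda$, and then to prove the harder one. The first is the Pagnon--Ressayre theorem quoted above: $\Scell{w_\sigma}\cap\SF\lambda$ is dense in the component $\SF\sigma$. The second, which is the crux, is its opposite counterpart: $\Scellopp{v_\sigma}\cap\SF\lambda$ is dense in $\SF\sigma$ as well. Granting both, taking closures gives $\SF\sigma\subseteq\overline{\Scell{w_\sigma}}=\Svar{w_\sigma}$ and $\SF\sigma\subseteq\overline{\Scellopp{v_\sigma}}=\Svaropp{v_\sigma}$, so $\SF\sigma\subseteq\Svar{w_\sigma}\cap\Svaropp{v_\sigma}$; since $\SF\sigma$ is nonempty this forces $v_\sigma\le w_\sigma$, and hence $\Rvar{v_\sigma}{w_\sigma}=\Svar{w_\sigma}\cap\Svaropp{v_\sigma}$ is a genuine Richardson variety containing $\SF\sigma$. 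If now $\Rvar vw\supseteq\SF\sigma$ is any Richardson variety containing $\SF\sigma$, then $\Svar w\cap\Svaropp v=\Rvar vw\supseteq\SF\sigma$ meets both $\Scell{w_\sigma}$ and $\Scellopp{v_\sigma}$; since a Schubert (resp.\ opposite Schubert) cell is either contained in or disjoint from a fixed Schubert (resp.\ opposite Schubert) variety, this gives $\Scell{w_\sigma}\subseteq\Svar w$ and $\Scellopp{v_\sigma}\subseteq\Svaropp v$, i.e.\ $w_\sigma\le w$ and $v\le v_\sigma$; therefore $\Rvar{v_\sigma}{w_\sigma}=\Svar{w_\sigma}\cap\Svaropp{v_\sigma}\subseteq\Svar w\cap\Svaropp v=\Rvar vw$. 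Thus $\Rvar{v_\sigma}{w_\sigma}$ is contained in every Richardson variety containing $\SF\sigma$ and is itself one, so it is the unique minimal such variety.

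It remains to establish the opposite density statement, and here the idea is to realize evacuation geometrically. I would use an involution of $\SF\lambda$ coming from flag duality --- sending $F_\bullet$ to the flag whose $j$th term is $F_{n-j}^{\perp}$, with $\perp$ taken for a bilinear form making $N$ self-adjoint --- which interchanges the component $\SF\sigma$ with the component $\SF{\sigma^\vee}$ indexed by the evacuation tableau (this being the classical combinatorial shadow of that symmetry of Springer fibers). Applying Pagnon--Ressayre to $\SF{\sigma^\vee}$, so that $\Scell{w_{\sigma^\vee}}\cap\SF\lambda$ is dense in $\SF{\sigma^\vee}$, and transporting through the involution yields a dense subset of $\SF\sigma$ contained in a single opposite Schubert cell; it then remains to identify this cell as $\Scellopp{v_\sigma}$. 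The needed fact, that the involution carries the index $w_{\sigma^\vee}$ to $v_\sigma$, is exactly what the reading-word description of the pair $(v_\sigma,w_\sigma)$ developed in the earlier sections is designed to supply: the relevant words are the bottom-to-top reading of $\sigma^\vee$, which governs $w_{\sigma^\vee}$, and the top-to-bottom reading of $\sigma^\vee$, which defines $v_\sigma^{-1}$, and these differ only by the block-reversal permutation attached to the shape $\lambda$.

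I expect the main obstacle to be this last step. Exhibiting $N$ as a self-adjoint operator --- equivalently, passing between the Springer fiber of $N$ and that of $\transpose N$ --- permutes the Jordan blocks of $N$, so the involution does not send Schubert cells to opposite Schubert cells on all of $\Fl_n(\C)$; one must work inside $\SF\lambda$, using the precise description of which cells meet which components, and then track the indexing permutation carefully through the block reordering to be sure the envelope is $\Rvar{v_\sigma}{w_\sigma}$ rather than a Richardson variety indexed by a permutation differing from $v_\sigma$ by a nontrivial element. The remaining ingredients --- the Bruhat-order facts, irreducibility of $\SF\sigma$, and the quoted Pagnon--Ressayre density theorem --- are standard or already cited, and the deduction in the first paragraph then completes the argument.
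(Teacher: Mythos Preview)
Your approach is essentially the paper's: Pagnon--Ressayre for $w_\sigma$, then transport via a duality-plus-evacuation involution to obtain the opposite statement for $v_\sigma$, then deduce minimality. The paper's involution is exactly $F_\bullet\mapsto\dot w_{0,\lambda}\,F_\bullet^\perp$ (orthogonal complement followed by left translation by the longest element of $S_\lambda$), van Leeuwen's theorem confirms it swaps $\SF\sigma$ and $\SF{\sigma^\vee}$, and the identity $v_\sigma=w_{0,\lambda}w_{\sigma^\vee}w_0$ is precisely the reading-word relation you describe.

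One technical point deserves correction. You assert that the involution carries $\Scell{w_{\sigma^\vee}}\cap\SF\lambda$ into a single opposite Schubert \emph{cell}, but this is not quite right: $(\cdot)^\perp$ does send $\Scell{w_{\sigma^\vee}}$ to $\Scellopp{w_{\sigma^\vee}w_0}$, but left translation by $\dot w_{0,\lambda}$ does not preserve opposite Schubert cells (since $\dot w_{0,\lambda}\notin B_-$), so the transported dense set need not sit inside $\Scellopp{v_\sigma}$. The paper sidesteps this by working with closures: for $w\in\mincoset{S_n}{\lambda}$ one has the \emph{containment} $\dot w_{0,\lambda}\Svaropp{ww_0}\subseteq\Svaropp{w_{0,\lambda}ww_0}$ (from length-additivity of $w_{0,\lambda}\cdot w$), which already gives $\SF\sigma\subseteq\Svaropp{v_\sigma}$. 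For minimality the paper then uses that the permutation flags $\dot v_\sigma B$ and $\dot w_\sigma B$ both lie in $\SF\sigma$ (the former coming from $\dot w_{\sigma^\vee}B\in\SF{\sigma^\vee}$ through the involution), so any $\Rvar vw\supseteq\SF\sigma$ contains both points and hence satisfies $v\le v_\sigma$ and $w_\sigma\le w$. This pointwise argument replaces your density claim and cleanly resolves the cell-tracking obstacle you correctly flagged.
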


It is not clear, a priori, that there exists a unique minimal Richardson variety containing $\SF{\sigma}$, but this is part of the content of \cref{intro.goemetric}.

In order to answer question \eqref{intro_question}, we wish to know when $\SF{\sigma}$ is equal to $\Rvar{v_\sigma}{w_\sigma}$. This is encapsulated in our titular definition:
\begin{defn}\label{defn_richardson}
Let $\sigma$ be a standard tableau of size $n$. For all $1 \le j \le n$, let $\rowjword{\sigma}{j}$ denote the row number in which the entry $j$ appears in $\sigma$, and let $\sigma[j-1]$ be the standard tableau obtained from $\sigma$ by deleting the boxes containing $j, j+1, \ldots, n$. We call $\sigma$ a \boldit{Richardson tableau} if for all $1 \le j \le n$ with $\rowjword{\sigma}{j} > 1$, the last (and therefore largest) entry of $\sigma[j-1]$ in row $\rowjword{\sigma}{j}-1$ is greater than every entry in rows $i \ge \rowjword{\sigma}{j}$.
\end{defn}

\begin{eg}\label{eg_richardson_tableau_intro}
Consider the following two standard tableaux of shape $\lambda= (4,2,2)$:
\[
\sigma = \;\begin{ytableau}
1 & 3 & 4 & 6 \\
2 & 7 \\
5 & 8
\end{ytableau}\;
\quad \text{ and } \quad
\tau = \;\begin{ytableau}
1 & 2 & 4 & 7 \\
3 & 6 \\
5 & 8
\end{ytableau}\;.
\]
We can verify that $\sigma$ is Richardson. On the other hand, $\tau$ is not Richardson. To see this, we can take $j=6$: then $\tau[5] = \ytableausmall{1 & 2 & 4\\ 3\\ 5}$, and the largest number in row $\rowjword{\tau}{6} - 1=1$, which is $4$, is less than $5$. Of the $56$ standard tableaux of shape $\lambda$, exactly $15$ are Richardson.
\end{eg}

We prove various combinatorial and geometric characterizations of Richardson tableaux:
\begin{thm}\label{intro.thm}
Let $\sigma$ be a standard tableau of shape $\lambda$ and size $n$. Then the following statements are equivalent.
\begin{enumerate}[label=(\roman*), leftmargin=*, itemsep=2pt]
\item\label{main1} The standard tableau $\sigma$ is a Richardson tableau.
\item\label{main2} The standard tableau $\sigma$ is a concatenation of prime Richardson tableau.
\item\label{main2.5} The standard tableau obtained from $\sigma$ by deleting its first row (and renumbering the entries accordingly) is Richardson, and for every entry $j$ in the second row of $\sigma$ the entry $j-1$ appears in the first row.
\item\label{main3} The evacuation $\sigma^{\vee}$ of $\sigma$ is a Richardson tableau.
\item\label{main4} Every evacuation slide of $\sigma$ is an $L$-slide, that is, the path traced through the tableau by each evacuation slide is `L'-shaped.
\item\label{main5} We have $\ell(w_\sigma)-\ell(v_\sigma)=n(\lambda)$.
\item\label{main6} The irreducible component $\SF{\sigma}$ of $\SF{\lambda}$ is a Richardson variety.
\item\label{main7} We have $\SF{\sigma} = \Rvar{v_\sigma}{w_\sigma}$.
\item\label{main8} We have $s_j v_\sigma \not\leq w_{\sigma}$ in Bruhat order for all $j\in [n-1]\setminus \{\lambda_1, \lambda_1+\lambda_2, \ldots\}$.
\end{enumerate}
\end{thm}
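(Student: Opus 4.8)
The plan is to prove the nine equivalences in two phases: first establish the combinatorial equivalences (i)--(v) using only tableau theory and properties of the words $v_\sigma, w_\sigma$, and then connect these to the geometric statements (vi)--(viii) via Theorem \ref{intro.goemetric} (= \cref{main_geometric}) together with Lusztig's description of $\SFtnn{\lambda}$.

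For the combinatorial core, I would organize a cycle of implications. The equivalence (i) $\Leftrightarrow$ (ii) should follow directly from unwinding the definition of \emph{prime} Richardson tableaux and the concatenation operation: a standard tableau decomposes uniquely into prime components (those whose first row has no gaps relative to the remaining entries, or some analogous indecomposability condition), and \cref{defn_richardson} is insensitive to this decomposition since the condition at each $j$ only involves rows and entries within a single prime block. The equivalence (i) $\Leftrightarrow$ (iii) is an inductive argument peeling off the first row: I would show that the displayed condition in (iii) for the entries $j$ in row $2$ is exactly the instance of \cref{defn_richardson} at those $j$ with $\rowjword{\sigma}{j} = 2$, and that deleting the first row preserves the remaining instances after renumbering. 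For (i) $\Leftrightarrow$ (iv), the key is the known correspondence between the word $v_\sigma^{-1}$ (read from $\sigma^\vee$) and $w_\sigma$ (read from $\sigma$) and the structure of evacuation: I would recall that evacuation is built from a sequence of \emph{jeu de taquin} slides (evacuation slides), track which cells each slide visits, and verify that the $L$-shaped condition on every slide path is equivalent to the row-maximality condition of \cref{defn_richardson}. The equivalence (i) $\Leftrightarrow$ (v) is where I expect to invest the most effort: one must compute $\ell(w_\sigma) - \ell(v_\sigma)$ as a sum of local contributions indexed by $j \in [n]$, show each contribution is at most the "expected" amount (with the total bounded by $n(\lambda)$ always), and prove equality holds at $j$ precisely when the \cref{defn_richardson} condition holds at $j$. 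This requires a careful inversion-counting argument comparing the one-line notations of $v_\sigma$ and $w_\sigma$ built from $\sigma$ and $\sigma^\vee$; the relation between $\sigma$ and $\sigma^\vee$ under evacuation (in particular $w_\sigma$ and $v_\sigma$ being built from "dual" readings) is what makes the bookkeeping delicate. I would also fold in (i) $\Leftrightarrow$ (viii) here, since condition (viii) is a Bruhat-order restatement: $s_j v_\sigma \not\le w_\sigma$ for the relevant $j$ is, via the subword characterization of Bruhat order, equivalent to a covering-type statement that should match the inversion-count equality in (v) position by position.

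For the geometric half, the chain is (v) $\Rightarrow$ (vii) $\Rightarrow$ (vi) $\Rightarrow$ (v). By \cref{main_geometric}, $\SF{\sigma} \subseteq \Rvar{v_\sigma}{w_\sigma}$, and both are irreducible; $\SF{\sigma}$ has dimension $n(\lambda)$ by Spaltenstein, while $\dim \Rvar{v_\sigma}{w_\sigma} = \ell(w_\sigma) - \ell(v_\sigma)$. Hence if (v) holds the two varieties have equal dimension, and an irreducible variety of dimension $d$ contained in an irreducible variety of dimension $d$ must equal it, giving (vii); and (vii) $\Rightarrow$ (vi) is trivial. For (vi) $\Rightarrow$ (v): if $\SF{\sigma}$ is \emph{any} Richardson variety $\Rvar{v}{w}$, then by minimality from \cref{main_geometric} we have $\Rvar{v_\sigma}{w_\sigma} \subseteq \Rvar{v}{w} = \SF{\sigma} \subseteq \Rvar{v_\sigma}{w_\sigma}$, so $\SF{\sigma} = \Rvar{v_\sigma}{w_\sigma}$ and comparing dimensions yields $\ell(w_\sigma) - \ell(v_\sigma) = n(\lambda)$. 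Finally, the link to Lusztig's totally nonnegative Springer fiber follows from \cref{geometry_positivity}: the top-dimensional cells of $\SFtnn{\lambda}$ are exactly the $\Rtp{v}{w}$ with $\Rvar{v}{w}$ a component of $\SF{\lambda}$, so this statement is absorbed into the equivalence of (vi)/(vii) with the rest.

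The main obstacle will be the exact inversion-counting needed for (i) $\Leftrightarrow$ (v) (equivalently (i) $\Leftrightarrow$ (viii)): one needs a clean formula expressing $\ell(w_\sigma) - \ell(v_\sigma)$ in terms of the row-reading data of $\sigma$ and $\sigma^\vee$, a proof that this difference is always $\le n(\lambda)$, and a characterization of the equality case that visibly matches \cref{defn_richardson}. I would look for a "local move" argument: show that modifying $\sigma$ by a single evacuation slide changes both sides in a controlled way, so that the general case reduces to understanding a single $L$-slide versus a non-$L$-slide, where the discrepancy is exactly $1$ per violating step. This reduction is also what ties (iv) into the circle, so getting it right simultaneously handles two of the hardest equivalences.
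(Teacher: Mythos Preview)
Your numbering is off by one starting at (iv): item (iv) in the theorem is ``$\sigma^\vee$ is Richardson'', item (v) is the $L$-slide condition, (vi) is the length equality, and (ix) is the Bruhat condition. This is not just cosmetic: you have effectively omitted (iv) from your plan, and in the paper this is the load-bearing equivalence. The paper first proves that the set of Richardson tableaux is closed under evacuation (via the prime-word recursion $\Rprime{\ell} = \Rprime{\ell-1}\circ\Rbiggest{\ell-2}\circ\{\ell\}$ and an explicit calculation of how evacuation interacts with it), and only then deduces the $L$-slide characterization and the length formula by induction on $n$. Both of those inductions need to know that $\sigma^\vee$ (or $(\overline{\sigma^\vee})^\vee$) is again Richardson, which you do not have available in your scheme.

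Your inequality also points the wrong way. You propose to show that each local contribution to $\ell(w_\sigma)-\ell(v_\sigma)$ is ``at most'' the expected amount, with total bounded above by $n(\lambda)$. In fact $\ell(w_\sigma)-\ell(v_\sigma)\ge n(\lambda)$ always; this is what the paper proves combinatorially (the upper bound comes only from geometry, via $\SF{\sigma}\subseteq\Rvar{v_\sigma}{w_\sigma}$). Concretely, the paper computes $\ell(w_\sigma)-\ell(w_{\bar\sigma})$ and $\ell(v_\sigma)-\ell(v_{\bar\sigma})$ separately and shows their difference is $2(c_1+\cdots+c_{k-1})-(k-1)\ge k-1$, with equality exactly when the first evacuation slide of $\sigma^\vee$ is an $L$-slide. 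Your ``discrepancy is exactly $1$ per violating step'' intuition is close in spirit but would need to be reoriented.

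Finally, your plan to derive (ix) combinatorially from the subword criterion is not how the paper proceeds and is likely hard. The paper's argument is genuinely geometric: Lusztig's cell decomposition of $\SFtnn{\lambda}$ gives $\ell(w)-\ell(v)\le n(\lambda)$ for all $(v,w)\in Z_\lambda$, and combining this with the combinatorial inequality $\ell(w_\sigma)-\ell(v_\sigma)\ge n(\lambda)$ forces equality and hence (i). In the other direction, (i) $\Rightarrow$ (ix) goes through (vii)/(viii) and the fact that $\Rtp{v_\sigma}{w_\sigma}\subseteq\SFtnn{\lambda}$. Your geometric chain (vi) $\Leftrightarrow$ (vii) $\Leftrightarrow$ (viii) via \cref{main_geometric} is correct and matches the paper.
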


In particular, the equivalence of statements \ref{main1} and \ref{main6} answers question \eqref{intro_question}. The equivalences in \cref{intro.thm} are proved below as follows:
\ref{main1} $\Leftrightarrow$ \ref{main2} is \cref{prime_concatenation},
\ref{main1} $\Leftrightarrow$ \ref{main2.5} is \cref{crop_tableau},
\ref{main1} $\Leftrightarrow$ \ref{main3} is \cref{evacuation_closed},
\ref{main1} $\Leftrightarrow$ \ref{main4} is \cref{thm-Lslides},
\ref{main1} $\Leftrightarrow$ \ref{main5} is \cref{thm-lengths},
\ref{main1} $\Leftrightarrow$ \ref{main6} $\Leftrightarrow$ \ref{main7} is \cref{main}, and
\ref{main1} $\Leftrightarrow$ \ref{main8} is \cref{richardson_Z}.

\cref{intro.thm} has several interesting combinatorial and geometric consequences, which we now discuss. Combinatorially, considering the concatenation of Richardson tableaux as in \cref{intro.thm}\ref{main2} naturally leads us to study \emph{Richardson words}. We leverage this approach to find (see \cref{prime_recursion}) a recursion describing the set of all prime Richardson words, namely, Richardson words which cannot be broken down further into smaller Richardson words.  This recursion yields the following enumerative result:
\begin{thm}[\cref{motzkin_richardson} below]\label{intro.Motzkin}
For all $n\ge 0$, the number of Richardson tableaux of size $n$ equals the Motzkin number $M_n$. 
\end{thm}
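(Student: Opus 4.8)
The plan is to set up a bijection between Richardson tableaux of size $n$ and a combinatorial model for the Motzkin number $M_n$, routing through the structure theory already in place—in particular \cref{intro.thm}\ref{main2}, which says that a Richardson tableau is precisely a concatenation of prime Richardson tableaux, and the recursion for prime Richardson words alluded to as \cref{prime_recursion}. Since $M_n$ counts, for instance, Motzkin paths of length $n$ (lattice paths from $(0,0)$ to $(n,0)$ with steps $(1,1)$, $(1,0)$, $(1,-1)$ staying weakly above the $x$-axis), the most transparent route is to show that the generating function $F(x) = \sum_{n \ge 0} r_n x^n$ for the number $r_n$ of size-$n$ Richardson tableaux satisfies the defining functional equation for the Motzkin generating function, namely $F = 1 + xF + x^2 F^2$.

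First I would record the consequence of the concatenation characterization at the level of generating functions. If $P(x) = \sum_{n \ge 1} p_n x^n$ is the generating function for \emph{prime} Richardson tableaux of size $n$, then because every nonempty Richardson tableau decomposes \emph{uniquely} as an ordered concatenation of primes (this uniqueness is what makes ``concatenation'' the right notion and should follow from \cref{prime_concatenation} together with the fact that the first row of a Richardson tableau records the concatenation points), we get
\[
F(x) = \frac{1}{1 - P(x)}, \qquad \text{equivalently} \qquad F(x)\bigl(1 - P(x)\bigr) = 1.
\]
The real content is then to compute $P(x)$, and this is where I would invoke the recursion \cref{prime_recursion} for prime Richardson words. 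A prime Richardson tableau has a single ``block'' in the sense of the concatenation decomposition; combinatorially I expect the recursion to say that stripping the last entry $n$ and the box it sits in (or, dually, analyzing the first row via characterization \ref{main2.5}) reduces a prime Richardson tableau either to a size-$(n-1)$ Richardson tableau together with a bounded choice, or to two Richardson tableaux glued in a prescribed way. The precise shape of the recursion should translate into the relation $P(x) = x + x^2 F(x)$: the ``$x$'' accounts for the unique prime tableau of size $1$, and the ``$x^2 F(x)$'' accounts for a prime tableau of size $n \ge 2$ being obtained from an arbitrary Richardson tableau of size $n-2$ by adjoining two new boxes/entries in the essentially unique way compatible with primality (one box in the first row serving as the ``cap'' that prevents further decomposition, one box below it).

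Combining the two displayed relations gives $F(x)\bigl(1 - x - x^2 F(x)\bigr) = 1$, i.e. $F = 1 + xF + x^2 F^2$, which is exactly the equation characterizing the Motzkin generating function $\sum_{n\ge 0} M_n x^n$; since $F$ and the Motzkin series have the same constant term $1$ and satisfy the same quadratic, they coincide, and hence $r_n = M_n$ for all $n \ge 0$. (Alternatively, one can make the whole argument bijective: read off a Motzkin path from a Richardson tableau by recording an up-step for each box that opens a new second-row-and-below ``arc,'' a down-step for each box that closes one, and a level step otherwise, with the Richardson condition in \cref{defn_richardson}—the ``nesting/ordering'' constraint on row tails—being exactly the condition that the path stays weakly above the axis and returns to it; I would present this as the conceptual heart even if the generating-function computation is what is written in full.)

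The main obstacle I anticipate is pinning down the recursion for prime Richardson tableaux precisely enough to extract $P(x) = x + x^2 F(x)$ rather than some more complicated expression—in particular, verifying that the ``two new boxes'' are adjoined in a \emph{unique} way given the underlying Richardson tableau of size $n-2$, so that the count is exactly $r_{n-2}$ with no multiplicity and no missing cases. This is really a question about the combinatorics of where the entries $n-1$ and $n$ can sit in a prime Richardson tableau, and it should be handled by a careful case analysis using \cref{defn_richardson} directly (tracking $\rowjword{\sigma}{n}$ and the last entry of $\sigma[n-1]$ in the row above it) together with the prime/concatenation dichotomy from \cref{prime_concatenation}; I would expect the clean statement to be that $n$ lies in the second row and $n-1$ lies at the end of the first row, which both forces primality and identifies the residual tableau. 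A secondary, more bookkeeping-level obstacle is making the concatenation-to-generating-function step airtight, i.e. confirming that the decomposition into primes is genuinely unique and that concatenation is associative on the nose at the level of tableaux; both should be immediate from the cited results but need to be stated.
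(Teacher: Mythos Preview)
Your high-level strategy matches the paper's exactly: show $F(x)=\frac{1}{1-P(x)}$ via unique prime factorization, then establish $P(x)=x+x^2F(x)$, and conclude that $F$ satisfies the Motzkin functional equation. The paper does precisely this.

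However, your proposed mechanism for $P(x)=x+x^2F(x)$ is wrong in a way that would derail the argument if you pursued it. You conjecture that in a prime Richardson tableau of size $n\ge 2$, the entry $n$ lies in the second row and $n-1$ lies at the end of the first row. This is false: in a prime Richardson tableau of length $\ell\ge 2$, the entry $n$ lies in row $\ell$ (the \emph{last} row), not row $2$. The recursion \cref{prime_recursion} reads $\Rprime{\ell}=\Rprime{\ell-1}\circ\Rbiggest{\ell-2}\circ\{\ell\}$, so the lattice word ends in the letter $\ell$; iterating once more shows that the letters $\ell-1$ and $\ell$ each appear exactly once, i.e.\ the last \emph{two rows} of a prime Richardson tableau have length one (this is \cref{prime_last_two_rows}). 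Thus the two ``extra'' boxes witnessing the $x^2$ factor are at the bottom of the tableau, not the top.

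Moreover, the bijection from prime Richardson tableaux of length $\ell$ to Richardson tableaux of length $\ell-2$ is not a simple deletion of two entries: writing $r=s\circ t\circ(\ell-1)\circ u\circ\ell$ with $s\in\Rprime{\ell-2}$, $t\in\Rbiggest{\ell-3}$, $u\in\Rbiggest{\ell-2}$, the paper's map sends $r$ to $t\circ s\circ u$, with the swap of $s$ and $t$ needed to land in (and hit all of) $\Rbiggest{\ell-2}\setminus\Rbiggest{\ell-3}$. So ``remove two boxes and read off the residual tableau'' is not literally correct; you need the factorization from \cref{prime_recursion} and a rearrangement of the factors. Once you use the actual structure of prime Richardson words rather than the row-1/row-2 picture, the rest of your outline goes through verbatim.
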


See \cref{figure_motzkin} for an illustration of \cref{intro.Motzkin}. 
\begin{figure}[h]
\begin{center}\setlength{\tabcolsep}{2pt}
\begin{tabular}{ccccccccc}
$\begin{tikzpicture}[baseline=(current bounding box.center),scale=0.28]
\pgfmathsetmacro{\n}{4};
\useasboundingbox(-0.8,0)rectangle(\n+0.8,\n/2);
\coordinate(u)at(1,1);
\coordinate(d)at(1,-1);
\coordinate(h)at(1,0);
\tikzstyle{out1}=[inner sep=0,minimum size=1.2mm,circle,draw=black,fill=black]
\coordinate(p0)at(0,0){};
\coordinate(p1)at($(p0)+(u)$){};
\coordinate(p2)at($(p1)+(u)$){};
\coordinate(p3)at($(p2)+(d)$){};
\coordinate(p4)at($(p3)+(d)$){};
\draw[thick](p0.center)--(p1.center)--(p2.center)--(p3.center)--(p4.center);
\foreach \x in {1,...,\n}{
\pgfmathsetmacro{\y}{int(\x-1)};
\draw[thick](p\y.center)--(p\x.center);
\node[out1]at(p\y){};}
\node[out1]at(p\n){};
\end{tikzpicture}$
&
$\begin{tikzpicture}[baseline=(current bounding box.center),scale=0.28]
\pgfmathsetmacro{\n}{4};
\useasboundingbox(-0.8,0)rectangle(\n+0.8,\n/2);
\coordinate(u)at(1,1);
\coordinate(d)at(1,-1);
\coordinate(h)at(1,0);
\tikzstyle{out1}=[inner sep=0,minimum size=1.2mm,circle,draw=black,fill=black]
\coordinate(p0)at(0,0){};
\coordinate(p1)at($(p0)+(u)$){};
\coordinate(p2)at($(p1)+(h)$){};
\coordinate(p3)at($(p2)+(h)$){};
\coordinate(p4)at($(p3)+(d)$){};
\draw[thick](p0.center)--(p1.center)--(p2.center)--(p3.center)--(p4.center);
\foreach \x in {1,...,\n}{
\pgfmathsetmacro{\y}{int(\x-1)};
\draw[thick](p\y.center)--(p\x.center);
\node[out1]at(p\y){};}
\node[out1]at(p\n){};
\end{tikzpicture}$
&
$\begin{tikzpicture}[baseline=(current bounding box.center),scale=0.28]
\pgfmathsetmacro{\n}{4};
\useasboundingbox(-0.8,0)rectangle(\n+0.8,\n/2);
\coordinate(u)at(1,1);
\coordinate(d)at(1,-1);
\coordinate(h)at(1,0);
\tikzstyle{out1}=[inner sep=0,minimum size=1.2mm,circle,draw=black,fill=black]
\coordinate(p0)at(0,0){};
\coordinate(p1)at($(p0)+(u)$){};
\coordinate(p2)at($(p1)+(h)$){};
\coordinate(p3)at($(p2)+(d)$){};
\coordinate(p4)at($(p3)+(h)$){};
\draw[thick](p0.center)--(p1.center)--(p2.center)--(p3.center)--(p4.center);
\foreach \x in {1,...,\n}{
\pgfmathsetmacro{\y}{int(\x-1)};
\draw[thick](p\y.center)--(p\x.center);
\node[out1]at(p\y){};}
\node[out1]at(p\n){};
\end{tikzpicture}$
&
$\begin{tikzpicture}[baseline=(current bounding box.center),scale=0.28]
\pgfmathsetmacro{\n}{4};
\useasboundingbox(-0.8,0)rectangle(\n+0.8,\n/2);
\coordinate(u)at(1,1);
\coordinate(d)at(1,-1);
\coordinate(h)at(1,0);
\tikzstyle{out1}=[inner sep=0,minimum size=1.2mm,circle,draw=black,fill=black]
\coordinate(p0)at(0,0){};
\coordinate(p1)at($(p0)+(u)$){};
\coordinate(p2)at($(p1)+(d)$){};
\coordinate(p3)at($(p2)+(u)$){};
\coordinate(p4)at($(p3)+(d)$){};
\draw[thick](p0.center)--(p1.center)--(p2.center)--(p3.center)--(p4.center);
\foreach \x in {1,...,\n}{
\pgfmathsetmacro{\y}{int(\x-1)};
\draw[thick](p\y.center)--(p\x.center);
\node[out1]at(p\y){};}
\node[out1]at(p\n){};
\end{tikzpicture}$
&
$\begin{tikzpicture}[baseline=(current bounding box.center),scale=0.28]
\pgfmathsetmacro{\n}{4};
\useasboundingbox(-0.8,0)rectangle(\n+0.8,\n/2);
\coordinate(u)at(1,1);
\coordinate(d)at(1,-1);
\coordinate(h)at(1,0);
\tikzstyle{out1}=[inner sep=0,minimum size=1.2mm,circle,draw=black,fill=black]
\coordinate(p0)at(0,0){};
\coordinate(p1)at($(p0)+(u)$){};
\coordinate(p2)at($(p1)+(d)$){};
\coordinate(p3)at($(p2)+(h)$){};
\coordinate(p4)at($(p3)+(h)$){};
\draw[thick](p0.center)--(p1.center)--(p2.center)--(p3.center)--(p4.center);
\foreach \x in {1,...,\n}{
\pgfmathsetmacro{\y}{int(\x-1)};
\draw[thick](p\y.center)--(p\x.center);
\node[out1]at(p\y){};}
\node[out1]at(p\n){};
\end{tikzpicture}$
&
$\begin{tikzpicture}[baseline=(current bounding box.center),scale=0.28]
\pgfmathsetmacro{\n}{4};
\useasboundingbox(-0.8,0)rectangle(\n+0.8,\n/2);
\coordinate(u)at(1,1);
\coordinate(d)at(1,-1);
\coordinate(h)at(1,0);
\tikzstyle{out1}=[inner sep=0,minimum size=1.2mm,circle,draw=black,fill=black]
\coordinate(p0)at(0,0){};
\coordinate(p1)at($(p0)+(h)$){};
\coordinate(p2)at($(p1)+(u)$){};
\coordinate(p3)at($(p2)+(h)$){};
\coordinate(p4)at($(p3)+(d)$){};
\draw[thick](p0.center)--(p1.center)--(p2.center)--(p3.center)--(p4.center);
\foreach \x in {1,...,\n}{
\pgfmathsetmacro{\y}{int(\x-1)};
\draw[thick](p\y.center)--(p\x.center);
\node[out1]at(p\y){};}
\node[out1]at(p\n){};
\end{tikzpicture}$
&
$\begin{tikzpicture}[baseline=(current bounding box.center),scale=0.28]
\pgfmathsetmacro{\n}{4};
\useasboundingbox(-0.8,0)rectangle(\n+0.8,\n/2);
\coordinate(u)at(1,1);
\coordinate(d)at(1,-1);
\coordinate(h)at(1,0);
\tikzstyle{out1}=[inner sep=0,minimum size=1.2mm,circle,draw=black,fill=black]
\coordinate(p0)at(0,0){};
\coordinate(p1)at($(p0)+(h)$){};
\coordinate(p2)at($(p1)+(u)$){};
\coordinate(p3)at($(p2)+(d)$){};
\coordinate(p4)at($(p3)+(h)$){};
\draw[thick](p0.center)--(p1.center)--(p2.center)--(p3.center)--(p4.center);
\foreach \x in {1,...,\n}{
\pgfmathsetmacro{\y}{int(\x-1)};
\draw[thick](p\y.center)--(p\x.center);
\node[out1]at(p\y){};}
\node[out1]at(p\n){};
\end{tikzpicture}$
&
$\begin{tikzpicture}[baseline=(current bounding box.center),scale=0.28]
\pgfmathsetmacro{\n}{4};
\useasboundingbox(-0.8,0)rectangle(\n+0.8,\n/2);
\coordinate(u)at(1,1);
\coordinate(d)at(1,-1);
\coordinate(h)at(1,0);
\tikzstyle{out1}=[inner sep=0,minimum size=1.2mm,circle,draw=black,fill=black]
\coordinate(p0)at(0,0){};
\coordinate(p1)at($(p0)+(h)$){};
\coordinate(p2)at($(p1)+(h)$){};
\coordinate(p3)at($(p2)+(u)$){};
\coordinate(p4)at($(p3)+(d)$){};
\draw[thick](p0.center)--(p1.center)--(p2.center)--(p3.center)--(p4.center);
\foreach \x in {1,...,\n}{
\pgfmathsetmacro{\y}{int(\x-1)};
\draw[thick](p\y.center)--(p\x.center);
\node[out1]at(p\y){};}
\node[out1]at(p\n){};
\end{tikzpicture}$
&
$\begin{tikzpicture}[baseline=(current bounding box.center),scale=0.28]
\pgfmathsetmacro{\n}{4};
\useasboundingbox(-0.8,0)rectangle(\n+0.8,\n/2);
\coordinate(u)at(1,1);
\coordinate(d)at(1,-1);
\coordinate(h)at(1,0);
\tikzstyle{out1}=[inner sep=0,minimum size=1.2mm,circle,draw=black,fill=black]
\coordinate(p0)at(0,0){};
\coordinate(p1)at($(p0)+(h)$){};
\coordinate(p2)at($(p1)+(h)$){};
\coordinate(p3)at($(p2)+(h)$){};
\coordinate(p4)at($(p3)+(h)$){};
\draw[thick](p0.center)--(p1.center)--(p2.center)--(p3.center)--(p4.center);
\foreach \x in {1,...,\n}{
\pgfmathsetmacro{\y}{int(\x-1)};
\draw[thick](p\y.center)--(p\x.center);
\node[out1]at(p\y){};}
\node[out1]at(p\n){};
\end{tikzpicture}$ \\[20pt]
\scalebox{0.6}{$\begin{ytableau}
1 & 2 & 3 & 4 \\
\none \\
\none \\
\none
\end{ytableau}$}
&
\scalebox{0.6}{$\begin{ytableau}
1 & 2 & 3 \\
4 \\
\none \\
\none
\end{ytableau}$}
&
\scalebox{0.6}{$\begin{ytableau}
1 & 2 & 4 \\
3 \\
\none \\
\none
\end{ytableau}$}
&
\scalebox{0.6}{$\begin{ytableau}
1 & 3 & 4 \\
2 \\
\none \\
\none
\end{ytableau}$}
&
\scalebox{0.6}{$\begin{ytableau}
1 & 3 \\
2 & 4 \\
\none \\
\none
\end{ytableau}$}
&
\scalebox{0.6}{$\begin{ytableau}
1 & 2 \\
3 \\
4 \\
\none
\end{ytableau}$}
&
\scalebox{0.6}{$\begin{ytableau}
1 & 3 \\
2 \\
4 \\
\none
\end{ytableau}$}
&
\scalebox{0.6}{$\begin{ytableau}
1 & 4 \\
2 \\
3 \\
\none
\end{ytableau}$}
&
\scalebox{0.6}{$\begin{ytableau}
1 \\
2 \\
3 \\
4
\end{ytableau}$}
\end{tabular}\vspace*{-6pt}
\caption{An illustration of \cref{intro.Motzkin} for $n=4$ (where $M_4 = 9$). Top row: Motzkin paths with $4$ steps. Bottom row: Richardson tableaux of size $4$.}
\label{figure_motzkin}
\end{center}
\end{figure}
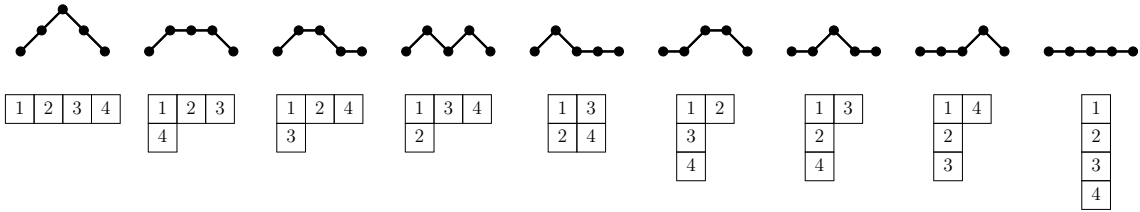

We also find a recursive description for the set of all Richardson words with a fixed shape in \cref{richardson_chop}, which leads to the following formula:
\begin{thm}[\cref{q_count} below]\label{intro.count}
The number of Richardson tableaux of fixed partition shape $\lambda = (\lambda_1, \lambda_2,\ldots, \lambda_\ell)$ is
\begin{eqnarray}\label{count.intro}
\binom{\lambda_{\ell-1}}{\lambda_\ell}\binom{\lambda_{\ell-2} + \lambda_\ell}{\lambda_{\ell-1} + \lambda_\ell}\binom{\lambda_{\ell-3} + \lambda_{\ell-1} + \lambda_\ell}{\lambda_{\ell-2} + \lambda_{\ell-1} + \lambda_\ell}\cdots\binom{\lambda_1 + \lambda_3 + \lambda_4 + \cdots + \lambda_\ell}{\lambda_2 + \lambda_3 + \lambda_4 + \cdots + \lambda_\ell}.
\end{eqnarray}
\end{thm}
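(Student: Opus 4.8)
The plan is to prove the formula by induction on the number of parts $\ell$, peeling off the first row using the characterization in \cref{intro.thm}\ref{main2.5}. Write $f(\lambda)$ for the number of Richardson tableaux of shape $\lambda = (\lambda_1, \ldots, \lambda_\ell)$, set $\hat\lambda := (\lambda_2, \ldots, \lambda_\ell)$ and $\hat n := \lambda_2 + \cdots + \lambda_\ell$, and note that every binomial coefficient appearing in \eqref{count.intro} is well-defined since $\lambda$ is a partition (each numerator minus its denominator is $\lambda_i - \lambda_{i+1} \ge 0$). As $f(\lambda) = 1$ when $\ell \le 1$, the formula follows once we establish the recursion
\[ f(\lambda) = \binom{\lambda_1 + \lambda_3 + \lambda_4 + \cdots + \lambda_\ell}{\lambda_2 + \lambda_3 + \cdots + \lambda_\ell}\, f(\hat\lambda), \]
since iterating it peels off one row at a time and produces exactly the factors of \eqref{count.intro}, read from right to left.

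To prove the recursion I would fix a Richardson tableau $\hat\sigma$ of shape $\hat\lambda$ on $[\hat n]$ and count the Richardson tableaux $\sigma$ of shape $\lambda$ whose restriction (delete row $1$, renumber) equals $\hat\sigma$. By \cref{intro.thm}\ref{main2.5}, these are exactly the tableaux obtained by choosing a first row $S \subseteq [n]$ with $|S| = \lambda_1$ and placing $\hat\sigma$ in rows $2, \ldots, \ell$ via the order-preserving bijection $[\hat n] \to [n] \setminus S$, subject to the single condition that $j - 1 \in S$ for every entry $j$ in the second row of $\sigma$ (the requirement that the restriction be Richardson is automatic, since it is $\hat\sigma$). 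The first thing to check is that this condition already forces $\sigma$ to be standard: if $b_1 < \cdots < b_{\lambda_2}$ are the second-row entries of $\sigma$, then $b_1 - 1, \ldots, b_k - 1$ are $k$ distinct elements of $S$ all $< b_k$, so the $k$th entry of row $1$ is $< b_k$, which is the only column inequality not already guaranteed by $\hat\sigma$ being standard. Hence it suffices to count the admissible subsets $S$, and distinct $S$ give distinct $\sigma$.

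Next I would turn this into a stars-and-bars count. Encode $\sigma$ by the length-$n$ word over $\{\mathsf R, \mathsf H\}$ with $\mathsf R$ in the positions of $S$ and $\mathsf H$ elsewhere; reading $1, 2, \ldots, n$, the $k$th $\mathsf H$ fills the box of $\hat\sigma$ holding $k$, shifted down one row. An entry $j$ lies in the second row of $\sigma$ exactly when $j$ is the position of the $k$th $\mathsf H$ for some $k$ in the first row of $\hat\sigma$, and there are exactly $\lambda_2$ such values of $k$. For each of them, the condition $j - 1 \in S$ says precisely that the gap immediately preceding that $\mathsf H$ (among the $\hat n + 1$ gaps cut out by the $\hat n$ copies of $\mathsf H$) contains at least one $\mathsf R$; the smallest such $k$ is $1$, since $1$ lies in the first row of $\hat\sigma$, and this merely forces position $1$ to carry an $\mathsf R$. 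So the number of admissible $S$ equals the number of ways to distribute $\lambda_1$ copies of $\mathsf R$ among $\hat n + 1$ gaps with $\lambda_2$ prescribed gaps nonempty, which is $\binom{(\lambda_1 - \lambda_2) + \hat n}{\hat n} = \binom{\lambda_1 + \lambda_3 + \cdots + \lambda_\ell}{\lambda_2 + \lambda_3 + \cdots + \lambda_\ell}$. Since this is independent of $\hat\sigma$, summing over all Richardson $\hat\sigma$ of shape $\hat\lambda$ gives the recursion, and induction finishes the proof.

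The only genuinely delicate step is matching the hypothesis of \cref{intro.thm}\ref{main2.5} with the ``gap is nonempty'' condition, together with the boundary behavior at the first $\mathsf H$; everything after that is routine. I expect the paper's own argument to run the same induction, but phrased through Richardson words and the ``chop'' recursion referenced above, which repackages the subset $S$ as an interleaving of words.
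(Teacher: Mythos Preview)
Your proof is correct and follows essentially the same route as the paper. Both arguments induct by deleting the first row via the $\crop$ characterization (\cref{intro.thm}\ref{main2.5}, i.e.\ \cref{crop_tableau}), fix a Richardson tableau $\hat\sigma$ of shape $(\lambda_2,\dots,\lambda_\ell)$, and count preimages; the paper phrases this through lattice words (its \cref{richardson_chop}) and a bijection $r\mapsto\tilde r$ to words with $\lambda_1-\lambda_2$ ones and $n-\lambda_1$ twos, which is exactly your stars-and-bars distribution of $\lambda_1-\lambda_2$ ``extra'' $\mathsf R$'s among $\hat n+1$ gaps. The only substantive difference is that the paper actually proves the $q$-analogue \eqref{q_count_q} by tracking the major index (equivalently $\sumone$) through this same bijection, obtaining \eqref{count.intro} as the $q=1$ specialization; your argument gives the $q=1$ count directly and more cleanly, at the cost of not seeing the refinement.
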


We also prove a $q$-analogue of the formula~\eqref{count.intro} using the major index statistic on standard tableaux; see \cref{q_count}. Note that summing \eqref{count.intro} over all partitions of a fixed size $n$ gives a formula for the Motzkin number $M_n$. As far as we know, this is the only known refinement of the Motzkin numbers indexed by partitions. It is surprising that answering the geometric question \eqref{intro_question} leads to such elegant enumerative formulas as in \cref{intro.Motzkin,count.intro}. We see this as evidence that Richardson tableaux $\sigma$ and the associated components $\SF{\sigma}$ are worthy of further study.

Combining \cref{intro.count} with \cref{intro.thm}\ref{main6} yields the following enumerative result on the cellular structure of $\SFtnn{\lambda}$:
\begin{cor}[\cref{tp_main} below]\label{tp_main_intro}
The totally nonnegative Springer fiber $\SF{\lambda}^{\geq 0}$ has dimension $n(\lambda)$, and its top-dimensional cells are precisely $\Rtp{v_\sigma}{w_\sigma}$ for all Richardson tableaux $\sigma$ of shape $\lambda$. In particular, the number of top-dimensional cells of $\SF{\lambda}^{\geq 0}$ is given by~\eqref{count.intro}.
\end{cor}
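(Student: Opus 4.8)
The plan is to deduce the corollary by combining Lusztig's cell decomposition of $\SF{\lambda}^{\ge 0}$ (via \cref{geometry_positivity}) with \cref{intro.thm} and \cref{intro.count}; no new geometry beyond what is recalled in \cref{sec_intro} is needed. Recall from \cref{geometry_positivity} that $\SF{\lambda}^{\ge 0}$ has a cell decomposition into totally positive Richardson cells $\Rtp{v}{w}$, and that a cell $\Rtp{v}{w}$ is top-dimensional precisely when $\Rvar{v}{w}$ is an irreducible component of $\SF{\lambda}$. First I would identify these top cells. By Spaltenstein \cite{spaltenstein76}, every component of $\SF{\lambda}$ equals $\SF{\sigma}$ for a unique standard tableau $\sigma$ of shape $\lambda$; by \cref{intro.thm} (equivalence of \ref{main1}, \ref{main6}, and \ref{main7}), $\SF{\sigma}$ is a Richardson variety exactly when $\sigma$ is a Richardson tableau, and in that case $\SF{\sigma}=\Rvar{v_\sigma}{w_\sigma}$. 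Since distinct tableaux index distinct components, and a Richardson variety is the Zariski closure of its totally positive part $\Rtp{v}{w}$, which is in turn the unique open cell of that closure, the assignment $\sigma\mapsto\Rtp{v_\sigma}{w_\sigma}$ is a bijection from Richardson tableaux of shape $\lambda$ onto the top-dimensional cells of $\SF{\lambda}^{\ge 0}$.

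Next I would compute dimensions. The cell $\Rtp{v_\sigma}{w_\sigma}$ has real dimension $\ell(w_\sigma)-\ell(v_\sigma)$, which equals $n(\lambda)$ by \cref{intro.thm}\ref{main5}, or equivalently because $\SF{\sigma}$ is a component of $\SF{\lambda}$ and all such have dimension $n(\lambda)$ by Spaltenstein. Since the dimension of a finite CW complex is the largest dimension of one of its cells, it only remains to observe that at least one Richardson tableau of shape $\lambda$ exists. This follows from \cref{intro.count}: their number is the product \eqref{count.intro}, in which every binomial coefficient $\binom{a}{b}$ has $a\ge b\ge 0$, since $a-b$ is a difference $\lambda_i-\lambda_{i+1}\ge 0$ of consecutive parts of $\lambda$, so the product is a positive integer. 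Hence $\dim\SF{\lambda}^{\ge 0}=n(\lambda)$, and the number of top-dimensional cells is the number of Richardson tableaux of shape $\lambda$, namely \eqref{count.intro} by \cref{intro.count}.

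The only genuine subtlety is extracting exactly what \cref{geometry_positivity} provides: one needs Lusztig's stratification of $\SF{\lambda}^{\ge 0}$ to be an honest cell decomposition, so that its dimension is the top cell dimension, and the matching of top cells with component Richardson varieties to hold unconditionally rather than only after the dimension is already known. Granting that, the corollary is pure bookkeeping assembled from \cref{intro.thm} and \cref{intro.count}.
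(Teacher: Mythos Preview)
The proposal is correct and follows essentially the same approach as the paper: combine Lusztig's cell decomposition and \cref{geometry_positivity} (matching $n(\lambda)$-dimensional cells of $\SFtnn{\lambda}$ with Richardson varieties that are irreducible components of $\SF{\lambda}$), invoke \cref{main}/\cref{intro.thm} to identify these with the pairs $(v_\sigma,w_\sigma)$ for Richardson tableaux $\sigma$, observe that $\Rtabs{\lambda}\neq\emptyset$ to pin down the dimension, and apply \cref{q_count}/\cref{intro.count} for the count. The paper isolates the cell identification as a separate \cref{tp_cells} and simply asserts $\Rtabs{\lambda}\neq\emptyset$ rather than reading it off the product formula, but these are packaging differences only.
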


We point out that while the geometry of a complex Springer fiber only depends on the Jordan type of the nilpotent matrix $N$ in \eqref{intro_SF}, the structure of its totally nonnegative does depend on $N$ (even within the same conjugacy class); see \cref{eg_tnn} below. \cref{tp_main_intro} only applies to totally nonnegative Springer fibers associated to nilpotent matrices $N$ which are in Jordan form with block sizes weakly decreasing; it remains an open problem to enumerate the top-dimensional cells for other choices of $N$.

Geometrically, \cref{intro.thm} implies that the cohomology classes of the components of $\SF{\lambda}$ indexed by Richardson tableaux are given by a product of Schubert polynomials:
\begin{cor}[\cref{cor.Schubert} below]\label{intro.Schubert}
Let $\sigma$ be a Richardson tableaux. The cohomology class $[\SF{\sigma}]$ is represented by the product of Schubert polynomials $\mathfrak{S}_{v_\sigma} \cdot \mathfrak{S}_{w_0w_\sigma}$ in the cohomology ring $H^*(\Fl_n(\C); \mathbb{Z}) \cong \mathbb{Z}[x_1, \ldots, x_n]/ I^+$.
\end{cor}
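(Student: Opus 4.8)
The plan is to deduce this corollary from the already-established geometric characterization of Richardson tableaux together with a standard fact in Schubert calculus. First I would apply \cref{intro.thm}: since $\sigma$ is a Richardson tableau, the implication \ref{main1} $\Rightarrow$ \ref{main7} gives $\SF{\sigma} = \Rvar{v_\sigma}{w_\sigma}$ as subvarieties of $\Fl_n(\C)$, so their fundamental classes in $H^*(\Fl_n(\C);\mathbb{Z})$ coincide. By \cref{intro.goemetric} the pair $(v_\sigma, w_\sigma)$ indexes a genuine Richardson variety, so in particular $v_\sigma \le w_\sigma$ in Bruhat order and $\Rvar{v_\sigma}{w_\sigma}$ is irreducible of dimension $\ell(w_\sigma) - \ell(v_\sigma)$; by \ref{main5} this dimension equals $n(\lambda)$, consistent with Spaltenstein's formula for $\dim\SF{\sigma}$.

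Next I would invoke the standard fact that the cohomology class of a Richardson variety factors as the product of the classes of the Schubert variety and opposite Schubert variety cutting it out. Concretely, $\Rvar{v_\sigma}{w_\sigma}$ is by definition $\Svaropp{v_\sigma} \cap \Svar{w_\sigma}$, and because this intersection has the expected dimension $\ell(w_\sigma) - \ell(v_\sigma)$ --- equivalently, codimension equal to the sum of the codimensions of the two factors --- the intersection is proper, so $[\Rvar{v_\sigma}{w_\sigma}] = [\Svaropp{v_\sigma}] \cdot [\Svar{w_\sigma}]$ in $H^*(\Fl_n(\C);\mathbb{Z})$ (this is due to Richardson, and is explained in Brion's lectures on the geometry of flag varieties). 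It then remains to substitute the usual Schubert-polynomial representatives under the isomorphism $H^*(\Fl_n(\C);\mathbb{Z}) \cong \mathbb{Z}[x_1,\ldots,x_n]/I^+$: with the conventions of the paper, $[\Svaropp{v_\sigma}]$ is represented by $\mathfrak{S}_{v_\sigma}$, while $[\Svar{w_\sigma}]$, the class of a Schubert variety of codimension $\ell(w_0) - \ell(w_\sigma) = \ell(w_0 w_\sigma)$, is represented by $\mathfrak{S}_{w_0 w_\sigma}$. Multiplying yields $[\SF{\sigma}] = \mathfrak{S}_{v_\sigma}\cdot\mathfrak{S}_{w_0 w_\sigma}$. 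As a sanity check, the total degree $\ell(v_\sigma) + \ell(w_0 w_\sigma) = \ell(v_\sigma) + \ell(w_0) - \ell(w_\sigma) = \binom{n}{2} - n(\lambda)$ by \ref{main5}, which is exactly the complex codimension of $\SF{\sigma}$ in $\Fl_n(\C)$.

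The one genuinely delicate point --- the step I expect to cause the most friction --- is purely one of bookkeeping: there are several mutually incompatible conventions in the literature relating Schubert cells, opposite Schubert cells, Bruhat order, and the normalization of Schubert polynomials, so I would need to pin these down once and for all and check that the permutations $v_\sigma$ and $w_\sigma$, as defined in the introduction via evacuation and row-reading, really pair with the representatives $\mathfrak{S}_{v_\sigma}$ and $\mathfrak{S}_{w_0 w_\sigma}$ (rather than, say, $\mathfrak{S}_{w_0 v_\sigma}$ and $\mathfrak{S}_{w_\sigma}$). This involves no new geometry, only careful tracking of the definitions used to set up \cref{intro.goemetric} and of the duality between Schubert and opposite Schubert classes.
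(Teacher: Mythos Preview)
Your proposal is correct and follows essentially the same route as the paper: apply \cref{main} (equivalently, \ref{main1} $\Rightarrow$ \ref{main7} of \cref{intro.thm}) to get $\SF{\sigma} = \Rvar{v_\sigma}{w_\sigma}$, and then use the standard Schubert-calculus fact (stated as \cref{prop.Richardson.class}, citing Speyer) that $[\Rvar{v}{w}]$ is represented by $\mathfrak{S}_{v}\cdot\mathfrak{S}_{w_0w}$. Your additional unpacking of this fact via proper intersection and your convention check are correct and consistent with the paper's setup.
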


\cref{intro.Schubert} partially answers (for these components) a problem posed by Springer \cite{springer83} of finding the expansion of the cohomology class $[\SF{\sigma}]$ in the Schubert basis. Namely, when $\sigma$ is a Richardson tableau, \cref{intro.Schubert} tells us that the answer to Springer's question is given by the Schubert structure constants for the product $\mathfrak{S}_{v_\sigma} \cdot \mathfrak{S}_{w_0w_\sigma}$. We discuss this further in \cref{sec_cohomologyclass}.

The question of which components $\SF{\sigma}$ of Springer fibers are smooth has been extensively studied \cite{vargas79,spaltenstein82,fung03,fresse_melnikov10,fresse_melnikov11,fresse11,graham_zierau11}. For example, it is known that $\SF{\sigma}$ is smooth if $\sigma$ is hook-shaped or has at most two rows, while \eqref{eg_singular_tableaux} in \cref{sec_smoothness} below provides examples of tableaux $\sigma$ such that $\SF{\sigma}$ is singular. Similarly, smoothness of the Richardson varieties $\Rvar{v}{w}$ has also been studied \cite{billey_coskun12,knutson_woo_yong13}, in particular for the special sub-families of Schubert varieties \cite{lakshmibai_sandhya90,billey_lakshmibai00} and positroid varieties \cite{billey_weaver25}. While Richardson varieties and components of Springer fibers can in general be singular, we show that the varieties in the intersection of these two families are smooth:
\begin{thm}[\cref{thm.smooth} below]\label{intro.smooth}
Let $\sigma$ be a Richardson tableau. Then the component $\SF{\sigma} = \Rvar{v_\sigma}{w_\sigma}$ of the Springer fiber $\SF{\lambda}$ is smooth. 
\end{thm}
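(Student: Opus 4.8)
The plan is to leverage the recursive structure of Richardson tableaux --- the prime-factor decomposition of \cref{intro.thm}\ref{main2} and especially the ``delete the first row'' recursion of \cref{intro.thm}\ref{main2.5} --- to exhibit $\SF{\sigma}$ as an iterated fiber bundle all of whose stages are smooth, and then conclude by induction on the number of rows of $\lambda$ (the base being single-row $\lambda$, where $\SF{\sigma}$ is a point, or already $\lambda$ with at most two rows, where $\SF{\sigma}$ is classically known to be smooth).

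The substantive step is combinatorial: one must make the pair of reading words $(v_\sigma, w_\sigma)$ explicit for a Richardson tableau $\sigma$ and describe how it decomposes in accordance with the decomposition of $\sigma$ itself. Write $\sigma'$ for the Richardson tableau obtained by deleting the first row of $\sigma$ and renumbering, with shape $\lambda' = (\lambda_2, \dots, \lambda_\ell)$; by \cref{intro.thm}\ref{main2.5} this is again Richardson, and every entry in the second row of $\sigma$ has its predecessor in the first row. I would show that this predecessor condition forces $v_\sigma$ and $w_\sigma$ to arise from $v_{\sigma'}$ and $w_{\sigma'}$ by a rigidly prescribed insertion of the $\lambda_1$ first-row values --- the combinatorial description of the reading words promised in the abstract, generalizing the description Graham--Zierau obtained in their special cases. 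Its key structural consequence is that the Bruhat interval $[v_\sigma, w_\sigma]$ is \emph{parabolically induced}: it factors as the interval $[v_{\sigma'}, w_{\sigma'}]$, embedded in a Levi subgroup of the parabolic $P \supseteq B$ of type $(\lambda_1, n - \lambda_1)$, times a single-coset interval recording the first-row data, with the length identity $\ell(w_\sigma) - \ell(v_\sigma) = n(\lambda)$ of \cref{intro.thm}\ref{main5} as its numerical shadow. Because $v_\sigma$ is read from the evacuation $\sigma^\vee$ rather than from $\sigma$ directly, establishing this requires understanding how evacuation interacts with first-row deletion on Richardson tableaux; \cref{intro.thm}\ref{main3} and the Bruhat condition \cref{intro.thm}\ref{main8} at non-corner positions should be the tools that make this bookkeeping go through. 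I expect this to be the main obstacle of the proof.

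Granting the parabolic factorization, the geometry is essentially formal. The factorization produces a $P$-equivariant morphism from $\SF{\sigma} = \Rvar{v_\sigma}{w_\sigma}$ to a partial flag variety $\mathcal{F} = P/Q$ encoding the first-row data of the flags in $\SF{\sigma}$, whose fibers are isomorphic to the component $\SF{\sigma'}$ of the Springer fiber of the nilpotent induced on the relevant subquotient space. Equivariance for the transitive action of $P$ on $\mathcal{F}$ forces this morphism to be a Zariski-locally-trivial fiber bundle --- or, after stratifying $\mathcal{F}$ by $B$-orbits and using $B$-equivariance, a short tower of such bundles. Hence $\SF{\sigma}$ is smooth as soon as $\mathcal{F}$ is (which it is) and the fiber $\SF{\sigma'}$ is, which holds by the inductive hypothesis.

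A variant that sidesteps the bundle construction is to feed the explicit form of $(v_\sigma, w_\sigma)$ into a smoothness criterion for Richardson varieties --- for instance, that $\Rvar{v}{w}$ is smooth if and only if the Schubert variety $X_w$ and the opposite Schubert variety $X^v$ meet transversally along it, which in type $A$ can be reduced to Lakshmibai--Sandhya-type pattern-avoidance conditions --- and check those conditions directly from the combinatorics. Either way the crux is the same: the analysis of the reading words and the proof that the interval $[v_\sigma, w_\sigma]$ is parabolically induced.
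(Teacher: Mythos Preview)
Your plan is genuinely different from the paper's, and its geometric core has a real gap.

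The paper does not induct on the number of rows or build a fiber bundle. It uses the Billey--Coskun description of the singular locus of a Richardson variety,
\[
\sing(\Rvar{v}{w}) = (\sing(\Svaropp{v})\cap\Svar{w}) \cup (\Svaropp{v}\cap\sing(\Svar{w})),
\]
to reduce to showing that $\Svar{w_\sigma}$ is smooth at $\dot v_\sigma B$ and $\Svar{w_0 v_\sigma}$ is smooth at $\dot w_0 \dot w_\sigma B$. Each of these is then verified by Deodhar's inequality: one shows that exactly $n(\lambda)$ reflections $t_{i,j}$ satisfy $v_\sigma < v_\sigma t_{i,j} \le w_\sigma$ (and dually), by giving an explicit tableau description of those reflections (``$i$ lies in a row of $\sigma^\vee$ above $j$ and is the largest entry $\le j$ in its row'') and ruling out all others via the Bruhat condition $s_j v_\sigma \not\le w_\sigma$ of \cref{intro.thm}\ref{main8}. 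So the paper's argument is closer to your throwaway ``variant'' than to your main plan, but with a concrete reflection-counting argument in place of a vague appeal to pattern avoidance.

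The gap in your main plan is the $P$-equivariance. A Richardson variety $\Rvar{v}{w}$ is $T$-stable, not $P$-stable, and there is no action of the parabolic of type $(\lambda_1, n-\lambda_1)$ on $\SF{\sigma}$ to invoke; already in the paper's running example $\sigma\in\SYT(4,2,2)$ one has $v_\sigma = 15726348 \notin S_8^{(4,4)}$ (the values $7,6$ occur out of order), so $v_\sigma$ is not even a minimal coset representative for that parabolic and the Bruhat interval $[v_\sigma,w_\sigma]$ cannot factor as you describe. Without a transitive group action on the base, the step ``equivariance forces local triviality'' collapses, and you are left having to prove directly that every fiber of your putative map is isomorphic to $\SF{\sigma'}$ --- which is the substantive content you were hoping to get for free. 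Your instinct that the $\crop$ recursion should have geometric meaning is reasonable, but it does not manifest as parabolic induction of Richardson varieties.
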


Our proof of \cref{intro.smooth} uses the combinatorics of the permutations $v_\sigma$ and $w_\sigma$ along with the equivalence \ref{main1} $\Leftrightarrow$ \ref{main8} of \cref{intro.thm}, which follows from Lusztig's work on totally nonnegative Springer fibers ~\cite{lusztig21}.

We also compare the collection of components of $\SF{\lambda}$ studied here with other special components in the literature. We prove that the family of components of $\SF{\lambda}$ equal to Richardson varieties:
\begin{itemize}[itemsep=2pt]
\item strictly contains the family of Richardson components of Pagnon and Ressayre \cite{pagnon_ressayre06};
\item is distinct from the family of generalized Richardson components studied by Fresse \cite{fresse11} (with neither family contained in the other); and
\item strictly contains the family of components associated to $K$-orbits defined by Barchini and Zierau \cite{barchini_zierau08} and studied further by Graham and Zierau \cite{graham_zierau11}.
\end{itemize}
In particular, \cref{intro.smooth} implies that $K$-orbit components of the Springer fiber $\SF{\lambda}$ are smooth, recovering \cite[Lemma 2.6]{graham_zierau11}. An explicit example of a component $\SF{\sigma}$ which is equal to a Richardson variety but which is {\itshape not} part of any of the three families listed above is provided by the Richardson tableau
\begin{align}\label{nonexample}
\sigma = \;\begin{ytableau}
1 & 3 \\
2 \\
4
\end{ytableau}\;.
\end{align}

We point out that one consequence of \cref{intro.thm} is that for all Richardson tableaux $\sigma$, the component $\SF{\sigma}$ is $T$-invariant, where $T\subseteq \GL_n(\C)$ denotes the maximal torus of diagonal matrices.  We discuss some implications of this in \cref{GZ_components}.

\subsection*{Outline}
We introduce relevant notation and background in \cref{sec.notation}. We prove our main combinatorial and enumerative results in \cref{sec_richardson_words,sec_enumeration,sec_slides,sec_reading} and our main geometric results in \cref{sec_geometric,sec_positivity,sec_smoothness}. In particular, in \cref{sec_richardson_words} we study Richardson tableaux, Richardson words, concatenation, and prime Richardson words, and we study their interaction with evacuation. This leads to the characterizations \ref{main2} and \ref{main3} of \cref{intro.thm}.  The enumerative results of \cref{intro.Motzkin,intro.count} appear in \cref{sec_enumeration}, along with a discussion of generating functions associated to Richardson tableaux. \cref{sec_slides} proves the $L$-slide characterization of Richardson tableaux from \cref{intro.thm}\ref{main4}. \cref{sec_reading} introduces the reading words $v_\sigma$ and $w_\sigma$ and proves the length-difference characterization from \cref{intro.thm}\ref{main5}. In \cref{sec_geometric} we identify the Richardson envelope of each component of the Springer fiber (as in \cref{intro.goemetric}) and prove the geometric characterizations \ref{main6} and \ref{main7} of \cref{intro.thm}. \cref{sec_positivity} discusses applications of these geometric results to Lusztig's work on totally nonnegative Springer fibers and derives \cref{intro.thm}\ref{main8}. We prove the smoothness result of \cref{intro.smooth} in \cref{sec_smoothness}. Finally, in \cref{sec_cohomologyclass} we discuss the cohomology classes of the components equal to Richardson varieties, and in \cref{sec_comparison} we compare our components with other collections of components of Springer fibers.

\subsection*{Acknowledgments}
We thank Sara Billey and Vasu Tewari for suggesting we use the major index to prove a $q$-analogue of \eqref{count.intro}, which led to \eqref{q_count_q} below. We are particularly grateful to Vasu Tewari for suggesting the statement of \cref{intro.Motzkin} and for verifying \cref{intro.Motzkin,count.intro} computationally. We also thank Matt Baker and William Graham for helpful comments. We used \texttt{Sage} \cite{sagemath} throughout the course of this work. S.N.K.\ was partially supported by the National Science Foundation under Award No.\ 2452061, by a travel support gift from the Simons Foundation, and by a grant from the Institute for Advanced Study School of Mathematics. M.E.P.\ was partially supported by NSF CAREER grant 2237057.


\section{Notation and background}\label{sec.notation}

\noindent In this section we recall standard background and introduce notation which will be used in the rest of the paper. We let $\mathbb{N}$ denote the set of nonnegative integers and $\mathbb{Z}_{>0}$ the set of positive integers. We set $[n] := \{1, 2, \dots, n\}$ for $n\in\mathbb{N}$.


\subsection{Partitions and tableaux}\label{sec_background_tableaux}
We recall background on tableaux combinatorics; see \cite[Chapter 7]{stanley24} for details. Given $n\in\mathbb{N}$, a \boldit{partition of $n$} is weakly decreasing sequence of positive integers $\lambda = (\lambda_1, \dots, \lambda_\ell)$ such that $\lambda_1 + \cdots + \lambda_\ell = n$. We call $n$ the \boldit{size} of $\lambda$, denoted $|\lambda|$, and write $\lambda\vdash n$. We call $\ell$ the \boldit{length} of $\lambda$, denoted $\ell(\lambda)$. By convention, $\lambda_i := 0$ for all $i > \ell(\lambda)$. We set
\begin{align}\label{lambda_dimension}
n(\lambda):= \sum_{i=1}^{\ell(\lambda)} (i-1)\lambda_i \in \mathbb{N}.
\end{align}

The \boldit{(Young) diagram} of $\lambda$ is the array of $n$ left-justified boxes with exactly $\lambda_i$ boxes in row $i$ for all $i\ge 1$ (where row $1$ is the top row). A \boldit{standard (Young) tableau of shape $\lambda$} is a filling $\sigma$ of the boxes of the diagram of $\lambda$ using the entries $1, \dots, n$, each exactly once, such that entries strictly increase across rows from left to right and down columns from top to bottom. We call $n$ the \boldit{size} of $\sigma$, denoted $|\sigma|$. For $0 \le j \le n$, we let $\sigma[j]$ denote the standard tableaux obtained from $\sigma$ by deleting the boxes containing the entries $j+1, \dots, n$. For convenience, if $n \ge 1$ we set $\bar{\sigma} := \sigma[n-1]$. We let $\SYT(\lambda)$ denote the set of standard tableaux of shape $\lambda$.

Given a standard tableau $\sigma$ of size $n$, for $1 \le j \le n$, we let $\rowjword{\sigma}{j}$ denote the row of $\sigma$ in which $j$ appears. We define the \boldit{lattice word} of $\sigma$ to be $\rowword{\sigma} := \rowjword{\sigma}{1}\rowjword{\sigma}{2} \cdots \rowjword{\sigma}{n}$. Note that $\sigma$ is uniquely determined by its lattice word. We can also recover the shape $\lambda$ of $\sigma$ from $\rowword{\sigma}$ by observing that $\lambda_i$ is the number of occurrences of $i$ in $\rowword{\sigma}$. Conversely, a word $r$ on the alphabet $\mathbb{Z}_{>0}$ comes from a standard tableau $\sigma$ if and only if it satisfies the \boldit{lattice property}: every prefix (i.e.\ initial subword) of $r$ has at least as many $i$'s as $i+1$'s for all $i\ge 1$. All words in this paper are assumed to be finite.

For examples of tableaux and lattice words, see \cref{eg_richardson_tableau_intro,eg_richardson_word}.


\subsection{Evacuation on standard tableaux}\label{sec_background_evacuation}
Let $\lambda\vdash n$. We recall the definition of the \boldit{evacuation} involution (also known as the \boldit{Sch\"{u}tzenberger involution}) on $\SYT(\lambda)$, which we denote by $\sigma\mapsto\sigma^\vee$. We refer to \cite[Section A.1]{fulton97} or \cite[Section A1.4]{stanley24} for details.

Given $\sigma\in\SYT(\lambda)$, we construct $\sigma^\vee$ by performing the following steps for $j = 1, \dots, n$, starting with the tableau $\sigma$.
\begin{enumerate}[label=(\arabic*), leftmargin=36pt, itemsep=2pt]
\item Delete the entry $j$ from the top-left corner, forming an empty box.
\item\label{jeu_de_taquin_step} Slide the empty box down and to the right until it reaches an outer corner of the tableau, where at each step, the empty box swaps places with the lesser of the entries immediately to its right and immediately below it. (This swapping procedure is a special case of \emph{jeu de taquin}.) We call this process the \boldit{$j^\text{th}$ evacuation slide} of $\sigma$, and we call the path the empty box takes through the tableau its \boldit{slide path}.
\item Write the entry $n+1-j$ in $\sigma^\vee$ in the location of the empty box.
\item Remove the empty box from the tableau $\sigma$.
\end{enumerate}
Then $\sigma^\vee$ is a standard tableau of shape $\lambda$, and $(\sigma^\vee)^\vee = \sigma$.

\begin{eg}\label{eg.slide}
Consider the following standard tableau of shape $\lambda = (4,4,3,3,2)$:
\[
\sigma = \;\begin{ytableau} *(yellow)1&*(yellow)2&5&13\\3&*(yellow)4&9&15\\6&*(yellow)8&*(yellow)11\\7&12&*(yellow)14\\10&16\end{ytableau}\;.
\]
We have highlighted the slide path of the first evacuation slide. Explicitly, the path of the empty box in step \ref{jeu_de_taquin_step} above is as shown:
\[
\;\scalebox{0.721}{$\begin{ytableau} *(red!80)\empty&*(yellow)2&5&13\\3&*(yellow)4&9&15\\6&*(yellow)8&*(yellow)11\\7&12&*(yellow)14\\10&16\end{ytableau}$}\;
\rightsquigarrow
\;\scalebox{0.721}{$\begin{ytableau} *(yellow)2&*(red!80)\empty&5&13\\3&*(yellow)4&9&15\\6&*(yellow)8&*(yellow)11\\7&12&*(yellow)14\\10&16\end{ytableau}$}\;
\rightsquigarrow
\;\scalebox{0.721}{$\begin{ytableau} *(yellow)2&*(yellow)4&5&13\\3&*(red!80)\empty&9&15\\6&*(yellow)8&*(yellow)11\\7&12&*(yellow)14\\10&16\end{ytableau}$}\;
\rightsquigarrow
\;\scalebox{0.721}{$\begin{ytableau} *(yellow)2&*(yellow)4&5&13\\3&*(yellow)8&9&15\\6&*(red!80)\empty&*(yellow)11\\7&12&*(yellow)14\\10&16\end{ytableau}$}\;
\rightsquigarrow
\;\scalebox{0.721}{$\begin{ytableau} *(yellow)2&*(yellow)4&5&13\\3&*(yellow)8&9&15\\6&*(yellow)11&*(red!80)\empty\\7&12&*(yellow)14\\10&16\end{ytableau}$}\;
\rightsquigarrow
\;\scalebox{0.721}{$\begin{ytableau} *(yellow)2&*(yellow)4&5&13\\3&*(yellow)8&9&15\\6&*(yellow)11& *(yellow)14\\7&12&*(red!80)\empty\\10&16\end{ytableau}$}\;.
\]
Therefore in $\sigma^\vee$, the largest number (which is $16$) appears at the end of the fourth row. Continuing in this way, we find that
\[
\sigma^\vee = \;\begin{ytableau}
1 & 3 & 5 & 7 \\
2 & 6 & 9 & 10 \\
4 & 11 & 14 \\
8 & 13 & 16 \\
12 & 15
\end{ytableau}\;.
\]
For additional examples of evacuation, see \cref{eg_evacuation_concatenation,eg_Lslide}.
\end{eg}

Recall that $\bar{\sigma}$ is the standard tableau of size $n-1$ obtained from $\sigma$ by deleting the box containing $n$. We will need the following property of evacuation:
\begin{lem}\label{lem.induction}
Let $\sigma$ be a standard tableau. Then $(\bar{\sigma})^{\vee}$ is obtained from $\sigma^{\vee}$ by performing the first evacuation slide on $\sigma^{\vee}$, and then decreasing every entry by $1$.
\end{lem}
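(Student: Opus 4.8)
The plan is to reduce the statement to the recursive structure of the evacuation algorithm. For a standard tableau $\rho$ of size $m \ge 1$, write $\partial\rho$ for the standard tableau of size $m-1$ obtained by performing the first evacuation slide of $\rho$ — that is, deleting the entry $1$ and sliding the empty box to an outer corner $c$ — then discarding the empty box and decreasing every entry by $1$; with this notation the lemma asserts exactly that $(\bar\sigma)^\vee = \partial(\sigma^\vee)$. The key auxiliary claim I would establish is a ``restriction identity'': for every standard tableau $\rho$ of size $m$, the tableau $\overline{\rho^\vee}$ (obtained by deleting from $\rho^\vee$ the box containing the largest entry $m$) equals $(\partial\rho)^\vee$, and moreover the box so deleted sits at the endpoint $c$ of the first evacuation slide of $\rho$. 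Granting this, the lemma follows quickly: apply the claim with $\rho = \sigma^\vee$ to get $\overline{(\sigma^\vee)^\vee} = (\partial(\sigma^\vee))^\vee$; since evacuation is an involution the left-hand side is $\bar\sigma$, so $\bar\sigma = (\partial(\sigma^\vee))^\vee$, and applying evacuation to both sides and using involutivity again yields $(\bar\sigma)^\vee = \partial(\sigma^\vee)$.

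To prove the auxiliary claim I would unwind the evacuation algorithm of $\rho$ directly from its definition. At step $1$ the algorithm deletes $1$ from $(1,1)$, slides the empty box along its slide path to an outer corner $c$, records the value $m$ there, and discards the empty box, leaving a standard tableau $\rho'$ on the alphabet $\{2, \dots, m\}$ whose shape is that of $\rho$ with the box $c$ removed; by definition the entrywise decrement of $\rho'$ is $\partial\rho$. The remaining steps $2, \dots, m$ of the algorithm on $\rho$ act on $\rho'$ by repeatedly deleting its minimum entry (which is always located at $(1,1)$, since each intermediate tableau is standard on a contiguous range of integers), sliding, recording, and discarding. I would then verify that these steps agree, move for move, with steps $1, \dots, m-1$ of the evacuation algorithm run on $\partial\rho$, under the index identification $j \leftrightarrow j-1$: shifting every entry down by $1$ changes neither the shape nor which of two neighbors is smaller at any point, so the slide paths coincide; and the value recorded at step $j$ on $\rho$, namely $m+1-j$, equals the value $(m-1)+1-(j-1)$ recorded at step $j-1$ on $\partial\rho$, and it is placed in the same box. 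Hence $\rho^\vee$ and $(\partial\rho)^\vee$ agree on all $m-1$ boxes other than $c$, while $\rho^\vee$ additionally carries the entry $m$ in box $c$; this is the claim.

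The main obstacle is the bookkeeping in this comparison: one must check by induction on the step number that the intermediate tableaux of the two runs remain ``in sync'' — each tableau in the run on $\rho'$ being the entrywise increment by $1$ of the corresponding tableau in the run on $\partial\rho$ — which is what allows the slide paths and recorded values to be transported between the two runs, and one must observe that the $m-1$ boxes filled during steps $2, \dots, m$ on $\rho$ are exactly the boxes of the shape other than $c$ (because the algorithm fills every box once, and $c$ is filled at step $1$). None of this is deep, but it must be organized carefully. As a sanity check I would verify the claim against \cref{eg.slide}: performing the first evacuation slide on $\sigma^\vee$ should terminate at the box where $16$ sits in $\sigma$, and after discarding that box and decrementing one should recover a standard tableau of the same shape as $\bar\sigma$.
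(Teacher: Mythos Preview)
Your proof is correct and takes essentially the same approach as the paper's: both arguments rest on the recursive structure of the evacuation algorithm (steps $2,\dots,n$ on $\rho$ coincide with steps $1,\dots,n-1$ on $\partial\rho$) together with the fact that evacuation is an involution. The paper simply states this more tersely, while you spell out the index-shift bookkeeping explicitly; your auxiliary claim $\overline{\rho^\vee}=(\partial\rho)^\vee$ is exactly the lemma with $\rho=\sigma^\vee$.
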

\begin{proof}
Since $(\sigma^\vee)^\vee = \sigma$, the slide path of the first evacuation slide of $\sigma^\vee$ ends in the box containing $n$ in $\sigma$. The subsequent evacuation slides of $\sigma^\vee$ will then determine $\bar{\sigma}$.  The result now follows from the fact that evacuation is an involution.
\end{proof}

\begin{eg}
Let $\sigma$ and $\sigma^\vee$ be as in \cref{eg.slide}. We apply~\cref{lem.induction} to compute $(\bar{\sigma})^{\vee}$ from $\sigma^\vee$:
\[
\sigma^\vee = \;\begin{ytableau}
*(yellow)1 & 3 & 5 & 7 \\
*(yellow)2 & 6 & 9 & 10 \\
*(yellow)4 & 11 & 14 \\
*(yellow)8 & 13 & 16 \\
*(yellow)12 & *(yellow)15
\end{ytableau}\;
\quad\rightsquigarrow\quad
\;\begin{ytableau}
*(yellow)2 & 3 & 5 & 7 \\
*(yellow)4 & 6 & 9 & 10 \\
*(yellow)8 & 11 & 14 \\
*(yellow)12 & 13 & 16 \\
*(yellow)15
\end{ytableau}\;
\quad\rightsquigarrow\quad
\;\begin{ytableau}
1 & 2 & 4 & 6 \\
3 & 5 & 8 & 9 \\
7 & 10 & 13 \\
11 & 12 & 15 \\
14 
\end{ytableau}\; = (\bar{\sigma})^{\vee}.
\]
Above, we highlighted the first slide path of $\sigma^{\vee}$ on the left, then performed the first evacuation slide, and then decreased every entry by $1$ to obtain $(\bar{\sigma})^{\vee}$.
\end{eg}


\subsection{Symmetric group}\label{sec_background_Sn}
We recall standard facts about the \boldit{symmetric group} $S_n$ of all permutations of $[n]$; see \cite[Chapter 2]{bjorner_brenti05} for details. We write permutations $w \in S_n$ in \boldit{one-line notation} as $w = w(1)w(2)\cdots w(n)$. The inverse of the permutation $w$ is denoted $w^{-1}$. An \boldit{inversion} of a permutation $w\in S_n$ is a pair $(i,j)\in [n]^2$ such that $i < j$ and $w(i) > w(j)$. The number of inversions of $w$ is called its \boldit{length}, denoted $\ell(w)$. We let $e := 12\cdots n$ denote the identity permutation, and let $w_0 := n \cdots 21$ denote the longest element of $S_n$. Note that $w_0^{-1} = w_0$.

For $(i,j)\in [n]^2$ with $i < j$, let $t_{i,j} \in S_n$ denote the reflection which swaps $i$ and $j$ and fixes all other elements of $[n]$. For $j\in [n-1]$, we let $s_j$ denote $t_{j,j+1}$, called a \boldit{simple reflection}.

The \boldit{Bruhat order} is a partial order $\le$ on $S_n$ which is the transitive closure of all relations $wt_{i,j} \le w$, where $w\in S_n$ and $(i,j)$ is an inversion of $w$. The Bruhat order has minimum $e$ and maximum $w_0$, and is graded by the length function $\ell(\cdot)$. Equivalently, a \boldit{reduced word} for $w\in S_n$ is an expression $w = s_{j_1} \cdots s_{j_m}$ with $m$ minimum (i.e.\ $m = \ell(w)$). Then for all $v,w\in S_n$, we have $v \le w$ if and only if $v$ has a reduced word which is obtained from a reduced word for $w$ by deleting $\ell(w) - \ell(v)$ simple reflections. The involutions $w \mapsto w^{-1}$ and $w\mapsto w_0ww_0$ of $S_n$ are automorphisms of the Bruhat order (i.e.\ they are order-preserving), while the involutions $w\mapsto ww_0$ and $w\mapsto w_0w$ are order-reversing.

\begin{eg}\label{eg_bruhat}
Let $w = 25341 \in S_5$. Then $w$ has $\ell(w) = 6$ inversions: $(1,5)$, $(2,3)$, $(2,4)$, $(2,5)$, $(3,5)$, and $(4,5)$. In particular, we have $wt_{1,5} \le w$, i.e., $15342 \le 25341$. We can also see that $15342 \le 25341$ in terms of reduced words: $15342=s_4s_3s_2s_3s_4$ is a subword of $25341=s_4s_3s_1s_2s_3s_4$.
\end{eg}

Given $\lambda\vdash n$, we define the \boldit{Young subgroup} $S_\lambda$ to be the subgroup of $S_n$ of all permutations which fix set-wise each set
\begin{eqnarray}\label{eqn.Young}
\{\lambda_1+\cdots + \lambda_{i-1}+1, \lambda_1+\cdots + \lambda_{i-1}+2,\ldots, \lambda_1+\cdots + \lambda_{i-1}+ \lambda_i\} 
\end{eqnarray}
for all $1 \le i \le \ell(\lambda)$.
Equivalently, $S_\lambda$ is generated by $s_j$ for all $j\in [n-1]\setminus \{\lambda_1, \lambda_1+\lambda_2, \ldots\}$. The subgroup $S_\lambda$ is a special case of a parabolic subgroup of $S_n$. We consider the cosets of the left action of $S_\lambda$ on $S_n$. Every coset of $S_\lambda\backslash S_n$ contains a unique element of minimal length. Let $\mincoset{S_n}{\lambda}$ denote the set of such minimal-length coset representatives. Explicitly, given $w\in S_n$, we have $w\in\mincoset{S_n}{\lambda}$ if and only if $w^{-1}(j) < w^{-1}(j+1)$ for all $j\in [n-1]\setminus \{\lambda_1, \lambda_1+\lambda_2, \ldots\}$. That is, $w\in\mincoset{S_n}{\lambda}$ if and only if the numbers of each set~\eqref{eqn.Young} appear in increasing order when the one-line notation of $w$ is read from left to right.

Every $w\in S_n$ has a unique \boldit{parabolic factorization},
\[
w = \mincosetsub{w}{\lambda}\cdot\mincoset{w}{\lambda}, \quad \text{ where } \ell(w) = \ell(\mincosetsub{w}{\lambda}) + \ell(\mincoset{w}{\lambda}),\, \mincosetsub{w}{\lambda}\in S_\lambda, \text{ and } \mincoset{w}{\lambda}\in\mincoset{S_n}{\lambda}.
\]
This defines a projection map $S_n\to\mincoset{S_n}{\lambda}$, $w\mapsto\mincoset{w}{\lambda}$. We let $w_{0,\lambda} := \mincosetsub{(w_0)}{\lambda}$ denote the longest element of $S_\lambda$.

\begin{eg}\label{eg_mincoset}
Let $\lambda = (3,2)$ and $w = 25341$. Then the parabolic factorization of $w$ is $w = 23154 \cdot 14253$. We also have $w_{0,\lambda} = \mincosetsub{(54321)}{\lambda} = 21543$. Note that we obtain the shortest coset representative of $w=25341$, which is $ \mincoset{w}{\lambda}=14253$, by sorting the substrings $123$ and $45$ into increasing order.
\end{eg}

We will need the following property of the projection map $S_n\to\mincoset{S_n}{\lambda}$:
\begin{lem}[{\cite[Proposition 2.5.1]{bjorner_brenti05}}]\label{bruhat_projection}
Let $\lambda\vdash n$ and $v\le w$ in $S_n$. Then $\mincoset{v}{\lambda} \le \mincoset{w}{\lambda}$.
\end{lem}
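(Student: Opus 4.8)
The plan is to derive the statement from Ehresmann's combinatorial criterion for Bruhat order on $S_n$: setting $r_w(i,j) := |\{k \in [i] : w(k) \le j\}|$ for $w \in S_n$ and $i, j \in [n]$, one has $v \le w$ if and only if $r_v(i,j) \ge r_w(i,j)$ for all $i, j$ (equivalently, for each $i$ the increasing rearrangement of $v(1), \dots, v(i)$ is dominated entrywise by that of $w(1), \dots, w(i)$); see \cite[Chapter 2]{bjorner_brenti05}. So it suffices to show that $r_v(i,j) \ge r_w(i,j)$ for all $i,j$ implies $r_{\mincoset{v}{\lambda}}(i,j) \ge r_{\mincoset{w}{\lambda}}(i,j)$ for all $i,j$.

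First I would record a concrete description of the projection $w \mapsto \mincoset{w}{\lambda}$. Since $\mincoset{w}{\lambda}$ is the minimal-length element of the left coset $S_\lambda w$, and left multiplication by $S_\lambda$ permutes the values of $w$ within each block $B_m := \{\lambda_1 + \cdots + \lambda_{m-1} + 1, \dots, \lambda_1 + \cdots + \lambda_m\}$ while preserving all cross-block comparisons, minimizing the number of inversions forces, within each block, the values to appear in increasing order of the positions they occupy; this is exactly the characterization of $\mincoset{S_n}{\lambda}$ recalled above. In particular the set of positions $k$ with $w(k) \in B_1 \cup \cdots \cup B_m = \{1, \dots, \lambda_1 + \cdots + \lambda_m\}$ is unaffected by the projection, so
\[
r_{\mincoset{w}{\lambda}}(i,j) = r_w(i,j) \qquad \text{whenever } j \in \{\lambda_1, \lambda_1 + \lambda_2, \dots, n\}.
\]

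Next, for an arbitrary $j \in [n]$ let $j^- < j \le j^+$ be the consecutive block boundaries bracketing $j$ (so $j^-, j^+ \in \{0, \lambda_1, \lambda_1 + \lambda_2, \dots, n\}$ and $j^+ - j^- = \lambda_m$ for the block $B_m$ containing $j$). Among the positions $k \in [i]$ whose $\mincoset{w}{\lambda}$-value lies in $B_m$ — there are $c := r_w(i,j^+) - r_w(i,j^-)$ of them — the values present are exactly the $c$ smallest elements $j^- + 1, \dots, j^- + c$ of $B_m$, since $\mincoset{w}{\lambda}$ orders the values of $B_m$ increasingly along positions. Counting those $\le j$ gives
\[
r_{\mincoset{w}{\lambda}}(i,j) = r_w(i,j^-) + \min(c,\, j - j^-) = \min\bigl(r_w(i,j^+),\, r_w(i,j^-) + (j - j^-)\bigr),
\]
and likewise with $v$ in place of $w$. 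Since $j^-$ and $j^+$ are block boundaries, $r_v(i,j^-) \ge r_w(i,j^-)$ and $r_v(i,j^+) \ge r_w(i,j^+)$; because $\min$ is nondecreasing in each coordinate, we conclude $r_{\mincoset{v}{\lambda}}(i,j) \ge r_{\mincoset{w}{\lambda}}(i,j)$ for every $i, j$, which gives $\mincoset{v}{\lambda} \le \mincoset{w}{\lambda}$.

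The step I expect to require the most care is the displayed formula for $r_{\mincoset{w}{\lambda}}(i,j)$, which is the only place the precise combinatorics of the coset projection enters; once it is in hand, monotonicity of $\min$ finishes the argument. An alternative is an induction on $\ell(w)$ using the lifting property of Bruhat order — choose a left descent $s$ of $w$ and split on whether $s \in S_\lambda$ — but the subcase $s \notin S_\lambda$ together with $sv < v$ does not reduce cleanly, so I would favor the rank-function argument above.
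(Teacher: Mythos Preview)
Your proof is correct. The rank-function formula $r_{\mincoset{w}{\lambda}}(i,j) = \min\bigl(r_w(i,j^+),\, r_w(i,j^-) + (j - j^-)\bigr)$ is right: since left multiplication by $S_\lambda$ fixes the set of positions carrying values in each block $B_m$, the $c$ positions in $[i]$ with $\mincoset{w}{\lambda}$-value in $B_m$ are exactly the $c$ smallest elements of $\{k : w(k)\in B_m\}$, and hence receive the $c$ smallest values of $B_m$. The monotonicity of $\min$ then finishes as you say.

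However, note that the paper does not give its own proof of this lemma --- it simply cites \cite[Proposition 2.5.1]{bjorner_brenti05}. That reference proves the statement for arbitrary Coxeter groups and parabolic subgroups, via an argument using reduced words and the subword/lifting property (essentially the inductive route you mention at the end). Your argument is a genuinely different, type-$A$-specific proof: it trades the Coxeter-theoretic machinery for the concrete Ehresmann rank criterion, which makes it shorter and entirely self-contained for $S_n$, at the cost of not generalizing beyond type $A$. Either approach is fine for the purposes of this paper, which only needs the $S_n$ case.
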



\subsection{Schubert and Richardson varieties}\label{sec_background_schubert}
We recall the definition of the type $A$ flag variety and its Schubert and Richardson subvarieties; see \cite[Section 1]{speyer} for details. Given $n\in\mathbb{N}$, we let $\GL_n(\C)$ denote the general linear group of all $n\times n$ invertible matrices, and let $B$ and $B_-$ denote the Borel subgroups of upper- and lower-triangular matrices, respectively. We let $\transpose{A}$ denote the transpose of an $n\times n$ matrix $A$.

A \boldit{(complete) flag} in $\C^n$ is a sequence of nested subspaces of $\C^n$,
\[
F_\bullet = (0\subset F_1 \subset F_2 \subset \cdots \subset F_{n-1}\subset \C^n),
\]
such that $\dim(F_j) = j$ for all $1\le j \le n-1$.
For convenience, we set $F_0 := \{0\}$ and $F_n := \C^n$. The set of all complete flags is the \boldit{(complete) flag variety}, denoted $\Fl_n(\C)$, which is a smooth projective variety of dimension $\binom{n}{2}$. We identify $\Fl_n(\C)$ with the space $\GL_n(\C)/B$, where the coset $gB$ for $g\in\GL_n(\C)$ defines the flag $F_\bullet$ such that each $F_j$ is spanned by the first $j$ columns of $g$.

The Weyl group of $\GL_n(\C)$ is the symmetric group $S_n$. Given a permutation $w\in S_n$, we let $\dot w$ denote the \boldit{permutation matrix} defined by $\dot{w}_{i,j} := \delta_{i,w(j)}$ for all $i,j\in [n]$. The map $S_n\to\GL_n(\C)$, $w \mapsto \dot{w}$ is a group representation, and $\dot{w}^{-1} = \transpose{\dot{w}}$ for all $w\in S_n$. For $w\in S_n$ we have the permutation flag $\dot{w}B\in\Fl_n(\C)$.

The \boldit{Schubert cells} and \boldit{opposite Schubert cells} are the $B$-orbits and $B_-$-orbits of $\Fl_n(\C)$, respectively, and are indexed by $S_n$:
\[
\Scell{w} := B\dot w B/B \subseteq \Fl_n(\C) \quad \text{ and } \quad \Scellopp{w} := B_- \dot w B/B \subseteq \Fl_n(\C)
\]
for $w \in S_n$. The Schubert cell $\Scell{w}$ is an affine cell of dimension $\ell(w)$, and the opposite Schubert cell $\Scellopp{w}$ is an affine cell of codimension $\ell(w)$. We denote their Zariski closures in $\Fl_n(\C)$ as
\[
\Svar{w} := \overline{\Scell{w}} \quad \text{ and } \quad \Svaropp{w} := \overline{\Scellopp{w}},
\]
called a \boldit{Schubert variety} and an \boldit{opposite Schubert variety}, respectively. For example, $\Svar{w_0} = \Svaropp{e} = \Fl_n(\C)$ and $\Svar{e} = \Svaropp{w_0}=\{\dot e B\}$. For all $v,w\in S_n$, 
\[
\Svar{v} \subseteq \Svar{w}
\quad\Leftrightarrow\quad
\Svaropp{w} \subseteq \Svaropp{v}
\quad\Leftrightarrow\quad
v \le w \text{ in Bruhat order}.
\]

For $v,w\in S_n$, we define the \boldit{open Richardson variety} and the \boldit{Richardson variety} in $\Fl_n(\C)$ by
\begin{align}\label{richardson_defn}
\Rcell{v}{w} := \Scellopp{v} \cap \Scell{w} \quad \text{ and } \quad \Rvar{v}{w} := \overline{\Rcell{v}{w}} = \Svaropp{v} \cap \Svar{w},
\end{align}
respectively. We have
\begin{align}\label{richardson_nonempty}
\Rcell{v}{w} \neq \emptyset
\quad\Leftrightarrow\quad
\Rvar{v}{w}\neq \emptyset
\quad\Leftrightarrow\quad
v \le w \text{ in Bruhat order}.
\end{align}
If $v\le w$, then $\Rvar{v}{w}$ is an irreducible projective variety of dimension $\ell(w) - \ell(v)$. Note that $\Rvar{e}{w} = \Svar{w}$ and $\Rvar{v}{w_0} = \Svaropp{v}$. For all $v \le w$ and $v'\le w'$ in $S_n$, we have
\begin{align}\label{richardson_closure}
\Rvar{v'}{w'} \subseteq \Rvar{v}{w}
\quad\Leftrightarrow\quad
v \le v' \le w' \le w.
\end{align}

\begin{eg}\label{eg_flag}
Consider the flag $F_\bullet = gB \in \Fl_4(\C)$ represented by the matrix
\[
g = \begin{bmatrix}
2 & 1 & 0 & 0 \\
0 & 0 & 0 & 1 \\
1 & 0 & 1 & 0 \\
1 & 0 & 0 & 0
\end{bmatrix} \in \GL_4(\C),
\]
that is, $F_j$ (for $j = 1,2,3,4$) is spanned by the first $j$ columns of $g$. Let us verify that $F_\bullet$ is contained in the open Richardson variety $\Rcell{v}{w}$ with $v = 1342$ and $w = 4132$, i.e., $F_\bullet \in \Scellopp{v} \cap \Scell{w}$. We have
\[
\dot{v} = \begin{bmatrix}
1 & 0 & 0 & 0 \\
0 & 0 & 0 & 1 \\
0 & 1 & 0 & 0 \\
0 & 0 & 1 & 0
\end{bmatrix}
\quad\text{ and }\quad
\dot{w} = \begin{bmatrix}
0 & 1 & 0 & 0 \\
0 & 0 & 0 & 1 \\
0 & 0 & 1 & 0 \\
1 & 0 & 0 & 0
\end{bmatrix},
\]
and the reader can confirm that 
\[
\Scellopp{v} = \left\{\begin{bmatrix}
1 & 0 & 0 & 0 \\
* & 0 & 0 & 1 \\
* & 1 & 0 & 0 \\
* & * & 1 & 0
\end{bmatrix}\cdot B\right\}
\quad \text{ and } \quad
\Scell{w} = \left\{\begin{bmatrix}
* & 1 & 0 & 0 \\
* & 0 & * & 1 \\
* & 0 & 1 & 0 \\
1 & 0 & 0 & 0
\end{bmatrix}\cdot B\right\},
\]
where each $*$ denotes an arbitrary element of $\C$.  It is clear from the description of $\Scell{w}$ above that $F_\bullet = gB \in \Scell{w}$. Note that $F_\bullet = g'B$ with
\[
g' = \begin{bmatrix}
1 & 0 & 0 & 0 \\[2pt]
0 & 0 & 0 & 1 \\[2pt]
\frac{1}{2} & 1 & 0 & 0 \\[2pt]
\frac{1}{2} & 1 & 1 & 0
\end{bmatrix} \in \GL_4(\C).
\]
So $F_\bullet = g'B \in \Scellopp{v}$ also. Thus $F_\bullet \in \Rcell{v}{w}$.
\end{eg}


\subsection{Springer fibers}\label{sec_background_springer}
We recall some background about Springer fibers; see \cite{jantzen04} or \cite{tymoczko17} for an introduction. For $n\in\mathbb{N}$, let $\gl_n(\C)$ denote the Lie algebra of $\GL_n(\C)$ consisting of all $n\times n$ matrices, and let $\fu$ denote the Lie algebra of the unipotent radical of $B$, which consists of all $n\times n$ upper-triangular matrices with zeros on the diagonal. Given $\lambda\vdash n$, we let $N_\lambda\in\gl_n(\C)$ denote the nilpotent matrix in Jordan canonical form with Jordan blocks of sizes $\lambda_1, \dots, \lambda_{\ell(\lambda)}$ (in that order along the diagonal). That is,
\begin{align}\label{nilpotent_formula}
N_\lambda := \sum_{\substack{j \in [n-1],\\ j\notin \{ \lambda_1, \lambda_1+\lambda_2 , \ldots \}}} E_{j,j+1},
\end{align}
where $E_{j,j+1}$ denotes the elementary matrix with unique nonzero entry equal to $1$ in position $(j,j+1)$. 

Each nilpotent matrix $N\in\gl_n(\C)$ is conjugate to $N_\lambda$ for a unique $\lambda\vdash n$; we call this $\lambda$ the \boldit{Jordan type} of $N$. We also define the \boldit{Springer fiber of $N$} to be
\begin{gather}\label{def_springer}
\begin{aligned}
\SF{N} &:= \{F_\bullet \in \Fl_n(\C) \mid N(F_j) \subseteq F_{j-1} \text{ for all $1 \le j \le n$}\} \\
&\hphantom{:}= \{gB \in \GL_n(\C)/B \mid g^{-1}Ng \in \fu\},
\end{aligned}
\end{gather}
which is a closed subvariety of $\Fl_n(\C)$ of dimension $n(\lambda)$. We let $\SF{\lambda}$ denote $\SF{N_\lambda}$.

\begin{rmk}\label{change_of_basis}
If $N\in\gl_n(\C)$ is a nilpotent matrix of Jordan type $\lambda$, then $\SF{N}$ is isomorphic to $\SF{\lambda}$ as a complex algebraic variety, via a change of basis. However, we point out that the definition of Richardson varieties $\Rvar{v}{w}$ (as well as the totally nonnegative part of $\SF{N}$ discussed in \cref{sec_positivity}) depends on the choice of basis in an essential way. Hence for our purposes it is important that we fix a choice of basis and work with $\SF{\lambda}$, rather than an arbitrary Springer fiber $\SF{N}$.
\end{rmk}

We will need the following description of the permutation flags contained in $\SF{\lambda}$:
\begin{lem}\label{permutations_in_SF}
Let $\lambda\vdash n$ and $w\in S_n$. Then $\dot{w}B\in\SF{\lambda}$ if and only if $w\in\mincoset{S_n}{\lambda}$.
\end{lem}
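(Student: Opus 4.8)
The plan is to translate the condition $\dot{w}B \in \SF{\lambda}$ directly into a condition on the permutation matrix $\dot{w}$ using the second description in~\eqref{def_springer}, namely that $gB \in \SF{\lambda}$ if and only if $g^{-1}N_\lambda g \in \fu$. First I would compute $\dot{w}^{-1} N_\lambda \dot{w}$ explicitly using $\dot{w}^{-1} = \transpose{\dot{w}}$ and the formula~\eqref{nilpotent_formula} for $N_\lambda$ as a sum of elementary matrices $E_{j,j+1}$ over $j \in [n-1] \setminus \{\lambda_1, \lambda_1 + \lambda_2, \ldots\}$. Since $\dot{w}^{-1} E_{j,j+1} \dot{w} = E_{w^{-1}(j), w^{-1}(j+1)}$, we get
\[
\dot{w}^{-1} N_\lambda \dot{w} = \sum_{\substack{j \in [n-1],\\ j \notin \{\lambda_1, \lambda_1+\lambda_2, \ldots\}}} E_{w^{-1}(j),\, w^{-1}(j+1)}.
\]
This matrix lies in $\fu$ (the strictly upper-triangular matrices) if and only if every elementary matrix appearing in the sum is strictly upper-triangular, i.e.\ $w^{-1}(j) < w^{-1}(j+1)$ for every $j \in [n-1] \setminus \{\lambda_1, \lambda_1+\lambda_2, \ldots\}$. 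By the explicit description of $\mincoset{S_n}{\lambda}$ recalled in~\cref{sec_background_Sn}, this is precisely the condition that $w \in \mincoset{S_n}{\lambda}$.

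There is one subtlety to address carefully: a priori, distinct elementary matrices $E_{w^{-1}(j), w^{-1}(j+1)}$ in the sum could have the same index pair and cancel, or a strictly lower-triangular term could in principle be cancelled by a diagonal contribution. However, the index pairs $(w^{-1}(j), w^{-1}(j+1))$ are pairwise distinct as $j$ ranges over the relevant set (since $w^{-1}$ is a bijection, the pairs $(j, j+1)$ are distinct and have distinct first or second coordinates), and none of them is a diagonal entry (as $j \neq j+1$), so no cancellation occurs and each term contributes independently to whether the sum lies in $\fu$. Thus $\dot w^{-1} N_\lambda \dot w \in \fu$ if and only if each summand is strictly upper triangular. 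This makes the equivalence clean.

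I expect the main (and only real) obstacle is bookkeeping: making sure the conjugation formula $\dot{w}^{-1} E_{j,k} \dot{w} = E_{w^{-1}(j), w^{-1}(k)}$ is applied with the correct convention matching the paper's definition $\dot{w}_{i,j} = \delta_{i, w(j)}$, and confirming that the resulting condition matches the stated characterization of $\mincoset{S_n}{\lambda}$ verbatim rather than its inverse. Once the conjugation is pinned down, the argument is immediate. Alternatively, and perhaps more transparently, one can use the first description in~\eqref{def_springer}: the flag $\dot{w}B$ has $F_j = \spn{\dot w e_1, \ldots, \dot w e_j} = \spn{e_{w(1)}, \ldots, e_{w(j)}}$, and since $N_\lambda e_k = e_{k-1}$ when $k-1 \notin \{\lambda_1, \lambda_1+\lambda_2, \ldots\} \cup \{0\}$ and $N_\lambda e_k = 0$ otherwise, the condition $N_\lambda(F_j) \subseteq F_{j-1}$ for all $j$ unwinds to exactly the same inequalities on $w^{-1}$; I would present whichever version is shortest given the conventions already fixed in the paper.
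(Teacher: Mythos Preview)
Your proposal is correct and takes essentially the same approach as the paper: both compute $\dot{w}^{-1}N_\lambda\dot{w}$ using the formula~\eqref{nilpotent_formula} for $N_\lambda$ as a sum of elementary matrices and observe that this matrix is strictly upper-triangular precisely when $w^{-1}(j) < w^{-1}(j+1)$ for all $j \in [n-1]\setminus\{\lambda_1, \lambda_1+\lambda_2, \ldots\}$, which is the defining condition for $w \in \mincoset{S_n}{\lambda}$. The paper's proof is a single sentence; your version is simply more explicit (in particular, your remark about non-cancellation is correct but not strictly needed, and your alternative flag-based argument is a valid equivalent route).
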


\begin{proof}
By \eqref{nilpotent_formula} we see that $\dot{w}^{-1}N_\lambda \dot w$ is upper-triangular if and only if the numbers of each set~\eqref{eqn.Young} appear in increasing order in the one-line notation of $w$, i.e., $w\in\mincoset{S_n}{\lambda}$.
\end{proof}

Following Spaltenstein \cite{spaltenstein76} (cf.\ \cite{steinberg76,vargas79}), we define a map
\[
\psi : \SF{\lambda} \to \SYT(\lambda)
\]
as follows. Given $F_\bullet \in \SF{\lambda}$, note that $N_\lambda$ induces a nilpotent endomorphism of the quotient space $\C^n/F_j$ for all $0 \le j \le n$. We let $\psi(F_\bullet)$ be the unique standard tableau $\sigma$ such that for all $j\in [n]$, the shape of $\sigma[j]$ is the Jordan type of $N$ acting on $\C^n/F_{n-j}$. Then for $\sigma\in\SYT(\lambda)$, we define
\[
\SFcell{\sigma} := \psi^{-1}(\sigma) \quad \text{ and } \quad \SF{\sigma} := \overline{\SFcell{\sigma}}.
\]
Spaltenstein showed that $\SFcell{\sigma}$ is locally closed in $\SF{\lambda}$, and that the irreducible components of $\SF{\lambda}$ are precisely $\SF{\sigma}$ for $\sigma\in\SYT(\lambda)$, which all have the same dimension $n(\lambda)$.

\begin{eg}\label{eg_springer}
Let $\lambda = (2,1,1)$, and let $F_\bullet = gB \in \Fl_4(\C)$ denote the flag from \cref{eg_flag}. That is,
\[
N_\lambda = \begin{bmatrix}
0 & 1 & 0 & 0 \\
0 & 0 & 0 & 0 \\
0 & 0 & 0 & 0 \\
0 & 0 & 0 & 0
\end{bmatrix} \in \gl_4(\C)
\quad\text{ and }\quad
g = \begin{bmatrix}
2 & 1 & 0 & 0 \\
0 & 0 & 0 & 1 \\
1 & 0 & 1 & 0 \\
1 & 0 & 0 & 0
\end{bmatrix} \in \GL_4(\C).
\]
Then $F_\bullet \in \SF{\lambda}$: indeed, $N_\lambda(F_j) = 0$ for $j = 1,2,3$, and $N_\lambda(\C^4) = \spn{\transpose{(1,0,0,0)}} \subseteq F_3$.

Now let us find $\sigma = \psi(F_\bullet)$. Since $N_\lambda(\C^4)$ is spanned by the vector $\transpose{(1,0,0,0)}$, which is contained in $F_2$ but not in $F_1$, we see that $N_\lambda$ acts as zero on $\C^4/F_2$ but is nonzero on $\C^4/F_1$. This is enough information to determine that $\sigma = \ytableausmall{1 & 3\\ 2\\ 4}$, i.e., $F_\bullet \in \SFcell{\sigma}$.

We point out that $F_\bullet$ is also contained in the irreducible component $\SF{\tau}$ (in addition to $\SF{\sigma}$), where $\tau = \ytableausmall{1 & 2\\ 3\\ 4}$. To see this, define
\[
h(t) = \begin{bmatrix}
2 & t & 1 & 0 \\
0 & 0 & 0 & 1 \\
1 & 1 & 0 & 0 \\
1 & 0 & 0 & 0
\end{bmatrix} \in \GL_4(\C) \quad \text{ for } t\in \C.
\]
The reader can check that $h(t)B \in \SFcell{\tau}$ for all $t\in\C$. Since $F_\bullet = \lim_{t\to\infty} h(t)B$, we get $F_\bullet \in \overline{\SFcell{\tau}} = \SF{\tau}$.
\end{eg}

\begin{rmk}\label{remark_steinberg}
We could alternatively have defined $\psi$ by considering the action of $N$ on $F_j$, rather than $\C^n/F_{n-j}$. This defines an equally valid labeling of the irreducible components of $\SF{\lambda}$ by $\SYT(\lambda)$, which is the convention used by Steinberg \cite{steinberg88}.  In \cite{van_leeuwen00}, van Leeuwen showed these conventions are related by evacuation; see \cref{dual_evacuation} below.
\end{rmk}


\section{Richardson tableaux and Richardson words}\label{sec_richardson_words}

\noindent In this section we introduce Richardson tableaux and Richardson words. The main result is \cref{evacuation_closed}, which states that the set of Richardson tableaux of shape $\lambda$ is closed under evacuation.


\subsection{Richardson tableaux}
Recall the definition of a Richardson tableau from \cref{defn_richardson}. We have the following equivalent definition:
\begin{prop}\label{condition_2}
Let $\sigma\in\SYT(\lambda)$, where $\lambda\vdash n$. Then $\sigma$ is Richardson if and only if for all $j\in [n]$ with $\rowjword{\sigma}{j} > 1$, the largest entry of the tableau $\sigma[j-1]$ in rows $i < \rowjword{\sigma}{j}$ is greater than every entry in rows $i \ge \rowjword{\sigma}{j}$.
\end{prop}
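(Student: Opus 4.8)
The plan is to prove that the two formulations are logically equivalent by observing that the only difference lies in which entry of the previous rows is being compared against the entries in the later rows. In \cref{defn_richardson}, we compare the \emph{last} (equivalently, largest) entry of $\sigma[j-1]$ in row $\rowjword{\sigma}{j}-1$; in \cref{condition_2}, we compare the \emph{largest} entry of $\sigma[j-1]$ in \emph{all} rows $i < \rowjword{\sigma}{j}$. So the crux is to show that, for a standard tableau, the largest entry appearing in rows $1, \dots, r-1$ is in fact always the last entry of row $r-1$, i.e., that row $r-1$ contains the largest entry among the first $r-1$ rows.

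First I would set $r := \rowjword{\sigma}{j}$ and note that $\sigma[j-1]$ is itself a standard tableau (of some shape $\mu$ with $\mu \vdash j-1$), and that since the box containing $j$ lies in row $r$, the tableau $\sigma[j-1]$ has at least $r-1$ nonempty rows. The key observation is: in any standard tableau, the last entry of row $i$ is strictly larger than the last entry of row $i+1$ whenever both rows are nonempty. This is because the last box of row $i+1$ sits weakly to the left of the last box of row $i$ (by the shape being a partition), so the entry directly above the last box of row $i+1$ is an entry of row $i$ that is strictly smaller than it by column-strictness, and the last entry of row $i$ is at least that entry; meanwhile the last entry of row $i+1$ is strictly larger than... wait, I should be careful and instead argue directly: the last entry of row $i$ is the maximum of row $i$, and every entry of row $i+1$ has a strictly smaller entry directly above it in row $i$; in particular, the last entry of row $i+1$ lies below some entry of row $i$ (possibly not the last one), which is strictly smaller, and that entry is at most the last entry of row $i$. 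Hence the last entry of row $i$ strictly exceeds the last entry of row $i+1$. Iterating, the last entry of row $1$ exceeds the last entry of row $2$, which exceeds that of row $3$, and so on; consequently the maximum over all entries in rows $1, \dots, r-1$ is attained in row $1$ — but that is not quite what is claimed.

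Let me reconsider: actually the claim that needs to hold is that the largest entry in rows $i < r$ of $\sigma[j-1]$ equals the last entry of row $r-1$ of $\sigma[j-1]$. This is \emph{false} in general for standard tableaux — e.g.\ the last entry of row $1$ typically exceeds that of row $r-1$. So the correct reading of \cref{condition_2} must be that the \emph{condition} (largest entry in rows $<r$ exceeds everything in rows $\geq r$) is equivalent to the \cref{defn_richardson} condition (last entry of row $r-1$ exceeds everything in rows $\geq r$), not that the two entries coincide. Then the proof is short: one direction is immediate since the last entry of row $r-1$ is one of the entries in rows $i<r$, so if it exceeds everything in rows $\geq r$ then so does the maximum over rows $i<r$. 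For the converse, suppose the largest entry $M$ in rows $i<r$ of $\sigma[j-1]$ exceeds every entry in rows $\geq r$; I must show the last entry $a$ of row $r-1$ also does. Suppose $M$ lies in row $i_0 < r-1$. Then the box directly below $M$ (in row $i_0+1$) either is empty in $\sigma[j-1]$ or contains an entry $> M$, contradicting maximality of $M$ unless that box is empty. If that box is empty, then row $i_0+1$ is shorter than row $i_0$; but then I would use the chain of inequalities on last-entries of rows to locate where the maximum actually sits. The hard part — and the only real content — will be pinning down exactly where the maximum of rows $1,\dots,r-1$ occurs in a standard tableau whose $(r-1)$st row is nonempty, and showing it lies in row $r-1$ when comparing against entries in rows $\geq r$; I expect this reduces to the observation that if the maximum over the first $r-1$ rows were in row $i_0 \le r-2$, then the box below it is occupied (since row $i_0+1,\dots,r-1$ are all nonempty as row $r$ is nonempty and shapes are partitions), forcing a larger entry below, a contradiction. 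Hence the maximum is in row $r-1$, and being the maximum of that row it is its last entry $a$, giving $a = M$ and the equivalence. I would write this up in a few lines, citing only that $\sigma[j-1]$ is a standard tableau and the definition of standard tableaux from \cref{sec_background_tableaux}.
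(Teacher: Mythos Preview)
Your reading of the condition is off: the Proposition is not asserting that the \emph{single} maximum over all entries in rows $1,\dots,r-1$ of $\sigma[j-1]$ exceeds everything in rows $\ge r$, but rather that for \emph{each} row $i<r$, the last entry of row $i$ exceeds every entry in rows $\ge r$. (The paper's proof makes this explicit: the negation is that ``there exists a row $s<r$ whose largest entry $a$ is less than $b$''.) Under your reading the statement is actually false. Take
\[
\sigma \;=\; \begin{ytableau} 1 & 3 & 6 \\ 2 & 4 \\ 5 & 7 \end{ytableau}\,.
\]
For every $j$ with $\rowjword{\sigma}{j}>1$ the global maximum of rows $<r$ in $\sigma[j-1]$ beats the entries in rows $\ge r$ (for $j=7$, $r=3$, this maximum is $6>5$), yet $\sigma$ is not Richardson: at $j=7$ the last entry of row $2$ in $\sigma[6]$ is $4<5$. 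So the pointwise implication you attempt cannot hold, and indeed your argument breaks precisely at ``the box below the maximum is occupied because rows $i_0+1,\dots,r-1$ are nonempty'': nonemptiness of row $i_0+1$ does not force its column $c$ to be filled --- here the maximum $6$ sits at position $(1,3)$ while $(2,3)$ is empty.

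With the intended reading the directions swap. The backward direction is the trivial one: if for every $i<r$ the last entry of row $i$ beats rows $\ge r$, then in particular the last entry of row $r-1$ does, which is \cref{defn_richardson}. The content lies in the forward direction, and it is \emph{not} argued pointwise in $j$: assuming the Proposition's condition fails at some $j$ (say some row $s<r$ has last entry $a<b$ while row $s+1$ does not), the paper passes to the index $j'$ equal to the last entry of row $s+1$ and checks that \cref{defn_richardson} fails there.
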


\begin{proof}
The backward direction follows from the definitions. For the forward direction (via the contrapositive), we suppose that $\sigma$ violates the conclusion of the proposition for some $j\in [n]$ with $\rowjword{\sigma}{j} > 1$, and then show that $\sigma$ is not Richardson. To this end, let $b$ denote the largest entry of $\sigma[j-1]$ in rows $i \ge \rowjword{\sigma}{j}$. Then there exists a row $s < \rowjword{\sigma}{j}$ such that the largest entry of $\sigma[j-1]$ in row $s$, which we denote by $a$, is less than $b$; let us take the maximum $s$ with this property. If $s = \rowjword{\sigma}{j} - 1$, then $\sigma$ violates \cref{defn_richardson} for $j$, and we are done. Otherwise, let $j'$ denote the largest entry of $\sigma[j-1]$ in row $s+1$. Then by the maximality of $s$, we have $j' > b$. Since $a<b$ also, $\sigma$ violates \cref{defn_richardson} for $j'$.
\end{proof}

Given a partition $\lambda$, we let $\Rtabs{\lambda}$ denote the set of Richardson tableaux of shape $\lambda$.
\begin{rmk}\label{explicit}
We can explicitly describe $\Rtabs{\lambda}$ in some special cases:
\begin{enumerate}[label=(\roman*), leftmargin=*, itemsep=2pt]
\item\label{explicit_two_row} If $\ell(\lambda) \le 2$, that is, if $\lambda$ has at most two rows, then $\Rtabs{\lambda}$ consists of all standard tableaux with no two consecutive entries in the second row. For example, there are two standard tableaux of shape $(2,2)$:
\begin{gather*}
\sigma = \; \begin{ytableau}
1 & 3 \\
2 & 4 
\end{ytableau}
\quad 
\text{ and } 
\quad 
\tau = \;\begin{ytableau}
1 &2\\
3 & 4
\end{ytableau}\;.
\end{gather*}
It is straightforward to confirm that $\sigma$ is Richardson while $\tau$ is not ($j=4$ gives a violation of \cref{defn_richardson}). 

\item\label{explicit_rectangle} If $\lambda = (k^\ell) := (\overbrace{k,\ldots, k}^{\text{$\ell$ times}})$ is a rectangle, then $\Rtabs{\lambda}$ contains a single element, consisting of the tableau obtained by writing entries $1, 2, \dots, k\ell$ column-by-column, from top to bottom and left to right. For example,
\[
\Rtabs{3,3,3,3} = \left\{\;
\begin{ytableau}1&5&9\\2&6&10\\3&7&11\\4&8&12\end{ytableau}
\;\right\}.
\]
\item\label{explicit_hook} If $\lambda = (k, 1^{\ell-1}) := (k, \overbrace{1, \ldots, 1}^{\text{$\ell-1$ times}})$ is a hook, then every standard tableau of shape $\lambda$ is Richardson.\qedhere
\end{enumerate}
\end{rmk}


\subsection{Richardson words}
Recall the lattice word $\rowword{\sigma}$ of a standard tableau $\sigma$ defined in \cref{sec_background_tableaux}. We call a word on the alphabet $\mathbb{Z}_{>0}$ a \boldit{Richardson word} if it is the lattice word of a Richardson tableau. Working with Richardson tableaux is equivalent to working with Richardson words. We will use the combinatorics of words to recursively characterize Richardson tableaux (see \cref{prime_star,prime_recursion}). We begin with the following observation:
\begin{prop}\label{richardson_tableau_to_word}
Let $r$ be a word on the alphabet $\mathbb{Z}_{>0}$. Then $r$ is a Richardson word if and only if for all $j\in [n]$ with $r_j \ge 2$, when we read the prefix $r_1r_2\cdots r_{j-1}$ from right to left, the number $r_j-1$ appears, and it appears before any occurrence of $r_j, r_j+1, \dots$.
\end{prop}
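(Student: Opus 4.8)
The plan is to unpack \cref{defn_richardson} directly, translating each condition on the tableau $\sigma$ into a condition on its lattice word $r = \rowword{\sigma}$. Recall that the lattice word records, for each $j$, the row $r_j = \rowjword{\sigma}{j}$ in which $j$ appears, and that $\sigma[j-1]$ is obtained from $\sigma$ by deleting the boxes containing $j, j+1, \dots, n$; equivalently, $\sigma[j-1]$ is the tableau with lattice word $r_1 r_2 \cdots r_{j-1}$. First I would observe that for any $i \ge 1$, the largest entry of $\sigma[j-1]$ lying in row $i$ is exactly the position of the \emph{last} occurrence of the letter $i$ in the prefix $r_1 \cdots r_{j-1}$ (and such an entry exists iff $i$ occurs in this prefix at all); equivalently, reading the prefix from right to left, it is the \emph{first} occurrence of $i$.

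With this dictionary in hand, the proof is essentially a matter of transcription. Fix $j \in [n]$ with $r_j \ge 2$, and set $i := r_j$. The condition in \cref{defn_richardson} for this $j$ says: the last entry of $\sigma[j-1]$ in row $i-1$ exceeds every entry of $\sigma[j-1]$ in rows $\ge i$. I would rewrite the left-hand quantity as the position $p$ of the first occurrence of the letter $i-1$ when $r_1\cdots r_{j-1}$ is read right to left — note this presupposes $i-1$ actually occurs in the prefix, which I should check is forced (it is, since by the lattice property of $r$ every prefix has at least as many $(i-1)$'s as $i$'s, and $r_j = i$ means the prefix $r_1\cdots r_{j-1}$ already contains at least one $i$, hence at least one $i-1$; so the ``appears'' clause of the proposition is automatic whenever the tableau condition is even meaningful). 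The right-hand quantity — ``greater than every entry in rows $i' \ge i$'' — translates to: for every letter $i' \ge i$ that occurs in $r_1 \cdots r_{j-1}$, the last occurrence of $i'$ comes before position $p$ in the original left-to-right order, i.e.\ after position $p$ has been passed when reading right to left. Combining, \cref{defn_richardson} for $j$ is equivalent to: reading $r_1 \cdots r_{j-1}$ from right to left, the letter $i-1 = r_j - 1$ appears, and its first appearance precedes any appearance of any letter $\ge r_j$. Quantifying over all valid $j$ gives exactly the statement of the proposition.

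One small technical point I would be careful about is the reduction, via \cref{condition_2}, between ``row $i-1$'' and ``rows $i' < i$''. \cref{defn_richardson} only constrains row $i-1$ directly, whereas one might worry the word condition as stated implicitly talks about all smaller rows. In fact the word condition only references the single letter $r_j - 1$, so it matches \cref{defn_richardson} on the nose and I do not need \cref{condition_2}; I would just double-check that no spurious strengthening has crept in. The main (very minor) obstacle is bookkeeping around the correspondence ``largest entry in a row'' $\leftrightarrow$ ``last occurrence of a letter in a prefix'' $\leftrightarrow$ ``first occurrence reading right to left,'' and making sure the edge cases (letters not occurring in the prefix, the forced occurrence of $r_j - 1$) are handled cleanly; there is no real mathematical content beyond carefully applying the definitions, so I expect the write-up to be short.
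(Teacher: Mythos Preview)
Your proposal is correct and matches the paper's approach exactly: the paper's proof is the single line ``This is the translation of \cref{defn_richardson} into the setting of words,'' and you have simply spelled out that translation in detail. One minor slip in your parenthetical: $r_j = i$ does not force the prefix $r_1\cdots r_{j-1}$ to contain an $i$ (consider the first entry of row $i$); instead apply the lattice property to $r_1\cdots r_j$, or just note that the box of $\sigma$ directly above $j$ lies in row $i-1$ and has entry $<j$.
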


\begin{proof}
This is the translation of \cref{defn_richardson} into the setting of words.
\end{proof}

\begin{eg}\label{eg_richardson_word}
Recall the standard tableaux $\sigma$ and $\tau$ from \cref{eg_richardson_tableau_intro}. They have lattice words
\[
\rowword{\sigma} = 12113123 \quad \text{ and } \quad \rowword{\tau} = 11213213.
\]
The first word is Richardson. The second word is not Richardson, since it fails the condition in \cref{richardson_tableau_to_word} when $j=6$: when we read the prefix $11213$ from right to left, $r_6-1=2-1=1$ appears after $r_6+1=2+1=3$.
\end{eg}

We point out that the condition on words in \cref{richardson_tableau_to_word} implies the lattice property.


\subsection{Concatenation and prime decompositions}
Let $\circ$ denote concatenation of words. Also let $\circ$ denote the induced \boldit{concatenation of standard tableaux}, i.e., given standard tableaux $\sigma$ and $\tau$, the standard tableau $\sigma \circ \tau$ is defined by
\[
\rowword{\sigma \circ \tau} = \rowword{\sigma} \circ \rowword{\tau}.
\]
Equivalently, $\sigma \circ \tau$ is obtained by concatenating the rows of $\sigma$ and $\tau$, after adding $|\sigma|$ to every entry of $\tau$. Note that concatenation is associative.

\begin{eg}\label{eg_concatenation}
The concatenation of words $12113 \circ 123 = 12113123$ corresponds to the concatenation
\begin{gather*}
\;\begin{ytableau}
1 & 3 & 4 \\
2 \\
5
\end{ytableau}\;
\circ
\;\begin{ytableau}
1 \\
2 \\
3
\end{ytableau}\;
=
\;\begin{ytableau}
1 & 3 & 4 & 6 \\
2 & 7 \\
5 & 8
\end{ytableau}\;
\end{gather*}
of standard tableaux.
\end{eg}

\begin{rmk}\label{concatenation_geometric}
Fresse \cite[Proposition 7.1]{fresse11} showed that for all standard tableaux $\sigma$ and $\tau$, we have the isomorphism
\[
\SF{\sigma\circ\tau} \cong \SF{\sigma} \times \SF{\tau}
\]
of irreducible components of Springer fibers. This reduces the study of $\SF{\sigma\circ\tau}$ to the study of $\SF{\sigma}$ and $\SF{\tau}$, and geometrically motivates the notion of prime decomposition for standard tableaux below.
\end{rmk}

We recall the definition of the prime decomposition of a standard tableau, which appeared in work of Tirrell \cite[Proposition 1.3.3]{tirrell16}. Given a lattice word $r$, we call $r = r_1 \circ \cdots \circ r_m$ a \boldit{lattice decomposition of $r$} if $r_1, \dots, r_m$ are lattice words. We call $r$ \boldit{prime} if it is nonempty and its only lattice decomposition is $r = r$. We can verify from the definition of the lattice property that the common refinement of two lattice decompositions of $r$ is again a lattice decomposition. We call the common refinement of all lattice decompositions of $r$ its \boldit{prime decomposition}. Equivalently, the prime decomposition of $r$ is the unique lattice decomposition $r = r_1 \circ \cdots \circ r_m$ such that $r_1, \dots, r_m$ are prime. We define a \boldit{prime Richardson word} to be a Richardson word which is also prime.

We analogously define a \boldit{prime standard tableau} to be a standard tableau whose lattice word is prime. Then every standard tableau $\sigma$ has a unique \boldit{prime decomposition} $\sigma = \sigma_1 \circ \cdots \circ \sigma_m$. We define a \boldit{prime Richardson tableau} to be a Richardson tableau which is also prime.

\begin{eg}\label{eg.4}There are ten standard tableaux of size $4$, shown below.
\begin{center}
\vspace*{4pt}
\scalebox{0.7}{$\begin{ytableau}
1 & 2 & 3 & 4 \\
\none \\
\none \\
\none
\end{ytableau}$}
\;\;
\scalebox{0.7}{$\begin{ytableau}
1 & 2 & 3 \\
4 \\
\none \\
\none
\end{ytableau}$}
\;\;
\scalebox{0.7}{$\begin{ytableau}
1 & 2 & 4 \\
3 \\
\none \\
\none
\end{ytableau}$}
\;\;
\scalebox{0.7}{$\begin{ytableau}
1 & 3 & 4 \\
2 \\
\none \\
\none
\end{ytableau}$}
\;\;
\scalebox{0.7}{$\begin{ytableau}
1 & 2 \\
3 & 4 \\
\none \\
\none
\end{ytableau}$}
\;\;
\scalebox{0.7}{$\begin{ytableau}
1 & 3 \\
2 & 4 \\
\none \\
\none
\end{ytableau}$}
\;\;
\scalebox{0.7}{$\begin{ytableau}
1 & 2 \\
3 \\
4 \\
\none
\end{ytableau}$}
\;\;
\scalebox{0.7}{$\begin{ytableau}
1 & 3 \\
2 \\
4 \\
\none
\end{ytableau}$}
\;\;
\scalebox{0.7}{$\begin{ytableau}
1 & 4 \\
2 \\
3 \\
\none
\end{ytableau}$}
\;\;
\scalebox{0.7}{$\begin{ytableau}
1 \\
2 \\
3 \\
4
\end{ytableau}$}
\vspace*{4pt}
\end{center}
Of these ten, all are Richardson tableaux except for $\;\scalebox{0.7}{$\begin{ytableau}1 & 2 \\ 3 & 4\end{ytableau}$}\;$. Also, of these ten, exactly three are prime tableaux, shown below.
\begin{center}
\vspace*{4pt}
\scalebox{0.7}{$\begin{ytableau}
1 & 2 \\
3 & 4 \\
\none \\
\none
\end{ytableau}$}
\hspace*{36pt}
\scalebox{0.7}{$\begin{ytableau}
1 & 3 \\
2 \\
4 \\
\none
\end{ytableau}$}
\hspace*{36pt}
\scalebox{0.7}{$\begin{ytableau}
1 \\
2 \\
3 \\
4
\end{ytableau}$}
\vspace*{4pt}
\end{center}
The two prime Richardson tableaux of size $4$ are the last two of these tableaux. Note that the last two rows of these prime Richardson tableaux have length $1$. We will show in \cref{prime_last_two_rows} below that this is true of all prime Richardson tableaux with at least two rows.
\end{eg}

\begin{rmk}\label{rmk.zerocase} There is a unique partition of size $0$, whose diagram is empty. Thus there exactly one standard tableau of size $0$ (called the \boldit{empty tableau}), which is vacuously Richardson. Its lattice word is the empty word, which is the identity element for concatenation of words. Note that by definition (and to ensure the uniqueness of prime factorization), the empty tableau/word is not prime.
\end{rmk}

The decomposition in \cref{eg_concatenation} is a prime decomposition. Note that the tableau in \cref{eg_concatenation} of size $8$ is a Richardson tableau, and so are its prime factors. This is a general phenomenon:
\begin{lem}\label{prime_concatenation}~
\begin{enumerate}[label=(\roman*), leftmargin=*, itemsep=2pt]
\item\label{prime_concatenation_forward} The concatenation of Richardson tableaux is a Richardson tableau.
\item\label{prime_concatenation_backward} Every prime factor in the prime decomposition of a Richardson tableau is a Richardson tableau.
\end{enumerate}
\end{lem}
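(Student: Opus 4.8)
The plan is to work entirely with Richardson words via \cref{richardson_tableau_to_word}, using the characterization there: a lattice word $r = r_1 \cdots r_n$ is Richardson if and only if for every index $j$ with $r_j \ge 2$, reading the prefix $r_1 \cdots r_{j-1}$ from right to left, the letter $r_j - 1$ appears before any occurrence of a letter $\ge r_j$. Both parts will follow by tracking how this right-to-left scanning condition behaves under concatenation. The key observation is that concatenation of tableaux corresponds to concatenation of lattice words, so if $r = u \circ w$ with $|u| = k$, then the letters of $r$ coming from $w$ are exactly $r_{k+1}, \dots, r_n$, and these are unchanged (the only adjustment when concatenating tableaux is adding $|\sigma|$ to entries, which shifts positions but not row-values).

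For part \ref{prime_concatenation_forward}, I would take Richardson words $u$ and $w$ and verify the condition of \cref{richardson_tableau_to_word} for $r = u \circ w$ at each index $j$ with $r_j \ge 2$. If $j \le k = |u|$, then the relevant prefix $r_1 \cdots r_{j-1}$ lies entirely within $u$, so the condition holds because $u$ is Richardson. If $j > k$, write $j = k + j'$ with $1 \le j' \le |w|$ and $r_j = w_{j'}$. Reading $r_1 \cdots r_{j-1}$ from right to left, we first encounter the letters $w_{j'-1}, \dots, w_1$ (the prefix of $w$ before position $j'$), and only afterward the letters of $u$. Since $w$ is Richardson, within $w_{j'-1} \cdots w_1$ the letter $w_{j'}-1$ appears before any letter $\ge w_{j'}$; hence it appears before any such letter in all of $r_1 \cdots r_{j-1}$, which is exactly what we need. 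By induction (or associativity), this extends to concatenations of arbitrarily many Richardson tableaux, giving \ref{prime_concatenation_forward}.

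For part \ref{prime_concatenation_backward}, suppose $\sigma$ is Richardson with prime decomposition $\sigma = \sigma_1 \circ \cdots \circ \sigma_m$, and let $r = r_1 \circ \cdots \circ r_m$ be the corresponding lattice decomposition of the lattice word of $\sigma$ (with each $r_i = \rowword{\sigma_i}$ prime). Fix a block $r_i$ and an index $j'$ within it with $(r_i)_{j'} \ge 2$; let $j$ be the corresponding global index in $r$, so $r_j = (r_i)_{j'}$ and the global prefix $r_1 \cdots r_{j-1}$ consists of the blocks $r_1, \dots, r_{i-1}$ followed by the prefix $(r_i)_1 \cdots (r_i)_{j'-1}$. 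Since $\sigma$ is Richardson, reading $r_1 \cdots r_{j-1}$ from right to left, the letter $r_j - 1$ appears before any letter $\ge r_j$. I claim this first occurrence (scanning right to left) of $r_j - 1$ — call it position $p$ — must lie within the block $r_i$, i.e., within $(r_i)_1 \cdots (r_i)_{j'-1}$. Granting the claim, the condition of \cref{richardson_tableau_to_word} for $r_i$ at $j'$ holds, and since $j'$ was arbitrary, $r_i$ is Richardson, proving \ref{prime_concatenation_backward}.

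The main obstacle is establishing that claim, i.e., that the witnessing occurrence of $r_j - 1$ cannot be pushed back into an earlier prime block $r_1, \dots, r_{i-1}$. This is where primality of the decomposition must be used. The idea is that if the letter $r_j - 1$ did not occur in the prefix $(r_i)_1 \cdots (r_i)_{j'-1}$ of the block $r_i$, one could argue — using the lattice property together with the fact that $r_i$ is a single prime block — that $r_i$ admits a nontrivial lattice decomposition, contradicting primality. Concretely, if some letter value $c$ (here $c = r_j$) appears in $r_i$ before its first occurrence of $c-1$ would be forced, this signals a "break point" in the block compatible with the lattice property, which is exactly the obstruction to primality; I expect the precise formulation to hinge on analyzing, for each letter value, the running difference between counts of $c$'s and $(c+1)$'s within $r_i$ and locating a position where the lattice decomposition can be split. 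This bookkeeping — identifying the split point and checking both pieces are lattice words — is the technical heart of the argument; everything else is routine translation between the word picture and the tableau picture.
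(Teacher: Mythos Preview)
Your argument for part \ref{prime_concatenation_forward} is correct and matches the paper's.

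For part \ref{prime_concatenation_backward}, your approach works but you are overcomplicating the key claim. You correctly identify that what needs to be shown is that the letter $r_j-1$ already appears in the prefix $(r_i)_1\cdots(r_i)_{j'-1}$ of the block $r_i$. But this follows \emph{immediately} from the lattice property of $r_i$, with no appeal to primality and no ``split point'' analysis: since $(r_i)_{j'}=c\ge 2$, the prefix of $r_i$ of length $j'$ contains at least as many $(c-1)$'s as $c$'s, hence at least one $(c-1)$, and since position $j'$ holds a $c$, that $(c-1)$ lies in positions $1,\dots,j'-1$. Once you know this, the Richardson property of the full word at the global index $j$ (scanning right to left, you hit $c-1$ before any letter $\ge c$) restricts to the $r_i$-prefix and gives the Richardson condition for $r_i$ at $j'$.

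This is exactly the paper's proof: it observes more generally that if $r_1\circ r_2$ is Richardson and $r_1,r_2$ are merely lattice words (not necessarily prime), then both are Richardson. Primality plays no role in the argument; it only enters to guarantee uniqueness of the decomposition, not to establish that the factors are Richardson. So your ``technical heart'' paragraph can be deleted and replaced by a one-line invocation of the lattice property.
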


\begin{proof}
For part \ref{prime_concatenation_forward}, we must show that if $r_1$ and $r_2$ are Richardson words, then so is $r_1 \circ r_2$. This follows from \cref{richardson_tableau_to_word}. For part \ref{prime_concatenation_backward}, we must show that if $r_1 \circ r_2$ is a Richardson word and $r_1$ and $r_2$ are lattice words, then $r_1$ and $r_2$ are also Richardson words. The fact that $r_1$ is Richardson follows from \cref{richardson_tableau_to_word}. To obtain that $r_2$ is Richardson, we need to know that if the number $i$ appears in $r_2$, then $i-1$ appears to the left of $i$ in $r_2$ (whence the fact that $r$ is Richardson implies that $r_2$ is Richardson). This follows from the fact that $r_2$ is a lattice word.
\end{proof}

We can rephrase \cref{prime_concatenation} as a statement about sets of words under concatenation (see \cref{prime_star}). To do so, we introduce some notation. For $\ell\ge 0$, let $\Rbiggest{\ell}$ denote the set of Richardson words on the alphabet $[\ell]$, i.e., $\Rbiggest{\ell}$ is the set of lattice words of Richardson tableaux of length at most $\ell$. Similarly, for $\ell\ge 1$, let $\Rprime{\ell}$ denote the set of prime Richardson words whose largest letter is $\ell$, i.e., $\Rprime{\ell}$ is the set of lattice words of prime Richardson tableaux of length $\ell$. Note that the set of prime Richardson words is the disjoint union $\bigsqcup_{\ell\ge 1}\Rprime{\ell}$.

If $S$ and $T$ are sets of finite words, we define
\[
S\circ T := \{s\circ t \mid s\in S, t\in T\}.
\]
We also let $S^*$ denote the set of all finite words formed by arbitrary concatenations of elements in $S$ (including the empty word), i.e.,
\[
S^* := \{s_1 \circ \cdots \circ s_m \mid m\in\mathbb{N} \text{ and } s_1, \dots, s_m \in S\}.
\]

\begin{prop}\label{prime_star}
We have
\begin{align}\label{equation_prime_star}
\Rbiggest{\ell} = (\Rprime{1} \sqcup \cdots \sqcup \Rprime{\ell})^* \quad \text{ for all } \ell\ge 0,
\end{align}
where every element of $\Rbiggest{\ell}$ is obtained uniquely as a concatenation of words in $\Rprime{1} \sqcup \cdots \sqcup \Rprime{\ell}$.
\end{prop}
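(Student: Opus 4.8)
The plan is to deduce \cref{prime_star} directly from \cref{prime_concatenation} together with the definition of the prime decomposition. The two sides of \eqref{equation_prime_star} are sets of lattice words, and I will show mutual containment, keeping track of the fact that the concatenation witnessing membership in the right-hand side is unique.

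First I would prove the inclusion $(\Rprime{1} \sqcup \cdots \sqcup \Rprime{\ell})^* \subseteq \Rbiggest{\ell}$. A word in the right-hand side is a concatenation $s_1 \circ \cdots \circ s_m$ of prime Richardson words, each using letters from $[\ell]$. By \cref{prime_concatenation}\ref{prime_concatenation_forward}, applied inductively on $m$, the concatenation is a Richardson word; and since each $s_i$ uses only letters in $[\ell]$, so does $s_1 \circ \cdots \circ s_m$. Hence it lies in $\Rbiggest{\ell}$. (The empty word is handled by \cref{rmk.zerocase}, corresponding to $m = 0$.)

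Next I would prove the reverse inclusion $\Rbiggest{\ell} \subseteq (\Rprime{1} \sqcup \cdots \sqcup \Rprime{\ell})^*$, simultaneously establishing uniqueness. Let $r \in \Rbiggest{\ell}$, and let $r = r_1 \circ \cdots \circ r_m$ be its prime decomposition, so each $r_i$ is a prime lattice word. By \cref{prime_concatenation}\ref{prime_concatenation_backward}, each $r_i$ is a Richardson word, hence a prime Richardson word; and since $r$ uses only letters in $[\ell]$, so does each $r_i$, so each $r_i$ has largest letter some $\ell_i \le \ell$, i.e.\ $r_i \in \Rprime{\ell_i} \subseteq \Rprime{1} \sqcup \cdots \sqcup \Rprime{\ell}$. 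This exhibits $r$ as an element of the right-hand side. For uniqueness: any expression of $r$ as a concatenation of words in $\Rprime{1} \sqcup \cdots \sqcup \Rprime{\ell}$ is in particular a lattice decomposition of $r$ into prime lattice words (the words in $\Rprime{j}$ are prime by definition and nonempty), and by the uniqueness of the prime decomposition recalled before \cref{eg.4}, this expression must coincide with $r = r_1 \circ \cdots \circ r_m$.

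I do not expect a serious obstacle here: the statement is essentially a repackaging of \cref{prime_concatenation} in the language of sets of words, and the only points requiring care are (a) tracking the alphabet constraint through the concatenation, which is immediate since the letters appearing in $s \circ t$ are exactly those appearing in $s$ or in $t$, and (b) invoking the already-established uniqueness of prime decomposition for the uniqueness clause, rather than reproving it. The mild subtlety worth a sentence is the degenerate $\ell = 0$ case, where $\Rbiggest{0}$ consists only of the empty word and the right-hand side is $\emptyset^* = \{\text{empty word}\}$, so the identity still holds.
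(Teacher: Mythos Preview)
Your proposal is correct and takes essentially the same approach as the paper, which simply writes ``This follows from \cref{prime_concatenation}.'' You have just unpacked that one-liner into the two mutual inclusions and the uniqueness clause, all of which are exactly the intended content of invoking \cref{prime_concatenation} together with the uniqueness of prime decomposition.
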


\begin{proof}
This follows from \cref{prime_concatenation}.
\end{proof}

\begin{rmk}\label{prime_algorithm}
We can calculate the prime decomposition of every Richardson word $r$ by reading it from right to left, as follows. Let $j$ denote the last letter of $r$. By definition, $j-1$ appears in $r$; we find the first occurrence, reading right to left. Then continuing to read from right to left from the first occurrence of $j-1$, we find the next occurrence of $j-2$, and then the next occurrence of $j-3$, and so on, until we reach $1$. Let $s$ denote the suffix of $r$ beginning with this $1$. By construction $s$ is prime, so it is the last prime factor in the prime decomposition of $s$. To find the remaining prime factors of $r$, we recursively apply the procedure above to $r$ with $s$ deleted.
\end{rmk}

\begin{eg}\label{eg_prime_decomposition}
Let us illustrate the procedure in \cref{prime_algorithm} for the Richardson tableau
\[
\sigma = \;\begin{ytableau}
1 & 4 & 8 & 9 & 11 \\
2 & 5 & 10 \\
3 & 6 & 12 \\
7
\end{ytableau}\; \quad \text{ with Richardson word } r = \rowword{\sigma} = 123123411213.
\]
To find the prime decomposition of $r$, we read it from right to left. Since $r$ ends in a $3$, we read from right to left until we find a $2$, and then continue reading until we find a $1$. This gives the suffix $1213$, which is the last word in the prime decomposition. Deleting this suffix from $r$ leaves $12312341$, which ends in $1$, so the second-last word in the prime decomposition is $1$. Continuing on, we find that the prime decomposition of $r$ is
\[
r = 123 \circ 1234 \circ 1 \circ 1213.
\]
In terms of tableaux, the prime decomposition of $\sigma$ is
\begin{gather*}
\sigma = \;\begin{ytableau}
1 \\
2 \\
3
\end{ytableau}\;
\circ
\;\begin{ytableau}
1 \\
2 \\
3 \\
4
\end{ytableau}\;
\circ
\;\begin{ytableau}
1
\end{ytableau}\;
\circ
\;\begin{ytableau}
1 & 3 \\
2 \\
4
\end{ytableau}\;.
\end{gather*}
Note that each prime factor is a Richardson tableau, in agreement with \cref{prime_concatenation}\ref{prime_concatenation_backward}.
\end{eg}

We can reinterpet \cref{prime_algorithm} as follows:
\begin{prop}\label{prime_recursion}
We have $\Rprime{1} = \{1\}$ and
\begin{align}\label{equation_prime_recursion}
\Rprime{\ell} = \Rprime{\ell-1} \circ \Rbiggest{\ell-2} \circ \{\ell\} \quad \text{ for all } \ell\ge 2,
\end{align}
where every element of $\Rprime{\ell}$ is obtained uniquely as a concatenation from the right-hand side.
\end{prop}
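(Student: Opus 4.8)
The plan is to prove \cref{prime_recursion} by unwinding the right-to-left reading procedure of \cref{prime_algorithm} and showing that it identifies exactly the claimed structure. Since $\Rprime{1}$ consists of prime Richardson words with largest letter $1$, and the lattice property forces such a word to be a string of $1$'s, primality forces it to be the single word $1$; this handles the base case.

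For the recursion, fix $\ell \ge 2$ and let $r \in \Rprime{\ell}$. First I would locate the structure forced by \cref{richardson_tableau_to_word}: since $\ell$ is the largest letter, it occurs in $r$, and reading $r$ from the right, let $j$ be the position of the \emph{last} occurrence of $\ell$. Applying the Richardson condition at position $j$, reading the prefix $r_1 \cdots r_{j-1}$ from right to left, we first encounter $\ell - 1$ before any $\ell$; and one checks (again via \cref{richardson_tableau_to_word}, or directly via the lattice property applied to the prefix) that in fact $r_j \cdots r_n$, the suffix starting at that last $\ell$, consists only of the single letter $\ell$ — otherwise a letter $\ge \ell-1$ appearing to the right of position $j$ together with another $\ell$ would either violate primality by letting us cut before it, or violate the Richardson condition. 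So $r = r' \circ \ell^{k}$ for some $k \ge 1$ and some word $r'$ not containing $\ell$; and $\ell^k$ being a proper lattice decomposition unless $k=1$, primality of $r$ forces $k = 1$, i.e. $r = r' \circ \{\ell\}$ with $r'$ on the alphabet $[\ell-1]$. The word $r'$ is a Richardson word by \cref{prime_concatenation}\ref{prime_concatenation_backward} (it is a prefix, hence a lattice-word factor, of the Richardson word $r$), and in fact $r'$ uses the letter $\ell-1$ (since $r$ does and the occurrences of $\ell-1$ all lie in $r'$). Now write the prime decomposition $r' = p_1 \circ \cdots \circ p_m$; each $p_i$ is a prime Richardson word by \cref{prime_concatenation}\ref{prime_concatenation_backward}. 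The key point, which I would extract from the reading procedure, is that $p_m$ is exactly the suffix produced by scanning right-to-left for $\ell-1, \ell-2, \dots, 1$, so $p_m$ contains $\ell-1$ and $p_1, \dots, p_{m-1}$ do not; hence $p_m \in \Rprime{\ell-1}$ and $p_1 \circ \cdots \circ p_{m-1} \in \Rbiggest{\ell-2}$. Wait — I must be careful about the order: the definition of concatenation and the reading procedure put $p_m$ as the \emph{rightmost} block of $r'$, so the decomposition is $r = p_1 \circ \cdots \circ p_{m-1} \circ p_m \circ \{\ell\}$, which is not yet in the form $\Rprime{\ell-1} \circ \Rbiggest{\ell-2} \circ \{\ell\}$; so I would instead argue that $p_1$ (the leftmost block) is the one containing $\ell-1$. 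Scanning from the right for the pattern $\ell-1, \ell-2, \dots, 1$: the final $1$ we reach determines a suffix of $r'$; this suffix is prime, so it is $p_m$, and it contains $\ell-1$. Hmm, so actually $p_m$ contains $\ell - 1$. Then the correct statement to match \eqref{equation_prime_recursion} would be $\Rbiggest{\ell-2} \circ \Rprime{\ell-1} \circ \{\ell\}$, not $\Rprime{\ell-1} \circ \Rbiggest{\ell-2} \circ \{\ell\}$. Let me reconsider: in \cref{eg_prime_decomposition} the word is $123 \circ 1234 \circ 1 \circ 1213$, and the \emph{last} prime factor $1213$ does contain the largest-minus-one letter? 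No: $1213$ has largest letter $3$; the whole word has largest letter $4$, appearing in $1234$. So the block containing the large letter can be in the middle. I think the cleanest formulation: in the prime decomposition of $r \in \Rprime{\ell}$, the block containing the last $\ell$ is the block $\{\ell\}$ itself (it must be a singleton prime), everything after it uses letters $< \ell - 1$... no. I would sort this out carefully by re-examining which block's reading-scan reaches $\ell-1$; the honest plan is: (1) show $r = q \circ \{\ell\}$ with $q$ Richardson on $[\ell-1]$ using $\ell-1$, where $q$ is uniquely determined; (2) in the prime decomposition $q = p_1 \circ \cdots \circ p_m$, show the \emph{leftmost} block $p_1$ is the unique block using $\ell-1$ by the following argument: if some later block $p_i$ used $\ell-1$, then reading $q$ from the right, the scan for $\ell-1$ would reach into $p_i$; but primality of $r$ would then let us cut $r$ right before that occurrence of $\ell-1$ — wait, that cut might not respect the lattice property. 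I would settle this by induction on $\ell$ or by appealing directly to \cref{richardson_tableau_to_word}: the first occurrence of $\ell-1$ in $q$ lies in $p_1$, and every occurrence of $\ell-1$ must be "linked" to the occurrence of $\ell$ at the end, forcing all of them into $p_1$; then $p_1 \in \Rprime{\ell-1}$ and $p_2 \circ \cdots \circ p_m \in \Rbiggest{\ell - 2}$, giving $r \in \Rprime{\ell-1} \circ \Rbiggest{\ell-2} \circ \{\ell\}$.

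For the reverse inclusion, given $p \in \Rprime{\ell-1}$, $s \in \Rbiggest{\ell-2}$, I would verify that $p \circ s \circ \{\ell\}$ is (a) a lattice word — clear, since prefixes of $p \circ s$ are prefixes of the Richardson word $p \circ s$ (Richardson by \cref{prime_concatenation}\ref{prime_concatenation_forward}), and appending $\ell$ keeps the lattice property because $p$ already contributed an $\ell-1$ and the lattice property is about counts — actually I need that the count of $\ell-1$'s in $p\circ s$ is at least $1$, which holds since $p \in \Rprime{\ell-1}$ uses $\ell-1$; (b) Richardson — check the condition of \cref{richardson_tableau_to_word} at the final position $j$ with $r_j = \ell$: reading $p \circ s$ right-to-left, the letter $\ell - 1$ occurs (in $p$) and no $\ell$ occurs, and the condition at all earlier positions holds since $p \circ s$ is Richardson; (c) prime — any lattice decomposition of $p \circ s \circ \{\ell\}$ must keep the final $\ell$ tied to an $\ell-1$ to its left and inductively to a $1$; since $p$ is prime, the only way to respect this is the trivial decomposition, so $p \circ s \circ \{\ell\}$ is prime with largest letter $\ell$; (d) largest letter $\ell$ — clear. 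Finally, uniqueness of the concatenation: given $r \in \Rprime{\ell}$, the decomposition $r = p \circ s \circ \{\ell\}$ is forced — the last letter is $\{\ell\}$ (a forced singleton as argued), and then $p$ is forced as the prime factor of $q = p \circ s$ containing $\ell-1$, with $s$ the (possibly empty) concatenation of the remaining prime factors; uniqueness of prime decomposition (the common-refinement characterization from the text) pins everything down.

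The main obstacle is step (a)/(1): pinning down precisely that in $r \in \Rprime{\ell}$ the letter $\ell$ appears \emph{exactly once} and sits at the very end as the singleton prime block $\{\ell\}$, and that the occurrences of $\ell-1$ are all confined to a single prime block. This is where the interaction between primality and the Richardson condition of \cref{richardson_tableau_to_word} is most delicate: one must argue that any "stray" occurrence of $\ell$ (beyond the last one) or any spreading of $\ell-1$ across multiple prime blocks would either violate the Richardson condition or supply a nontrivial lattice decomposition, contradicting primality. I expect to handle this by the right-to-left scanning argument of \cref{prime_algorithm}, tracking the "chain" $\ell, \ell-1, \ell-2, \dots, 1$ of linked letters that the Richardson condition forces, and observing that this entire chain (together with anything the lattice property attaches to it) must lie within one prime factor — and that this prime factor, stripped of its final $\ell$, is exactly the $\Rprime{\ell-1}$-component.
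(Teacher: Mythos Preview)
Your overall approach---unwinding the right-to-left scan of \cref{prime_algorithm}---is exactly what the paper does (its entire proof is ``This follows from \cref{prime_algorithm}''). But your execution has a real gap at the point you yourself flag as the ``main obstacle'': locating the $\Rprime{\ell-1}$ factor as the \emph{leftmost} prime factor $p_1$ of $q := r'$, not $p_m$.

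Your attempted arguments for this are not correct. The scan of \cref{prime_algorithm}, applied to $r$, does \emph{not} produce $p_m$ (the last prime factor of $q$); it produces the last prime factor of $r$, which is $r$ itself since $r$ is prime. And the claim ``the first occurrence of $\ell-1$ in $q$ lies in $p_1$'' is unjustified: a priori the first $\ell-1$ lies in whichever $p_i$ first uses that letter. The clean fix is the one you almost reach but abandon: suppose $\ell-1$ appears in $p_i$ for some $i \ge 2$. Then $p_i \circ \cdots \circ p_m$ is a lattice word containing an $\ell-1$, so $p_i \circ \cdots \circ p_m \circ \{\ell\}$ is also a lattice word, and
\[
r = (p_1 \circ \cdots \circ p_{i-1}) \circ (p_i \circ \cdots \circ p_m \circ \{\ell\})
\]
is a nontrivial lattice decomposition of $r$, contradicting primality. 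Hence every occurrence of $\ell-1$ lies in $p_1$, so $p_1 \in \Rprime{\ell-1}$ and $p_2 \circ \cdots \circ p_m \in \Rbiggest{\ell-2}$. Your worry that ``that cut might not respect the lattice property'' dissolves once you cut at the block boundary rather than at an individual letter.

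A smaller error: your argument that $\ell$ appears exactly once, at the end, is garbled. Writing $r = r' \circ \ell^k$ with $r'$ free of $\ell$ does not follow from taking the last occurrence of $\ell$; and ``$\ell^k$ being a proper lattice decomposition'' is false, since $\ell^k$ is not a lattice word for $\ell \ge 2$. The correct argument: the scan of \cref{prime_algorithm} applied to $r$ starts at the last letter $r_n$ and, since $r$ is prime, must reach position $1$. The Richardson condition forces every letter strictly between successive scan positions to be strictly smaller than the letter at the right endpoint; in particular no letter $\ge r_n$ appears in positions $1,\dots,n-1$. Hence $r_n = \ell$ and this is the unique occurrence of $\ell$.
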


\begin{proof}
This follows from \cref{prime_algorithm}.
\end{proof}

Now using \eqref{equation_prime_recursion} and \eqref{equation_prime_star}, we can recursively calculate $(\Rprime{i})_{i\ge 1}$:
\begin{eg}\label{eg_prime_recursion}
We have $\Rprime{1} = \{1\}$, so the unique prime Richardson tableau with exactly one row is $\ytableausmall{1}$. We have
\[
\Rprime{2} = \Rprime{1} \circ \Rbiggest{0} \circ \{2\} = \{1\} \circ \{\emptyset\} \circ \{2\} = \{12\},
\]
so the unique prime Richardson tableau with exactly two rows is
\[
\;\begin{ytableau}
1 \\
2
\end{ytableau}\;.
\]
We have
\[
\Rprime{3} = \Rprime{2} \circ \Rbiggest{1} \circ \{3\} = \{12\} \circ \{1\}^* \circ \{3\} = \{123, 1213, 12113, 121113, \dots\},
\]
so the prime Richardson tableaux with exactly three rows are precisely as follows.
\[
\;\begin{ytableau}
1 \\
2 \\
3
\end{ytableau}\;,
\quad
\;\begin{ytableau}
1 & 3 \\
2 \\
4
\end{ytableau}\;,
\quad
\;\begin{ytableau}
1 & 3 & 4 \\
2 \\
5
\end{ytableau}\;,
\quad
\;\begin{ytableau}
1 & 3 & 4 & 5 \\
2 \\
6
\end{ytableau}\;,
\quad\dots
\]
As a final example, we calculate that
\begin{gather*}
\Rprime{4} = \Rprime{3} \circ \Rbiggest{2} \circ \{4\} = \{12\} \circ \{1\}^* \circ \{3\} \circ \{1, 12\}^* \circ \{4\},
\end{gather*}
which gives an explicit description of prime Richardson tableaux with exactly $4$ rows.
\end{eg}

We will need the following consequence of \cref{prime_recursion} in \cref{sec_motzkin}:
\begin{cor}\label{prime_last_two_rows}
The last two rows of every prime Richardson tableau \textup{(}other than $\ytableausmall{1}$\textup{)} have length one. 
\end{cor}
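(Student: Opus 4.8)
The plan is to induct on the number of rows $\ell$ of a prime Richardson tableau, feeding off the recursion for $\Rprime{\ell}$ established in \cref{prime_recursion}. The one structural fact I will repeatedly use is that for any standard tableau the length of row $i$ equals the number of occurrences of the letter $i$ in its lattice word, and that these multiplicities are additive under the concatenation $\circ$ (since $\rowword{\sigma\circ\tau} = \rowword{\sigma}\circ\rowword{\tau}$). So it suffices to control, for a prime Richardson word $r$ with largest letter $\ell$, how many times the letters $\ell$ and $\ell-1$ occur in $r$.

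For the base case $\ell = 2$, I would simply quote the computation of \cref{eg_prime_recursion}: $\Rprime{2} = \Rprime{1}\circ\Rbiggest{0}\circ\{2\} = \{12\}$, so the unique prime Richardson tableau with two rows is $\ytableausmall{1\\2}$, whose two rows each have length one. For the inductive step, let $\ell \ge 3$ and let $\sigma$ be a prime Richardson tableau with $\ell$ rows, so $r := \rowword{\sigma} \in \Rprime{\ell}$. By \cref{prime_recursion} we may write $r = p\circ b\circ\ell$ (here $\ell$ denotes the one-letter word) with $p\in\Rprime{\ell-1}$ and $b\in\Rbiggest{\ell-2}$. The letter $\ell$ does not occur in $p$, whose largest letter is $\ell-1$, nor in $b$, whose letters lie in $[\ell-2]$; hence $\ell$ occurs exactly once in $r$, so row $\ell$ of $\sigma$ has length one. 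Likewise, the letter $\ell-1$ occurs neither in $b$ nor in the trailing letter $\ell$, so its multiplicity in $r$ equals its multiplicity in $p$. Since $p$ corresponds to a prime Richardson tableau with $\ell-1 \ge 2$ rows, the inductive hypothesis says its last row — which is row $\ell-1$ — has length one, i.e.\ $\ell-1$ occurs exactly once in $p$, hence exactly once in $r$. Therefore row $\ell-1$ of $\sigma$ also has length one, completing the induction.

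I do not expect a genuine obstacle here: once the recursion \eqref{equation_prime_recursion} is available, the argument is bookkeeping, and the only thing to watch is that the three concatenated pieces $p$, $b$, $\{\ell\}$ have tightly constrained alphabets (largest letters $\ell-1$, at most $\ell-2$, and $\ell$ respectively), which is precisely what pins the letters $\ell$ and $\ell-1$ inside the pieces $\{\ell\}$ and $p$ alone. The mild edge case is $\ell = 3$, where $\Rbiggest{\ell-2} = \Rbiggest{1} = \{1\}^*$ is infinite, but since $b$ contributes no copies of $\ell$ or of $\ell-1$, this causes no difficulty.
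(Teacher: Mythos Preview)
Your proof is correct and takes essentially the same approach as the paper: both arguments rest on the recursion \eqref{equation_prime_recursion} and the alphabet constraints it imposes on the concatenated pieces. The only cosmetic difference is that the paper applies \eqref{equation_prime_recursion} twice in a row to obtain $\Rprime{\ell} = \Rprime{\ell-2} \circ \Rbiggest{\ell-3} \circ \{\ell-1\} \circ \Rbiggest{\ell-2} \circ \{\ell\}$ for $\ell\ge 3$, from which the claim is read off directly, whereas you apply it once and invoke induction for the $\ell-1$ count; these are equivalent ways of unwinding the same recursion.
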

\begin{proof}
Applying \eqref{equation_prime_recursion} twice, we obtain
\begin{align}\label{delete_two_recursion_old}
\Rprime{\ell} = \Rprime{\ell-2} \circ \Rbiggest{\ell-3} \circ \{\ell-1\} \circ \Rbiggest{\ell-2} \circ \{\ell\} \quad \text{ for all } \ell\ge 3.
\end{align}
This (along with the fact that $\Rprime{2} = \{12\}$) implies the result. 
\end{proof}

We point out that \cref{prime_recursion} implies the following characterization of prime Richardson tableaux:
\begin{cor}\label{prime_characterization}
Let $r$ be a nonempty Richardson word with largest letter $\ell$. Then $r$ is prime if and only if:
\begin{enumerate}[label=(\roman*), leftmargin=*, itemsep=2pt]
\item\label{prime_characterization_biggest} $\ell$ appears exactly once in $r$, in the final position; and
\item\label{prime_characterization_smaller} for all $1 \le j \le \ell-1$, the letters of $r$ between the first occurrence of $j$ and the first occurrence of $j+1$ are all at most $j-1$ \textup{(}when we read $r$ from left to right\textup{)}.
\end{enumerate}
\end{cor}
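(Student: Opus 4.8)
The plan is to derive this characterization directly from \cref{prime_recursion}, specifically from the two-step unfolding \eqref{delete_two_recursion_old}, together with the recursive structure of $\Rbiggest{\ell}$ given in \cref{prime_star}. First I would handle the forward direction: suppose $r$ is a prime Richardson word with largest letter $\ell$. By \cref{prime_recursion}, we may write $r = p \circ b \circ \ell$ where $p\in\Rprime{\ell-1}$ and $b\in\Rbiggest{\ell-2}$. Since $p$ and $b$ use only letters from $[\ell-1]$ and $[\ell-2]$ respectively, the letter $\ell$ appears exactly once in $r$, namely in the final position; this gives \ref{prime_characterization_biggest}. For \ref{prime_characterization_smaller}, I would induct on $\ell$: the claim is vacuous for $\ell=1$, and for the inductive step one observes that the first occurrence of $\ell-1$ in $r$ lies inside the prime factor $p\in\Rprime{\ell-1}$ (since $b$ omits the letter $\ell-1$), so the letters strictly between the first occurrence of $\ell-1$ and the (unique, final) occurrence of $\ell$ are exactly the remaining letters of $p$ after its first $\ell-1$, followed by all of $b$ — and both of these are at most $\ell-2$. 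The conditions for smaller $j$ (with $1\le j\le \ell-2$) follow from applying the inductive hypothesis to $p$, noting that the first occurrences in $r$ of the letters $1,\dots,\ell-1$ all occur within $p$ in the same relative order as in $p$.

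For the converse, suppose $r$ is a nonempty Richardson word with largest letter $\ell$ satisfying \ref{prime_characterization_biggest} and \ref{prime_characterization_smaller}; I want to show $r$ is prime. Suppose toward a contradiction that $r = r' \circ r''$ is a nontrivial lattice decomposition with $r''$ nonempty. By the description of the prime-decomposition algorithm in \cref{prime_algorithm} (or directly from the lattice property), the final prime factor $s$ of $r$ is the suffix of $r$ that begins at the right-to-left-first occurrence of $1$ obtained by tracing $\ell \to \ell-1 \to \cdots \to 1$ from the end. Condition \ref{prime_characterization_biggest} says $\ell$ is in the last position, and unwinding condition \ref{prime_characterization_smaller} I claim this trace reaches all the way back to the very first letter of $r$ (which must be $1$, by the lattice property). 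Indeed, \ref{prime_characterization_smaller} with $j = \ell-1$ forces the right-to-left-first occurrence of $\ell-1$ (before the final $\ell$) to be the \emph{leftmost} occurrence of $\ell-1$; more carefully, between consecutive ``first occurrences'' of $j$ and $j+1$ there are no letters $\ge j$, so once we pass the first occurrence of $j+1$ going leftward, the first $j$ we meet is the first occurrence of $j$ overall. Iterating down to $j=1$ shows the suffix $s$ equals all of $r$, so $r$ is prime.

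The \textbf{main obstacle} I anticipate is making the bookkeeping in the converse fully precise: one must argue carefully that conditions \ref{prime_characterization_biggest} and \ref{prime_characterization_smaller} together force the backward trace $\ell \to \ell-1 \to \cdots \to 1$ used in \cref{prime_algorithm} to land on the very first letter of $r$, rather than on some later copy of $1$. The subtlety is that condition \ref{prime_characterization_smaller} is phrased in terms of \emph{first} (left-to-right) occurrences, whereas the prime-decomposition algorithm traces \emph{last} (right-to-left) occurrences within shrinking suffixes; reconciling these requires the observation that \ref{prime_characterization_smaller} implies that for each $j$, the first occurrence of $j$ precedes the first occurrence of $j+1$ and no letter $\ge j$ intervenes, so that the leftmost occurrence of each $j$ is simultaneously the relevant one for the backward trace. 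Once this alignment is established, both directions are short; alternatively, one could avoid \cref{prime_algorithm} entirely and argue via \eqref{delete_two_recursion_old} and induction on $\ell$ in both directions, which may be cleaner but requires setting up the inductive hypothesis to track exactly where the first occurrences of $1,\dots,\ell-1$ sit.
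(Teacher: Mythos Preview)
Your proposal is correct. The forward direction is essentially the paper's: both induct on $\ell$ via \cref{prime_recursion}, writing $r = p \circ b \circ \ell$ and observing that the first occurrences of $1,\dots,\ell-1$ all lie in the prefix $p$. The backward direction, however, differs. The paper argues by contrapositive: assuming $r$ is not prime with prime decomposition $r = r_1 \circ \cdots \circ r_m$ ($m\ge 2$), it lets $j$ be the largest letter of $r_1$ and uses the lattice property of $r_2\circ\cdots\circ r_m$ to produce a second $j$ strictly between the first $j$ (which is the last letter of $r_1$) and the first $j+1$, violating \ref{prime_characterization_smaller}. Your approach instead runs the algorithm of \cref{prime_algorithm} directly and shows that conditions \ref{prime_characterization_biggest}--\ref{prime_characterization_smaller} force the backward trace $\ell \to \ell-1 \to \cdots \to 1$ to land on the \emph{first} occurrence of each letter, hence on position $1$, so the last prime factor is all of $r$. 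Both arguments are short; the paper's contrapositive avoids the left-to-right/right-to-left bookkeeping you flagged as an obstacle (which you do resolve correctly), while yours has the virtue of making the connection to \cref{prime_algorithm} explicit. One minor cleanup: in your forward direction, since by induction the first $\ell-1$ is the \emph{last} letter of $p$, the phrase ``the remaining letters of $p$ after its first $\ell-1$'' denotes the empty word, so you may as well say the letters between the first $\ell-1$ and $\ell$ are exactly those of $b$.
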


\begin{proof}
The forward direction follows from \eqref{equation_prime_recursion} by induction. For the backward direction, suppose that $r$ satisfies condition \ref{prime_characterization_biggest} but is not prime; we will show that condition \ref{prime_characterization_smaller} fails. Let $r = r_1 \circ \cdots \circ r_m$ denote the prime decomposition of $r$, so that $m \ge 2$. Let $j$ denote the largest letter of $r_1$, whence $j \le \ell-1$ by condition \ref{prime_characterization_biggest}. 
By~\eqref{equation_prime_recursion} applied to $r_1$, the first occurrence of $j$ in $r$ is at the end of $r_1$.  Consider the first occurrence of $j+1$ in $r_2 \circ \cdots \circ r_m$. The lattice property implies that at least one $j$ appears in $r_2\circ \cdots \circ r_m$ before this $j+1$. Therefore in the word $r$, there is a $j$ between the first occurrence of $j$ and the first occurrence of $j+1$, so condition \ref{prime_characterization_smaller} fails.
\end{proof}

\begin{eg}\label{prime_hook}
Recall from \cref{explicit}\ref{explicit_hook} that every standard tableau $\sigma$ of hook shape is Richardson. We can apply \cref{prime_characterization} to $r = \rowword{\sigma}$ to see that such a $\sigma$ is prime if and only if both $2$ and $n$ appear in the first column of $\sigma$ (where $n = |\sigma|$ and we assume $n \ge 2$).
\end{eg}


\subsection{Closure under evacuation}
The goal of this subsection is to prove \cref{evacuation_closed}, which states that the set of Richardson tableaux is closed under evacuation. We let evacuation act on words, i.e., if $\sigma$ is a standard tableau, then we set $\rowword{\sigma}^\vee := \rowword{\sigma^\vee}$. We will need the following key property:
\begin{prop}\label{evacuation_concatenation}
Evacuation anticommutes with concatenation. That is,
\[
(\sigma \circ \tau)^\vee = \tau^\vee \circ \sigma^\vee \; \text{ for all standard tableaux $\sigma$ and $\tau$}.
\]
\end{prop}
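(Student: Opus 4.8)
The plan is to reduce the statement to a claim about jeu de taquin slides and then argue by induction on the total size $|\sigma| + |\tau|$. The base case, where one of $\sigma, \tau$ is the empty tableau, is immediate since evacuation of the empty tableau is empty and it is the identity for $\circ$. For the inductive step, set $n := |\sigma| + |\tau|$ and assume $\tau$ is nonempty (the case $\tau$ empty being the base case); write $\tau = \bar\tau \circ T$ is not quite the right decomposition, so instead I would peel off the entry $n$ of $\sigma \circ \tau$, which lies in $\tau$ (specifically in the box of $\tau$ containing $|\tau|$), giving $\overline{\sigma \circ \tau} = \sigma \circ \bar\tau$.

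The key tool is \cref{lem.induction}: for any standard tableau $\rho$ of size $n$, the tableau $(\bar\rho)^\vee$ is obtained from $\rho^\vee$ by performing the first evacuation slide and then subtracting $1$ from every entry. Apply this with $\rho = \sigma \circ \tau$: then $(\sigma \circ \bar\tau)^\vee = (\overline{\sigma\circ\tau})^\vee$ is the result of running the first evacuation slide on $(\sigma\circ\tau)^\vee$ and decrementing. By the inductive hypothesis applied to $\sigma$ and $\bar\tau$ (whose sizes sum to $n-1$), we have $(\sigma \circ \bar\tau)^\vee = (\bar\tau)^\vee \circ \sigma^\vee$. On the other hand, applying \cref{lem.induction} with $\rho = \tau$ gives that $(\bar\tau)^\vee$ is the first evacuation slide of $\tau^\vee$ followed by decrementing. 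So the content of the inductive step is: performing the first evacuation slide on $(\sigma\circ\tau)^\vee$ produces $\bigl(\text{first evacuation slide of }\tau^\vee\bigr) \circ \sigma^\vee$ (before the global decrement, which matches on both sides). Since $\sigma^\vee$ occupies the last $|\sigma|$ columns' worth of entries and $(\tau\circ\sigma$ \ldots) — here I should be careful about the concatenation convention — the point is that $(\sigma\circ\tau)^\vee$ has $\tau^\vee$ sitting in its ``small-entry'' part and $\sigma^\vee$ (shifted) in its ``large-entry'' part, so its first evacuation slide (which removes the entry $1$ from the top-left and slides the hole down-right) takes place entirely within the sub-tableau corresponding to $\tau^\vee$, leaving the $\sigma^\vee$-part untouched.

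Thus the heart of the matter is the following claim, which I would isolate as a lemma: if $\rho = \alpha \circ \beta$ is a concatenation of standard tableaux, then the first evacuation slide of $\rho$ is confined to the entries $1, \dots, |\alpha|$, i.e. its slide path stays within the rows/boxes that form $\alpha$ as a sub-tableau of $\rho$, and the effect on $\rho$ is $(\text{first evacuation slide of }\alpha) \circ \beta$. This is plausible because $\alpha$, as a sub-tableau of $\alpha \circ \beta$, is exactly $\rho[\,|\alpha|\,]$, and the hole, which starts at the top-left and only ever swaps with the smaller of its right/below neighbors, can never cross into the region occupied by entries exceeding $|\alpha|$ as long as it is adjacent to at least one box with a small entry — one checks that the outer corner where the slide terminates is an outer corner of $\alpha$ itself (using that within each row, the entries of $\alpha$ all precede those of the $\beta$-part, and that the column containing the last box of row $i$ of $\alpha$ either has no box below in $\beta$'s contribution or that box has a larger entry).

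The main obstacle I anticipate is making this confinement claim fully rigorous: one must verify that the jeu de taquin hole, sliding through $\alpha \circ \beta$, genuinely never ``escapes'' into the $\beta$-region, and that when it stops, the resulting tableau restricted to the first $|\alpha|$ entries is precisely the first evacuation slide of $\alpha$ while the rest is unchanged. The subtlety is that the shape of $\alpha \circ \beta$ is the rowwise concatenation of the two shapes, so a box of $\alpha$ in row $i$ may have a box of (the shifted) $\beta$ directly below it in row $i+1$ — one needs that such a below-box always carries an entry larger than the right-neighbor's entry (or that there is no competition), so the hole prefers to move right (staying in $\alpha$) rather than down into $\beta$; the lattice/standardness constraints give this, but it requires a careful case analysis at each step of the slide. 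Once this lemma is in hand, the induction closes immediately as outlined above, with the global ``$-1$'' decrements on both sides matching because evacuation-slide-then-decrement is exactly the operation supplied by \cref{lem.induction}.
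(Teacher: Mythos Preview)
Your key lemma --- that the first evacuation slide of $\alpha\circ\beta$ stays inside the $\alpha$-region --- is false. Already for $\alpha = \ytableausmall{1}$ and any nonempty $\beta$, the hole at $(1,1)$ moves right into the $\beta$-region. In general, once the hole reaches an outer corner of $\alpha$ in some row $k$, if $\beta$ has boxes in row $k$ then that position is \emph{not} an outer corner of $\alpha\circ\beta$, and the slide continues rightward through every $\beta$-entry in row $k$: at each such position the right neighbor lies in a weakly earlier column of $\beta$ than the below neighbor (because row $k$ of $\alpha$ is strictly longer than row $k{+}1$), hence is smaller. You worried about the hole moving \emph{down} into $\beta$ but overlooked that it escapes by moving \emph{right}. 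This correct description of the slide is exactly the paper's short proof: for $j=1,\dots,|\sigma|$, the $j$th slide of $\sigma\circ\tau$ traces the $j$th slide of $\sigma$ and then sweeps straight across one row of $\tau$; after $|\sigma|$ such slides only $\tau$ remains, and the outer boxes filled so far record $\sigma^\vee$ shifted into the right-hand portion of each row. The remaining $|\tau|$ slides compute $\tau^\vee$, giving $\tau^\vee\circ\sigma^\vee$ directly, with no induction needed.

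Your inductive wrapper has a separate circularity: to analyze the first slide of $(\sigma\circ\tau)^\vee$ you assert that it ``has $\tau^\vee$ in its small-entry part and $\sigma^\vee$ in its large-entry part'', which is the statement to be proved. One can repair this --- prove the correct slide statement above, apply it to $\tau^\vee\circ\sigma^\vee$ (whose concatenation structure you \emph{do} know) to get that its first-slide-and-decrement equals $(\bar\tau)^\vee\circ\sigma^\vee$, match this with what you derived for $(\sigma\circ\tau)^\vee$, and finish by invoking reversibility of a jeu de taquin slide on tableaux of a fixed shape --- but then the slide analysis you need is precisely the paper's argument, and the detour through \cref{lem.induction} becomes extra scaffolding around the same computation.
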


\begin{proof}
For $j = 1, \dots, |\sigma|$, the $j$th evacuation slide of $\sigma\circ\tau$ first follows the path of the $j$th evacuation slide of $\sigma$, until it reaches $\tau$, say in row $k$; then it proceeds straight across row $k$ of $\tau$, shifting the row one unit to the left. In particular, after performing $|\sigma|$ evacuation slides to $\sigma\circ\tau$, we obtain the tableau $\tau$. The result follows.
\end{proof}

\begin{eg}\label{eg_evacuation_concatenation}
Let $\sigma$ be as in \cref{eg_prime_decomposition}. Then by \cref{evacuation_concatenation}, we have
\begin{align*}
\sigma^\vee &= \;\begin{ytableau}
1 & 3 \\
2 \\
4
\end{ytableau}^{\;\vee}
\circ
\;\begin{ytableau}
1
\end{ytableau}^{\;\vee}
\circ
\;\begin{ytableau}
1 \\
2 \\
3 \\
4
\end{ytableau}^{\;\vee}
\circ
\;\begin{ytableau}
1 \\
2 \\
3
\end{ytableau}^{\;\vee}\\[4pt]
&=
\;\begin{ytableau}
1 & 3 \\
2 \\
4
\end{ytableau}\;
\circ
\;\begin{ytableau}
1
\end{ytableau}\;
\circ
\;\begin{ytableau}
1 \\
2 \\
3 \\
4
\end{ytableau}\;
\circ
\;\begin{ytableau}
1 \\
2 \\
3
\end{ytableau}\;
=
\;\begin{ytableau}
1 & 3 & 5 & 6 & 10 \\
2 & 7 & 11 \\
4 & 8 & 12 \\
9
\end{ytableau}\;,
\end{align*}
which we can also verify using the definition of evacuation (see \cref{eg_Lslide} below).
\end{eg}

Let $\ell\in\mathbb{N}$, and let $r$ be a word on the alphabet $[\ell]$ such that $1, \dots, \ell$ all appear. Then we define $\newrow{\ell}{r}$ to be the word obtained from $r$ by increasing the first occurrences of $1, \dots, \ell$ by $1$, and then prepending $1$. When $\sigma$ is a standard tableau of length $\ell$, then $\newrow{\ell}{\rowword{\sigma}}$ is the lattice word of the standard tableau obtained from $\sigma$ by shifting the first column down by $1$ unit, increasing all entries by $1$, and placing $1$ in the top-left corner. (Informally, this is an inverse evacuation slide involving only the first column.)

\begin{eg}\label{eg_word_slide}
We have $\newrow{3}{12113123} = 1\underline{2}\underline{3}11\underline{4}123$, where we have underlined the entries which were increased by $1$. Note that $12113123$ is the lattice word of the tableau $\sigma$, and the corresponding operation on tableaux is depicted below:
\begin{gather*}
\begin{gathered}
\sigma = \, \\
~
\end{gathered}\;\begin{ytableau}
1 & 3 & 4 & 6 \\
2 & 7 \\
5 & 8 \\
\none
\end{ytableau}\;
\quad\longmapsto\quad
\;\begin{ytableau}
1 & 4 & 5 & 7 \\
2 & 8 \\
3 & 9 \\
6
\end{ytableau}\;.
\end{gather*}
Note that $\sigma$ is obtained from the tableau on the right by performing the first evacuation slide (and renumbering), and the slide path is straight down the first column.
\end{eg}

\begin{lem}\label{word_slide_lemma}
Let $r$ be the lattice word of a standard tableau of length $\ell\ge 0$. Then \[
(r\circ\{\ell+1\})^\vee = \newrow{\ell}{r^\vee}.
\]
\end{lem}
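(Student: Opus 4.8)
The plan is to deduce the statement from \cref{lem.induction} by adjoining a single box. Let $\sigma$ be the standard tableau with $\rowword{\sigma} = r$, say of shape $\lambda = (\lambda_1, \dots, \lambda_\ell)$ (so $\sigma$ has exactly $\ell$ rows), and write $n := |\sigma|$. If $\ell = 0$ then both sides of the claimed identity are the one-letter word $1$ and there is nothing to prove, so assume $\ell \ge 1$; in particular the letter $\ell$ occurs in $r$, so $r \circ \{\ell+1\}$ is again a lattice word. Let $\mu$ be the corresponding standard tableau. Explicitly, $\mu$ is obtained from $\sigma$ by adjoining one box, containing the entry $n+1$, at position $(\ell+1,1)$; its shape is $(\lambda_1, \dots, \lambda_\ell, 1)$. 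Since $n+1$ is the largest entry of $\mu$ and occupies this new box, we have $\bar{\mu} = \mu[n] = \sigma$.

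The main step is to determine the first evacuation slide of $\mu^\vee$. Because $(\mu^\vee)^\vee = \mu$, the reasoning in the proof of \cref{lem.induction} shows that the slide path of the first evacuation slide of $\mu^\vee$ ends at the box of $\mu$ containing the largest entry $n+1$, namely the box $(\ell+1,1)$. A jeu de taquin slide path moves only rightward and downward, so a path from $(1,1)$ to $(\ell+1,1)$ must run straight down the first column. Consequently, performing the first evacuation slide on $\mu^\vee$ moves each of the entries $\mu^\vee_{2,1}, \mu^\vee_{3,1}, \dots, \mu^\vee_{\ell+1,1}$ up by one row (into rows $1, \dots, \ell$ of the first column), leaves every entry outside the first column unchanged, and deletes the box $(\ell+1,1)$.

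Now apply \cref{lem.induction} to $\mu$: the tableau $\sigma^\vee = (\bar{\mu})^\vee$ is obtained from $\mu^\vee$ by performing this first evacuation slide and then decreasing every entry by $1$. Reading this off the previous paragraph gives $\sigma^\vee_{i,1} = \mu^\vee_{i+1,1} - 1$ for $1 \le i \le \ell$, and $\sigma^\vee_{i,j} = \mu^\vee_{i,j} - 1$ for every box $(i,j)$ of $\sigma^\vee$ with $j \ge 2$. Equivalently, recalling that $\mu^\vee_{1,1} = 1$ since $\mu^\vee$ is standard, the tableau $\mu^\vee$ is obtained from $\sigma^\vee$ by shifting the first column down one unit, increasing every entry by $1$, and inserting $1$ in the top-left corner. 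By the description of $\newrownoarg{\ell}$ given just before the lemma, this is precisely the assertion $\rowword{\mu^\vee} = \newrow{\ell}{\rowword{\sigma^\vee}}$, i.e.\ $(r \circ \{\ell+1\})^\vee = \newrow{\ell}{r^\vee}$.

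The only point requiring care is the claim in the second paragraph that the first evacuation slide of $\mu^\vee$ runs straight down the first column; once this is granted, everything else is bookkeeping with \cref{lem.induction}. That claim, in turn, follows by combining two facts already on hand: the slide path terminates at the box of $\mu$ holding its largest entry (from the proof of \cref{lem.induction}), and jeu de taquin slide paths are monotone lattice paths, so such a path is forced once its endpoints are pinned down.
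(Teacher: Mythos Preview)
Your proof is correct and follows essentially the same route as the paper: both arguments apply \cref{lem.induction} to the tableau with lattice word $r\circ\{\ell+1\}$ and reduce the claim to showing that the first evacuation slide of its evacuation runs straight down the first column, which follows because the slide must terminate at the box $(\ell+1,1)$. Your version is a bit more explicit in spelling out why a monotone slide path with those endpoints is forced to be vertical, but the structure is identical.
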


\begin{proof}
Let $\sigma$ denote the standard tableau whose lattice word is $r\circ\{\ell+1\}$. We must show that $\newrow{\ell}{r^\vee}$ is the lattice word of $\sigma^\vee$. Note that $r$ is the lattice word of $\bar{\sigma}$, and $\sigma$ is obtained from $\bar{\sigma}$ by adding a box in a new row with the entry $|\sigma|$. By \cref{lem.induction}, $(\bar{\sigma})^\vee$ (whose lattice word is $r^\vee$) is obtained from $\sigma^\vee$ by performing the first evacuation slide on $\sigma^{\vee}$ and decreasing every entry by $1$. We must show that this slide is along the first column. This follows from the fact that $|\sigma|$ occurs at the bottom of the first column of $\sigma$.
\end{proof}

\begin{thm}\label{evacuation_closed_prime}
For all $\ell\ge 1$, the set $\Rprime{\ell}$ of prime Richardson words with largest letter $\ell$ is closed under the evacuation map $(\cdot)^\vee$.
\end{thm}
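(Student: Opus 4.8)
The plan is to induct on $\ell$. For $\ell \le 2$ there is nothing to do, since $\Rprime{1} = \{1\}$ and $\Rprime{2} = \{12\}$ each consist of a single column-shaped tableau, which is fixed by evacuation. So suppose $\ell \ge 3$ and that $\Rprime{j}$ is closed under $(\cdot)^\vee$ for all $j < \ell$; I would first observe that this forces $\Rbiggest{j}$ to be closed under $(\cdot)^\vee$ for all $j \le \ell-1$ as well, since by \cref{prime_star} a word in $\Rbiggest{j}$ is a concatenation $s_1 \circ \cdots \circ s_k$ of primes from $\Rprime{1} \sqcup \cdots \sqcup \Rprime{j}$, and \cref{evacuation_concatenation} turns its evacuation into $s_k^\vee \circ \cdots \circ s_1^\vee$, again a concatenation of such primes. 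Now take $r \in \Rprime{\ell}$ and use \cref{prime_recursion} to write $r = p \circ q \circ \{\ell\}$ with $p \in \Rprime{\ell-1}$ and $q \in \Rbiggest{\ell-2}$. Setting $u := p \circ q$ — a lattice word with exactly $\ell-1$ rows — \cref{word_slide_lemma} gives $r^\vee = \newrow{\ell-1}{u^\vee}$, and \cref{evacuation_concatenation} gives $u^\vee = q^\vee \circ p^\vee$; by the inductive hypothesis $p^\vee \in \Rprime{\ell-1}$ and $q^\vee \in \Rbiggest{\ell-2}$. So the theorem reduces to showing $\newrow{\ell-1}{q^\vee \circ p^\vee} \in \Rprime{\ell}$.

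The crux is the following self-contained claim, to be proved by induction on $m \ge 1$: \emph{(a)} if $v$ is a Richardson word with exactly $m$ rows then $\newrow{m}{v} \in \Rprime{m+1} \circ \Rbiggest{m}$; and \emph{(b)} if $w \in \Rbiggest{m-1}$ and $P \in \Rprime{m}$ then $\newrow{m}{w \circ P} \in \Rprime{m+1}$. Granting the claim, applying (b) with $m = \ell-1$, $w = q^\vee$, $P = p^\vee$ finishes the inductive step above. For the base case $m=1$: the only Richardson word with one row is $1^k$ for some $k \ge 1$, and $\newrow{1}{1^k} = 12 \circ 1^{k-1} \in \Rprime{2} \circ \Rbiggest{1}$, which yields (a); the special case $k=1$ (where $w = \emptyset$, $P = 1$) yields (b). In the inductive step I would deduce (b) for $m$ from (a) for $m-1$: writing $P = P_1 \circ Q_1 \circ \{m\}$ with $P_1 \in \Rprime{m-1}$ and $Q_1 \in \Rbiggest{m-2}$ via \cref{prime_recursion}, the letter $m$ appears in $w \circ P$ only as the final letter while the first occurrences of $1, \dots, m-1$ all lie in the prefix $w \circ P_1 \circ Q_1$ (which contains all of $1, \dots, m-1$ because $P_1$ has $m-1$ rows); unwinding the definition of $\newrow{m}{\cdot}$ then gives $\newrow{m}{w \circ P} = \newrow{m-1}{w \circ P_1 \circ Q_1} \circ \{m+1\}$, and since $w \circ P_1 \circ Q_1$ is Richardson (by \cref{prime_concatenation}) with exactly $m-1$ rows, (a) for $m-1$ gives $\newrow{m-1}{w \circ P_1 \circ Q_1} \in \Rprime{m} \circ \Rbiggest{m-1}$, whence $\newrow{m}{w \circ P} \in \Rprime{m} \circ \Rbiggest{m-1} \circ \{m+1\} = \Rprime{m+1}$ by \cref{prime_recursion}. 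Finally I would deduce (a) for $m$ from (b) for $m$: for Richardson $v$ with $m$ rows, some factor of the prime decomposition $v = v^{(1)} \circ \cdots \circ v^{(s)}$ has $m$ rows (concatenating tableaux concatenates rows); taking the first such factor $v^{(t)}$ and setting $c := v^{(1)} \circ \cdots \circ v^{(t-1)}$ and $d := v^{(t+1)} \circ \cdots \circ v^{(s)}$, one has $c \in \Rbiggest{m-1}$ and $d \in \Rbiggest{m}$ (by \cref{prime_star,prime_concatenation}), and every first occurrence of $1, \dots, m$ in $v$ lies inside $c \circ v^{(t)}$ (since $v^{(t)}$ contains all of $1, \dots, m$ while $c$ uses no letter $\geq m$), so $\newrow{m}{v} = \newrow{m}{c \circ v^{(t)}} \circ d$, which lies in $\Rprime{m+1} \circ \Rbiggest{m}$ by (b).

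The main obstacle, I expect, is arriving at the correct formulation of the claim: one genuinely needs both the ``exactly $m$ rows'' form (a) and the ``prefix times a prime'' form (b), interlocked so that (b) for $m$ uses (a) for $m-1$ while (a) for $m$ uses (b) for $m$. Once this is set up, the remaining work is bookkeeping — tracking where the first occurrence of each letter sits inside a concatenation, and checking how row counts add under concatenation. As a sanity check it is worth noting that evacuation preserves the shape of a tableau and carries prime words to prime words (the latter being immediate from \cref{evacuation_concatenation} together with the involutivity of $(\cdot)^\vee$), so the only real content of the theorem is that $r^\vee$ is a Richardson word — which is exactly what the inductive argument delivers, and which is why the analogous statement for all of $\Rbiggest{\ell}$ follows at once from \cref{prime_concatenation,evacuation_concatenation}.
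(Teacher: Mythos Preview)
Your proof is correct and shares the paper's overall architecture: the same base cases, the same use of \cref{word_slide_lemma} and \cref{evacuation_concatenation} to reduce the inductive step to showing $\newrow{\ell-1}{\Rbiggest{\ell-2}\circ\Rprime{\ell-1}}\subseteq\Rprime{\ell}$ (your claim~(b) at $m=\ell-1$; the paper's \eqref{induction_to_prove}). Where the two diverge is in how this key containment is established. The paper proves the \emph{equality} $\newrow{m}{\Rbiggest{m-1}\circ\Rprime{m}}=\Rprime{m+1}$ by induction on $m$, the pivotal move being the set-level identity $\Rbiggest{\ell-2}\circ\Rprime{\ell-2}\circ\Rbiggest{\ell-3}=\Rbiggest{\ell-3}\circ\Rprime{\ell-2}\circ\Rbiggest{\ell-2}$ (both sides are the Richardson words on $[\ell-2]$ having at least one prime factor in $\Rprime{\ell-2}$), which lets one peel off a trailing $\Rbiggest{\ell-2}\circ\{\ell\}$ after applying $\newrownoarg{\ell-1}$. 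Your auxiliary claim~(a) is an element-level version of the same idea: locating the first prime factor of maximal length inside the prime decomposition is exactly what underlies that commutation identity. The paper's route is a line or two shorter; your (a)/(b) scaffold makes the mechanism of ``where the first occurrences of each letter live'' more explicit and arguably easier to verify by hand.
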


\begin{proof}
We proceed by induction on $\ell$. For the base cases, we can verify that $\Rprime{1} = \{1\}$ and $\Rprime{2} = \{12\}$ are closed under evacuation. Now suppose that $\ell\ge 3$ and $\Rprime{1}, \dots, \Rprime{\ell-1}$ are closed under evacuation (whence $\Rbiggest{0}, \dots, \Rbiggest{\ell-1}$ are closed under evacuation by \eqref{equation_prime_star} and \cref{evacuation_concatenation}). We must show that $\Rprime{\ell}$ is closed under evacuation. We have
\begin{align*}
\Rprime{\ell}^\vee &= (\Rprime{\ell-1} \circ \Rbiggest{\ell-2} \circ \{\ell\})^\vee \qquad \text{(by \eqref{equation_prime_recursion})}\\[2pt]
&= \newrow{\ell-1}{(\Rprime{\ell-1} \circ \Rbiggest{\ell-2})^\vee} \qquad \text{(by \cref{word_slide_lemma})}\\[2pt]
&= \newrow{\ell-1}{\Rbiggest{\ell-2}^\vee \circ \Rprime{\ell-1}^\vee} \qquad \text{(by \cref{evacuation_concatenation})}\\[2pt]
&= \newrow{\ell-1}{\Rbiggest{\ell-2} \circ \Rprime{\ell-1}} \qquad \text{(by the induction hypothesis)}.
\end{align*}
Therefore $\Rprime{\ell}$ is closed under evacuation if and only if 
\begin{align}\label{induction_to_prove}
\newrow{\ell-1}{\Rbiggest{\ell-2} \circ \Rprime{\ell-1}} = \Rprime{\ell}.
\end{align}

By the induction hypothesis, \eqref{induction_to_prove} holds with $\ell$ replaced by $\ell-1$, i.e., we have
\begin{align}\label{induction_hypothesis_equivalent}
\newrow{\ell-2}{\Rbiggest{\ell-3} \circ \Rprime{\ell-2}} = \Rprime{\ell-1}.
\end{align}
Now using \eqref{equation_prime_recursion} again, we obtain
\[
\Rbiggest{\ell-2} \circ \Rprime{\ell-1} = \Rbiggest{\ell-2} \circ \Rprime{\ell-2} \circ \Rbiggest{\ell-3} \circ \{\ell-1\}.
\]
We also have
\[
\Rbiggest{\ell-2} \circ \Rprime{\ell-2} \circ \Rbiggest{\ell-3} = \Rbiggest{\ell-3} \circ \Rprime{\ell-2} \circ \Rbiggest{\ell-2},
\]
since both sides are the set of Richardson words on the alphabet $[\ell-2]$ with at least one prime factor in $\Rprime{\ell-2}$. Hence
\begin{align*}
\newrow{\ell-1}{\Rbiggest{\ell-2} \circ \Rprime{\ell-1}} &= \newrow{\ell-1}{\Rbiggest{\ell-3} \circ \Rprime{\ell-2} \circ \Rbiggest{\ell-2} \circ \{\ell-1\}} \\[2pt]
&= \newrow{\ell-2}{\Rbiggest{\ell-3} \circ \Rprime{\ell-2}} \circ \Rbiggest{\ell-2} \circ \{\ell\} \qquad \text{(by definition of $\newrownoarg{i}$)}\\[2pt]
&= \Rprime{\ell-1} \circ \Rbiggest{\ell-2} \circ \{\ell\} \qquad \text{(by \eqref{induction_hypothesis_equivalent})}\\[2pt]
&= \Rprime{\ell} \qquad \text{(by \eqref{equation_prime_recursion})}.
\end{align*}
This proves \eqref{induction_to_prove} and completes the induction.
\end{proof}

\begin{cor}\label{evacuation_closed}
The set of Richardson tableaux is closed under the evacuation map $(\cdot)^\vee$.
\end{cor}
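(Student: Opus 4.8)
The plan is to reduce to the prime case already settled in \cref{evacuation_closed_prime}. Let $\sigma$ be a Richardson tableau (the empty case being trivial) and write $r := \rowword{\sigma}$ for its lattice word. First I would take the prime decomposition $r = r_1 \circ \cdots \circ r_m$ of $r$. By \cref{prime_concatenation}\ref{prime_concatenation_backward}, each prime factor $r_i$ is again a Richardson word, hence $r_i \in \Rprime{\ell_i}$ for some $\ell_i \ge 1$ (equivalently, this is \cref{prime_star} unwound).

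Next, \cref{evacuation_closed_prime} gives $r_i^\vee \in \Rprime{\ell_i}$ for every $i$, so each $r_i^\vee$ is again a prime Richardson word. Since evacuation anticommutes with concatenation by \cref{evacuation_concatenation}, the lattice word of $\sigma^\vee$ is
\[
r^\vee = r_m^\vee \circ \cdots \circ r_1^\vee,
\]
which is a concatenation of Richardson words and therefore itself a Richardson word by \cref{prime_concatenation}\ref{prime_concatenation_forward}. Hence $\sigma^\vee$ is a Richardson tableau. As evacuation preserves the shape of a standard tableau and squares to the identity, $(\cdot)^\vee$ in fact restricts to an involution on $\Rtabs{\lambda}$ for each partition $\lambda$.

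There is no substantive obstacle remaining at this stage: the entire weight of the argument has been absorbed into \cref{evacuation_closed_prime}, and this corollary is a purely formal consequence of the compatibility among evacuation, concatenation, and prime decomposition. One could instead attempt a direct, self-contained proof starting from the characterization in \cref{condition_2} together with the inductive description of evacuation in \cref{lem.induction}, but that route appears both more delicate and less transparent than the reduction above.
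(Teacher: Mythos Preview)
Your proof is correct and follows essentially the same approach as the paper: reduce to the prime case via \cref{evacuation_closed_prime}, use \cref{evacuation_concatenation} to handle concatenation, and then conclude using \cref{prime_concatenation} (equivalently \eqref{equation_prime_star}). The paper's proof is just a terser citation of these same ingredients.
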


\begin{proof}
This follows from \eqref{equation_prime_star}, \cref{evacuation_closed_prime}, and \cref{evacuation_concatenation}.
\end{proof}


\section{Enumeration of Richardson tableaux}\label{sec_enumeration}

\noindent In this section we count Richardson tableaux in various ways. Our main results are formulas for the number of Richardson tableaux of fixed size (\cref{motzkin_richardson}), the number of Richardson tableaux of fixed shape (\cref{q_count}), and generating functions of Richardson tableaux (\cref{richardson_ogf}).


\subsection{Enumeration of Richardson tableaux of fixed size}\label{sec_motzkin}
For $n\in\mathbb{N}$, define the \boldit{Motzkin number} $M_n$ to be the number of paths with $n$ steps from $(0,0)$ to $(n,0)$ which never pass below the $x$-axis, using three possible kinds of steps: up (i.e., the step $(1,1)$), down (i.e., the step $(1,-1)$), and horizontal (i.e., the step $(1,0)$). We have $(M_n)_{n=0}^\infty = (1, 1, 2, 4, 9, 21, \dots)$; for example, see \cref{figure_motzkin}. For further background on Motzkin numbers, we refer to \cite{donaghey_shapiro77} and \cite[Exercises 6.37 and 6.38]{stanley24}.

We will show that the number of Richardson tableaux of size $n$ equals $M_n$ in  \cref{motzkin_richardson},\footnote{We thank Vasu Tewari for suggesting this result to us.} and then use it to asymptotically calculate the proportion of standard tableaux which are Richardson in \cref{proportion_richardson}. The key is a bijection between prime Richardson words and Richardson words:
\begin{thm}\label{delete_two}
For all $\ell\geq 2$, there is a bijection 
\begin{eqnarray}\label{eqn.bijection}
\Psi: \Rprime{\ell} \to \Rbiggest{\ell-2}\setminus\Rbiggest{\ell-3}
\end{eqnarray}
between the set $\Rprime{\ell}$ of prime Richardson words whose largest letter is $\ell$ and the set $\Rbiggest{\ell-2} \setminus \Rbiggest{\ell-3}$ of Richardson words whose largest letter is $\ell-2$, such that for all $r\in\Rprime{\ell}$, the multiset of letters appearing in $r$ is the union of $\{\ell-1, \ell\}$ with the multiset of letters appearing in $\Psi(r)$. \textup{(}By convention, we set $\Rbiggest{-1} := \emptyset$.\textup{)}
\end{thm}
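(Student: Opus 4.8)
The plan is to realize $\Psi$ and its inverse explicitly, using the iterated prime recursion \eqref{delete_two_recursion_old} together with the description of Richardson words via prime decompositions in \cref{prime_star,prime_concatenation}. The case $\ell = 2$ is immediate: $\Rprime{2} = \{12\}$ while $\Rbiggest{0}\setminus\Rbiggest{-1}$ consists only of the empty word, so we set $\Psi(12)$ equal to the empty word and the letter condition holds trivially. Assume henceforth that $\ell \ge 3$.

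First I would define the map. By \eqref{delete_two_recursion_old}, and tracking the uniqueness clause of \cref{prime_recursion} through the two applications used to derive it, every $r \in \Rprime{\ell}$ has a \emph{unique} factorization
\[
r = q \circ v \circ \{\ell-1\} \circ u \circ \{\ell\}, \qquad q \in \Rprime{\ell-2}, \quad v \in \Rbiggest{\ell-3}, \quad u \in \Rbiggest{\ell-2}.
\]
Set $\Psi(r) := u \circ q \circ v$. By \cref{prime_concatenation}\ref{prime_concatenation_forward} this is a Richardson word, and (since the letters of $u$ and $v$ are at most $\ell-2$ and $\ell-3$ respectively) its largest letter is exactly $\ell-2$, contributed by $q$; hence $\Psi(r) \in \Rbiggest{\ell-2}\setminus\Rbiggest{\ell-3}$. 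As is visible from the displayed factorization, $\ell-1$ and $\ell$ each occur exactly once in $r$ and not at all in $u$, $q$, or $v$, so the multiset of letters of $r$ is the multiset of letters of $\Psi(r)$ together with $\{\ell-1,\ell\}$, as required.

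Next I would exhibit the inverse. The crux is that every $w \in \Rbiggest{\ell-2}\setminus\Rbiggest{\ell-3}$ admits a \emph{unique} factorization $w = u \circ q \circ v$ with $q \in \Rprime{\ell-2}$, $v \in \Rbiggest{\ell-3}$, and $u \in \Rbiggest{\ell-2}$. For existence, write the prime decomposition $w = w_1 \circ \cdots \circ w_m$ from \cref{prime_star}; since $w$ has largest letter exactly $\ell-2$, some $w_i$ lies in $\Rprime{\ell-2}$, and choosing the \emph{largest} such $i$ we set $q := w_i$, $u := w_1 \circ \cdots \circ w_{i-1}$, and $v := w_{i+1} \circ \cdots \circ w_m$. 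Then $u \in \Rbiggest{\ell-2}$ by \cref{prime_concatenation}\ref{prime_concatenation_forward}, while $v \in \Rbiggest{\ell-3}$ because each $w_j$ with $j > i$ has largest letter at most $\ell-2$ but, by maximality of $i$, not equal to $\ell-2$. For uniqueness, in any factorization $w = u \circ q \circ v$ of the prescribed form the prime decomposition of $v$ uses only letters at most $\ell-3$, so $q$ must be the last prime factor of $w$ with largest letter $\ell-2$; this pins down $q$, and then $u$ and $v$ by uniqueness of the prime decomposition. Now set $\Phi(w) := q \circ v \circ \{\ell-1\} \circ u \circ \{\ell\} \in \Rprime{\ell}$, using \eqref{delete_two_recursion_old}. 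That $\Phi$ and $\Psi$ are mutually inverse is then a formal check: the triple $(q,v,u)$ read off $\Phi(w)$ by the uniqueness of the previous paragraph's factorization is exactly the one defining $\Phi(w)$, giving $\Psi(\Phi(w)) = w$; and the triple read off $r$ is the same as the one read off $\Psi(r) = u \circ q \circ v$, giving $\Phi(\Psi(r)) = r$.

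I expect the one step needing genuine care to be the existence and uniqueness of the factorization $w = u\circ q\circ v$ for $w\in\Rbiggest{\ell-2}\setminus\Rbiggest{\ell-3}$: the subtle point is that $q$ cannot be taken to be the \emph{first} prime factor of $w$, since that factor may have largest letter smaller than $\ell-2$; one must take the \emph{last} prime factor with largest letter $\ell-2$, after which everything to its right automatically lands in $\Rbiggest{\ell-3}$, which is what makes the decomposition both well-defined and unique. Everything else — stability of Richardson words under concatenation, the behavior of largest letters, and the bookkeeping of multisets of letters — is routine given \cref{prime_recursion,prime_star,prime_concatenation}.
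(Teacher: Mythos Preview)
Your proof is correct and follows essentially the same strategy as the paper: both use the double recursion \eqref{delete_two_recursion_old} to factor $r = q \circ v \circ (\ell-1) \circ u \circ \ell$ uniquely, and then map $r$ to a rearrangement of the three pieces $q,v,u$. The only difference is the choice of rearrangement. The paper defines $\Psi(r) = v \circ q \circ u$ and recovers $q$ from the image as the \emph{first} prime factor lying in $\Rprime{\ell-2}$ (this works because $v \in \Rbiggest{\ell-3}$ sits to the left of $q$); you define $\Psi(r) = u \circ q \circ v$ and recover $q$ as the \emph{last} such prime factor (this works because $v \in \Rbiggest{\ell-3}$ now sits to the right of $q$). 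Both are valid bijections satisfying the multiset condition, and neither is simpler than the other. One small quibble: in your final paragraph you say ``$q$ cannot be taken to be the first prime factor of $w$, since that factor may have largest letter smaller than $\ell-2$''; the real obstruction in your ordering is rather that the first prime factor of $u$ may already have largest letter \emph{equal} to $\ell-2$, so the first prime factor of $w$ in $\Rprime{\ell-2}$ need not be $q$.
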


\begin{proof} If $\ell=2$ then $\Rprime{\ell} = \{12\}$, and $\Rbiggest{0}\setminus\Rbiggest{-1} = \{\varnothing\}$ is the singleton consisting of the empty word $\varnothing$ (cf.\ \cref{rmk.zerocase}). We set $\Psi(12) := \varnothing$.

Now assume $\ell\geq 3$. Recall from \eqref{delete_two_recursion_old} that
\begin{align}\label{delete_two_recursion}
\Rprime{\ell} = \Rprime{\ell-2} \circ \Rbiggest{\ell-3} \circ \{\ell-1\} \circ \Rbiggest{\ell-2} \circ \{\ell\} \quad \text{ for all } \ell\ge 3.
\end{align}
Given $r\in\Rprime{\ell}$, write $r = s \circ t \circ (\ell-1) \circ u \circ \ell$ as in \eqref{delete_two_recursion}. We define the map~\eqref{eqn.bijection} by
\[
\Psi( r) = s \circ t \circ (\ell-1) \circ u \circ \ell \mapsto t \circ s \circ u,
\]
which is well-defined by \cref{prime_concatenation}. We claim $\Psi$ is a bijection. Indeed, note that we can recover $s$ (and hence $t$ and $u$) from $t \circ s \circ u$ as the first prime factor of $t \circ s \circ u$ in $\Rprime{\ell-2}$.
\end{proof}

\begin{eg}
We illustrate the map $\Psi$ from \eqref{eqn.bijection}. Consider the prime Richardson tableau
\[
\sigma = \;\begin{ytableau}
1 & 3 & 5 \\
2 & 6 \\
4 \\
7
\end{ytableau}\;,
\]
which has length $\ell = 4$ and lattice word $r = 1213124\in \Rprime{4}$. We factor $r$ as in~\eqref{delete_two_recursion}:
\[
r = 1213124 = 12\circ 1 \circ 3 \circ 12 \circ 4 = s \circ t \circ 3 \circ u \circ 4,
\]
where $s=12\in \Rprime{2}$, $t=1\in \Rbiggest{1}$, and $u=12\in\Rbiggest{2}$. Therefore
\[
\Psi(r) = t\circ s \circ u = 1\circ 12 \circ 12 = 11212\in \Rbiggest{2},
\]
which is the lattice word of the tableau
\[
\;\begin{ytableau}
1 & 2 & 4\\
3 & 5
\end{ytableau}\;
\]
of length $\ell-2$. Note that this tableau is Richardson but not prime.
\end{eg}

Recall from \cref{prime_last_two_rows} that the last two rows of every prime Richardson tableau of length $\ell\geq 2$ have length one. The bijection $\Psi$ from \eqref{eqn.bijection} restricts to a bijection between prime Richardson tableaux with a specific shape and the Richardson tableaux of the shape obtained by deleting the last two rows (see \cref{figure_delete_two} for an example):
\begin{cor}\label{cor_delete_two}
Let $\lambda = (\lambda_1, \dots, \lambda_{\ell-2}, 1, 1)$ be a partition of length $\ell \ge 2$ whose last two parts are $1$. Then the number of prime Richardson tableaux of shape $\lambda$ equals the number of Richardson tableaux of shape $(\lambda_1, \dots, \lambda_{\ell-2})$.
\end{cor}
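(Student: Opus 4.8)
The plan is to obtain this as an immediate consequence of the bijection $\Psi$ from \cref{delete_two}, once we keep track of how $\Psi$ affects the shape of a tableau. The bridge is the standard fact (recalled in \cref{sec_background_tableaux}) that the shape of a standard tableau is determined by its lattice word: if $r$ is the lattice word of a standard tableau of shape $\mu$, then $\mu_i$ is the number of occurrences of the letter $i$ in $r$. So I would first translate both quantities in the statement into statements about lattice words. Prime Richardson tableaux of shape $\lambda = (\lambda_1, \dots, \lambda_{\ell-2}, 1, 1)$ correspond exactly to the words in $\Rprime{\ell}$ whose letter multiset is $\{1^{\lambda_1}, \dots, (\ell-2)^{\lambda_{\ell-2}}\} \cup \{\ell-1, \ell\}$ (the largest letter of such a word is automatically $\ell$, so membership in $\Rprime{\ell}$ is equivalent to being prime and Richardson). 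Likewise, Richardson tableaux of shape $(\lambda_1, \dots, \lambda_{\ell-2})$ correspond to the Richardson words whose letter multiset is $\{1^{\lambda_1}, \dots, (\ell-2)^{\lambda_{\ell-2}}\}$; since $\lambda$ is a partition of length $\ell$ we have $\lambda_{\ell-2} \ge \lambda_{\ell-1} = 1$, so $(\lambda_1, \dots, \lambda_{\ell-2})$ is a genuine partition (of length $\ell-2$) and the largest letter of any such word is exactly $\ell-2$; hence these words are precisely the elements of $\Rbiggest{\ell-2} \setminus \Rbiggest{\ell-3}$ with the prescribed letter multiplicities.

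Then I would invoke the refinement in \cref{delete_two}: for every $r \in \Rprime{\ell}$, the letter multiset of $r$ equals $\{\ell-1, \ell\}$ together with the letter multiset of $\Psi(r)$. Restricting the bijection $\Psi \colon \Rprime{\ell} \to \Rbiggest{\ell-2} \setminus \Rbiggest{\ell-3}$ to the set of words with letter multiset $\{1^{\lambda_1}, \dots, (\ell-2)^{\lambda_{\ell-2}}\} \cup \{\ell-1,\ell\}$ thus produces a bijection onto the set of words with letter multiset $\{1^{\lambda_1}, \dots, (\ell-2)^{\lambda_{\ell-2}}\}$, which by the previous paragraph is exactly a bijection between the prime Richardson tableaux of shape $\lambda$ and the Richardson tableaux of shape $(\lambda_1, \dots, \lambda_{\ell-2})$. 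The degenerate case $\ell = 2$, where $(\lambda_1, \dots, \lambda_{\ell-2})$ is the empty partition, is already covered by the convention $\Psi(12) := \varnothing$ in the proof of \cref{delete_two} (cf.\ \cref{rmk.zerocase}).

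I do not anticipate a genuine obstacle here, since all the work is already contained in \cref{delete_two}; the only thing to verify is the bookkeeping identifying ``shape'' with ``letter multiset of the lattice word'' and checking that the largest-letter constraints on the two sides are compatible. The one point that deserves a sentence of care is confirming that $(\lambda_1, \dots, \lambda_{\ell-2})$ is a valid (possibly empty) partition and that its Richardson words are precisely those in $\Rbiggest{\ell-2} \setminus \Rbiggest{\ell-3}$ with the right multiplicities, which is where the hypothesis $\lambda_{\ell-1} = \lambda_\ell = 1$ (giving $\lambda_{\ell-2} \ge 1$) is used.
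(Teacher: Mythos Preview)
Your proposal is correct and takes essentially the same approach as the paper: the paper's proof is the single sentence ``The map $\Psi$ from \cref{delete_two} restricts to a bijection on the corresponding sets of lattice words,'' and you have simply spelled out the bookkeeping (shape $\leftrightarrow$ letter multiset, and the compatibility of the largest-letter constraints) that this sentence leaves implicit.
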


\begin{proof}
The map $\Psi$ from \cref{delete_two} restricts to a bijection on the corresponding sets of lattice words.
\end{proof}

\begin{figure}[ht]
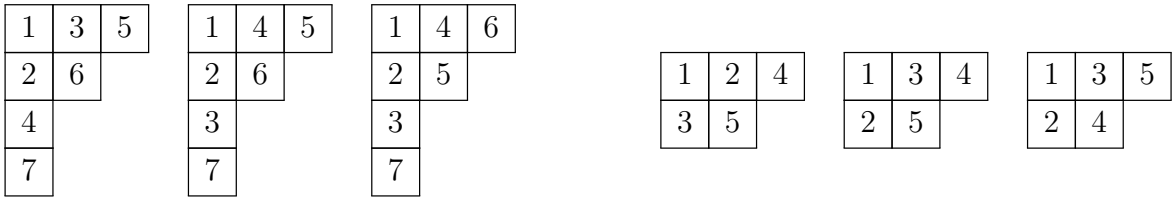

\begin{center}
\[
\;\begin{ytableau}
1 & 3 & 5 \\
2 & 6 \\
4 \\
7
\end{ytableau}\;
\hspace*{8pt}
\;\begin{ytableau}
1 & 4 & 5 \\
2 & 6 \\
3 \\
7
\end{ytableau}\;
\hspace*{8pt}
\;\begin{ytableau}
1 & 4 & 6 \\
2 & 5 \\
3 \\
7
\end{ytableau}\;
\hspace*{48pt}
\;\begin{ytableau}
1 & 2 & 4 \\
3 & 5
\end{ytableau}\;
\hspace*{8pt}
\;\begin{ytableau}
1 & 3 & 4 \\
2 & 5
\end{ytableau}\;
\hspace*{8pt}
\;\begin{ytableau}
1 & 3 & 5 \\
2 & 4
\end{ytableau}\;
\]
\caption{An illustration of \cref{cor_delete_two}. Left: prime Richardson tableaux of shape $\lambda = (3,2,1,1)$. Right: Richardson tableaux of shape $(3,2)$.}
\label{figure_delete_two}
\end{center}
\end{figure}

\begin{thm}\label{motzkin_richardson}
For all $n\in\mathbb{N}$, the number of Richardson tableaux of size $n$ equals the Motzkin number $M_n$.
\end{thm}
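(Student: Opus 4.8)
The plan is to translate the statement into the language of Richardson words and then compute the size generating function $F(q) := \sum_r q^{|r|}$, the sum being over all Richardson words $r$ (including the empty word, and $|r|$ denoting the length of $r$). I will show that $F$ satisfies the quadratic functional equation characterizing the Motzkin generating function $M(q) := \sum_{n\ge 0} M_n q^n$, which — together with matching constant terms — forces $F = M$; since Richardson words of size $n$ are in bijection with Richardson tableaux of size $n$, this yields the claim.

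First I would use \cref{prime_star}: summing the identity $\Rbiggest{\ell} = (\Rprime{1}\sqcup\cdots\sqcup\Rprime{\ell})^*$ over all $\ell$ shows that every Richardson word factors \emph{uniquely} as a concatenation of prime Richardson words, so that $F = 1/(1-P)$, where $P(q) := \sum q^{|r|}$ runs over all prime Richardson words $r$. Next I would compute $P$. The set of prime Richardson words is $\Rprime{1} = \{1\}$ together with $\bigsqcup_{\ell\ge 2}\Rprime{\ell}$; the former contributes $q$. For the latter, summing the bijections $\Psi\colon \Rprime{\ell}\to\Rbiggest{\ell-2}\setminus\Rbiggest{\ell-3}$ of \cref{delete_two} over $\ell\ge 2$ gives a bijection from $\bigsqcup_{\ell\ge2}\Rprime{\ell}$ onto $\bigsqcup_{k\ge 0}(\Rbiggest{k}\setminus\Rbiggest{k-1})$, which is precisely the set of all Richardson words (a nonempty Richardson word has a well-defined largest letter $k\ge 1$, and $\Rbiggest{0}\setminus\Rbiggest{-1} = \{\varnothing\}$ accounts for the empty word). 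Since \cref{delete_two} guarantees that the multiset of letters of $r\in\Rprime{\ell}$ is the union of $\{\ell-1,\ell\}$ with the multiset of letters of $\Psi(r)$, we have $|r| = |\Psi(r)| + 2$, so this bijection multiplies generating functions by $q^2$. Hence $P(q) = q + q^2 F(q)$.

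Substituting gives $F = 1/(1 - q - q^2 F)$, i.e.\ $q^2 F^2 - (1-q)F + 1 = 0$, with $F(0) = 1$ coming from the empty word. The Motzkin generating function obeys the identical equation $q^2 M^2 - (1-q)M + 1 = 0$ with $M(0) = M_0 = 1$, via the first-return decomposition of a Motzkin path into a sequence of blocks, each either a horizontal step or an elevated Motzkin path flanked by an up step and a down step. Extracting the coefficient of $q^n$ in the quadratic shows that a formal power series solution is determined by its constant term, so $F = M$ and the number of Richardson tableaux of size $n$ is $M_n$. I do not anticipate a serious obstacle: the combinatorial substance is already contained in \cref{prime_star} and \cref{delete_two}, and the only points requiring care are tracking the size shift of $2$ in \cref{delete_two} and checking that $\bigsqcup_{k\ge0}(\Rbiggest{k}\setminus\Rbiggest{k-1})$ really is the set of all Richardson words, including the empty one. (One could equally unwind the two recursions directly to obtain an explicit bijection between Richardson tableaux of size $n$ and Motzkin paths with $n$ steps, matching the prime factor $1$ with a horizontal step and a prime factor $\Psi^{-1}(w)$ with an up step, the Motzkin path of $w$, and a down step, as suggested by \cref{figure_motzkin}.)
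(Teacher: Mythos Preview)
Your proposal is correct and follows essentially the same approach as the paper: both derive $P(q) = q + q^2 F(q)$ from the bijection of \cref{delete_two} (the paper via its shape-refined \cref{cor_delete_two}) and combine with the prime-factorization identity $F = 1/(1-P)$ from \cref{prime_star} to obtain the Motzkin quadratic, then conclude by uniqueness of the power-series solution with constant term $1$.
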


\begin{proof}
Let $M(x) = \sum_{n\ge 0}M_nx^n$ denote the ordinary generating function for the Motzkin numbers. It satisfies the identity (see, e.g., \cite[(1)]{donaghey_shapiro77})
\begin{align}\label{motzkin_ogf}
M(x) = 1 + xM(x) + x^2M(x)^2.
\end{align}
Note that $M(x)$ the unique solution in $\C[[x]]$ to the quadratic equation \eqref{motzkin_ogf} (the other solution given by the quadratic formula is a formal Laurent series which is not in $\C[[x]]$).

On the other hand, consider the ordinary generating functions for Richardson tableaux and prime Richardson tableaux, respectively:
\[
R(x) = \sum_{\substack{\text{Richardson}\\ \text{tableaux $\sigma$}}}x^{|\sigma|}, \quad P(x) = \sum_{\substack{\text{prime Richardson}\\ \text{tableaux $\sigma$}}}x^{|\sigma|}.
\]
The theorem follows immediately once we show $R(x) = M(x)$. It suffices to prove that $R(x)$ also satisfies the quadratic equation \eqref{motzkin_ogf}.

By the prime factorization for Richardson tableaux (cf.~\cref{prime_star}), we have
\[
R(x) = \frac{1}{1 - P(x)}.
\]
By \cref{cor_delete_two}, we have $[x^\ell]P(x) = [x^{\ell-2}]R(x)$ for all $\ell \ge 2$, and $[x]P(x) = 1$ (where $[x^i]f(x)$ denotes the coefficient of $x^i$ in $f(x)$). Hence $P(x) = x + x^2R(x)$, and plugging this into the line above gives
\[
R(x) = \frac{1}{1 - x - x^2R(x)}.
\]
Clearing denominators shows that $R(x)$ satisfies \eqref{motzkin_ogf}, as desired.
\end{proof}

It would be interesting to describe an explicit bijection proving \cref{motzkin_richardson}:
\begin{prob}\label{motzkin_problem}
Construct an explicit bijection between the set of Richardson tableaux of size $n$ and the set of Motzkin paths with $n$ steps, for all $n\in\mathbb{N}$ \textup{(}by composing the bijections implicit in the proof of \cref{motzkin_richardson}, or otherwise\textup{)}.
\end{prob}

\begin{cor}\label{proportion_richardson}
For $n\in\mathbb{N}$, let $p_n$ denote the proportion of standard tableaux of size $n$ which are Richardson. Then
\[
p_n \sim \frac{3^{n + \frac{3}{2}}e^{\frac{n}{2} - \sqrt{n} + \frac{1}{4}}}{\sqrt{2\pi}\hspace*{1pt}n^{\frac{n+3}{2}}} \quad \text{ as } n\to\infty.
\]
\end{cor}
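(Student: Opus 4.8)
The plan is to combine the exact count from \cref{motzkin_richardson} with the well-known asymptotics for the two relevant counting sequences. By \cref{motzkin_richardson}, the number of Richardson tableaux of size $n$ is the Motzkin number $M_n$, while the total number of standard tableaux of size $n$ is the sum $t_n := \sum_{\lambda\vdash n}|\SYT(\lambda)|$, which by the Robinson--Schensted correspondence equals the number of involutions in $S_n$. Thus $p_n = M_n/t_n$, and the corollary reduces to dividing two known asymptotic expansions.

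First I would record the standard asymptotic for the Motzkin numbers. Using \eqref{motzkin_ogf}, the generating function $M(x)$ has its dominant singularity at $x = 1/3$ of square-root type, and singularity analysis (see, e.g., \cite[Exercise 6.37]{stanley24}) gives
\[
M_n \sim \frac{3^{n}\sqrt{3}}{2\sqrt{\pi}\, n^{3/2}} \quad\text{ as } n\to\infty.
\]
Next I would recall the classical asymptotic for the number of involutions $t_n$, due to Chowla--Herstein--Moore / Moser--Wyman:
\[
t_n \sim \frac{n^{n/2}e^{-n/2+\sqrt{n}-1/4}}{\sqrt{2}} \quad\text{ as } n\to\infty.
\]
(Here I am using that $t_n$ satisfies $t_n = t_{n-1} + (n-1)t_{n-2}$, whose asymptotics are obtained by a saddle-point analysis of the exponential generating function $e^{x + x^2/2}$.)

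Dividing these two expansions gives
\[
p_n = \frac{M_n}{t_n} \sim \frac{3^{n}\sqrt{3}}{2\sqrt{\pi}\, n^{3/2}}\cdot\frac{\sqrt{2}\, e^{n/2-\sqrt{n}+1/4}}{n^{n/2}} = \frac{3^{n+3/2}e^{n/2-\sqrt{n}+1/4}}{\sqrt{2\pi}\, n^{(n+3)/2}},
\]
which is the claimed formula; the simplification uses $\sqrt{3}\cdot 3^{n}=3^{n+1/2}$, so that $3^{n+1/2}\cdot 3 = 3^{n+3/2}$ after also absorbing a factor of $3$—more carefully, one tracks $\sqrt{3}/2 \cdot \sqrt 2 = \sqrt{3/2}$ and $\sqrt{3/2}/\sqrt\pi = \sqrt{3}/\sqrt{2\pi}$, giving the prefactor $3^{n}\sqrt{3}/\sqrt{2\pi} = 3^{n+1/2}/\sqrt{2\pi}$; comparing with $3^{n+3/2}$ shows the stated formula has an extra factor of $3$, so in writing up I would re-derive the Motzkin constant carefully to confirm it is $\tfrac{3^{3/2}}{2\sqrt\pi}$ rather than $\tfrac{\sqrt 3}{2\sqrt\pi}$, i.e.\ that $M_n \sim 3^{n+3/2}/(2\sqrt\pi\, n^{3/2})$ is the correct normalization—this is a known subtlety and the factor depends on writing the singular expansion of $M(x)$ as $\tfrac{1 - \sqrt{1-2x-3x^2}}{2x^2}$ and extracting the coefficient of $(1-3x)^{1/2}$.

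The main obstacle is purely bookkeeping: getting the multiplicative constant exactly right in the Motzkin asymptotic and in the involution asymptotic, since both involve several factors of $\sqrt{2}$, $\sqrt{3}$, $\sqrt{\pi}$, and powers of $e$ that must cancel precisely to produce the displayed formula. There is no conceptual difficulty once \cref{motzkin_richardson} is in hand; the proof is an application of Stirling's formula and standard singularity/saddle-point analysis, and I would present it in that order: (1) cite $p_n = M_n/t_n$ from \cref{motzkin_richardson} and Robinson--Schensted, (2) state the Motzkin asymptotic with its constant, (3) state the involution asymptotic with its constant, (4) divide and simplify.
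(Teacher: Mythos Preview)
Your approach is exactly the paper's: write $p_n = M_n/T_n$ via \cref{motzkin_richardson} and Robinson--Schensted, then divide the known asymptotics for Motzkin numbers and involutions. Your self-correction is right---the Motzkin constant is indeed $3^{3/2}/(2\sqrt{\pi})$, not $\sqrt{3}/(2\sqrt{\pi})$, so $M_n \sim 3^{n+3/2}/(2\sqrt{\pi}\,n^{3/2})$ (the paper cites \cite[Example VI.3]{flajolet_sedgewick09} for this), and with that fix your division goes through cleanly without the ``extra factor of $3$'' you worried about.
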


\begin{proof}
By \cref{motzkin_richardson}, we have $p_n = \frac{M_n}{T_n}$ for all $n\in\mathbb{N}$, where $T_n$ denotes the number of standard tableaux of size $n$. It follows from \eqref{motzkin_ogf} that
\begin{align}\label{motzkin_asymptotics}
M_n \sim \frac{3^{n + \frac{3}{2}}}{2\sqrt{\pi}\hspace*{1pt}n^{\frac{3}{2}}} \quad \text{ as } n\to\infty;
\end{align}
see, e.g., \cite[Example VI.3]{flajolet_sedgewick09}, noting therein that $U_n = M_{n-1}$. On the other hand, it follows from properties of the Robinson--Schensted correspondence \cite[Corollary 7.13.9]{stanley24} that $T_n$ is the number of involutions in $S_n$. Chowla, Herstein, and Moore \cite[Theorem 8]{chowla_herstein_moore51} (cf.\ \cite[Proposition VIII.2]{flajolet_sedgewick09}) found the asymptotics of $T_n$:
\begin{align}\label{involution_asymptotics}
T_n \sim \frac{n^{\frac{n}{2}}}{\sqrt{2}\hspace*{1pt}e^{\frac{n}{2} - \sqrt{n} + \frac{1}{4}}} \quad \text{ as } n\to\infty.
\end{align}
The result follows by combining \eqref{motzkin_asymptotics} and \eqref{involution_asymptotics}.
\end{proof}


\subsection{Enumeration of Richardson tableaux of fixed shape}
The main result of this subsection is \cref{q_count}, which gives a $q$-count of the number of Richardson tableaux $\Rtabs{\lambda}$ of a fixed shape $\lambda$. We first recall some background on \boldit{$q$-analogues}, where $q$ is an indeterminate. Given $0 \le k \le n$, set
\[
\qn{n} := 1 + q + \cdots + q^{n-1}, \quad \qfac{n} := \qn{1}\qn{2}\cdots\qn{n}, \quad \qbinom{n}{k} := \frac{\qfac{n}}{\qfac{k}\qfac{n-k}}.
\]
When $q=1$, we recover $n$, $n!$, and $\binom{n}{k}$, respectively. For example, 
\begin{align*}
\qbinom{4}{2} = \frac{\qfac{4}}{\qfac{2}\qfac{2}} = \frac{\qn{4}\qn{3}}{\qn{2}} &= \frac{(1 + q + q^2 + q^3)(1 + q + q^2)}{1+q} \\
&= 1 + q + 2q^2 + q^3 + q^4.
\end{align*}

Given a standard tableau $\sigma$ of size $n$, we call $j\in [n-1]$ a \boldit{descent} of $\sigma$ if $j+1$ is in a strictly lower row than $j$, i.e., $\rowjword{\sigma}{j+1} > \rowjword{\sigma}{j}$. By definition, $n$ is never a descent of $\sigma$. We define the \boldit{major index} of $\sigma$ by
\[
\maj(\sigma) := \sum_{\text{descents $j$ of $\sigma$}}j.
\]

\begin{eg}\label{eg_maj}
As in \cref{eg_permutations_intro}, let
\[
\sigma = \;\begin{ytableau}
1 & 3 & 4 & 6 \\
2 & 7 \\
5 & 8
\end{ytableau}\;.
\]
Then the descents of $\sigma$ are $1$, $4$, $6$, and $7$, so $\maj(\sigma) = 1 + 4 + 6 + 7 = 18$.
\end{eg}

Our goal is to prove the following enumerative result:\footnote{We thank Sara Billey and Vasu Tewari for suggesting trying to find a $q$-analogue of \eqref{q_count_1}, and thank Vasu Tewari for verifying \eqref{q_count_q} experimentally.}
\begin{thm}\label{q_count}
Let $\lambda$ be a partition of length $\ell$. Then
\begin{align}\label{q_count_q}
\sum_{\sigma\in\Rtabs{\lambda}}q^{\maj(\sigma)} = q^{\sum_{2 \le i \le j \leq \ell}\lambda_i\lambda_j} \cdot \prod_{i = 1}^{\ell-1}\qbinom{\lambda_i + \lambda_{i+2} + \cdots + \lambda_{\ell}}{\lambda_{i+1} + \lambda_{i+2} + \cdots+ \lambda_{\ell}}
\end{align}
\textup{(}where the $i = \ell-1$ term of the product is $\qbinomsmall{\lambda_{\ell-1}}{\lambda_{\ell}}$\textup{)}. In particular, setting $q=1$ gives
\begin{align}\label{q_count_1}
|\Rtabs{\lambda}| = \binom{\lambda_{\ell-1}}{\lambda_\ell}\binom{\lambda_{\ell-2} + \lambda_\ell}{\lambda_{\ell-1} + \lambda_\ell}\binom{\lambda_{\ell-3} + \lambda_{\ell-1} + \lambda_\ell}{\lambda_{\ell-2} + \lambda_{\ell-1} + \lambda_\ell}\cdots\binom{\lambda_1 + \lambda_3 + \lambda_4 + \cdots + \lambda_\ell}{\lambda_2 + \lambda_3 + \lambda_4 + \cdots + \lambda_\ell}.
\end{align}
\end{thm}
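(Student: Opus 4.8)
The plan is to establish the $q$-analogue \eqref{q_count_q} by induction on $\ell$, leveraging the recursive structure of Richardson words developed in Section \ref{sec_richardson_words}. The key structural input should be a recursion, analogous to \cref{prime_recursion} or \cref{prime_star}, that describes all Richardson words of a \emph{fixed shape} $\lambda$ by peeling off the first row: the idea is that \cref{intro.thm}\ref{main2.5} (proved as \cref{crop_tableau}) tells us precisely which Richardson tableaux of shape $(\lambda_1,\lambda_2,\dots,\lambda_\ell)$ restrict, after deleting the first row and renumbering, to a given Richardson tableau of shape $(\lambda_2,\dots,\lambda_\ell)$ — namely, one must interleave the $\lambda_1$ entries of the first row among the existing entries subject to the constraint that every entry landing in the (new) second row is immediately preceded in the first row. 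First I would make this bijection explicit on the level of lattice words: a Richardson tableau of shape $\lambda$ is determined by a Richardson tableau $\tau$ of shape $(\lambda_2,\dots,\lambda_\ell)$ together with a choice of how to insert the letters of the new first row, and this choice set is in bijection with subsets counted by a single Gaussian binomial coefficient $\qbinomsmall{\lambda_1+\lambda_3+\cdots+\lambda_\ell}{\lambda_2+\cdots+\lambda_\ell}$ once the $q$-weight is tracked correctly. (Alternatively, one can route everything through the "chop" recursion \cref{richardson_chop} referenced after \cref{intro.count}; the referenced \cref{richardson_chop} presumably packages exactly this.)

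The second ingredient is to understand how $\maj$ behaves under this peeling operation. I would prove a lemma of the form: if $\sigma$ has shape $\lambda$ and $\sigma'$ is obtained by deleting the first row and renumbering (so $\sigma'$ has shape $(\lambda_2,\dots,\lambda_\ell)$ and size $n-\lambda_1$), then $\maj(\sigma)$ equals $\maj(\sigma')$ shifted by the appropriate amount coming from renumbering, plus a contribution recording \emph{where} the first-row entries were inserted. Concretely, each descent of $\sigma'$ at position $j$ becomes a descent of $\sigma$ at position $j + (\text{number of first-row entries of }\sigma\text{ that are }\le\text{the }j\text{th entry of the lower part})$, and in addition there may be a descent of $\sigma$ at the position of the last first-row entry. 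Summing the shifts over all descents, plus accounting for the fixed offset, should reproduce exactly the factor $q^{\lambda_1(\lambda_2+\cdots+\lambda_\ell)}$ — wait, more precisely the $i=1$ contribution $\lambda_1\lambda_2 + \lambda_1\lambda_3+\cdots$ to the exponent $\sum_{2\le i\le j\le\ell}\lambda_i\lambda_j$ is \emph{not} present; rather the new part of the prefactor when passing from $(\lambda_2,\dots,\lambda_\ell)$ to $\lambda$ is accounted for by combining the fixed shift from renumbering with the minimal-insertion configuration. I would set up the bookkeeping so that the minimal element of the insertion poset contributes the prefactor adjustment and the Gaussian binomial is the generating function for the insertion positions weighted by inversions relative to that minimum; this is the standard "$q$-Vandermonde / Gaussian binomial counts lattice paths by area" phenomenon.

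Granting these two lemmas, the induction is routine: the base case $\ell=1$ is the single row tableau with $\maj=0$ and empty product equal to $1$; for the inductive step, writing $\mu=(\lambda_2,\dots,\lambda_\ell)$, one has
\[
\sum_{\sigma\in\Rtabs{\lambda}}q^{\maj(\sigma)}
= q^{c(\lambda,\mu)}\,\qbinom{\lambda_1+\lambda_3+\cdots+\lambda_\ell}{\lambda_2+\cdots+\lambda_\ell}\sum_{\tau\in\Rtabs{\mu}}q^{\maj(\tau)},
\]
where $c(\lambda,\mu)$ is the offset from the lemma, and one checks $c(\lambda,\mu) + \sum_{2\le i\le j\le\ell,\,i\ge 2}\lambda_i\lambda_j$ (the exponent for $\mu$, reindexed) equals $\sum_{2\le i\le j\le\ell}\lambda_i\lambda_j$ — this is the identity $c(\lambda,\mu) = \lambda_2\lambda_2+\lambda_2\lambda_3+\cdots$ restricted appropriately, which I would verify is forced by the minimal-insertion configuration. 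Setting $q=1$ immediately yields \eqref{q_count_1}. The main obstacle I anticipate is the $\maj$ bookkeeping in the second lemma: descents can be created, destroyed, or shifted by the interleaving of the new first row, and one must show the total $q$-weight factors cleanly as a Gaussian binomial times a monomial rather than some messier polynomial. Pinning down the exact statistic on insertion configurations (likely a $\maj$- or $\inv$-type statistic on the chosen subset of positions) so that its generating function is \emph{exactly} $\qbinomsmall{\lambda_1+\lambda_3+\cdots+\lambda_\ell}{\lambda_2+\cdots+\lambda_\ell}$, with the right monomial prefactor absorbing the rest, is the crux; everything else is formal manipulation of the recursion from Section \ref{sec_richardson_words}.
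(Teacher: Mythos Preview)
Your overall strategy is the same as the paper's: induct via the crop operation (\cref{richardson_chop}/\cref{crop_tableau}), and show that passing from $\crop(\lambda)$ to $\lambda$ contributes exactly the factor $q^{c}\qbinomsmall{n-\lambda_2}{n-\lambda_1}$ (your Gaussian binomial, rewritten). The paper also does this, and the inductive shell you wrote down is correct.

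What you are missing is the key lemma that dissolves the ``$\maj$ bookkeeping'' you flag as the main obstacle. For Richardson tableaux (and only for them), the descent set is determined entirely by the first row: $j$ is a descent of $\sigma$ if and only if $j+1$ is \emph{not} in row $1$ (this is \cref{richardson_maj}\ref{richardson_maj_max}, an immediate consequence of \cref{defn_richardson}). Consequently
\[
\maj(\sigma)=\sum_{k\notin\text{row }1}(k-1)=\binom{n}{2}-\sumone(\rowword{\sigma}),
\]
where $\sumone(r)=\sum_{j:\,r_j=1}(j-1)$. This replaces tracking descents---which, as you note, can be created, destroyed, and shifted under interleaving---by tracking only the \emph{positions of the $1$'s} in the lattice word. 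The crop recursion of \cref{richardson_chop} controls exactly this: every Richardson word $r$ with $\crop(r)=s$ is obtained by first forming $t$ (replace each $1$ in $s$ by $12$, shift everything else up) and then inserting $\lambda_1-\lambda_2$ extra $1$'s, and the map $r\mapsto\tilde r$ (collapse each $12$ and each letter $\ge 3$ to a single $2$) sends the fiber over $s$ bijectively to words with $\lambda_1-\lambda_2$ ones and $n-\lambda_1$ twos. A short counting argument (\cref{sumone_q}) then shows $\sum_{\tilde r}q^{\sumone(\tilde r)}$ is a monomial times the desired Gaussian binomial, and an elementary identity (\eqref{sumone_r} in the paper) relates $\sumone(r)$, $\sumone(\tilde r)$, and $\sumone(t)$ additively. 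So the factorization you want is automatic once you work with $\sumone$ rather than $\maj$; attempting to track $\maj$ directly through the interleaving, without first using \cref{richardson_maj}, is the hard way around.
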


\begin{eg}\label{eg_q_count}
The $8$ Richardson tableaux of shape $\lambda = (3,2,1)$ are shown below.
\[
\scalebox{0.81}{$\begin{ytableau}
1 & 2 & 4 \\
3 & 5 \\
6\end{ytableau}$}
\hspace*{9.9pt}
\scalebox{0.81}{$\begin{ytableau}
1 & 2 & 5 \\
3 & 6 \\
4\end{ytableau}$}
\hspace*{9.9pt}
\scalebox{0.81}{$\begin{ytableau}
1 & 3 & 4 \\
2 & 5 \\
6\end{ytableau}$}
\hspace*{9.9pt}
\scalebox{0.81}{$\begin{ytableau}
1 & 3 & 5 \\
2 & 4 \\
6\end{ytableau}$}
\hspace*{9.9pt}
\scalebox{0.81}{$\begin{ytableau}
1 & 3 & 5 \\
2 & 6 \\
4
\end{ytableau}$}
\hspace*{9.9pt}
\scalebox{0.81}{$\begin{ytableau}
1 & 3 & 6 \\
2 & 4 \\
5
\end{ytableau}$}
\hspace*{9.9pt}
\scalebox{0.81}{$\begin{ytableau}
1 & 4 & 5 \\
2 & 6 \\
3\end{ytableau}$}
\hspace*{9.9pt}
\scalebox{0.81}{$\begin{ytableau}
1 & 4 & 6 \\
2 & 5 \\
3\end{ytableau}$}
\hspace*{9.9pt}
\]
We calculate that
\[
\sum_{\sigma\in\Rtabs{\lambda}}q^{\maj(\sigma)} = q^{11} + q^{10} + q^{10} + q^9 + q^9 + q^8 + q^8 + q^7 = q^7\cdot\qbinom{4}{3}\cdot\qbinom{2}{1},
\]
in agreement with \cref{q_count}.
\end{eg}

\begin{eg}\label{two_row_enumeration}
According to \eqref{q_count_1}, the number of Richardson tableaux of a partition $\lambda = (\lambda_1, \lambda_2)$ of length at most two is $\binom{\lambda_1}{\lambda_2}$. We can see this explicitly as follows. A standard tableau of shape $\lambda$ is uniquely determined by the subset $I$ of entries in its second row, and by \cref{explicit}\ref{explicit_two_row}, the tableau is Richardson if and only if $I$ contains no two consecutive entries. Hence we must enumerate subsets $I$ of $\{2, 3, \dots, \lambda_1 + \lambda_2\}$ of size $\lambda_2$ with no two consecutive entries. The map
\[
I = \{i_1 < \cdots < i_{\lambda_2}\} \mapsto \{i_1 - 1 < i_2 - 2 < \cdots < i_{\lambda_2} - \lambda_2\}
\]
provides a bijection from such subsets onto the subsets of $[\lambda_1]$ of size $\lambda_2$, so there are $\binom{\lambda_1}{\lambda_2}$ such subsets.
\end{eg}

As a consequence of \cref{q_count}, we obtain a new formula for the Motzkin numbers:
\begin{cor}\label{motzkin_formula}
For all $n\in\mathbb{N}$, we have
\[
M_n = \sum_{\lambda\vdash n} \prod_{i = 1}^{\ell(\lambda)-1} \binom{\lambda_i + \lambda_{i+2} + \cdots + \lambda_{\ell(\lambda)}}{\lambda_{i+1} + \lambda_{i+2} + \cdots + \lambda_{\ell(\lambda)}}
\]
\textup{(}where the $i = \ell(\lambda)-1$ term of the product is $\binom{\lambda_{\ell(\lambda)-1}}{\lambda_{\ell(\lambda)}}$\textup{)}.
\end{cor}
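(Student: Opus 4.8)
The plan is to combine the two enumerative results already in hand. By \cref{motzkin_richardson}, the Motzkin number $M_n$ is precisely the number of Richardson tableaux of size $n$. Every standard tableau of size $n$ has a unique shape $\lambda\vdash n$, so partitioning the set of size-$n$ Richardson tableaux according to their shape gives
\[
M_n = \sum_{\lambda\vdash n} |\Rtabs{\lambda}|.
\]
Then I would invoke the specialization \eqref{q_count_1} of \cref{q_count} at $q=1$, which identifies $|\Rtabs{\lambda}|$ with exactly the product of binomial coefficients appearing on the right-hand side of the corollary; substituting this into the display above yields the claim.

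The one bookkeeping point is to check that the two ways of writing the product agree: the factor indexed by $i$ in the statement of the corollary, namely $\binom{\lambda_i + \lambda_{i+2} + \cdots + \lambda_{\ell(\lambda)}}{\lambda_{i+1} + \lambda_{i+2} + \cdots + \lambda_{\ell(\lambda)}}$, coincides term by term with the factors listed in \eqref{q_count_1} once one reads the latter from right ($i = \ell(\lambda)-1$, where $\lambda_{i+2}=0$ and the factor reduces to $\binom{\lambda_{\ell(\lambda)-1}}{\lambda_{\ell(\lambda)}}$) to left ($i = 1$), using the standing convention that $\lambda_j = 0$ for $j > \ell(\lambda)$.

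I do not expect any genuine obstacle here: all of the combinatorial content is carried by \cref{motzkin_richardson,q_count}, and the corollary is just the observation that summing the fixed-shape count over all shapes of a given size recovers the fixed-size count. (Alternatively, one could sum the full $q$-refinement \eqref{q_count_q} over $\lambda\vdash n$ and then set $q=1$, but the $q=1$ identity \eqref{q_count_1} is all that is needed.)
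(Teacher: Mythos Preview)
Your proposal is correct and follows essentially the same approach as the paper: the paper's proof is the single sentence ``This follows by combining \cref{motzkin_richardson} and \eqref{q_count_1},'' which is exactly what you do. Your additional bookkeeping remark about matching the indexing of the binomial factors is accurate and harmless.
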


\begin{proof}
This follows by combining \cref{motzkin_richardson} and \eqref{q_count_1}.
\end{proof}

Now we turn to the proof of \cref{q_count}. As in the previous section, we utilize lattice words. Given a word $r$ of length $n$, we define an \boldit{ascent} of $r$ to be an index $j \in [n-1]$ such that $r_j < r_{j+1}$. Note that the descents of every standard tableau $\sigma$ are precisely the ascents of $\rowword{\sigma}$. (For example, if $\sigma$ is as in \cref{eg_maj}, then $\rowword{\sigma} = 12113123$ has ascents $1$, $4$, $6$, and $7$.) Hence
\begin{align}\label{maj_rowword}
\maj(\sigma) = \sum_{\substack{j\in [n-1], \\ \rowjword{\sigma}{j} < \rowjword{\sigma}{j+1}}}j 
\end{align}
for all standard tableaux $\sigma$ of size $n$.
Also, let $\sumone$ denote the function on all words $r$ on the alphabet $\mathbb{Z}_{>0}$ defined by
\[
\sumone(r) := \sum_{\substack{j \ge 1,\\ r_j = 1}}(j-1). 
\]
If $r$ is the lattice word of a tableau $\sigma$, then $\sumone(r)$ is the sum of the entries in the first row minus the length of the first row. 

\begin{eg}\label{eg_maj_rowword}
Let $\sigma$ be the standard tableau from \cref{eg_maj}. Then
\[
\sumone(\rowword{\sigma}) = \sumone(12113123) = (1-1) + (3-1) + (4-1) + (6-1) = 10.
\]
We have $10=(1+3+4+6)-4$, the sum of the entries in the first row of $\sigma$ minus $\lambda_1=4$. 
Note also that
\[
\maj(\sigma) = 18 = 28 - 10 = \binom{8}{2} - \sumone(\rowword{\sigma)},
\]
which is a particular case of \cref{richardson_maj}\ref{richardson_maj_dual} below.
\end{eg}

The next lemma says that the descents of a Richardson tableau are determined by the entries in its first row. In particular, we can write its major index in terms of $\sumone$:
\begin{lem}\label{richardson_maj}
Let $\sigma$ be a Richardson tableau of size $n$.
\begin{enumerate}[label=(\roman*), leftmargin=*, itemsep=2pt]
\item\label{richardson_maj_max} For all $j\in [n-1]$, we have that $j$ is a descent of $\sigma$ if and only if $j+1$ is not in the first row of $\sigma$.
\item\label{richardson_maj_dual} We have
\begin{align}\label{maj_dual_formula}
\maj(\sigma) = \binom{n}{2} - \sumone(\rowword{\sigma}).
\end{align}
\end{enumerate}
\end{lem}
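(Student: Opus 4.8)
The plan is to prove \ref{richardson_maj_max} first and then obtain \ref{richardson_maj_dual} as a routine reindexing of the sum defining $\maj$. For \ref{richardson_maj_max}, one direction is immediate: if $j$ is a descent then $\rowjword{\sigma}{j+1} > \rowjword{\sigma}{j} \ge 1$, so $j+1$ is not in the first row of $\sigma$. For the converse I would assume that $j+1$ lies in row $r := \rowjword{\sigma}{j+1} \ge 2$ and show $\rowjword{\sigma}{j} < r$. The idea is to apply the Richardson hypothesis in the form of \cref{condition_2} to the entry $j+1$: this says that the largest entry of $\sigma[j]$ among rows $1, \dots, r-1$ exceeds every entry of $\sigma[j]$ in rows $i \ge r$.

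Before using this I would check that it has content, i.e.\ that rows $1, \dots, r-1$ of $\sigma[j]$ are nonempty: the box of $\sigma$ directly above the box containing $j+1$ holds an entry smaller than $j+1$, hence at most $j$, so it survives the deletion defining $\sigma[j] = \sigma[(j+1)-1]$; and the first row is always nonempty. Now $\sigma[j]$ consists of exactly the entries $1, \dots, j$, so $j$ is its largest entry. If $j$ were in a row $i \ge r$ of $\sigma$ (equivalently of $\sigma[j]$, since $j$ survives in $\sigma[j]$), then the Richardson condition would force some entry of $\sigma[j]$ to be strictly larger than $j$, which is absurd. Hence $j$ lies in a row $< r = \rowjword{\sigma}{j+1}$, so $j$ is a descent of $\sigma$, completing part~\ref{richardson_maj_max}.

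For part~\ref{richardson_maj_dual} I would simply compute: by part~\ref{richardson_maj_max}, $\maj(\sigma)$ is the sum of all $j \in [n-1]$ such that $j+1$ is not in the first row of $\sigma$, and substituting $k = j+1$ rewrites this as the sum of $k-1$ over all $k \in \{2, \dots, n\}$ not lying in the first row. On the other hand $\sumone(\rowword{\sigma})$ is the sum of $k-1$ over all $k$ in the first row of $\sigma$, where the term $k = 1$ contributes $0$. Adding the two sums yields $\sum_{k=2}^{n}(k-1) = \binom{n}{2}$, which rearranges to \eqref{maj_dual_formula}. The only step needing any care is the nonemptiness check in the previous paragraph, which guarantees that the invoked Richardson condition is not vacuous; beyond that the argument is pure bookkeeping.
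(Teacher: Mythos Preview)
Your proof is correct and follows essentially the same approach as the paper: for part~\ref{richardson_maj_max} you argue by contradiction that if $j+1$ is not in the first row and $j$ lies weakly below it, then the Richardson condition at $j+1$ forces an entry of $\sigma[j]$ to exceed $j$, which is absurd (the paper invokes \cref{defn_richardson} directly while you cite the equivalent \cref{condition_2}, and you are slightly more explicit about nonemptiness); part~\ref{richardson_maj_dual} is then the same reindexing computation as in the paper.
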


\begin{proof}
Part \ref{richardson_maj_max} follows from the definition of the Richardson property. Indeed, if $j+1$ does not appear in the first row of $\sigma$ and $j$ is not a descent (i.e., $j$ appears weakly below $j+1$), then $\sigma$ violates \cref{defn_richardson} for $j+1$. 

For part \ref{richardson_maj_dual}, by \eqref{maj_rowword} and part \ref{richardson_maj_max} we have
\begin{gather*}
\maj(\sigma) = \sum_{\substack{j\in [n-1], \\ \rowjword{\sigma}{j} < \rowjword{\sigma}{j+1}}}j = \sum_{\substack{j\in [n-1], \\ \rowjword{\sigma}{j+1} > 1}}j = \binom{n}{2} - \sum_{\substack{j\in [n-1], \\ \rowjword{\sigma}{j+1} = 1}}j = \binom{n}{2} - \sumone(\rowword{\sigma}),
\end{gather*}
as desired.
\end{proof}

\begin{rmk} 
For $\lambda\vdash n$, the maximum number of descents among all standard tableaux of shape $\lambda \vdash n$ is $n-\lambda_1$. Thus, \cref{richardson_maj}\ref{richardson_maj_max} implies that every Richardson tableau of shape $\lambda$ has the maximum number of descents. However, if a standard tableau $\sigma$ of shape $\lambda$ has the maximum number of descents, it is not necessarily Richardson. One such example is 
\[
\sigma = \;\begin{ytableau}
1 & 3 & 5 & 7 \\
2 & 4 \\
6 & 8
\end{ytableau}\;,
\]
which violates \cref{defn_richardson} for $j=8$.
\end{rmk}

We will also need the following property of $\sumone$:
\begin{lem}\label{sumone_q}
Let $a,b\ge 0$. Then
\[
\sum_rq^{\sumone(r) - \binom{a}{2}} = \qbinom{a+b}{b},
\]
where the sum is over all words $r$ of length $a+b$ with exactly $a$ $1$'s and $b$ $2$'s.
\end{lem}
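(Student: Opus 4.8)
The plan is to induct on $a+b$, splitting the sum according to the final letter of $r$. First I would record the combinatorial meaning of the exponent. Writing the positions of the $1$'s in $r$ as $p_1 < p_2 < \cdots < p_a$, we have $\sumone(r) = \sum_{i=1}^{a}(p_i-1)$. Since $a + \binom{a}{2} = \binom{a+1}{2} \le \sum_{i=1}^{a} p_i$, it follows that $\sumone(r) - \binom{a}{2} = \sum_{i=1}^{a}(p_i - i) \ge 0$, and moreover $p_i - i$ is exactly the number of $2$'s among $r_1, \dots, r_{p_i - 1}$. Hence $\sumone(r) - \binom{a}{2}$ equals the number of pairs $j < k$ with $r_j = 2$ and $r_k = 1$, so the left-hand side is genuinely a polynomial in $q$ and the claimed identity makes sense in $\mathbb{Z}[q]$.

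For the base cases $a = 0$ or $b = 0$ there is a unique admissible word $r$ (all $2$'s, respectively all $1$'s); a direct check gives $\sumone(r) - \binom{a}{2} = 0$, matching $\qbinom{a+b}{b} = 1$. For the inductive step, assume $a, b \ge 1$ and consider the last letter of $r$. If $r_{a+b} = 2$, then deleting it yields a word $r'$ of length $a + b - 1$ with $a$ ones and $b-1$ twos, and $\sumone(r') = \sumone(r)$; by the inductive hypothesis this part of the sum is $\qbinom{a+b-1}{b-1}$. If $r_{a+b} = 1$, then $r'$ has $a-1$ ones and $b$ twos and $\sumone(r) = \sumone(r') + (a+b-1)$; using $\binom{a}{2} - \binom{a-1}{2} = a-1$, this part of the sum equals $q^{(a+b-1)-(a-1)}\qbinom{a+b-1}{b} = q^{b}\qbinom{a+b-1}{b}$. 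Adding the two contributions gives $\qbinom{a+b-1}{b-1} + q^{b}\qbinom{a+b-1}{b}$, which is the usual recurrence for $\qbinom{a+b}{b}$, completing the induction.

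I do not expect a genuine obstacle here: the argument is routine generating-function bookkeeping. The only point requiring a moment of care is the exponent shift in the $r_{a+b} = 1$ branch, namely $\binom{a}{2} - \binom{a-1}{2} = a-1$, which is what converts the factor $q^{a+b-1}$ contributed by the new $1$ into the factor $q^{b}$ appearing in the $q$-Pascal recurrence. Alternatively, the first paragraph already reduces the lemma to the classical fact that summing $q^{\inv}$ over binary words with prescribed content yields a Gaussian binomial coefficient, which one could simply cite; but the self-contained induction above is just as short.
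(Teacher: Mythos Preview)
Your proof is correct. In fact, your first paragraph together with your closing remark is essentially the paper's own argument: the paper shows directly that $\sumone(r) - \binom{a}{2} = \inv(r)$ via the computation $\sum_k (j_k - k)$, and then cites \cite[Proposition 1.7.1]{stanley12} for the classical identity $\sum_r q^{\inv(r)} = \qbinom{a+b}{b}$.

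What you add is the self-contained induction using the $q$-Pascal recurrence, which the paper does not give. This buys independence from the external citation, at the cost of a few more lines; conversely, the paper's route is a one-line reduction once the inversion interpretation is in hand. Both are fine here. The exponent bookkeeping you flagged ($\binom{a}{2}-\binom{a-1}{2}=a-1$ turning $q^{a+b-1}$ into $q^b$) is exactly right and is the only point requiring care.
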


\begin{proof}
An \boldit{inversion} of a word $r$ of length $[n]$ is a pair $(i,j) \in [n]^2$ such that $i < j$ and $r_i > r_j$. Let $\inv(r)$ denote the number of inversions of $r$. By \cite[Proposition 1.7.1]{stanley12}, we have
\[
\sum_rq^{\inv(r)} = \qbinom{a+b}{b},
\]
where the sum is over all words $r$ of length $a+b$ with exactly $a$ $1$'s and $b$ $2$'s. Therefore it suffices to show that for every such word $r$, we have $\inv(r) = \sumone(r) - \binom{a}{2}$. To prove this, let $j_1 < \cdots < j_a$ denote the positions of the $1$'s in $r$, so that $\sumone(r) = j_1+\cdots + j_a -a $. For all $k\in [a]$, we see that $r$ has exactly $j_k - k$ inversions of the form $(\cdot,j_k)$. We get
\[
\inv(r) = \sum_{k=1}^a \left( j_k - k\right) = \sumone(r) - \sum_{k=1}^{a} (k-1) = \sumone(r) - \binom{a}{2},
\]
as desired.
\end{proof}

Given a partition $\lambda\vdash n$, define the partition $\crop(\lambda)\vdash n-\lambda_1$ to be $(\lambda_2, \lambda_3, \dots)$. Given a standard tableau $\sigma\in\SYT(\lambda)$, let $\crop(\sigma)\in\SYT(\crop(\lambda))$ denote the standard tableau obtained from $\sigma$ by deleting the first row and renumbering the remaining entries as $1, \dots, n-\lambda_1$ (so that they remain in the same relative order). Similarly, given a word $r$ on the alphabet $\mathbb{Z}_{>0}$, we let $\crop(r)$ denote the word obtained from $r$ by deleting all the $1$'s and then decreasing all of the remaining letters by $1$. Note that $\rowword{\crop(\sigma)} = \crop(\rowword{\sigma})$. The final ingredient we will need for the proof of~\cref{q_count} is a recursive characterization of Richardson words using $\crop$:
\begin{prop}\label{richardson_chop}~
\begin{enumerate}[label=(\roman*), leftmargin=*, itemsep=2pt]
\item\label{richardson_chop_forward} If $r$ is a Richardson word, then $\crop(r)$ is a Richardson word.
\item\label{richardson_chop_backward} If $s$ is a Richardson word, then the word $r$ obtained from $s$ as follows is a Richardson word with $\crop(r) = s$:
\begin{itemize}[itemsep=2pt]
\item increase every letter of $s$ by $1$, and then replace every occurrence of $2$ by the string $12$, to form the word $t$;
\item insert an arbitrary number of $1$'s into $t$ to form the word $r$ in any way such that we do not insert a $1$ immediately before a $2$ in $t$.
\end{itemize}
Moreover, every Richardson word $r$ such that $\crop(r)=s$ is uniquely obtained in this way.
\end{enumerate}
\end{prop}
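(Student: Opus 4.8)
The plan is to argue entirely with words, using the criterion of \cref{richardson_tableau_to_word}: a word $r$ is Richardson exactly when, for every $j$ with $r_j\ge 2$, the value $r_j-1$ occurs in the prefix $r_1\cdots r_{j-1}$ and occurs there to the right of every occurrence of $r_j,r_j+1,\dots$ (this condition already forces the lattice property, so it is all that must be checked). For part \ref{richardson_chop_forward}, set $s=\crop(r)$ and let $\phi$ be the order-preserving bijection from the positions $p$ of $r$ with $r_p\ge 2$ onto the positions of $s$, so that $s_{\phi(p)}=r_p-1$. Given $j$ with $s_j\ge 2$, put $p=\phi^{-1}(j)$, so $r_p=s_j+1\ge 3$. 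Under $\phi$, occurrences of a value in $\{s_j,s_j+1,\dots\}$ in $s_1\cdots s_{j-1}$ correspond to occurrences of a value in $\{s_j+1,s_j+2,\dots\}$ in $r_1\cdots r_{p-1}$, while occurrences of $s_j-1$ (which is $\ge 1$, hence not removed by $\crop$) in $s_1\cdots s_{j-1}$ correspond to occurrences of $s_j$ in $r_1\cdots r_{p-1}$. Since $\phi$ preserves order, the criterion of \cref{richardson_tableau_to_word} for $r$ at $p$ transports verbatim to the criterion for $s$ at $j$, so $s$ is Richardson. I expect this direction to be routine.

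For part \ref{richardson_chop_backward}, I would first note that $\crop(r)=s$ is automatic for any $r$ produced by the procedure: the inserted $1$'s together with the leading $1$ of each replacement block $12$ are precisely the letters removed by $\crop$, and deleting them and subtracting $1$ from the rest undoes ``increase every letter by $1$''. The real content is that such an $r$ is Richardson. Write $t=B_1\cdots B_m$, where $B_k=12$ if $s_k=1$ and $B_k=s_k+1$ if $s_k\ge 2$; every inserted $1$ is strictly smaller than any value $\ge 2$, so inserted $1$'s are irrelevant to the criterion. I would then check the criterion of \cref{richardson_tableau_to_word} at each $j$ with $r_j\ge 2$ in two cases. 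If $r_j=s_k+1$ with $s_k=1$ (so $r_j=2$, the $2$ of a block $B_k=12$), the rule ``no $1$ is inserted immediately before a $2$'' keeps that block's leading $1$ immediately to the left of $r_j$, so $r_{j-1}=1=r_j-1$ is the last letter of the prefix and the criterion holds trivially. If $r_j=s_k+1$ with $s_k\ge 2$, then using the two elementary facts that, for $v\ge 2$, the value $v$ appears in a block $B_m$ exactly when $s_m=v-1$ and a value $\ge v+1$ appears in $B_m$ exactly when $s_m\ge v$, the criterion for $r$ at $j$ reduces literally to the criterion of \cref{richardson_tableau_to_word} for $s$ at the position of $s_k$, which holds by hypothesis.

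For the ``moreover'' clause, let $r$ be any Richardson word with $\crop(r)=s$. Since $\crop(r)=s$, the subsequence of non-$1$ letters of $r$, in order, is $s_1+1,\dots,s_m+1$, so $r=1^{a_0}(s_1+1)1^{a_1}(s_2+1)\cdots(s_m+1)1^{a_m}$ for unique $a_0,\dots,a_m\ge 0$; matching this against $t$ shows the procedure produces exactly the words of this form subject to the single constraint $a_{k-1}\ge 1$ whenever $s_k=1$ (the gap before each $2$ must contain that block's leading $1$, with any further $1$'s inserted before it). But this constraint is automatic for a Richardson $r$: if $s_k=1$ and $j$ is the position of the letter $s_k+1=2$, then the criterion of \cref{richardson_tableau_to_word} at $j$ forces $r_{j-1}=1$ (otherwise $r_j-1=1$ could not lie to the right of $r_{j-1}\ge 2$ in $r_1\cdots r_{j-1}$), i.e.\ $a_{k-1}\ge 1$. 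Hence $r$ arises from the procedure, via the displayed decomposition, which is unique. The main obstacle I foresee is the bookkeeping in part \ref{richardson_chop_backward}: one must track carefully how the new first row created by the substitutions $1\mapsto 12$ and the freely inserted $1$'s interacts with both clauses of the Richardson criterion, and in particular confirm that ``do not insert a $1$ immediately before a $2$'' is exactly---not merely sufficient for---the condition characterizing Richardson words with a prescribed crop.
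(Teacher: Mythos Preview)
Your proposal is correct and follows exactly the approach the paper intends: the paper's proof is the single line ``This follows from \cref{richardson_tableau_to_word}. (The requirement that we do not insert a $1$ immediately before a $2$ in $t$ ensures that $r$ is obtained uniquely.)'', and you have carefully filled in precisely those details. Your case analysis in part \ref{richardson_chop_backward} (splitting on $r_j=2$ versus $r_j\ge 3$) and your handling of the ``moreover'' clause via the decomposition $r=1^{a_0}(s_1+1)\cdots(s_m+1)1^{a_m}$ are both sound and match what the terse reference to \cref{richardson_tableau_to_word} is meant to encode.
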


\begin{proof}
This follows from \cref{richardson_tableau_to_word}. (The requirement that we do not insert a $1$ immediately before a $2$ in $t$ ensures that $r$ is obtained uniquely.)
\end{proof}

\begin{eg}\label{eg_richardson_chop}
Let $\lambda = (4,2,2)$, so that $\crop(\lambda) = (2,2)$, and let $\sigma\in\SYT(\lambda)$ be as in \cref{eg_maj,eg_maj_rowword}. That is,
\[
\sigma = \;\begin{ytableau}
1 & 3 & 4 & 6 \\
2 & 7 \\
5 & 8
\end{ytableau}\;
\quad\text{ and }\quad
\crop(\sigma) = \;\begin{ytableau}
1 & 3 \\
2 & 4
\end{ytableau}\;.
\]
In the setting of \cref{richardson_chop}\ref{richardson_chop_backward}, we have $r = \rowword{\sigma} = 12113123$, $s = \crop(r) = 1212 = \rowword{\crop(\sigma)}$, and $t = 123123$. Notice that $r$ is obtained from $t$ by inserting two $1$'s into $t$ before the $3$.  
\end{eg}

Although we will continue working with words (rather than tableaux) to prove \cref{q_count}, we point out that \cref{richardson_chop} can be rephrased in terms of tableaux:
\begin{cor}\label{crop_tableau}
Let $\sigma$ be a standard tableau. Then $\sigma$ is Richardson if and only if:
\begin{itemize}[itemsep=2pt]
\item $\crop(\sigma)$ is Richardson; and
\item for every entry $j$ in the second row of $\sigma$, the entry $j-1$ appears in the first row.
\end{itemize}
\end{cor}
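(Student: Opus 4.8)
The plan is to deduce \cref{crop_tableau} from \cref{richardson_chop} by translating between standard tableaux and their lattice words. Recall that a standard tableau $\tau$ is Richardson exactly when $\rowword{\tau}$ is a Richardson word, that $\crop(\sigma)$ is always a standard tableau, and that $\rowword{\crop(\sigma)} = \crop(\rowword{\sigma})$. Since each entry of a standard tableau occurs exactly once, the condition ``for every entry $j$ in the second row of $\sigma$, the entry $j-1$ lies in the first row'' says precisely that $\rowjword{\sigma}{j} = 2$ implies $\rowjword{\sigma}{j-1} = 1$; writing $r := \rowword{\sigma}$, this is the statement that every occurrence of the letter $2$ in $r$ is immediately preceded by a $1$ (note $j \ge 2$ automatically, as $r_1 = 1$). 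So it is enough to prove: a lattice word $r$ arising from a standard tableau is Richardson if and only if $\crop(r)$ is Richardson and every occurrence of $2$ in $r$ is immediately preceded by a $1$.

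For the forward direction, if $r$ is Richardson then $\crop(r)$ is Richardson by \cref{richardson_chop}\ref{richardson_chop_forward}, and for each $j$ with $r_j = 2$, applying \cref{richardson_tableau_to_word} at position $j$ and reading $r_1 \cdots r_{j-1}$ from right to left forces the first letter read, namely $r_{j-1}$, to equal $1$ (otherwise we would encounter a letter $\ge 2$ before the required $1$).

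For the backward direction, set $s := \crop(r)$ and invoke \cref{richardson_chop}\ref{richardson_chop_backward}: together with its uniqueness clause, it says the Richardson words $r'$ with $\crop(r') = s$ are exactly those produced by forming $t$ from $s$ (increase each letter by $1$, then replace each $2$ by $12$) and inserting $1$'s into $t$, never immediately before a $2$. I would show that our $r$ is such a word: deleting the $1$'s of $r$ and subtracting $1$ recovers $s$, so $r$ is ``$s$ increased by $1$'' with some $1$'s inserted; the hypothesis that every $2$ of $r$ is preceded by a $1$ supplies exactly the $1$'s created by the step $2 \mapsto 12$, and any further $1$'s of $r$ can be realized as insertions not immediately before a $2$ (within a maximal run of $1$'s abutting a $2$ on the right, the final $1$ plays the role of the mandatory one). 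Hence $r$ is Richardson.

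The one point requiring care is this last verification — that the two conditions cut out precisely the image of the construction in \cref{richardson_chop}\ref{richardson_chop_backward}, rather than a larger family of words — which amounts to checking that every ``extra'' $1$ of $r$ can be viewed as a legal insertion; this is routine once one notes that a $1$ sitting immediately before a $2$ in $r$ is always accounted for by the $2 \mapsto 12$ step. If one prefers to bypass \cref{richardson_chop} entirely, the backward direction can instead be proved straight from \cref{richardson_tableau_to_word}: for a position $j$ with $r_j = 2$ the required condition is immediate from $r_{j-1} = 1$, while for $r_j \ge 3$ it follows by applying \cref{richardson_tableau_to_word} to $\crop(r)$ at the position corresponding to $j$, since interspersed $1$'s are irrelevant to a condition about the letters $r_j - 1 \ge 2$ and above.
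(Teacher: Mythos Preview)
Your proof is correct and follows essentially the same approach as the paper, which simply states that the corollary is \cref{richardson_chop} rephrased in terms of tableaux. Your write-up supplies the translation in more detail and even offers a direct alternative via \cref{richardson_tableau_to_word}, but the underlying route is the same.
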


\begin{proof}
This is just \cref{richardson_chop} stated in terms of tableaux.
\end{proof}

\begin{eg}\label{eg_richardson_induction} This example demonstrates the inductive argument of our proof of~\cref{q_count}, which appears below. Recall from \cref{eg_q_count} that there are $8$ Richardson tableaux of shape $\lambda = (3,2,1)$. We explain how each such tableau (or equivalently, its lattice word $r$) is obtained from a Richardson tableau of shape $\crop(\lambda) = (2,1)$ as in \cref{richardson_chop}\ref{richardson_chop_backward}. There are two Richardson words corresponding to the shape $(2,1)$: $s=112$ and $s=121$.

First we consider the case $s=112$. Then $t=12123$, and to obtain $r$ we must insert one more $1$ into $t$ in any way such that we do not insert a $1$ immediately before a $2$ in $t$. There are four such Richardson words, each shown below along with its corresponding Richardson tableau:
\begin{align}\label{eqn.112}
\begin{array}{cccc}
\hspace*{6pt}\;\begin{ytableau}
1 & 2 & 4 \\
3 & 5 \\
6
\end{ytableau}\;\hspace*{6pt}
&
\hspace*{6pt}\;\begin{ytableau}
1 & 3 & 4 \\
2 & 5 \\
6
\end{ytableau}\;\hspace*{6pt}
&
\hspace*{6pt}\;\begin{ytableau}
1 & 3 & 5 \\
2 & 4 \\
6
\end{ytableau}\;\hspace*{6pt}
&
\hspace*{6pt}\;\begin{ytableau}
1 & 3 & 6 \\
2 & 4 \\
5
\end{ytableau}\;\hspace*{6pt}\\[24pt]
r=112123
&
r=121123
&
r=121213
&
r=121231.
\end{array}
\end{align}
Similarly, for the case $s = 121$  we have that $t = 12312$ and we obtain
\begin{align}\label{eqn.121}
\begin{array}{cccc}
\hspace*{6pt}\;\begin{ytableau}
1 & 2 & 5 \\
3 & 6 \\
4
\end{ytableau}\;\hspace*{6pt}
&
\hspace*{6pt}\;\begin{ytableau}
1 & 3 & 5 \\
2 & 6 \\
4
\end{ytableau}\;\hspace*{6pt}
&
\hspace*{6pt}\;\begin{ytableau}
1 & 4 & 5 \\
2 & 6 \\
3
\end{ytableau}\;\hspace*{6pt}
&
\hspace*{6pt}\;\begin{ytableau}
1 & 4 & 6 \\
2 & 5 \\
3
\end{ytableau}\;\hspace*{6pt}\\[24pt]
r=112312
&
r=121312
&
r=123112
&
r=123121
\end{array}
\end{align}

This process partitions the set of Richardson tableaux of shape $\lambda$ into two subsets of equal size on which $\sumone$ is particularly well-behaved. Indeed, we have
\[
\sum_{\text{$r$ in~\eqref{eqn.112}}} q^{\sumone(r)} = q^4(1+q+q^2+q^3) \quad \text{ and } \quad 
\sum_{\text{$r$ in~\eqref{eqn.121}}} q^{\sumone(r)} = q^5(1+q+q^2+q^3).
\]
The reader can confirm that this is a special case of the formula~\eqref{sumone_to_multiply} in the proof of~\cref{q_count} below.
\end{eg}

\begin{proof}[Proof of \cref{q_count}]
Let $n = |\lambda|$ and $\ell = \ell(\lambda)$, and for a partition $\mu$ let $\Rwords{\mu} := \{\rowword{\sigma} \mid \sigma\in\Rtabs{\mu}\}$ denote the set of all lattice words of Richardson tableaux of shape $\mu$. By \eqref{maj_dual_formula}, it suffices to show that
\[
\sum_{r\in\Rwords{\lambda}}q^{\sumone(r)} = q^{\binom{n}{2} - \sum_{2 \le i \le j \le \ell}\lambda_i\lambda_j}\cdot\prod_{i=1}^{\ell-1}\qinvbinom{\lambda_i + \lambda_{i+2} + \cdots + \lambda_\ell}{\lambda_{i+1} + \lambda_{i+2} + \cdots+\lambda_\ell}.
\]
Using the identities $\qinvbinomsmall{b}{a} = q^{-a(b-a)}\qbinomsmall{b}{a}$ and $\binom{n}{2} = \sum_{i=1}^\ell \binom{\lambda_i}{2} + \sum_{1\leq i<j\leq \ell} \lambda_i\lambda_j$ (from, e.g., \cite[Lemma 2.9 and Proposition 4.7(ii)]{karp_thomas}), a calculation shows that this is equivalent to
\begin{align}\label{sumone_count}
\sum_{r\in\Rwords{\lambda}}q^{\sumone(r)} = q^{\sum_{i=1}^\ell\binom{\lambda_i}{2}} \cdot \prod_{i=1}^{\ell-1}\qbinom{\lambda_i + \lambda_{i+2} + \cdots +\lambda_\ell}{\lambda_{i+1} + \lambda_{i+2} + \cdots+\lambda_\ell}.
\end{align}
We prove \eqref{sumone_count} by induction on $n$. The base case is $n=0$, whence $\lambda = \varnothing$; then \eqref{sumone_count} states that $1 = 1$.

Now suppose that $n\ge 1$ and that \eqref{sumone_count} holds for partitions of size at most $n-1$. In particular, we have
\begin{align}\label{sumone_induction}
\sum_{s\in\Rwords{\crop(\lambda)}}q^{\sumone(s)} = q^{\sum_{i=2}^\ell\binom{\lambda_i}{2}} \cdot \prod_{i=2}^{\ell-1}\qbinom{\lambda_i + \lambda_{i+2} + \cdots +\lambda_\ell}{\lambda_{i+1} + \lambda_{i+2} + \cdots+\lambda_\ell}.
\end{align}
Now fix $s\in\Rwords{\crop(\lambda)}$, and let $\Rwords{\lambda;s} := \{r\in\Rwords{\lambda} \mid \crop(r) = s\}$. We obtain the partition $\Rwords{\lambda} = \bigsqcup_{s\in \Rwords{\crop(\lambda)}} \Rwords{\lambda;s}$. In particular, it suffices to show that
\begin{align}\label{sumone_to_multiply}
\sum_{r\in\Rwords{\lambda;s}}q^{\sumone(r)} = q^{\sumone(s) + \binom{\lambda_1}{2}}\qbinom{n-\lambda_2}{n-\lambda_1},
\end{align}
since summing \eqref{sumone_to_multiply} over all $s\in\Rwords{\crop(\lambda)}$ and plugging in \eqref{sumone_induction} yields \eqref{sumone_count}.

To prove \eqref{sumone_to_multiply}, we use \cref{richardson_chop}\ref{richardson_chop_backward} to construct the elements of $\Rwords{\lambda;s}$ starting from $s$. First note that if $s$ has $1$'s in positions $j_1 < \cdots < j_{\lambda_2}$, then $t$ (as defined in \cref{richardson_chop}\ref{richardson_chop_backward}) has $1$'s in positions $j_1 < j_2 + 1 < \cdots < j_{\lambda_2} + \lambda_2 - 1$, so
\begin{align}\label{sumone_t_s}
\sumone(t) = \sumone(s) + \binom{\lambda_2}{2}.
\end{align}

For each $r\in\Rwords{\lambda;s}$, let $\tilde{r}$ denote the word obtained from $r$ by replacing every substring $12$ and every letter $j\ge 3$ by $2$.  For example, if $r=12113123$ then $\tilde{r} = 211222$. The map $r\mapsto \tilde{r}$ defines a bijection between $\Rwords{\lambda;s}$ and the set of all words of length $n - \lambda_2$ with exactly $\lambda_1 - \lambda_2$ many $1$'s and $n - \lambda_1$ many $2$'s. Summing over all such $\tilde{r}$'s, we have
\begin{align}\label{sumone_q_induction}
\sum_{\tilde{r}}q^{\sumone(\tilde{r}) - \binom{\lambda_1 - \lambda_2}{2}} = \qbinom{n-\lambda_2}{n-\lambda_1}
\end{align}
by \cref{sumone_q}.

Now we claim that
\begin{align}\label{sumone_r}
\sumone(r) = \sumone(\tilde{r}) + \sumone(t) + \lambda_2(\lambda_1 - \lambda_2).
\end{align}
Note that plugging \eqref{sumone_t_s} into \eqref{sumone_r} and then into \eqref{sumone_q_induction} yields
\[
\sum_{r\in\Rwords{\lambda;s}}q^{\sumone(r) - \sumone(s) - \binom{\lambda_2}{2} - \lambda_2(\lambda_1 - \lambda_2) - \binom{\lambda_1 - \lambda_2}{2}} = \qbinom{n-\lambda_2}{n-\lambda_1},
\]
which we see is equivalent to \eqref{sumone_to_multiply} using the identity $\binom{\lambda_2}{2} + \lambda_2(\lambda_1-\lambda_2) + \binom{\lambda_1-\lambda_2}{2} = \binom{\lambda_1}{2}$. Therefore to complete the proof, it suffices to prove \eqref{sumone_r}.

To this end, we recall from \cref{richardson_chop} that we obtain each word $r\in\Rwords{\lambda;s}$ by using $s$ to produce the word $t$ (by replacing every $2$ with the string $12$), and then inserting $\lambda_1-\lambda_2$ many $1$'s into $t$, but not before a $2$.  Thus, there are two types of occurrences of the digit $1$ in the word $r$: the $1$'s appearing in $t$ (which we call \emph{old $1$'s}) and the $1$'s inserted into $t$ (which we call \emph{new $1$'s}). That is, a $1$ in $r$ is old if it appears immediately before a $2$, and is new otherwise. By construction, $r$ has $\lambda_2$ old $1$'s and $\lambda_1 - \lambda_2$ new $1$'s. For example, in the setup of \cref{eg_richardson_chop}, we have $s = 1212$, $t = 123123$, and $r = \underline{1}2\overline{1}\overline{1}3\underline{1}23$, where $\underline{1}$ denotes an old $1$ and $\overline{1}$ denotes a new $1$.

Note that the old $1$'s in $r$ correspond to the $1$'s in $t$, while the new $1$'s in $r$ correspond to the $1$'s in $\tilde{r}$. To see how this leads to \eqref{sumone_r} (with the additional term $\lambda_2(\lambda_1 - \lambda_2)$), consider a pair of $1$'s in $r$, one old and one new (of which there are $\lambda_2(\lambda_1 - \lambda_2)$ in total). If the new $1$ appears before the old $1$, this shifts the position of the old $1$ one space to the right relative to its position in $t$, thereby adding one to the count of $\sumone$. On the other hand, if the old $1$ appears before the new $1$, this shifts the position of the new $1$ one space to the right relative to its position in $\tilde{r}$, thereby also adding one to the count of $\sumone$. In total we add $\lambda_2(\lambda_1 - \lambda_2)$ to the count of $\sumone$, which proves \eqref{sumone_r}, as claimed.
\end{proof}


\subsection{Enumeration via generating functions}
In this subsection, we study the multivariate generating functions for Richardson words and prime Richardson words. We show that the generating functions satisfy simple recursive formulas and use these to give a second proof of the formula~\eqref{q_count_1} for the number of Richardson tableaux of shape $\lambda$. It would be interesting to generalize these arguments to prove the $q$-analogue \eqref{q_count_q} using generating functions.

Our general setup is as follows. Given a set $S$ of finite words on the alphabet $\mathbb{Z}_{>0}$, define the \boldit{generating function}
\[
\Phi_S = \Phi_S(x_1, x_2, \dots) := \sum_{s\in S}\prod_{i\ge 1}x_i^{\# \text{$i$'s in $s$}}
\]
in the ring of formal power series. Given a sequence $\alpha = (\alpha_1, \dots, \alpha_\ell)$ of nonnegative integers, we set $\mathbf{x}^\alpha := x_1^{\alpha_1}x_2^{\alpha_2}\cdots x_\ell^{\alpha_\ell}$, and we let $[\mathbf{x}^\alpha]$ denote the \boldit{coefficient operator} which extracts the coefficient of $\mathbf{x}^\alpha$ from a formal power series.

Let $\ogfbiggest{\ell}$ denote the generating function of Richardson words on the alphabet $[\ell]$, which is characterized as follows:
\[
[\mathbf{x}^\alpha]\ogfbiggest{\ell} = \begin{cases}
|\Rtabs{\alpha}|, & \text{ if $\alpha$ is a partition of length at most $\ell$}; \\
0, & \text{ otherwise}
\end{cases}
\]
for all $\alpha$. We will also study $\ogfprime{\ell}$, the generating function for prime Richardson words with largest letter $\ell$.

\begin{thm}\label{richardson_ogf}
The generating functions $\big(\ogfprime{\ell}\big)_{\hspace*{-1pt}\ell\ge 1}$ and $\big(\ogfbiggest{\ell}\big)_{\hspace*{-1pt}\ell\ge 0}$ are uniquely determined by the recurrences
\begin{align}\label{recurrence_prime}
\ogfprime{\ell} = \ogfprime{\ell-1}\ogfbiggest{\ell-2}x_\ell \quad \text{ for all } \ell\ge 2
\end{align}
and
\begin{align}\label{recurrence_biggest}
\ogfbiggest{\ell} = \frac{\ogfbiggest{\ell-1}}{1 - \ogfprime{\ell}\ogfbiggest{\ell-1}} \quad \text{ for all } \ell \ge 1,
\end{align}
with initial conditions
\[
\ogfprime{1} = x_1 \quad \text{ and } \quad \ogfbiggest{0} = 1.
\]
In particular, the generating functions $\big(\ogfprime{\ell}\big)_{\hspace*{-1pt}\ell\ge 1}$ and $\big(\ogfbiggest{\ell}\big)_{\hspace*{-1pt}\ell\ge 0}$ are all rational.
\end{thm}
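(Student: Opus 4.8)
The plan is to combine the unique prime factorization of Richardson words (\cref{prime_star}) and the recursion for prime Richardson words (\cref{prime_recursion}) with the elementary observation that $\Phi_S$ is multiplicative under \emph{unambiguous} concatenation: if $S$ and $T$ are sets of finite words such that the map $S\times T\to S\circ T$, $(s,t)\mapsto s\circ t$, is a bijection, then $\Phi_{S\circ T}=\Phi_S\Phi_T$, since the multiset of letters of $s\circ t$ is the disjoint union of those of $s$ and $t$. Likewise, if $S$ consists only of nonempty words and $S^*=\bigsqcup_{m\ge 0}S^{\circ m}$ is an unambiguous decomposition, then $\Phi_{S^*}=\sum_{m\ge 0}\Phi_S^m=(1-\Phi_S)^{-1}$, where the geometric series converges and $1-\Phi_S$ is a unit because $\Phi_S$ has zero constant term. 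Throughout, $\ogfbiggest{\ell}$ and $\ogfprime{\ell}$ involve only the variables $x_1,\dots,x_\ell$, so the statements about rationality make sense.

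First I would record the initial conditions: $\ogfprime{1}=x_1$ since $\Rprime{1}=\{1\}$, and $\ogfbiggest{0}=1$ since the only Richardson word on the empty alphabet is the empty word. Next, \eqref{recurrence_prime} is immediate from \cref{prime_recursion}: the identity \eqref{equation_prime_recursion} exhibits $\Rprime{\ell}=\Rprime{\ell-1}\circ\Rbiggest{\ell-2}\circ\{\ell\}$ as an unambiguous concatenation, so $\ogfprime{\ell}=\ogfprime{\ell-1}\,\ogfbiggest{\ell-2}\,x_\ell$ for all $\ell\ge 2$.

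For \eqref{recurrence_biggest}, the key combinatorial step is to reorganize the prime factorization of a Richardson word on $[\ell]$ by grouping its prime factors according to whether their largest letter equals $\ell$. A prime Richardson word with largest letter $\ell$ is exactly an element of $\Rprime{\ell}$, and any concatenation of prime Richardson words with largest letter $<\ell$ lies in $\Rbiggest{\ell-1}$ (again by \cref{prime_star}). Cutting the prime factorization of $w\in\Rbiggest{\ell}$ immediately before and after each factor lying in $\Rprime{\ell}$ therefore writes $w$ uniquely as $w_0\circ p_1\circ w_1\circ\cdots\circ p_k\circ w_k$ with $p_1,\dots,p_k\in\Rprime{\ell}$ and $w_0,\dots,w_k\in\Rbiggest{\ell-1}$, giving the unambiguous decomposition
\[
\Rbiggest{\ell}=\bigsqcup_{k\ge 0}\Rbiggest{\ell-1}\circ(\Rprime{\ell}\circ\Rbiggest{\ell-1})^{\circ k}.
\]
Taking generating functions and summing the geometric series yields
\[
\ogfbiggest{\ell}=\sum_{k\ge 0}\ogfbiggest{\ell-1}\bigl(\ogfprime{\ell}\,\ogfbiggest{\ell-1}\bigr)^k=\frac{\ogfbiggest{\ell-1}}{1-\ogfprime{\ell}\,\ogfbiggest{\ell-1}}
\]
for all $\ell\ge 1$ (the series converges and the denominator is a unit since $\ogfprime{\ell}$ has zero constant term), which is \eqref{recurrence_biggest}. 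Finally, the recurrences together with the initial conditions determine $\ogfbiggest{0},\ogfprime{1},\ogfbiggest{1},\ogfprime{2},\ogfbiggest{2},\dots$ one at a time, the division in \eqref{recurrence_biggest} always being legitimate, so they have a unique solution; and an immediate induction on $\ell$ using \eqref{recurrence_prime} and \eqref{recurrence_biggest} shows that each $\ogfprime{\ell}$ and $\ogfbiggest{\ell}$ is a rational function.

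I expect the only genuine point requiring care to be the combinatorial bookkeeping in the displayed decomposition of $\Rbiggest{\ell}$ --- namely, checking that grouping the prime factors as described really is a bijection, with nothing lost and nothing double-counted. This follows cleanly from the uniqueness in \cref{prime_star}, so there is no substantial obstacle; the remaining steps are formal manipulations in the power series ring.
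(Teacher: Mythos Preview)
Your proof is correct and uses the same two combinatorial inputs as the paper (\cref{prime_star} and \cref{prime_recursion}); the only difference is in how \eqref{recurrence_biggest} is obtained. The paper first records the closed form $\ogfbiggest{i}=\bigl(1-(\ogfprime{1}+\cdots+\ogfprime{i})\bigr)^{-1}$ from \cref{prime_star} and then verifies \eqref{recurrence_biggest} by substitution and clearing denominators, whereas you derive \eqref{recurrence_biggest} directly by regrouping the prime factorization according to which factors lie in $\Rprime{\ell}$ --- a slightly more combinatorial but entirely equivalent route.
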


\begin{proof}
The recurrence \eqref{recurrence_prime} follows from \cref{prime_recursion}. Now for any set $S$ of finite words such that every element of $S^*$ is obtained uniquely as a concatenation of words in $S$, we have $\Phi_{S^*} = \frac{1}{1 - \Phi_S}$. Hence by \cref{prime_star} we obtain
\[
\ogfbiggest{i} = \frac{1}{1 - (\ogfprime{1} + \cdots + \ogfprime{i})} \quad \text{ for all } i \ge 1.
\]
Substituting this into \eqref{recurrence_biggest}, we see that \eqref{recurrence_biggest} is equivalent to
\[
\frac{1}{1 - (\ogfprime{1} + \cdots + \ogfprime{\ell})} = \frac{\frac{1}{1 - (\ogfprime{1} + \cdots + \ogfprime{\ell-1})}}{1 - \frac{\ogfprime{\ell}}{1 - (\ogfprime{1} + \cdots + \ogfprime{\ell-1})}},
\]
which we can check is true by clearing denominators. Finally, we can verify the initial conditions directly.
\end{proof}

\begin{eg}\label{eg_richardson_ogf}
Using \cref{richardson_ogf}, we calculate that
\begin{align*}
\ogfprime{1} &= x_1, & \ogfbiggest{0} &= 1, \\[4pt]
\ogfprime{2} &= x_1x_2, & \ogfbiggest{1} &= \frac{1}{1-x_1}, \\[4pt]
\ogfprime{3} &= \frac{x_1x_2x_3}{1-x_1}, & \ogfbiggest{2} &= \frac{1}{1-x_1 - x_1x_2}, \\[4pt]
\ogfprime{4} &= \frac{x_1x_2x_3x_4}{(1-x_1)(1-x_1-x_1x_2)}, & \ogfbiggest{3} &= \frac{1}{1-x_1 - x_1x_2 - \frac{x_1x_2x_3}{1-x_1}},
\end{align*}
etc.
\end{eg}

\begin{rmk}
We can rephrase \cref{delete_two} as the statement that
\[
\ogfprime{\ell} = (\ogfbiggest{\ell-2} - \ogfbiggest{\ell-3})x_{\ell-1}x_\ell \quad \text{ for all } \ell\ge 2,
\]
where $\ogfbiggest{-1} := 0$.
\end{rmk}

As a demonstration of the utility of these generating functions, we present an alternative proof of the formula \eqref{q_count_1} for the number of Richardson tableaux of shape $\lambda$. Our main tool will be the negative binomial theorem:
\begin{align}\label{negative_binomial}
(1-x)^{-(b+1)} = \sum_{i\ge 0}\binom{b+i}{b}x^i \quad \text{ for all } b\in\mathbb{N}.
\end{align}

\begin{lem}\label{count_lemma1}
For all $a,b\ge 0$ and $i\ge 2$, we have
\[
[x_i^a]\ogfbiggest{i}^{b+1} = \binom{a+b}{b}\ogfprime{i-1}^a\ogfbiggest{i-2}^a\ogfbiggest{i-1}^{a+b+1},
\]
where above, the operator $[x_i^a]$ extracts the coefficient of $x_i^a$ \textup{(}treating the variables $x_j$ for $j\neq i$ as constants\textup{)}.
\end{lem}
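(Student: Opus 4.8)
The plan is to combine the two recurrences from \cref{richardson_ogf} into a single closed form for $\ogfbiggest{i}$ as a power series in $x_i$ (with the remaining variables treated as constants), and then read off the coefficient of $x_i^a$ using the negative binomial theorem \eqref{negative_binomial}.

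First I would substitute the prime recurrence \eqref{recurrence_prime}, namely $\ogfprime{i} = \ogfprime{i-1}\ogfbiggest{i-2}x_i$, into the recurrence \eqref{recurrence_biggest} for $\ogfbiggest{i}$, obtaining
\[
\ogfbiggest{i} = \frac{\ogfbiggest{i-1}}{1 - \ogfprime{i-1}\ogfbiggest{i-2}\ogfbiggest{i-1}\,x_i}.
\]
The key observation is that the three series $\ogfprime{i-1}$, $\ogfbiggest{i-2}$, and $\ogfbiggest{i-1}$ involve only the variables $x_1, \dots, x_{i-1}$ (indeed $\ogfbiggest{i-1}$ is the generating function for Richardson words on the alphabet $[i-1]$, and similarly for the others), so with respect to $x_i$ the right-hand side is simply $\ogfbiggest{i-1}$ times a geometric series in $x_i$ with constant ratio $\ogfprime{i-1}\ogfbiggest{i-2}\ogfbiggest{i-1}$.

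Raising to the power $b+1$ and applying \eqref{negative_binomial} with $x = \ogfprime{i-1}\ogfbiggest{i-2}\ogfbiggest{i-1}\,x_i$ then gives
\[
\ogfbiggest{i}^{b+1} = \ogfbiggest{i-1}^{b+1}\bigl(1 - \ogfprime{i-1}\ogfbiggest{i-2}\ogfbiggest{i-1}\,x_i\bigr)^{-(b+1)} = \sum_{k\ge 0}\binom{b+k}{b}\ogfprime{i-1}^{k}\ogfbiggest{i-2}^{k}\ogfbiggest{i-1}^{k+b+1}\,x_i^{k}.
\]
Extracting the coefficient of $x_i^a$ yields $\binom{a+b}{b}\ogfprime{i-1}^{a}\ogfbiggest{i-2}^{a}\ogfbiggest{i-1}^{a+b+1}$, which is exactly the claimed identity.

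There is essentially no obstacle here beyond bookkeeping: the only point that needs to be made carefully is that $\ogfprime{i-1}$, $\ogfbiggest{i-2}$, and $\ogfbiggest{i-1}$ do not involve $x_i$, which is what legitimizes treating them as constants when applying the negative binomial theorem and extracting $[x_i^a]$. Everything else is a direct manipulation of \eqref{recurrence_prime}, \eqref{recurrence_biggest}, and \eqref{negative_binomial}.
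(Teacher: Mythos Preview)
Your proposal is correct and follows essentially the same approach as the paper: substitute \eqref{recurrence_prime} into \eqref{recurrence_biggest}, raise to the $(b+1)$st power, observe that $\ogfprime{i-1}$, $\ogfbiggest{i-2}$, $\ogfbiggest{i-1}$ do not involve $x_i$, and extract $[x_i^a]$ via the negative binomial theorem \eqref{negative_binomial}.
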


\begin{proof}
By \eqref{recurrence_biggest} and \eqref{recurrence_prime}, we have
\[
\ogfbiggest{i}^{b+1} = \bigg(\frac{\ogfbiggest{i-1}}{1 - \ogfprime{i}\ogfbiggest{i-1}}\bigg)^{\hspace*{-2pt}b+1} = \ogfbiggest{i-1}^{b+1} \cdot (1 - \ogfprime{i-1}\ogfbiggest{i-2}\ogfbiggest{i-1}x_i)^{-(b+1)}.
\]
Note that in the right-hand side above, $x_i$ does not appear in any of the generating functions. Therefore we can extract the coefficient of $x_i^a$ via \eqref{negative_binomial}.
\end{proof}

\begin{lem}\label{count_lemma2}
For all $a,b,c,d\ge 0$ and $i\ge 2$, we have
\[
[x_i^{a+b}]\big(\ogfprime{i}^b\ogfbiggest{i-1}^c\ogfbiggest{i}^{d+1}\big) = \binom{a+d}{d}\ogfprime{i-1}^{a+b}\ogfbiggest{i-2}^{a+b}\ogfbiggest{i-1}^{a+c+d+1},
\]
where above, the operator $[x_i^{a+b}]$ extracts the coefficient of $x_i^{a+b}$ \textup{(}treating the variables $x_j$ for $j\neq i$ as constants\textup{)}.
\end{lem}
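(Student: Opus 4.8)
The plan is to imitate the proof of \cref{count_lemma1}, which is precisely the special case $b = c = 0$ (with the ``$b$'' there playing the role of ``$d$'' here): use the recurrences \eqref{recurrence_prime} and \eqref{recurrence_biggest} to strip all of the $x_i$-dependence out of $\ogfprime{i}$ and $\ogfbiggest{i}$ into explicit elementary factors, and then apply the negative binomial theorem \eqref{negative_binomial} to read off the coefficient of $x_i^{a+b}$.

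First I would rewrite the two $x_i$-dependent factors. By \eqref{recurrence_prime} we have $\ogfprime{i}^b = \ogfprime{i-1}^b\ogfbiggest{i-2}^b x_i^b$, and by \eqref{recurrence_biggest} followed by \eqref{recurrence_prime},
\[
\ogfbiggest{i}^{d+1} = \ogfbiggest{i-1}^{d+1}\big(1 - \ogfprime{i-1}\ogfbiggest{i-2}\ogfbiggest{i-1}x_i\big)^{-(d+1)}.
\]
Multiplying these together with the $x_i$-free factor $\ogfbiggest{i-1}^c$ gives
\[
\ogfprime{i}^b\ogfbiggest{i-1}^c\ogfbiggest{i}^{d+1} = \ogfprime{i-1}^b\ogfbiggest{i-2}^b\ogfbiggest{i-1}^{c+d+1}\cdot x_i^b\cdot\big(1 - \ogfprime{i-1}\ogfbiggest{i-2}\ogfbiggest{i-1}x_i\big)^{-(d+1)}.
\]
The key observation, exactly as in \cref{count_lemma1}, is that $\ogfprime{i-1}$, $\ogfbiggest{i-2}$, and $\ogfbiggest{i-1}$ are generating functions for words on alphabets contained in $[i-1]$, hence do not involve $x_i$; so extracting the coefficient of $x_i^{a+b}$ amounts to extracting the coefficient of $x_i^a$ from $\big(1 - \ogfprime{i-1}\ogfbiggest{i-2}\ogfbiggest{i-1}x_i\big)^{-(d+1)}$, which by \eqref{negative_binomial} equals $\binom{a+d}{d}\big(\ogfprime{i-1}\ogfbiggest{i-2}\ogfbiggest{i-1}\big)^a$. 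Collecting exponents of the three lower-index generating functions then yields $\binom{a+d}{d}\ogfprime{i-1}^{a+b}\ogfbiggest{i-2}^{a+b}\ogfbiggest{i-1}^{a+c+d+1}$, as claimed.

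I do not anticipate a genuine obstacle: once the recurrences are substituted, the remaining work is pure exponent bookkeeping. The one point worth double-checking is that the three lower-index generating functions really are free of $x_i$ (so that they may be treated as constants when applying the operator $[x_i^{a+b}]$), which is immediate from the combinatorial meaning of $\ogfprime{j}$ and $\ogfbiggest{j}$. In short, the lemma is the natural generalization of \cref{count_lemma1} that keeps track of extra factors of $\ogfprime{i}$ and $\ogfbiggest{i-1}$, and it is proved by the same two-line manipulation.
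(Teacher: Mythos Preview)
Your proposal is correct and essentially identical to the paper's argument. The only cosmetic difference is that the paper first applies \eqref{recurrence_prime} to pull out the factor $\ogfprime{i-1}^b\ogfbiggest{i-2}^b\ogfbiggest{i-1}^c x_i^b$, reduces the problem to computing $[x_i^a]\ogfbiggest{i}^{d+1}$, and then cites \cref{count_lemma1} directly; you instead inline the proof of \cref{count_lemma1} by also substituting \eqref{recurrence_biggest} and invoking \eqref{negative_binomial} explicitly.
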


\begin{proof}
By \eqref{recurrence_prime}, we have
\begin{align*}
[x_i^{a+b}]\big(\ogfprime{i}^b\ogfbiggest{i-1}^c\ogfbiggest{i}^{d+1}\big) &= [x_i^{a+b}]\big((\ogfprime{i-1}\ogfbiggest{i-2}x_i)^b\ogfbiggest{i-1}^c\ogfbiggest{i}^{d+1}\big) \\
&= \ogfprime{i-1}^b\ogfbiggest{i-2}^b\ogfbiggest{i-1}^c[x_i^a]\ogfbiggest{i}^{d+1}.
\end{align*}
The result then follows by applying \cref{count_lemma1}.
\end{proof}

\begin{proof}[Proof of \eqref{q_count_1}]
The number of Richardson tableaux of shape $\lambda$ is $[\mathbf{x}^\lambda]\ogfbiggest{\ell}$. By repeatedly applying \cref{count_lemma2}, we get
\begin{align*}
[\mathbf{x}^\lambda]\ogfbiggest{\ell}&= [x_1^{\lambda_1}\cdots x_\ell^{\lambda_\ell}]\ogfbiggest{\ell} \\[2pt]
&= [x_1^{\lambda_1}\cdots x_{\ell-1}^{\lambda_{\ell-1}}] \left(\ogfprime{\ell-1}^{\lambda_\ell}\ogfbiggest{\ell-2}^{\lambda_\ell}\ogfbiggest{\ell-1}^{\lambda_\ell+1} \right) \\[2pt]
&= \binom{\lambda_{\ell-1}}{\lambda_\ell}\cdot [x_1^{\lambda_1}\cdots x_{\ell-2}^{\lambda_{\ell-2}}] \left( \ogfprime{\ell-2}^{\lambda_{\ell-1}}\ogfbiggest{\ell-3}^{\lambda_{\ell-1}}\ogfbiggest{\ell-2}^{\lambda_{\ell-1}+\lambda_\ell+1} \right) \\[2pt]
&= \binom{\lambda_{\ell-1}}{\lambda_\ell}\binom{\lambda_{\ell-2} + \lambda_\ell}{\lambda_{\ell-1} + \lambda_\ell}\cdot [x_1^{\lambda_1}\cdots x_{\ell-3}^{\lambda_{\ell-3}}] \left( \ogfprime{\ell-3}^{\lambda_{\ell-2}}\ogfbiggest{\ell-4}^{\lambda_{\ell-2}}\ogfbiggest{\ell-3}^{\lambda_{\ell-2}+\lambda_{\ell-1}+\lambda_\ell+1} \right) \\[2pt]
&= \cdots \\[2pt]
&= \binom{\lambda_{\ell-1}}{\lambda_\ell}\binom{\lambda_{\ell-2} + \lambda_\ell}{\lambda_{\ell-1} + \lambda_\ell} \cdots \binom{\lambda_2 + \lambda_4 + \cdots + \lambda_\ell}{\lambda_3 + \lambda_4 + \cdots + \lambda_\ell}\cdot [x_1^{\lambda_1}]\left(\ogfprime{1}^{\lambda_2}\ogfbiggest{0}^{\lambda_2}\ogfbiggest{1}^{\lambda_2 + \lambda_3 + \cdots + \lambda_\ell+1} \right).
\end{align*}
Now by \cref{eg_richardson_ogf} and \eqref{negative_binomial}, we have
\begin{align*}
[x_1^{\lambda_1}]\left(\ogfprime{1}^{\lambda_2}\ogfbiggest{0}^{\lambda_2}\ogfbiggest{1}^{\lambda_2 + \lambda_3 + \cdots + \lambda_\ell+1}\right) &= [x_1^{\lambda_1}]\left(x_1^{\lambda_2}(1-x_1)^{-(\lambda_2 + \lambda_3 + \cdots + \lambda_\ell+1)}\right) \\
&= \binom{\lambda_1 + \lambda_3 + \cdots + \lambda_\ell}{\lambda_2 + \lambda_3 + \cdots + \lambda_\ell},
\end{align*}
completing the proof.
\end{proof}


\section{\texorpdfstring{$L$-slide}{L-slide} characterization of Richardson tableaux}\label{sec_slides}

\noindent The goal of this section is to prove \cref{thm-Lslides}, which characterizes Richardson tableaux in terms of evacuation slides. Recall that an evacuation slide deletes the entry in the box in the upper-left corner of a standard tableau $\sigma$, and slides this empty box through $\sigma$ until it reaches an outer corner. We call such a slide an \boldit{$L$-slide} if it is $L$-shaped, i.e., the empty box slides from the upper-left corner down the first column until it reaches some row $k$ with an outer corner, and then slides right across row $k$ to the outer corner. For example, the slide appearing in \cref{eg.slide} is {\itshape not} an $L$-slide, because it leaves the first column in the first row, but terminates in the fourth row.
\begin{eg}\label{eg_Lslide}
Let $\sigma$ be the Richardson tableau from \cref{eg_prime_decomposition}. Then the first evacuation slide of $\sigma$ is an $L$-slide ending in row $k = 3$, as highlighted in yellow below.
\[
\sigma = \;\begin{ytableau}*(yellow)1&4&8&9&11\\*(yellow)2&5&10\\*(yellow)3&*(yellow)6&*(yellow)12\\7\end{ytableau}\;
\]
In fact, every evacuation slide of $\sigma$ is an $L$-slide. By \cref{thm-Lslides} below, this is equivalent to the fact that $\sigma$ is Richardson. Below, each evacuation slide is shown in yellow, with boxes of $\sigma^{\vee}$ highlighted in green.
\begin{align*}
\sigma & \makebox[0pt][l]{$~=~$}\hphantom{{}\rightsquigarrow{}} \;\scalebox{0.72}{$\begin{ytableau}*(yellow)1&4&8&9&11\\*(yellow)2&5&10\\*(yellow)3&*(yellow)6&*(yellow)12\\7\end{ytableau}$}\;
\rightsquigarrow
\;\scalebox{0.72}{$\begin{ytableau}*(yellow)2&4&8&9&11\\*(yellow)3&*(yellow)5&*(yellow)10\\6&12&*(green)12\\7\end{ytableau}$}\;
\rightsquigarrow
\;\scalebox{0.72}{$\begin{ytableau}*(yellow)3&*(yellow)4&*(yellow)8&*(yellow)9&*(yellow)11\\5&10&*(green)11\\6&12&*(green)12\\7\end{ytableau}$}\;
\rightsquigarrow
\;\scalebox{0.72}{$\begin{ytableau}*(yellow)4&8&9&11&*(green)10\\*(yellow)5&10&*(green)11\\*(yellow)6&12&*(green)12\\*(yellow)7\end{ytableau}$}\;
\rightsquigarrow
\;\scalebox{0.72}{$\begin{ytableau}*(yellow)5&8&9&11&*(green)10\\*(yellow)6&10&*(green)11\\*(yellow)7&*(yellow)12&*(green)12\\*(green)9\end{ytableau}$}\;\\[6pt]
&\rightsquigarrow
\;\scalebox{0.72}{$\begin{ytableau}*(yellow)6&8&9&11&*(green)10\\*(yellow)7&*(yellow)10&*(green)11\\12&*(green)8&*(green)12\\*(green)9\end{ytableau}$}\;
\rightsquigarrow
\;\scalebox{0.72}{$\begin{ytableau}*(yellow)7&*(yellow)8&*(yellow)9&*(yellow)11&*(green)10\\10&*(green)7&*(green)11\\12&*(green)8&*(green)12\\*(green)9\end{ytableau}$}\;
\rightsquigarrow
\;\scalebox{0.72}{$\begin{ytableau}*(yellow)8&*(yellow)9&*(yellow)11&*(green)6&*(green)10\\10&*(green)7&*(green)11\\12&*(green)8&*(green)12\\*(green)9\end{ytableau}$}\;
\rightsquigarrow
\;\scalebox{0.72}{$\begin{ytableau}*(yellow)9&11&*(green)5&*(green)6&*(green)10\\*(yellow)10&*(green)7&*(green)11\\*(yellow)12&*(green)8&*(green)12\\*(green)9\end{ytableau}$}\;
\rightsquigarrow
\;\scalebox{0.72}{$\begin{ytableau}*(yellow)10&*(yellow)11&*(green)5&*(green)6&*(green)10\\12&*(green)7&*(green)11\\*(green)4&*(green)8&*(green)12\\*(green)9\end{ytableau}$}\;\\[6pt]
&\rightsquigarrow
\;\scalebox{0.72}{$\begin{ytableau}*(yellow)11&*(green)3&*(green)5&*(green)6&*(green)10\\*(yellow)12&*(green)7&*(green)11\\*(green)4&*(green)8&*(green)12\\*(green)9\end{ytableau}$}\;
\rightsquigarrow
\;\scalebox{0.72}{$\begin{ytableau}*(yellow)12&*(green)3&*(green)5&*(green)6&*(green)10\\*(green)2&*(green)7&*(green)11\\*(green)4&*(green)8&*(green)12\\*(green)9\end{ytableau}$}\;
\rightsquigarrow
\;\scalebox{0.72}{$\begin{ytableau}*(green)1&*(green)3&*(green)5&*(green)6&*(green)10\\*(green)2&*(green)7&*(green)11\\*(green)4&*(green)8&*(green)12\\*(green)9\end{ytableau}$}\; = \sigma^\vee
\end{align*}
Notice that this calculation of $\sigma^\vee$ agrees with \cref{eg_evacuation_concatenation}.
\end{eg}

\begin{lem}\label{lem.Lshape}
Let $\sigma$ be a Richardson tableau. Then the first evacuation slide of $\sigma$ is an $L$-slide.
\end{lem}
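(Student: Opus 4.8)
The plan is to reduce to the case of a \emph{prime} Richardson tableau via the prime decomposition, and then to verify the prime case directly. After disposing of the trivial cases $|\sigma|\le 1$ (where the slide path is a single box), I would first treat a prime Richardson tableau $\sigma$ of shape $\lambda$ with $\ell:=\ell(\lambda)$ rows, and show that its first evacuation slide descends the first column from $(1,1)$ all the way to $(\ell,1)$, and then slides straight right across row $\ell$ to the outer corner $(\ell,\lambda_\ell)$ --- this is an $L$-slide, possibly with a degenerate horizontal segment. For the descent I would argue that whenever the empty box occupies a box $(i,1)$ with $i<\ell$ it must move down: this is forced when $\lambda_i=1$ (then $(i+1,1)$ is the only neighbour), and when $\lambda_i\ge 2$ the two candidate neighbours are $\sigma(i,2)$ and $\sigma(i+1,1)$, so it suffices to show $\sigma(i+1,1)<\sigma(i,2)$. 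Here I invoke \cref{prime_characterization}\ref{prime_characterization_smaller} for the Richardson word $r=\rowword{\sigma}$ (whose largest letter is $\ell$): the letters of $r$ strictly between the first occurrence of $i$, which is at position $\sigma(i,1)$, and the first occurrence of $i+1$, which is at position $\sigma(i+1,1)$, are all $\le i-1$. Since $\sigma(i,2)$ lies in row $i$ and exceeds $\sigma(i,1)$, its position cannot lie strictly between these two positions, which forces $\sigma(i,2)>\sigma(i+1,1)$. Once the empty box reaches $(\ell,1)$ there is no box below it, so it slides right along row $\ell$ to the corner, completing this case.

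For a general Richardson tableau $\sigma$ I would take its prime decomposition $\sigma=\sigma_1\circ\cdots\circ\sigma_m$; each $\sigma_i$ is (prime and) Richardson by \cref{prime_concatenation}. By the argument in the proof of \cref{evacuation_concatenation}, the first evacuation slide of $\sigma=\sigma_1\circ(\sigma_2\circ\cdots\circ\sigma_m)$ follows the first evacuation slide of $\sigma_1$ until it exits $\sigma_1$, say in row $k$, and then proceeds straight across row $k$ through $\sigma_2\circ\cdots\circ\sigma_m$. By the prime case, the slide through $\sigma_1$ runs down the first column and then right along a single row, so appending a further horizontal segment in that same row keeps the whole path $L$-shaped; and when $\sigma_1=\ytableausmall{1}$, the slide through $\sigma_1$ is just the box $(1,1)$ and the slide of $\sigma$ runs straight right along row $1$, again an $L$-slide. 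This would complete the proof.

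I expect the prime case to be the main obstacle: the crucial point is precisely that in a \emph{prime} Richardson tableau the empty box never turns right before the last row, and it is exactly \cref{prime_characterization} that supplies this. For a non-prime Richardson tableau the empty box genuinely can turn right earlier --- for instance it turns right in row $2$ for the tableau $\ytableausmall{1&3\\2&4\\5}$ --- so the reduction to the prime case is essential rather than cosmetic. A secondary technical point I would need to check carefully is that the slide of $\sigma_1$, when traced inside $\sigma_1\circ\tau$, really does agree with the slide of $\sigma_1$ alone until it leaves $\sigma_1$ (inside $\tau$ all entries exceed those of $\sigma_1$, so the relevant comparisons never draw the empty box into $\tau$ prematurely), which is exactly the content of the proof of \cref{evacuation_concatenation}.
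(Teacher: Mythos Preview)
Your proposal is correct, but it takes a genuinely different route from the paper's proof. The paper argues directly: it defines $k$ as the first row where $\sigma_{k+1,1}\ge\sigma_{k,2}$ (so the slide has reached row $k$ in the first column), and then proves by induction on the column index $j$ that $\sigma_{k+1,j}>\sigma_{k,j+1}$ for all $1\le j\le\lambda_{k+1}$, appealing only to \cref{defn_richardson} at each step. No prime decomposition, no \cref{prime_characterization}, no concatenation argument.

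Your approach is more structural: you peel off the first prime factor $\sigma_1$, use \cref{prime_characterization}\ref{prime_characterization_smaller} to force the slide of $\sigma_1$ straight down its first column to row $\ell(\sigma_1)$, and then invoke the mechanism behind \cref{evacuation_concatenation} to extend by a horizontal segment through the remaining factors. This buys you something the paper's proof does not make explicit: the row $k$ at which the $L$-slide turns is exactly the length of the first prime factor of $\sigma$ (your example $\ytableausmall{1&3\\2&4\\5}=\ytableausmall{1\\2}\circ\ytableausmall{1\\2\\3}$, turning at row $2$, illustrates this). The paper's argument, in contrast, is shorter and entirely self-contained, needing nothing from \cref{sec_richardson_words} beyond the definition.
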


\begin{proof}
Let $\sigma_{i,j}$ denote the entry of $\sigma$ in row $i$ and column $j$, where we set $\sigma_{i,j} := \infty$ if $j > \lambda_i$. Take $k\ge 1$ minimal such that $\sigma_{k+1,1} \ge \sigma_{k,2}$, so that
\[
\sigma_{i+1,1} < \sigma_{i,2} \quad \text{ for all $1 \le i \le k-1$}.
\]
That is, the first evacuation slide of $\sigma$ begins by moving the empty box down the first column until it reaches row $k$. We claim that the slide continues by moving right across row $k$ until it reaches an outer corner, where it terminates, whence it is an $L$-slide. To prove this, it suffices to show that
\begin{align}\label{eqn.claim}
\sigma_{k+1, j} > \sigma_{k, j+1} \quad \text{ for all $1\leq j\leq \lambda_{k+1}$}.
\end{align}

We prove \eqref{eqn.claim} by induction on $j$. The base case $j=1$ follows by the definition of $k$. Now suppose that $2 \le j \le \lambda_{k+1}$ and \eqref{eqn.claim} holds for $j-1$. By \cref{defn_richardson} for the entry $\sigma_{k+1,j}$, we have $\sigma_{k+1,j-1}<\sigma_{k,c}<\sigma_{k+1,j}$, where $\sigma_{k,c}$ denotes the largest entry of $\sigma[\sigma_{k+1,j}]$ in row $k$. By induction we have $\sigma_{k+1, j-1}>\sigma_{k,j}$, so $\sigma_{k,j} < \sigma_{k,c}$. This implies that $c \ge j+1$, so $\sigma_{k,j+1} \le \sigma_{k,c} < \sigma_{k+1,j}$, as desired.  This completes the proof of \eqref{eqn.claim}.
\end{proof}

\begin{thm}\label{thm-Lslides}
The standard tableau $\sigma$ is Richardson if and only if each evacuation slide of $\sigma$ \textup{(}used to calculate $\sigma^\vee$\textup{)} is an $L$-slide.
\end{thm}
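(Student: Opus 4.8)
The plan is a simultaneous induction on $n=|\sigma|$ for both directions, bootstrapping from \cref{lem.Lshape} (which already handles one direction for the \emph{first} slide) and using \cref{evacuation_closed} to pass the Richardson property back and forth between $\sigma$ and $\sigma^\vee$. The base case $n\le 1$ is trivial.

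Two reductions set up the induction. First, a direct inspection of \cref{defn_richardson} shows that $\bar{\rho}$ is Richardson whenever $\rho$ is: the defining condition for $\bar{\rho}$ at an entry $j\le n-1$ is verbatim the condition for $\rho$ at $j$. Second, applying \cref{lem.induction} to $\sigma^\vee$ shows that performing the first evacuation slide on $\sigma$ and then subtracting $1$ from every entry yields the standard tableau $\sigma':=(\overline{\sigma^\vee})^\vee$ of size $n-1$; since relabelling entries does not affect jeu-de-taquin paths, the $j$-th evacuation slides of $\sigma$ for $2\le j\le n$ are precisely the evacuation slides of $\sigma'$. Thus "every evacuation slide of $\sigma$ is an $L$-slide" is equivalent to the conjunction "the first evacuation slide of $\sigma$ is an $L$-slide and every evacuation slide of $\sigma'$ is an $L$-slide."

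Given these, the forward direction is immediate: if $\sigma$ is Richardson then its first evacuation slide is an $L$-slide by \cref{lem.Lshape}, while $\sigma^\vee$ is Richardson by \cref{evacuation_closed}, hence so is $\overline{\sigma^\vee}$, hence so is $\sigma'=(\overline{\sigma^\vee})^\vee$ by \cref{evacuation_closed} again; the inductive hypothesis then makes every evacuation slide of $\sigma'$ an $L$-slide. For the reverse direction, suppose every evacuation slide of $\sigma$ is an $L$-slide. Then every evacuation slide of $\sigma'$ is an $L$-slide, so $\sigma'$ is Richardson by induction, and hence $\overline{\sigma^\vee}=(\sigma')^\vee$ is Richardson. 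Since $\overline{\sigma^\vee}$ is Richardson, the Richardson condition holds in $\sigma^\vee$ at every entry $j<n$, so it remains only to verify it at the top entry $n$; once we do, $\sigma^\vee$ is Richardson and therefore $\sigma=(\sigma^\vee)^\vee$ is Richardson by \cref{evacuation_closed}.

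The heart of the proof is this last verification, which I would isolate as: if $\overline{\sigma^\vee}$ is Richardson and the first evacuation slide of $\sigma$ is an $L$-slide, then $\sigma^\vee$ satisfies the Richardson condition at $n$. Write the $L$-slide as "straight down column $1$ to row $k$, then straight right to the outer corner $(k,\lambda_k)$"; since $n$ is recorded into this box during the evacuation of $\sigma$, we have $\rowjword{\sigma^\vee}{n}=k$. If $k=1$, or if $\lambda_k=1$ (which forces $k=\ell(\lambda)$), the condition at $n$ is vacuous, so assume $k\ge 2$ and $\lambda_k\ge 2$; then one must show that the rightmost entry of $\overline{\sigma^\vee}$ in row $k-1$ exceeds every entry of $\overline{\sigma^\vee}$ in rows $\ge k$. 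The plan here is to combine the jeu-de-taquin inequalities that the $L$-shape imposes on $\sigma$ itself — namely $\sigma_{i+1,1}<\sigma_{i,2}$ for $1\le i\le k-1$, $\sigma_{k,2}<\sigma_{k+1,1}$, and $\sigma_{k,c+1}<\sigma_{k+1,c}$ for $2\le c\le\lambda_k-1$ — with the fact that the remaining evacuation slides are also $L$-slides (equivalently, that $\overline{\sigma^\vee}$ is Richardson), in order to control where the largest entries $n-1,n-2,\dots$ get deposited into $\sigma^\vee$. I expect this to be the main obstacle: a box's final position in $\sigma^\vee$ depends on the entire sequence of evacuation slides rather than just the first, so the argument genuinely needs to feed in the Richardness of $\overline{\sigma^\vee}$ to pin down those positions; the remainder is bookkeeping with the inequalities above.
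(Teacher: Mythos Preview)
Your forward direction is correct and matches the paper's argument exactly.

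Your reverse direction has a genuine gap: you reduce to checking the Richardson condition at $n$ in $\sigma^\vee$, acknowledge this as ``the main obstacle,'' and then stop. The inequalities you list from the first $L$-slide of $\sigma$ constrain $\sigma$, not $\sigma^\vee$; the entries of $\sigma^\vee$ in rows $\ge k$ are deposited by \emph{later} slides, and you give no mechanism for translating your constraints into the required inequality in $\sigma^\vee$. Calling this ``bookkeeping'' undersells it.

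The paper avoids this detour entirely by inducting on $\bar\sigma$ rather than on $\sigma'=(\overline{\sigma^\vee})^\vee$. The key observation you are missing is that the $j$th evacuation slide of $\bar\sigma$ is literally the $j$th evacuation slide of $\sigma$ with the entry $n$ ignored (since $n$, being maximal, can only move at the very last step of any slide). Hence if every slide of $\sigma$ is an $L$-slide, so is every slide of $\bar\sigma$, and by induction $\bar\sigma$ is Richardson. One then checks the Richardson condition at $n$ \emph{in $\sigma$ itself}, where everything is concrete: $n$ sits in row $k$, and one must show the last entry $b$ of row $k-1$ exceeds every entry $a$ in rows $\ge k$. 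Because every slide is an $L$-slide, each fixed entry traces a reverse-$L$ path through the evacuation (left along its row, then up the first column). In particular $b$ remains the rightmost entry of row $k-1$ until it enters column $1$, and by shape considerations this cannot happen before $n$ enters column $1$; since $n$ is maximal, every such $a$ has already passed through position $(k-1,1)$ by then, at which moment $a$ and $b$ both lie in row $k-1$ with $a$ to the left of $b$, giving $a<b$.

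So: switch the inductive anchor from $\sigma'$ to $\bar\sigma$. This lets you verify the last Richardson condition directly in $\sigma$ rather than in $\sigma^\vee$, and the verification becomes a short tracking argument instead of the open-ended problem you flagged.
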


\begin{proof}
$(\Rightarrow)$ Suppose that $\sigma$ is Richardson.  By \cref{lem.Lshape}, the first evacuation slide of $\sigma$ is an $L$-slide.  The desired statement now follows by induction provided the tableau resulting from this slide (and decreasing every entry by $1$) is Richardson. By \cref{lem.induction}, this tableau is $(\overline{\sigma^{\vee}})^\vee$. It is Richardson since the operations $\tau \mapsto \tau^\vee$ and $\tau \mapsto \bar{\tau}$ on standard tableaux preserve the Richardson property (by \cref{evacuation_closed} and by definition, respectively).

$(\Leftarrow)$ Suppose that each evacuation slide of $\sigma$ is an $L$-slide.  This implies the same is true for $\bar{\sigma}$, since the $j$th evacuation slide of $\bar{\sigma}$ is just the $j$th evacuation slide of $\sigma$, ignoring the largest entry $n = |\sigma|$. Thus, by induction, we may assume that $\bar{\sigma}$ is Richardson.  It therefore suffices to prove that the Richardson condition in \cref{defn_richardson} holds for the entry $n$ of $\sigma$. 

If $n$ appears in the first row of $\sigma$ the Richardson condition holds vacuously, so we may assume that $n$ appears in row $k>1$. Let $b$ denote the last entry of $\sigma$ in row $k-1$. We need to show that $a<b$ for every entry $a<n$ of $\sigma$ appearing in row $k$ or below. 

Since each evacuation slide of $\sigma$ is an $L$-slide, the path traced by any fixed entry through the evacuation process is $L$-shaped.  In particular, $b$ will continue to label the box at the end of row $k$ until it slides into the first column. Furthermore, $b$ cannot slide into the first column before $n$ does, as such a slide would result in a non-partition shape where row $k-1$ has length $1$ but row $k$ has length at least $2$.

Because $n$ is the largest entry in $\sigma$, once $n$ slides into the first column (in row $k$) it must be the last entry in the first column.  This implies that $a$ already slid into the first column and up in a previous step. That is, after one of the evacuation slides, $a$ was the first entry in row $k-1$ and $b$ was the last entry. Since every row is increasing, we have $a < b$.
\end{proof}


\section{Reading words of standard tableaux}\label{sec_reading}

\noindent In this section we study two permutations (namely $v_\sigma$ and $w_\sigma$) associated to every standard tableau $\sigma$, and use them to characterize when $\sigma$ is Richardson. Recall from \eqref{lambda_dimension} that $n(\lambda)=\sum_{i=1}^{\ell(\lambda)} (i-1)\lambda_i$. 

\begin{thm}\label{thm-lengths}
Let $\sigma\in\SYT(\lambda)$. Then
\begin{align}\label{dimension_bound}
\ell(w_\sigma) - \ell(v_\sigma) \ge n(\lambda),
\end{align}
where equality holds if and only if $\sigma$ is Richardson.
\end{thm}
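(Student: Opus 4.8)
The plan is to translate the two lengths into tableau statistics and then induct on $n:=|\sigma|$ by deleting the largest entry. Since $\ell(\cdot)$ is unchanged under $u\mapsto u^{-1}$ and under $u\mapsto w_0uw_0$, we have $\ell(w_\sigma)=\ell(w_0w_\sigma^{-1}w_0)$ and $\ell(v_\sigma)=\ell(v_\sigma^{-1})$, each equal to the number of inversions of the corresponding one-line notation. By definition $w_0w_\sigma^{-1}w_0$ reads the entries of $\sigma$ row by row from bottom to top (left to right within rows) and $v_\sigma^{-1}$ reads the entries of $\sigma^\vee$ row by row from top to bottom; since entries increase along rows, neither reading word has an inversion among entries of a common row, so
\[
\ell(w_\sigma)=\#\{(a,b): a>b,\ \rowjword{\sigma}{a}>\rowjword{\sigma}{b}\},\qquad \ell(v_\sigma)=\#\{(a,b): a>b,\ \rowjword{\sigma^\vee}{a}<\rowjword{\sigma^\vee}{b}\}.
\]
Writing $A(\tau):=\#\{a>b:\rowjword{\tau}{a}>\rowjword{\tau}{b}\}$, and noting that each pair of entries in distinct rows of $\sigma^\vee$ is counted by exactly one of $A(\sigma^\vee)$ and $\ell(v_\sigma)$, we get $\ell(w_\sigma)-\ell(v_\sigma)=A(\sigma)+A(\sigma^\vee)-\sum_{i<j}\lambda_i\lambda_j$. (As a check on the easy direction: every pair of entries in a common column is counted by $A$, so $A(\tau)\ge\sum_c\binom{\lambda'_c}{2}=n(\lambda)$ for all $\tau\in\SYT(\lambda)$; this already gives $\ell(w_\sigma)-\ell(v_\sigma)\ge 2n(\lambda)-\sum_{i<j}\lambda_i\lambda_j$, which has the right flavor but is weaker than \eqref{dimension_bound} in general.)

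For the full statement I would induct on $n$, the cases $n\le 1$ being immediate. In the inductive step set $k:=\rowjword{\sigma}{n}$, so $\bar\sigma=\sigma[n-1]$ has shape $\bar\lambda$ equal to $\lambda$ with one box deleted from row $k$. Passing from $\bar\sigma$ to $\sigma$ only creates pairs involving $n$, and a pair $(n,b)$ is counted by $A$ exactly when $b$ lies in one of rows $1,\dots,k-1$, so $\ell(w_\sigma)=\ell(w_{\bar\sigma})+(\lambda_1+\cdots+\lambda_{k-1})$; also $n(\lambda)=n(\bar\lambda)+(k-1)$ by \eqref{lambda_dimension}. Hence the step reduces to the estimate
\[
\ell(v_\sigma)-\ell(v_{\bar\sigma})\ \le\ (\lambda_1-1)+(\lambda_2-1)+\cdots+(\lambda_{k-1}-1),
\]
with equality precisely when the first evacuation slide of $\sigma^\vee$ is an $L$-slide. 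Granting this, $\ell(w_\sigma)-\ell(v_\sigma)\ge[\ell(w_{\bar\sigma})-\ell(v_{\bar\sigma})]+(k-1)\ge n(\bar\lambda)+(k-1)=n(\lambda)$ by the induction hypothesis, and equality holds iff $\bar\sigma$ is Richardson and the first evacuation slide of $\sigma^\vee$ is an $L$-slide. By \cref{lem.induction} the remaining evacuation slides of $\sigma^\vee$ are, after relabeling, exactly those of $(\bar\sigma)^\vee$, so by \cref{thm-Lslides} and \cref{evacuation_closed} this pair of conditions is equivalent to all evacuation slides of $\sigma^\vee$ being $L$-slides, i.e.\ (again by \cref{thm-Lslides} and \cref{evacuation_closed}) to $\sigma$ being Richardson; this closes the induction.

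The heart is therefore the displayed $v$-estimate, which I would prove by a direct analysis of the first evacuation (jeu de taquin) slide of $\rho:=\sigma^\vee$. By \cref{lem.induction}, $(\bar\sigma)^\vee$ arises from $\rho$ by deleting the entry $1$ in the corner box, sliding the hole along a path $P_0=(1,1),P_1,\dots,P_m$ to an outer corner $P_m$ (which lies in row $k$), and decreasing all entries by $1$; along the way each of the entries at $P_1,\dots,P_m$ moves one step back toward $P_0$, and an entry's row strictly decreases exactly at a ``down'' step of the path, of which there are exactly $k-1$ regardless of which path occurs. Keeping book of which pairs counted by $\ell(v_\sigma)$ gain or lose a unit under these moves --- using that the deleted entry $1$ is smallest (so lies in no such pair) and that $P_m$ is the unique corner whose removal gives shape $\bar\lambda$ --- expresses $\ell(v_\sigma)-\ell(v_{\bar\sigma})$ as an explicit sum that is maximized exactly when all down steps precede all right steps, that is, when the path runs straight down column $1$ and then straight across row $k$: an $L$-slide. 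Making this bookkeeping precise, and checking that the tightness condition is exactly the $L$-slide condition, is where I expect the real work to lie; everything around it is routine.

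Two remarks on alternatives. Using the length formulas above together with \cref{evacuation_concatenation} and the observation that $\sum_{i<r}\lambda_i\mu_r=\sum_{i<j}\lambda_i\mu_j$, one checks that $\ell(w_\sigma)-\ell(v_\sigma)$ is additive under concatenation of tableaux; since $n(\cdot)$ is likewise additive and (by \cref{prime_concatenation}) a tableau is Richardson iff all its prime factors are, the theorem reduces to prime $\sigma$, a useful cross-check. Alternatively, since $A(\sigma)-n(\lambda)$ counts the pairs of boxes in distinct rows and columns whose larger value lies in the lower box, and such a pair in ``northwest'' position always has this property while a ``northeast'' pair need not, the inequality \eqref{dimension_bound} becomes the assertion that the northeast pairs whose larger value is in the higher box for both $\sigma$ and $\sigma^\vee$ inject into the northwest pairs, with equality iff that injection is a bijection.
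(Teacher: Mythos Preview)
Your approach is correct and essentially identical to the paper's: induction on $n$ via deletion of the largest entry, with $\ell(w_\sigma)-\ell(w_{\bar\sigma})=\lambda_1+\cdots+\lambda_{k-1}$ and the key $v$-estimate reduced to analysis of the first evacuation slide of $\sigma^\vee$, then the equality case closed using \cref{lem.induction}, \cref{thm-Lslides}, and \cref{evacuation_closed}. The paper carries out your flagged bookkeeping explicitly in \cref{lem.v.induction}, obtaining the exact formula $\ell(v_\sigma)-\ell(v_{\bar\sigma})=\sum_{i=1}^{k-1}(\lambda_i-2c_i+1)$, where $c_i$ is the rightmost column of the slide path in row $i$, from which your inequality and its equality condition follow at once.
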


We will prove \cref{thm-lengths} at the end of this section. We first recall the definition of $v_\sigma$ and $w_\sigma$ from \cref{sec_intro}. Let $n = |\sigma|$, and recall the notation for the symmetric group $S_n$ from \cref{sec_background_Sn}. The \boldit{reading word} of $\sigma$ is the permutation in $S_n$ formed (in one-line notation) by reading its entries row by row from bottom to top, and within each row from left to right. The \boldit{top-down reading word} of $\sigma$ is defined similarly, except that we read the rows from top to bottom instead of bottom to top. We define $v_\sigma, w_\sigma\in S_n$ to be the unique permutations such that
\begin{itemize}[itemsep=2pt]
\item $v_\sigma^{-1}$ is the top-down reading word of $\sigma^{\vee}$; and
\item $w_0w_{\sigma}^{-1}w_0$ is the reading word of $\sigma$.
\end{itemize}
For further background on reading words, we refer to \cite[Appendix A]{stanley24} and \cite[Section 2]{fulton97}. It will follow from \cref{w_PR} below that $w_\sigma$ is the permutation considered by Pagnon and Ressayre in \cite{pagnon_ressayre06} (also denoted $w_\sigma$).

\begin{rmk}\label{reading_word_bruhat}
Since the entries of $\sigma$ increase along rows, we see that $v_\sigma,w_\sigma\in\mincoset{S_n}{\lambda}$, i.e., $v_\sigma$ and $w_\sigma$ are minimal-length coset representatives of $S_\lambda\backslash S_n$. Moreover, it will follow from \cref{main} in the next section that $v_\sigma \le w_\sigma$ in Bruhat order. We point out that our proof of \cref{main} is geometric. It would be interesting to give a combinatorial proof that $v_\sigma \le w_\sigma$.
\end{rmk}

\begin{eg}\label{eg_permutations}
Let $\lambda = (4,2,2)$ and $n = |\lambda| = 8$. We adopt the setup of \cref{eg_permutations_intro}:
\[
\sigma = \;\begin{ytableau}
1 & 3 & 4 & 6 \\
2 & 7 \\
5 & 8
\end{ytableau}\;
,\quad
\sigma^{\vee} = 
\;\begin{ytableau}
1 & 4 & 6 & 7 \\
2 & 5 \\
3 & 8
\end{ytableau}\;
,\quad
v_{\sigma} = 15726348,
\quad
w_{\sigma} = 75182364.
\]
Notice that $v_\sigma,w_\sigma\in\mincoset{S_n}{\lambda}$ (meaning that each of the substrings $1234$, $56$, and $78$ appear in increasing order) and $v_\sigma \leq w_{\sigma}$. We also have
\[
\ell(w_{\sigma}) - \ell(v_\sigma) = 15-9 = 6 = n(\lambda),
\]
which by \cref{thm-lengths} is equivalent to the fact that $\sigma$ is Richardson.
\end{eg}

We have the following explicit relation between $v_{\sigma^\vee}$ and $w_\sigma$:
\begin{prop}\label{v_w_relation}
Let $\sigma\in\SYT(\lambda)$, where $\lambda\vdash n$. Then
\begin{align}\label{equation_v_w_relation}
v_{\sigma^\vee} = w_{0,\lambda}w_\sigma w_0 \quad \text{ in } S_n.
\end{align}
\end{prop}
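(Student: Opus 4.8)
The plan is to reduce the identity to an elementary statement comparing the two reading words of a single standard tableau. For $\tau\in\SYT(\lambda)$ write $u_{\mathrm{td}}(\tau)\in S_n$ for its top-down reading word and $u_{\mathrm{bu}}(\tau)\in S_n$ for its (bottom-up) reading word, both in one-line notation. Unwinding the definitions of $v_\sigma$ and $w_\sigma$ from \cref{sec_reading} and using that evacuation is an involution, we get $v_{\sigma^\vee}^{-1}=u_{\mathrm{td}}\big((\sigma^\vee)^\vee\big)=u_{\mathrm{td}}(\sigma)$ and $w_0w_\sigma^{-1}w_0=u_{\mathrm{bu}}(\sigma)$. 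Hence, taking inverses in \eqref{equation_v_w_relation}, the proposition becomes equivalent to the assertion that
\[
u_{\mathrm{td}}(\tau)=u_{\mathrm{bu}}(\tau)\cdot w_0 w_{0,\lambda}\qquad\text{for every }\tau\in\SYT(\lambda).
\]

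The heart of the argument is this last identity, and the key observation is that $w_0 w_{0,\lambda}$ is precisely the permutation of $[n]$ that carries the position of each box in the top-down reading order to its position in the bottom-up reading order. Indeed, a box in row $r$ and column $c$ of $\tau$ occupies position $\lambda_1+\cdots+\lambda_{r-1}+c$ in $u_{\mathrm{td}}(\tau)$ and position $\lambda_{r+1}+\cdots+\lambda_\ell+c$ in $u_{\mathrm{bu}}(\tau)$, and in both words that position holds the same entry $\tau_{r,c}$. Since $w_{0,\lambda}$ (the longest element of the Young subgroup $S_\lambda$) reverses each of the row-blocks $\{\lambda_1+\cdots+\lambda_{r-1}+1,\dots,\lambda_1+\cdots+\lambda_r\}$, it sends $\lambda_1+\cdots+\lambda_{r-1}+c$ to $\lambda_1+\cdots+\lambda_{r-1}+\lambda_r+1-c$, and then $w_0$ sends a position $p$ to $n+1-p$; composing, $w_0 w_{0,\lambda}$ sends $\lambda_1+\cdots+\lambda_{r-1}+c$ to $\lambda_{r+1}+\cdots+\lambda_\ell+c$, exactly as required. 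Equivalently, $w_0 w_{0,\lambda}$ is the permutation that reverses the order of the row-blocks while keeping each block internally ordered.

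Finally I would assemble the pieces: substituting $u_{\mathrm{bu}}(\sigma)=w_0w_\sigma^{-1}w_0$ into the displayed identity at $\tau=\sigma$ gives $v_{\sigma^\vee}^{-1}=w_0w_\sigma^{-1}w_0\cdot w_0w_{0,\lambda}=w_0w_\sigma^{-1}w_{0,\lambda}$, and inverting (using $w_0^{-1}=w_0$ and $w_{0,\lambda}^{-1}=w_{0,\lambda}$) yields $v_{\sigma^\vee}=w_{0,\lambda}w_\sigma w_0$, which is \eqref{equation_v_w_relation}. The only real obstacle is bookkeeping: one must fix the composition convention for reading words and carry out the position-shift computation for $w_0 w_{0,\lambda}$ carefully — in particular it is $w_0 w_{0,\lambda}$ rather than $w_{0,\lambda}w_0$ that appears here, and these two permutations differ unless $\lambda$ is a rectangle. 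Checking the identity on a small example such as $\lambda=(2,1)$ is a convenient safeguard.
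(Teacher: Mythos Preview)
Your proof is correct and follows essentially the same approach as the paper. The paper's argument is the one-sentence observation that multiplying the top-down reading word of $\sigma$ on the right by $w_{0,\lambda}w_0$ yields the (bottom-up) reading word of $\sigma$, i.e.\ $v_{\sigma^\vee}^{-1}\,w_{0,\lambda}w_0 = w_0w_\sigma^{-1}w_0$; your identity $u_{\mathrm{td}}(\tau)=u_{\mathrm{bu}}(\tau)\cdot w_0 w_{0,\lambda}$ is exactly this statement rewritten (since $(w_{0,\lambda}w_0)^{-1}=w_0w_{0,\lambda}$), and your position-by-position computation of what $w_0w_{0,\lambda}$ does to the row-blocks is the explicit justification the paper leaves implicit.
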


\begin{proof}
Multiplying on the right by $w_{0,\lambda}w_0$ takes the top-down reading word of $\sigma$, which is $v_{\sigma^{\vee}}^{-1}$, to the reading word of $\sigma$, which is $w_0w_\sigma^{-1}w_0$. That is, $v_{\sigma^\vee}^{-1} w_{0,\lambda}w_0 = w_0w_\sigma^{-1}w_0$, which is equivalent to \eqref{equation_v_w_relation}.
\end{proof}

\begin{rmk}\label{RS}
Let $\sigma\in\SYT(\lambda)$. Then we can recover $\sigma$ from $w_\sigma$ using the Robinson--Schensted bijection (as defined in, e.g., \cite[Section 7.11]{stanley24}):
\begin{align}\label{RS_equation}
\RS(w_0w_\sigma^{-1}w_0) = (\sigma, \pi^\vee) \quad \text{ and } \quad \RS(w_\sigma) = (\pi, \sigma^{\vee}),
\end{align}
where $\pi$ is the unique standard tableau of shape $\lambda$ whose top-down reading word is the identity permutation. Indeed, the first equality of \eqref{RS_equation} is a well-known property of reading words \cite[Lemma A1.2.4]{stanley24}. The second equality of \eqref{RS_equation} then follows from standard properties of $\RS(\cdot)$ \cite[Theorem 7.13.1 and Corollary A1.4.4]{stanley24}; an alternative proof was given by Pagnon and Ressayre \cite[Theorem 2]{pagnon_ressayre06}.

On the other hand, we cannot in general recover $\sigma$ from $v_\sigma$ (unless we also know the shape of $\sigma$). For example, we have $v_\sigma = 12 \in S_2$ both when $\sigma = \ytableausmall{1 & 2}$ and when $\sigma = \ytableausmall{1 \\ 2}$.
\end{rmk}

Let $w\in S_n$. The \boldit{Lehmer code} of $w$ (see \cite[Section 1.3]{stanley12}) is the sequence $\code{w} = (\codej{w}{1}, \dots, \codej{w}{n})$, where
\[
\codej{w}{i} := |\{j \in [n] \mid i < j \text{ and } w(i)>w(j)\}| \quad \text{ for all }i\in [n],
\]
i.e., $\codej{w}{i}$ is the number of inversions of $w$ of the form $(i,\cdot)$. In particular, we have
\begin{align}\label{lehmer_length}
\ell(w) = \sum_{i=1}^n\codej{w}{i}.
\end{align}
Recall that $w$ is uniquely determined by its Lehmer code. We will show that $\code{w_\sigma}$ can be computed easily using $\sigma$.

\begin{eg}\label{eg.Lehmer}
Let $\sigma$ be the tableau from \cref{eg_permutations_intro,eg_permutations}. Recall that $w_\sigma = 75182364$, which has Lehmer code $\code{w_\sigma} = (6,4,0,4,0,0,1,0)$. We can read off $\code{w_\sigma}$ from $\sigma$ as follows. For every entry $j$ of $\sigma$, we count the entries of $\sigma$ which are less than $j$ and appear in any row above $j$. For example, if $j=5$ there are $4$ such entries, shown in bold below:
\[
\sigma = \;\begin{ytableau}\textbf{1}&\textbf{3}&\textbf{4}&6 \\ \textbf{2}&7\\5&8\end{ytableau}\;.
\]
For $j=1, \ldots, 8$ the list of these counts is $(0,1,0,0,4,0,4,6)$, which is exactly $\code{w_\sigma}$ in reverse.
\end{eg}

The next result shows that the calculation of $\code{w_\sigma}$ from \cref{eg.Lehmer} works in general. Recall that $\bar{\sigma}$ denotes the tableau obtained from $\sigma$ by deleting its largest entry.
\begin{lem}\label{lem.w.induction}
Let $\sigma$ be a standard tableau of size $n$. Then the Lehmer code of $w_\sigma$ is given by
\begin{align}\label{eqn.wsig.Lehmer}
\codej{w_\sigma}{n+1-j} = |\{i < j \mid \rowjword{\sigma}{i}< \rowjword{\sigma}{j}\}| \quad \text{ for all } j \in [n].
\end{align}
In particular, let $\lambda$ denote the shape of $\sigma$, and suppose that $n\ge 1$ appears in row $k$ of $\sigma$. Then
\begin{align}\label{eqn.w.induction}
\ell(w_\sigma) - \ell(w_{\bar{\sigma}}) = \codej{w_\sigma}{1} = \lambda_1+\cdots + \lambda_{k-1}.
\end{align}
\end{lem}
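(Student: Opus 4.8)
The plan is to establish the closed formula \eqref{eqn.wsig.Lehmer} for the Lehmer code of $w_\sigma$ first, and then deduce \eqref{eqn.w.induction} as an easy corollary. Recall that by definition $w_0 w_\sigma^{-1} w_0$ is the reading word of $\sigma$, i.e.\ the permutation obtained in one-line notation by listing the entries of $\sigma$ row by row from bottom to top (and left to right within each row). Equivalently, $(w_0 w_\sigma^{-1} w_0)^{-1} = w_0 w_\sigma w_0$ sends the entry $j$ of $\sigma$ to its position in this reading word. First I would unwind what $\codej{w_\sigma}{n+1-j}$ means: since $\code{w_\sigma}$ counts inversions $(i,\cdot)$, we have $\codej{w_\sigma}{n+1-j} = |\{i : i > n+1-j \text{ and } w_\sigma(n+1-j) > w_\sigma(i)\}|$. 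The cleanest route is to translate this through the relation $\codej{w}{i} = \codej{w^{-1}}{?}$-type identities, or more directly, to use the standard fact that for any permutation $w$, the reversed-complemented permutation $w_0 w w_0$ has one-line notation $(n+1-w(n), n+1-w(n-1), \dots, n+1-w(1))$; applying this together with taking inverses lets me rewrite $\codej{w_\sigma}{n+1-j}$ in terms of the reading-word permutation of $\sigma$ itself.

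Concretely, I would argue as follows. Let $u := w_0 w_\sigma^{-1} w_0$ be the reading word of $\sigma$, so $u(p)$ is the entry of $\sigma$ in the $p$-th position of the bottom-to-top row reading. Then $u^{-1}(j)$ is the position of entry $j$ in this reading. A descent-type count shows that $\ell(w_\sigma)$ equals the number of pairs of entries $i < j$ of $\sigma$ such that $i$ appears \emph{after} $j$ in the bottom-to-top reading word, which happens exactly when $i$ lies in a strictly higher row than $j$ (if $i,j$ are in the same row then $i<j$ forces $i$ to the left, hence earlier; if $i$ is in a lower row it is read earlier; so $i$ is read later iff $\rowjword{\sigma}{i} < \rowjword{\sigma}{j}$). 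Grouping these inversions by the larger element $j$, the number with second coordinate $j$ is precisely $|\{i < j : \rowjword{\sigma}{i} < \rowjword{\sigma}{j}\}|$. Matching this grouping against the coordinates of the Lehmer code of $w_\sigma$ — using that $w_0 w_\sigma w_0$ has position of $j$ decreasing as $j$ increases within a row, and carefully tracking which coordinate $n+1-j$ the count $j$ lands in — yields \eqref{eqn.wsig.Lehmer}. The bookkeeping here, identifying exactly the index $n+1-j$ rather than $j$ or some other shift, is the main obstacle; I expect to handle it by an explicit small check (as in \cref{eg.Lehmer}) combined with the observation that the entries $j$ in a fixed row of $\sigma$ occupy a consecutive block of positions $n+1-j$ in the Lehmer code, on which the claimed count is monotone in the right direction.

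Finally, \eqref{eqn.w.induction} follows immediately from \eqref{eqn.wsig.Lehmer}. Taking $j = n$ in \eqref{eqn.wsig.Lehmer} gives $\codej{w_\sigma}{1} = |\{i < n : \rowjword{\sigma}{i} < \rowjword{\sigma}{n} = k\}|$, which is exactly the number of entries of $\sigma$ in rows $1, \dots, k-1$, namely $\lambda_1 + \cdots + \lambda_{k-1}$. For the equality $\ell(w_\sigma) - \ell(w_{\bar\sigma}) = \codej{w_\sigma}{1}$, I would note that deleting the entry $n$ from $\sigma$ and from the Lehmer code description shows that $\code{w_{\bar\sigma}}$ is obtained from $\code{w_\sigma} = (\codej{w_\sigma}{1}, \dots, \codej{w_\sigma}{n})$ by deleting the first coordinate: indeed by \eqref{eqn.wsig.Lehmer} the coordinate $\codej{w_\sigma}{n+1-j}$ for $j \le n-1$ depends only on the relative row positions of entries $\le j$, which are unchanged under passing to $\bar\sigma$ (after reindexing $n+1-j \mapsto n-j$). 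Since $\ell$ of a permutation is the sum of its Lehmer code entries by \eqref{lehmer_length}, the length drops by exactly the removed first coordinate $\codej{w_\sigma}{1}$, completing the proof.
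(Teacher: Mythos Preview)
Your approach is correct and more direct than the paper's, though the step you flag as ``the main obstacle'' is in fact a one-line computation rather than something to be handled by small-case checks and monotonicity heuristics. Writing $u := w_0 w_\sigma^{-1} w_0$ for the reading word of $\sigma$, one has $w_\sigma(n+1-j) = n+1 - u^{-1}(j)$; substituting $m = n+1-i$ into the definition of $\codej{w_\sigma}{n+1-j}$ immediately gives
\[
\codej{w_\sigma}{n+1-j} = |\{i < j : u^{-1}(j) < u^{-1}(i)\}|,
\]
which is exactly the number of $i < j$ read \emph{after} $j$ in the bottom-to-top reading, i.e.\ with $\rowjword{\sigma}{i} < \rowjword{\sigma}{j}$. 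So the index $n+1-j$ falls out automatically; there is no bookkeeping to worry about.

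By contrast, the paper proves \eqref{eqn.wsig.Lehmer} by induction on $n$: it first observes that the one-line notation of $w_{\bar\sigma}$ is obtained from that of $w_\sigma$ by deleting the first entry $w_\sigma(1)$ and decrementing all larger values, then computes $w_\sigma(1) = \lambda_1 + \cdots + \lambda_{k-1} + 1$ from the position of $n$ in the reading word, and deduces $\codej{w_\sigma}{1} = \lambda_1 + \cdots + \lambda_{k-1}$ and $\codej{w_\sigma}{i} = \codej{w_{\bar\sigma}}{i-1}$ for $i \ge 2$. In the paper's logic this Lehmer-code shift is the inductive step for \eqref{eqn.wsig.Lehmer} and simultaneously yields \eqref{eqn.w.induction}. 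Your route reverses the dependence: you establish \eqref{eqn.wsig.Lehmer} directly and then read off the Lehmer-code shift (and hence \eqref{eqn.w.induction}) as a corollary by applying the formula to both $\sigma$ and $\bar\sigma$. Both arguments are short; yours is arguably cleaner once the index computation above is made explicit.
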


\begin{proof}
We proceed by induction on $n$, where the base case $n=0$ holds vacuously. Now suppose that $n\ge 1$ and the result holds for standard tableaux of size $n-1$. Let $y_0$ denote the longest permutation in $S_{n-1}$. By definition, $w_0w_{\sigma}^{-1}w_0$ is the reading word of $\sigma$, and $y_0w_{\bar{\sigma}}^{-1}y_0$ is the reading word of $\bar{\sigma}$. Thus, we obtain the one-line notation of $y_0w_{\bar{\sigma}}^{-1}y_0$ from that of $w_0w_{\sigma}^{-1}w_0$ by deleting $n$. Equivalently, we obtain the one-line notation of $w_{\bar{\sigma}}$ from that of $w_\sigma$ by deleting the first entry (which equals $w_\sigma(1)$) and subtracting $1$ from all later values greater than $w_\sigma(1)$. Also, since $n$ appears in the reading word of $\sigma$ in position $n - (\lambda_1 + \cdots + \lambda_{k-1})$, we have
\[
(w_0w_{\sigma}^{-1}w_0)^{-1}(n) = n - (\lambda_1 + \cdots + \lambda_{k-1}), \quad \text{i.e.,} \quad w_\sigma(1) = \lambda_1 + \cdots + \lambda_{k-1} + 1.
\]
Therefore we get
\begin{align}\label{lehmer_induction}
\codej{w_\sigma}{1} = \lambda_1 + \cdots + \lambda_{k-1} \quad \text{ and } \quad \codej{w_\sigma}{i} = \codej{w_{\bar{\sigma}}}{i-1} \text{ for all } 2 \le i \le n.
\end{align}

Note that \eqref{eqn.wsig.Lehmer} when $j=n$ asserts that $\codej{w_\sigma}{1} = \lambda_1 + \cdots + \lambda_{k-1}$, since every entry of $\sigma$ in rows $1, \ldots, k-1$ is less than $n$. Therefore \eqref{lehmer_induction} implies \eqref{eqn.wsig.Lehmer} by induction, and it also implies \eqref{eqn.w.induction} using \eqref{lehmer_length}.
\end{proof}

\begin{rmk}\label{w_PR}
The formula \eqref{eqn.wsig.Lehmer} for the Lehmer code of $w_\sigma$ agrees with \cite[Theorem 8.9]{pagnon_ressayre06}. This implies that our definition of $w_\sigma$ is equivalent to the definition used in \cite{pagnon03,pagnon_ressayre06}.
\end{rmk}

We now derive a recursive formula for the length of $v_{\sigma}$ analogous to \eqref{eqn.w.induction}:
\begin{lem}\label{lem.v.induction}
Let $\sigma\in \SYT(\lambda)$, where $\lambda\vdash n$ with $n\ge 1$, and suppose that $n$ appears in row $k$ of $\sigma$. For $1 \le i \le k-1$, let $c_i$ denote the column number of the rightmost entry of the slide path in row $i$ of the first evacuation slide of $\sigma^\vee$. Then
\[
\ell(v_\sigma) - \ell(v_{\bar{\sigma}}) = \lambda_1+\cdots + \lambda_{k-1}- 2(c_1+\cdots +c_{k-1}) + k-1.
\]
\end{lem}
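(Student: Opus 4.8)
The plan is to mimic the inductive structure already used for $w_\sigma$ in \cref{lem.w.induction}, but now working with the evacuation tableau $\sigma^\vee$ and the top-down reading word. By definition $v_\sigma^{-1}$ is the top-down reading word of $\sigma^\vee$, and $v_{\bar\sigma}^{-1}$ is the top-down reading word of $(\bar\sigma)^\vee$. By \cref{lem.induction}, $(\bar\sigma)^\vee$ is obtained from $\sigma^\vee$ by performing the first evacuation slide and then decreasing every entry by $1$. So the first task is to understand, at the level of one-line notation, what the first evacuation slide does to the top-down reading word of $\sigma^\vee$. First I would fix notation: say the slide path of the first evacuation slide of $\sigma^\vee$ visits rows $1, 2, \dots, k$ (it must reach row $k$ since, by the argument in the proof of \cref{lem.induction}, the slide path of the first evacuation slide of $\sigma^\vee$ ends in the box occupied by $n$ in $(\sigma^\vee)^\vee = \sigma$, which lies in row $k$), and in row $i$ (for $1 \le i \le k-1$) it enters at some column and exits at column $c_i$, while in row $k$ it terminates at the outer corner. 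The slide replaces, in each row $i<k$, the segment of entries from the entry directly below the empty box's entry point up through column $c_i$: each of those entries shifts left by one position within its row, and the value that was at the top of column $1$ is deleted (it becomes the largest entry of the final tableau, placed in row $k$). Translating this into the top-down reading word: deleting the box containing $1$ from the top-left and renumbering accounts for the passage from $S_n$ to $S_{n-1}$; the net effect on one-line notation is the deletion of one letter together with some reshuffling confined to the letters coming from rows $1, \dots, k$.

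Next I would count the change in the number of inversions directly. Let $y_0$ be the longest element of $S_{n-1}$; we have $\ell(v_\sigma) = \ell(v_\sigma^{-1})$ and $\ell(v_{\bar\sigma}) = \ell(v_{\bar\sigma}^{-1})$, so it suffices to compare the inversion counts of the two reading words. The reading word of $\sigma^\vee$ has its entry $1$ at the front (top-left corner), contributing $0$ inversions of the form $(1,\cdot)$; more usefully, I would track how the entry $1$, as the empty box slides, turns into the entry $n$ and ends up as the last letter among the first $\lambda_1 + \cdots + \lambda_k$ letters of the reading word. The bookkeeping splits into two contributions. First, the letter that is removed (the one occupying the top of column $1$ of $\sigma^\vee$, which is the smallest entry among rows $1, \dots, k$) sits at the beginning of the reading word and hence contributes $0$ inversions as a left endpoint; but removing it and renumbering shifts values. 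Second — and this is the real content — in each row $i < k$ the left-shift of the segment ending at column $c_i$ rearranges the relative order of those entries against the entries of rows $i+1, \dots, k$ that appear later in the reading word. A careful but elementary count shows that the shift in row $i$ changes the inversion number by $-2c_i$ plus a correction coming from the single entry that moves from the end of row $i$'s affected segment into column $1$ (and ultimately up), contributing the $+1$ per row; summing over $i = 1, \dots, k-1$ gives $-2(c_1 + \cdots + c_{k-1}) + (k-1)$, and the term $\lambda_1 + \cdots + \lambda_{k-1}$ arises exactly as in \eqref{eqn.w.induction}: it records the inversions between the deleted/renumbered small entry (which becomes $n$ in row $k$) and the $\lambda_1 + \cdots + \lambda_{k-1}$ entries of rows $1, \dots, k-1$, all of which are smaller than it and all of which precede it in the reading word after the slide. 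A clean way to organize this is to write $\ell(v_\sigma) - \ell(v_{\bar\sigma}) = \codej{v_\sigma^{-1}}{\,?\,} \pm (\text{row-by-row shift corrections})$ and evaluate each piece using that the entries along any single row of a standard tableau are increasing.

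The step I expect to be the main obstacle is the precise inversion accounting for the left-shift in each row $i < k$: one has to argue that shifting the segment of row $i$ from its entry point to column $c_i$ one step left, and inserting the displaced leftmost-of-segment entry at the top of column $1$ (where it then slides further up and out), changes the total inversion count of the full reading word by exactly $-2c_i + 1$ — the $-2c_i$ because the displaced entry, which was relatively large within its local window, crosses $c_i$ entries to its left that become to its right (contributing $-c_i$) and, reading-word-wise, the row is listed after lower rows so there is a second symmetric contribution of $-c_i$ from entries in rows below; the $+1$ from the fresh entry that enters column $1$. Getting the signs and the off-by-one boundary terms exactly right — especially distinguishing the entry that enters the slide path in row $i$ from the entry that leaves it — is where the bulk of the work lies. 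Everything else (the base case $n = 0$, the reduction via \cref{lem.induction}, and the identification of the $\lambda_1 + \cdots + \lambda_{k-1}$ term) parallels \cref{lem.w.induction} and should be routine. It may streamline the argument to phrase the whole computation in terms of the word-theoretic description of the first-column evacuation slide already isolated in $\newrownoarg{\ell}$ and \cref{word_slide_lemma}, applied not to $\sigma^\vee$ directly but after peeling off the portion of the slide path that stays in the first column, handling the horizontal excursions in rows $1, \dots, k-1$ separately.
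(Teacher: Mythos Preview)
Your overall strategy matches the paper's: compare the top-down reading words $v_\sigma^{-1}$ and $v_{\bar\sigma}^{-1}$ via \cref{lem.induction} and count the change in inversions. But your execution contains a genuine error and misses the organizing observation that makes the count clean.

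The error is your explanation of the term $\lambda_1+\cdots+\lambda_{k-1}$. You say it ``records the inversions between the deleted/renumbered small entry (which becomes $n$ in row $k$) and the $\lambda_1+\cdots+\lambda_{k-1}$ entries of rows $1,\dots,k-1$.'' There is no such entry. In passing from $\sigma^\vee$ to $(\bar\sigma)^\vee$ the box at the end of the slide path is simply \emph{removed}; nothing is relabeled $n$. (You are conflating this with the evacuation of $\sigma^\vee$ that builds $\sigma$, a different process.) So this term cannot arise the way you claim, and your decomposition into ``$-2c_i+1$ per row plus a global $\lambda_1+\cdots+\lambda_{k-1}$'' is reverse-engineered from the answer rather than derived.

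What the paper does instead is observe that the passage from $v_\sigma^{-1}$ to $v_{\bar\sigma}^{-1}$ (before the harmless global $-1$ shift) replaces each slide-path value $d_{j-1}$ by $d_j$, so the \emph{only} entries whose position in the reading word changes are the slide-path entries $d_2,\dots,d_m$, each moving to the position previously occupied by $d_{j-1}$. When $d_{j-1}$ and $d_j$ are in the same row these positions are adjacent, so no inversions with off-path entries change. Thus only the vertical steps matter: when $d_j$ sits at $(i,c_{i-1})$ and jumps to the position of $d_{j-1}$ at $(i-1,c_{i-1})$, it passes exactly the $\lambda_{i-1}-c_{i-1}$ entries to the right of $d_{j-1}$ in row $i-1$ (each larger than $d_j$, so $\lambda_{i-1}-c_{i-1}$ inversions are lost) and the $c_{i-1}-1$ entries to the left of $d_j$ in row $i$ (each smaller than $d_j$, so $c_{i-1}-1$ inversions are gained). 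Summing $(\lambda_{i-1}-c_{i-1})-(c_{i-1}-1)$ over $i=2,\dots,k$ gives the formula directly. The $\lambda_i$ contributions arise row by row from this local analysis, not from a phantom entry $n$.
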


Before proving \cref{lem.v.induction}, we first motivate the proof through an example.
\begin{eg}\label{eg.v.induction}
Let $\tau\in\SYT(\lambda)$ denote the standard tableau from \cref{eg.slide} (where it was called $\sigma$), with $\lambda = (4,4,3,3,2)$. Set $\sigma = \tau^\vee$. By \cref{lem.induction}, $(\bar{\sigma})^{\vee}$ is obtained from $\sigma^\vee=\tau$ by performing an evacuation slide, as illustrated below.
\[
\sigma^\vee = \tau = \;\begin{ytableau} *(yellow)1&*(yellow)2&5&13\\3&*(yellow)4&9&15\\6&*(yellow)8&*(yellow)11\\7&12&*(yellow)14\\10&16 \end{ytableau}\;
\quad \quad \quad
(\bar{\sigma})^{\vee} = \;\begin{ytableau} *(yellow)1&*(yellow)3&4&12\\2&*(yellow)7&8&14\\5&*(yellow)10& *(yellow)13\\6&11\\9&15\end{ytableau}
\]
In the notation of \cref{lem.v.induction}, we have $k=4$ and $c_1 = 2$, $c_2 = 2$, $c_3 = 3$, $c_4=3$. We also have
\begin{align*}
v_\sigma^{-1} &= [\mathbf{1}, \mathbf{2}, 5, 13, 3, \mathbf{4}, 9, 15, 6, \mathbf{8}, \mathbf{11}, 7, 12, \mathbf{14}, 10, 16], \\
v_{\bar\sigma}^{-1} &= [\mathbf{1}, \mathbf{3},4,12,2,\mathbf{7},8,14,5,\mathbf{10}, \mathbf{13},6,11,9,15],
\end{align*}
where we have bolded the entries in the slide path. They show how $v_{\bar{\sigma}}^{-1}$ is obtained from $v_{\sigma}^{-1}$: we shift each of the bolded values to replace the bolded value to its left (bumping $1$ out of the permutation), and then subtract $1$ from every value.

The inversions of $v_{\sigma}$ and those of $v_{\bar{\sigma}}$ are closely related. First, observe that $(i,j)$ is an inversion of a permutation $w$ if and only if $i < j$ and $j$ appears before $i$ in the one-line notation of $w^{-1}$. Hence any inversion of $v_{\sigma}$ between a pair of unbolded values (i.e.\ values that are not on the slide path) gives an inversion of $v_{\bar{\sigma}}$ (after subtracting $1$ from both values), and vice-versa. Since the bolded values are increasing, there are no inversions between any pair them.

It remains to consider inversions between a bolded value and an unbolded value. For example, $(4,5)$ and $(4,13)$ are inversions of $v_{\sigma}$, but shifting $4$ to the left into the position of $2$ in the one-line notation for $v_{\sigma}^{-1}$ means that the corresponding pairs $(3,4)$ and $(3,12)$ are not inversions of $v_{\bar{\sigma}}$. Notice that $5$ and $13$ are the entries in $\sigma^\vee$ in the row immediately above $4$ and to its right, so we lose  $2 = \lambda_1 - c_1$ inversions when we go from $v_\sigma$ to $v_{\bar{\sigma}}$.
Similarly, $(3,4)$ is not an inversion of $v_{\sigma}$, but after $4$ is shifted, the corresponding pair $(2,3)$ becomes an inversion of $v_{\bar{\sigma}}$. Notice that $3$ is the entry in $\sigma^\vee$ to the left of $4$ in the same row, so we add $1 = c_1 - 1$ inversion when we go from $v_\sigma$ to $v_{\bar\sigma}$.
 In summary, shifting $4$ erases $2 = \lambda_1 - c_1$ new inversions and adds $1 = c_1 - 1$ inversion. We will show that this holds in general, and thus
\begin{align*}
\ell(v_{\bar{\sigma}}) &= \ell(v_{\sigma}) - (\lambda_1-c_1) + (c_1 -1) - (\lambda_2-c_2) +(c_2-1) - (\lambda_3-c_3) + (c_3-1)  \\[2pt]
&= \ell(v_\sigma) - (4-2) + (2-1) - (4-2) + (2-1) - (3-3) + (3-1)   \\[2pt]
&= \ell(v_\sigma).
\end{align*}
Indeed, the reader can confirm that $\ell(v_{\sigma}) = 26 = \ell(v_{\bar{\sigma}})$ in this case.
\end{eg}

\begin{proof}[Proof of \cref{lem.v.induction}]
Let $d_1 = 1 < d_2 < \cdots < d_m$ denote the entries of $\sigma^\vee$ appearing in the slide path of the first evacuation slide. By \cref{lem.induction} and by definition of the permutations $v_{\sigma}$ and $v_{\bar{\sigma}}$, we obtain the one-line notation of $v_{\bar{\sigma}}^{-1}$ from the one-line notation of $v_{\sigma}^{-1}$ as follows:
\begin{enumerate}[label=(\arabic*), leftmargin=36pt, itemsep=2pt]
\item\label{slide_shift} replace $d_{j-1}$ by $d_j$ for $j = m, m-1, \dots, 2$, bumping $1$ out of the permutation; and 
\item\label{slide_subtract} subtracting $1$ from every entry.
\end{enumerate}
We wish to compare inversions of $v_\sigma$ and $v_{\bar{\sigma}}$, as in \cref{eg.v.induction}. It suffices to consider step \ref{slide_shift} only and ignore step \ref{slide_subtract}. Step \ref{slide_shift} only changes inversions involving entries of the slide path that move more than one position to the left. Let $d_j$ be such an entry, which must be the first entry of the slide path appearing in some row $i$ of $\sigma^{\vee}$ with $2\leq i \leq k$. Then $d_{j-1}$ is the entry immediately above $d_j$ in $\sigma^\vee$, which by definition appears in column $c_{i-1}$.

By definition of evacuation, $d_j$ must be less than every entry $b$ in row $i-1$ to the right of $d_{j-1}$.  The $\lambda_{i-1} - c_{i-1}$ pairs of the form $(d_j,b)$ are inversions of $v_{\sigma}$.  Moving $d_j$ into the position of $d_{j-1}$ in step \ref{slide_shift} eliminates these inversions.

On the other hand, since $\sigma^\vee$ is a standard tableau, $d_j$ is greater than every entry $a$ to its left in row $i$.  The $c_{i-1}-1$ pairs $(a,d_j)$ are not inversions of $v_{\sigma}$, but become inversions after moving $d_j$ into the position of $d_{j-1}$ in step \ref{slide_shift}. Since the numbers between $d_{j-1}$ and $d_j$ in the one-line notation of $v_{\sigma}^{-1}$ are precisely those of the form $b$ or $a$ as above, all other inversions involving $d_j$ remain unchanged in step \ref{slide_shift}.

The previous two paragraphs imply that
\begin{align*}
\ell(v_\sigma) - \ell(v_{\bar{\sigma}}) &= \sum_{i=2}^k\big((\lambda_{i-1}-c_{i-1}) - (c_{i-1}-1)\big) \\[2pt]
&= \lambda_1+\cdots + \lambda_{k-1} - 2(c_1+\cdots + c_{k-1}) + k-1,
\end{align*}
as desired.
\end{proof}

\begin{eg} It is not true, in general, that $\ell(v_\sigma)\geq \ell (v_{\bar{\sigma}})$. Indeed, consider the tableaux below.
\[
\sigma = \sigma^{\vee} = \;\begin{ytableau}
1 & 2 \\
3 & 4
\end{ytableau}\;
\hspace*{48pt}
(\bar{\sigma})^{\vee} = \;\begin{ytableau}
1 & 3 \\
2\end{ytableau}
\]
In this case $v_\sigma = 1234$ has length $0$, while $v_{\bar{\sigma}} = 132$ has length $1$.
\end{eg}

\begin{lem}\label{length_induction}
Let $\sigma$ be a standard tableau of size $n$, where $n\ge 1$, and suppose that $n$ appears in row $k$ of $\sigma$. Then
\[
\ell(w_\sigma) - \ell(v_\sigma) \ge \ell(w_{\bar{\sigma}}) - \ell(v_{\bar{\sigma}}) + k-1,
\]
where equality holds if and only if the first evacuation slide of $\sigma^\vee$ is an $L$-slide.
\end{lem}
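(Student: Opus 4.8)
The plan is to deduce the statement directly from the two recursive length formulas already proved, namely \eqref{eqn.w.induction} in \cref{lem.w.induction} and the formula in \cref{lem.v.induction}, and then extract both the inequality and the equality condition from a trivial lower bound on the column numbers $c_1,\dots,c_{k-1}$ of the first evacuation slide of $\sigma^\vee$.

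First I would subtract the two formulas. Writing $\lambda$ for the shape of $\sigma$ and taking $c_1,\dots,c_{k-1}$ as in \cref{lem.v.induction}, formula \eqref{eqn.w.induction} gives $\ell(w_\sigma)-\ell(w_{\bar{\sigma}})=\lambda_1+\cdots+\lambda_{k-1}$, while \cref{lem.v.induction} gives $\ell(v_\sigma)-\ell(v_{\bar{\sigma}})=\lambda_1+\cdots+\lambda_{k-1}-2(c_1+\cdots+c_{k-1})+(k-1)$. Subtracting, the $\lambda$-terms cancel, leaving
\[
\big(\ell(w_\sigma)-\ell(v_\sigma)\big)-\big(\ell(w_{\bar{\sigma}})-\ell(v_{\bar{\sigma}})\big)=2(c_1+\cdots+c_{k-1})-(k-1).
\]
So the lemma is reduced to a statement about the integers $c_i$. (When $k=1$ all sums involved are empty and this reads $0=0$, so the lemma holds; assume $k\ge 2$ below.)

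Next I would pin down those integers using \cref{lem.induction}: the slide path of the first evacuation slide of $\sigma^\vee$ begins at the $(1,1)$ box and ends at the box of $\sigma=(\sigma^\vee)^\vee$ containing $n$, which lies in row $k$. Since each step of an evacuation (jeu-de-taquin) slide moves the empty box one box down or one box to the right, the path meets each of the rows $1,\dots,k$ in at least one box; hence $c_i\ge 1$ for all $1\le i\le k-1$. Substituting into the displayed identity gives $2(c_1+\cdots+c_{k-1})-(k-1)\ge 2(k-1)-(k-1)=k-1$, which is exactly the claimed inequality.

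Finally, for the equality statement: equality forces $c_i=1$ for every $i$, i.e.\ the slide path meets each of rows $1,\dots,k-1$ only in the first column. Using that the path is monotone (down-or-right), starts at $(1,1)$, and ends in row $k$, this is equivalent to the path running straight down column $1$ from $(1,1)$ to $(k,1)$ and then straight right across row $k$ to its terminal outer corner — that is, it is an $L$-slide; conversely an $L$-slide ending in row $k$ has rightmost slide-path column equal to $1$ in every row $i<k$. I do not expect a genuine obstacle here: both input lemmas are already in hand, and the only real content is the elementary observation that the first evacuation slide of $\sigma^\vee$ passes through every row from $1$ up to the row of $n$. The one place to be careful is making the equivalence "$c_i=1$ for all $i$" $\Leftrightarrow$ "$L$-slide" airtight, including the degenerate case $k=1$, where the $L$-slide is purely horizontal in the first row.
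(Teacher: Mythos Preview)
Your proposal is correct and follows essentially the same approach as the paper: subtract the two recursive length formulas from \cref{lem.w.induction} and \cref{lem.v.induction} to obtain the identity $\big(\ell(w_\sigma)-\ell(v_\sigma)\big)-\big(\ell(w_{\bar{\sigma}})-\ell(v_{\bar{\sigma}})\big)=2(c_1+\cdots+c_{k-1})-(k-1)$, then use $c_i\ge 1$ and characterize equality. The paper's proof is slightly terser (it simply states $c_1+\cdots+c_{k-1}\ge k-1$ with equality iff all $c_i=1$, iff the slide is an $L$-slide), but your added justification for why $c_i\ge 1$ and your handling of the $k=1$ case are welcome details.
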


\begin{proof}
By \cref{lem.w.induction,lem.v.induction} we have
\begin{align}\label{length_induction_precise}
\ell(w_\sigma) - \ell(v_\sigma) = \ell(w_{\bar{\sigma}}) - \ell(v_{\bar{\sigma}}) + 2(c_1 + \cdots + c_{k-1}) - (k-1),
\end{align}
where $c_i$ is the column number of the rightmost entry of the slide path in row $i$ of the first evacuation slide of $\sigma^\vee$. In particular $c_1 + \cdots + c_{k-1} \ge k-1$, where equality holds if and only if all the $c_i$'s are equal to $1$, i.e., the first evacuation slide of $\sigma^\vee$ is an $L$-slide. Plugging this into \eqref{length_induction_precise} completes the proof.
\end{proof}

We are now in the position to prove \cref{thm-lengths}:
\begin{proof}[Proof of \cref{thm-lengths}]
We proceed by induction on $n = |\sigma|$, where we can verify the base case $n=0$ directly. Let $n\ge 1$ and assume the result holds for all standard tableaux of size $n-1$. Suppose that $n$ appears in $\sigma$ in row $k$, and let $\mu$ denote the partition obtained from $\lambda$ by decreasing the $k$th part by $1$. Then $\mu$ is the shape of $\bar{\sigma}$, and $n(\lambda) = n(\mu) + k-1$. By \cref{length_induction}, we get
\[
\ell(w_\sigma) - \ell(v_\sigma) \ge \ell(w_{\bar{\sigma}}) - \ell(v_{\bar{\sigma}}) + n(\lambda) - n(\mu),
\]
where equality holds if and only if the first evacuation slide of $\sigma^\vee$ is an $L$-slide. By induction, we have
\[
\ell(w_{\bar{\sigma}}) - \ell(v_{\bar{\sigma}}) \ge n(\mu),
\]
where equality holds if and only if $\bar{\sigma}$ is Richardson. Together, these inequalities imply
\[
\ell(w_\sigma) - \ell(v_\sigma) \ge n(\lambda),
\]
where equality holds if and only if the first evacuation slide of $\sigma^\vee$ is an $L$-slide and $\bar{\sigma}$ is Richardson. It remains to show that this equality condition is equivalent to $\sigma$ being Richardson.

First suppose that $\sigma$ is Richardson. Then $\bar{\sigma}$ is also Richardson. By \cref{evacuation_closed} we get that $\sigma^\vee$ is Richardson, so the first evacuation slide of $\sigma^\vee$ is an $L$-slide by \cref{lem.Lshape}. This verifies the equality condition.

Conversely, suppose that equality holds. We must show that $\sigma$ is Richardson. By \cref{evacuation_closed} and \cref{thm-Lslides}, it suffices to show that each evacuation slide of $\sigma^\vee$ is an $L$-slide. We already know that the first evacuation slide of $\sigma^\vee$ is an $L$-slide. By \cref{lem.induction}, the subsequent evacuation slides for $\sigma^\vee$ are precisely the evacuation slides for $(\bar{\sigma})^{\vee}$. Again applying \cref{evacuation_closed} and \cref{thm-Lslides}, these slides are all $L$-slides since $\bar{\sigma}$ is Richardson.
\end{proof}


\section{Springer fibers and Richardson varieties}\label{sec_geometric}

\noindent In this section we prove our main geometric result, \cref{main} below, which says that Richardson tableaux index the irreducible components of Springer fibers which are equal to Richardson varieties.


\subsection{Two involutions on the flag variety}
We begin by defining two involutive isomorphisms of $\Fl_n(\C)$. Let $(x,y) := \transpose{y}x = x_1y_1 + \cdots + x_ny_n$ denote the standard bilinear pairing on $\C^n$, and for $V\subseteq\C^n$, let $V^\perp := \{y\in\C^n\mid (x,y) = 0 \text{ for all } x \in V\}$ denote the orthogonal complement of $V$. The first involution $(\cdot)^\perp : \Fl_n(\C) \to \Fl_n(\C)$ is defined by
\[
F_\bullet = (0 \subset F_1 \subset \cdots \subset F_{n-1} \subset \C^n) \mapsto F_\bullet^\perp := (0 \subset (F_{n-1})^\perp \subset \cdots \subset (F_1)^\perp \subset \C^n).
\]
We point out that in the literature on Springer fibers one often defines $F_\bullet^\perp$ more abstractly, for example by using dual vector spaces or quotients of $\C^n$. The reason we fix a pairing and take orthogonal complements is because we need explicit formulas for how Schubert and opposite Schubert varieties transform under $(\cdot)^\perp$, as in \cref{dual_transpose}\ref{dual_transpose_schubert} below.

We will use the following formula for $(\cdot)^\perp$ in terms of cosets (i.e.\ writing $\Fl_n(\C) = \GL_n(\C)/B$).
\begin{lem}\label{lem_perp_coset}
We have
\begin{align}\label{perp_coset}
(gB)^\perp = (\transpose{g})^{-1}\dot{w}_0B \quad \text{ for all }\, gB \in \Fl_n(\C) = \GL_n(\C)/B.
\end{align}
\end{lem}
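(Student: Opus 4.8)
The plan is to verify the coset formula \eqref{perp_coset} directly from the definitions, using the identification of $\Fl_n(\C)$ with $\GL_n(\C)/B$ in which $gB$ corresponds to the flag whose $j$th subspace $F_j$ is the span of the first $j$ columns of $g$. The right-hand side of \eqref{perp_coset} involves $(\transpose{g})^{-1}\dot{w}_0$, so the first thing I would unwind is what flag this coset represents: the $j$th subspace is the span of the first $j$ columns of $(\transpose{g})^{-1}\dot{w}_0$. Since right-multiplication by $\dot{w}_0$ reverses the order of the columns, the first $j$ columns of $(\transpose{g})^{-1}\dot{w}_0$ are the last $j$ columns of $(\transpose{g})^{-1}$, in reversed order; reversal does not affect the span, so the $j$th subspace of $(\transpose{g})^{-1}\dot{w}_0 B$ is the span of the last $j$ columns of $(\transpose{g})^{-1}$.

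Next I would identify $(F_{n-j})^\perp$ with that same span. Write $g = [g_1 \mid g_2 \mid \cdots \mid g_n]$ in columns, so $F_{n-j} = \spn{g_1, \dots, g_{n-j}}$. The orthogonal complement $(F_{n-j})^\perp$ consists of all $y \in \C^n$ with $(g_i, y) = \transpose{y} g_i = 0$ for $i = 1, \dots, n-j$; equivalently, $\transpose{g}\, y$ has zeros in its first $n-j$ coordinates, i.e.\ $y$ lies in the kernel of the map $\C^n \to \C^{n-j}$ given by the first $n-j$ rows of $\transpose{g}$. Applying the invertible change of variables $y = (\transpose{g})^{-1} z$, this says $z$ has zeros in its first $n-j$ coordinates, so $z$ ranges over the span of the last $j$ standard basis vectors; hence $(F_{n-j})^\perp$ is the image under $(\transpose{g})^{-1}$ of $\spn{e_{n-j+1}, \dots, e_n}$, which is exactly the span of the last $j$ columns of $(\transpose{g})^{-1}$. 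This matches the subspace computed in the previous paragraph, so the $j$th subspaces of $(gB)^\perp$ and of $(\transpose{g})^{-1}\dot{w}_0 B$ agree for every $j$, proving \eqref{perp_coset}. Finally I would note as a sanity check that $g \mapsto (\transpose{g})^{-1}$ is a group automorphism of $\GL_n(\C)$ and that right-multiplication by $\dot{w}_0$ followed by this conjugation is compatible with the $B$-coset structure (one uses $\transpose{b}^{-1} \in B_-$ and $\dot{w}_0 B_- \dot{w}_0 = B$, so the map descends to $\GL_n(\C)/B$), confirming the formula is well-defined on cosets.

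I don't expect any serious obstacle here: the statement is a bookkeeping identity relating orthogonal complements to the transpose-inverse, and the only subtlety is keeping track of the column-reversal introduced by $\dot{w}_0$ and making sure the indices "$j$" versus "$n-j$" are lined up correctly in the flag $F_\bullet^\perp = (0 \subset (F_{n-1})^\perp \subset \cdots \subset (F_1)^\perp \subset \C^n)$. The cleanest way to present it is to fix $j$, compute both the $j$th subspace of $(gB)^\perp$ (namely $(F_{n-j})^\perp$) and the $j$th subspace of $(\transpose{g})^{-1}\dot{w}_0 B$, and check they coincide via the kernel description above; this avoids any need to track signs or orderings beyond the single reversal.
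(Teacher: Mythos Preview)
Your proof is correct and follows essentially the same approach as the paper: both identify $(F_{n-j})^\perp$ with the span of the last $j$ columns of $h=(\transpose{g})^{-1}$, the paper via the orthogonality relation $\transpose{g}h=I$ (so column $i$ of $g$ is orthogonal to column $j$ of $h$ for $i\neq j$), and you via the equivalent kernel/change-of-variables description. The well-definedness remark you add is a nice sanity check but not needed for the argument.
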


\begin{proof}
Given $g\in\GL_n(\C)$, let $F_\bullet = gB$ denote the corresponding flag, so that each $F_j$ is spanned by the first $j$ columns of $g$. Set $h = (\transpose{g})^{-1}$. Then the equation $\transpose{g}h = I$ implies that column $i$ of $g$ is orthogonal to column $j$ of $h$ for all $i \neq j$. In particular, each $F_j^\perp$ is spanned by the last $n-j$ columns of $h$, so $F_\bullet^\perp = h\dot{w}_0B$ as desired.
\end{proof}

Given $\lambda\vdash n$, the second involution of $\Fl_n(\C)$ we consider is left translation by $w_{0,\lambda}$ (the longest element of the Young subgroup $S_\lambda$):
\[
\varphi_\lambda: \Fl_n(\C) \to \Fl_n(\C), \quad F_\bullet \mapsto \dot{w}_{0,\lambda}F_\bullet. 
\]
Recall that $\SF{\lambda} = \SF{N_\lambda}$ is the Springer fiber defined by the nilpotent matrix $N_\lambda$ in Jordan canonical form with weakly decreasing block sizes. A key property of $N_\lambda$ for our purposes is
\begin{align}\label{conjugate_transpose}
\dot{w}_{0,\lambda}N_\lambda\dot{w}_{0,\lambda} = \transpose{N_\lambda}.
\end{align}

We collect some basic properties of $(\cdot)^\perp$ and $\varphi_\lambda$:
\begin{lem}\label{dual_transpose}
Let $\lambda\vdash n$.
\begin{enumerate}[label=(\roman*), leftmargin=*, itemsep=2pt]
\item\label{dual_transpose_flag} For all nilpotent $N\in\gl_n(\C)$, we have $\SF{N}^\perp = \SF{\transpose{N}}$.
\item\label{dual_transpose_commute} The involutions $(\cdot)^\perp$ and $\varphi_\lambda$ commute.
\item\label{dual_transpose_involution} We have $(\varphi_\lambda(\SF{\lambda}))^\perp = \varphi_\lambda(\SF{\lambda}^\perp) = \SF{\lambda}$.
\item\label{dual_transpose_permutation} For all $w\in S_n$, have $(\dot w B)^\perp = \dot w \dot w_0B$ and $\varphi_\lambda(\dot w B) = \dot{w}_{0,\lambda} \dot w B$ in $\Fl_n(\C)$.
\item\label{dual_transpose_schubert} For all $w\in S_n$, we have $\Scell{w}^\perp = \Scellopp{ww_0}$ and $\Svar{w}^\perp = \Svaropp{ww_0}$.
\item\label{dual_transpose_translate} For all $w\in\mincoset{S_n}{\lambda}$, we have $\varphi_\lambda(\Svar{w}) \subseteq \Svar{w_{0,\lambda}w}$ and $\varphi_\lambda(\Svar{w}^\perp) \subseteq \Svaropp{w_{0,\lambda}ww_0}$.
\end{enumerate}
\end{lem}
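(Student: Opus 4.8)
The plan is to verify the six statements essentially in the order listed, since several of the later parts depend on the earlier ones. I would begin with part \ref{dual_transpose_flag}, which is the crux of the "transpose" story: using the coset description \eqref{def_springer}, a flag $gB$ lies in $\SF{N}$ iff $g^{-1}Ng\in\fu$. Applying \cref{lem_perp_coset}, $(gB)^\perp=(\transpose g)^{-1}\dot w_0 B$, and one computes $((\transpose g)^{-1}\dot w_0)^{-1}\,\transpose N\,((\transpose g)^{-1}\dot w_0)=\dot w_0^{-1}\transpose g\,\transpose N\,(\transpose g)^{-1}\dot w_0=\dot w_0^{-1}\,\transpose{(g^{-1}Ng)}\,\dot w_0$; since conjugation by $\dot w_0$ sends the strictly-upper-triangular matrices to themselves after a transpose (i.e. $\dot w_0^{-1}\transpose X\dot w_0\in\fu \iff X\in\fu$), this lies in $\fu$ iff $g^{-1}Ng\in\fu$. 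Hence $(gB)^\perp\in\SF{\transpose N}$ iff $gB\in\SF{N}$, giving $\SF{N}^\perp=\SF{\transpose N}$. I expect this to be the main obstacle, only in the sense that one must be careful with the precise interaction of transpose and conjugation by $\dot w_0$; everything else is bookkeeping.

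Next, part \ref{dual_transpose_permutation} is a direct computation: $(\dot w B)^\perp=(\transpose{\dot w})^{-1}\dot w_0 B=\dot w\dot w_0 B$ using $\transpose{\dot w}=\dot w^{-1}$, and $\varphi_\lambda(\dot w B)=\dot w_{0,\lambda}\dot w B$ by definition. Part \ref{dual_transpose_commute} then follows formally: for $gB\in\Fl_n(\C)$, $\varphi_\lambda((gB)^\perp)=\dot w_{0,\lambda}(\transpose g)^{-1}\dot w_0 B$, while $(\varphi_\lambda(gB))^\perp=(\transpose{(\dot w_{0,\lambda}g)})^{-1}\dot w_0 B=(\transpose g)^{-1}(\transpose{\dot w_{0,\lambda}})^{-1}\dot w_0 B=(\transpose g)^{-1}\dot w_{0,\lambda}\dot w_0 B$; these agree because $w_{0,\lambda}$ is an involution and commutes with $w_0$ (as $w_0 w_{0,\lambda} w_0 = w_{0,\lambda}$, a standard fact about the longest element of a parabolic subgroup, so $\dot w_{0,\lambda}$ and $\dot w_0$ commute). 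Part \ref{dual_transpose_involution} combines \ref{dual_transpose_flag} and \eqref{conjugate_transpose}: $\varphi_\lambda(\SF{\lambda})=\dot w_{0,\lambda}\SF{N_\lambda}=\SF{\dot w_{0,\lambda}N_\lambda\dot w_{0,\lambda}^{-1}}=\SF{\transpose{N_\lambda}}$ using $\dot w_{0,\lambda}^{-1}=\dot w_{0,\lambda}$ and \eqref{conjugate_transpose}; then by \ref{dual_transpose_flag}, $(\varphi_\lambda(\SF{\lambda}))^\perp=\SF{\transpose{N_\lambda}}^\perp=\SF{N_\lambda}=\SF{\lambda}$, and the middle equality $\varphi_\lambda(\SF{\lambda}^\perp)=\varphi_\lambda(\SF{\transpose{N_\lambda}})=\SF{\lambda}$ similarly. (One may alternatively deduce the middle equality from \ref{dual_transpose_commute}.)

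For part \ref{dual_transpose_schubert}, recall $\Scell{w}=B\dot wB/B$ and $\Scellopp{w}=B_-\dot wB/B$. Since $B^\perp=B_-$ under $(\cdot)^\perp$ (an element $bB$ with $b$ upper-triangular maps to $(\transpose b)^{-1}\dot w_0 B$, and $(\transpose b)^{-1}$ is lower-triangular; more cleanly, $(\cdot)^\perp$ is $\GL_n$-equivariant for the twisted action $g\cdot F = (\transpose g)^{-1}F$, intertwining the $B$-action with the $B_-$-action), applying \ref{dual_transpose_permutation} to each point of $\Scell w$ gives $\Scell w^\perp=B_-\dot w\dot w_0 B/B=\Scellopp{ww_0}$; taking closures and using that $(\cdot)^\perp$ is a homeomorphism yields $\Svar w^\perp=\Svaropp{ww_0}$. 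Finally, part \ref{dual_transpose_translate}: for $w\in\mincoset{S_n}{\lambda}$ we have $w_{0,\lambda}w\in S_n$ with $\ell(w_{0,\lambda}w)=\ell(w_{0,\lambda})+\ell(w)$, so $w\le w_{0,\lambda}w$ in Bruhat order; hence $\varphi_\lambda(\Scell w)=\dot w_{0,\lambda}B\dot w B/B\subseteq B\dot w_{0,\lambda}\dot w B/B\cup(\text{lower cells})\subseteq\Svar{w_{0,\lambda}w}$ — more precisely, left multiplication by $\dot w_{0,\lambda}$ maps $B\dot w B$ into $\overline{B\dot w_{0,\lambda}\dot w B}$ since for any reduced factorization one has $\dot w_{0,\lambda}\cdot B\dot wB\subseteq\overline{B\dot w_{0,\lambda}\dot wB}$ (a standard property of the Bruhat decomposition, $us_i\in\overline{B u s_i B}\cup \overline{BuB}$ iterated along a reduced word for $w_{0,\lambda}$). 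Taking closures gives $\varphi_\lambda(\Svar w)\subseteq\Svar{w_{0,\lambda}w}$. Applying $(\cdot)^\perp$ to this inclusion, using \ref{dual_transpose_commute} and \ref{dual_transpose_schubert}, gives $\varphi_\lambda(\Svar w^\perp)=(\varphi_\lambda(\Svar w))^\perp\subseteq\Svar{w_{0,\lambda}w}^\perp=\Svaropp{w_{0,\lambda}ww_0}$, as claimed. The only slightly delicate points are the reduced-length identity $\ell(w_{0,\lambda}w)=\ell(w_{0,\lambda})+\ell(w)$ for $w\in\mincoset{S_n}{\lambda}$ (which is exactly the parabolic factorization recalled in \cref{sec_background_Sn}, applied to $w_{0,\lambda}w$) and the standard closure-relation $B\dot u B\cdot B\dot vB\subseteq\overline{B\dot{u}\dot vB}$ when $\ell(uv)=\ell(u)+\ell(v)$, both of which may be cited from \cite{bjorner_brenti05}.
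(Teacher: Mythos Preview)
Your overall strategy matches the paper's proof almost exactly: part \ref{dual_transpose_flag} via the coset description and the interaction of transpose with conjugation by $\dot w_0$; parts \ref{dual_transpose_permutation} and \ref{dual_transpose_schubert} as direct computations from \cref{lem_perp_coset}; part \ref{dual_transpose_involution} from \ref{dual_transpose_flag} and \eqref{conjugate_transpose}; and part \ref{dual_transpose_translate} from length-additivity of $w_{0,\lambda}w$ plus the standard closure relation $\dot u\,\Scell{v}\subseteq\Svar{uv}$ for length-additive products (the paper cites \cite{humphreys75} here).

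There is, however, a genuine slip in your argument for part \ref{dual_transpose_commute}. You compute
\[
(\transpose{(\dot w_{0,\lambda}g)})^{-1} \;=\; (\transpose g)^{-1}(\transpose{\dot w_{0,\lambda}})^{-1},
\]
but the order is wrong: since $\transpose{(\dot w_{0,\lambda}g)}=\transpose g\,\transpose{\dot w_{0,\lambda}}$, inverting gives $(\transpose{\dot w_{0,\lambda}})^{-1}(\transpose g)^{-1}=\dot w_{0,\lambda}(\transpose g)^{-1}$. With the correct order you get
\[
(\varphi_\lambda(gB))^\perp \;=\; \dot w_{0,\lambda}(\transpose g)^{-1}\dot w_0 B \;=\; \varphi_\lambda((gB)^\perp)
\]
immediately, using only that $w_{0,\lambda}$ is an involution (so $\transpose{\dot w_{0,\lambda}}=\dot w_{0,\lambda}^{-1}=\dot w_{0,\lambda}$). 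Your patch --- the claim that $w_0w_{0,\lambda}w_0=w_{0,\lambda}$ --- is \emph{false} in general (e.g.\ $\lambda=(2,1)$, $n=3$: $w_{0,\lambda}=213$ but $w_0w_{0,\lambda}w_0=132$), so that step would not go through. Once the matrix-algebra is corrected, no commutativity with $w_0$ is needed, and your proof of \ref{dual_transpose_commute} coincides with the paper's.
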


\begin{proof}
Proof of \ref{dual_transpose_flag}: By \eqref{def_springer}, we have
\begin{eqnarray*}
gB \in \SF{N} &\Leftrightarrow& g^{-1}Ng \in \fu \qquad \text{(by \eqref{def_springer})}\\ 
&\Leftrightarrow& \dot w_0 g^{-1}Ng \dot w_0 \in \dot w_0 \fu \dot w_0 \qquad \text{(conjugating by $\dot w_0$)} \\
&\Leftrightarrow& \dot w_0 \transpose{g} \transpose{N} (\transpose{g})^{-1} \dot w_0 \in \fu \qquad \text{(taking the transpose)}\\
&\Leftrightarrow & (\transpose{g})^{-1}\dot w_0B \in \SF{\transpose{N}} \qquad \text{(by \eqref{def_springer})}\\
&\Leftrightarrow & (gB)^\perp \in \SF{\transpose{N}}  \qquad \text{(by \eqref{perp_coset})},
\end{eqnarray*}
where the third equivalence uses the fact that $\transpose{(\dot{w}_0\fu\dot{w}_0)} = \fu$ and $\transpose{(g^{-1})} = (\transpose{g})^{-1}$. The result follows.

Proof of \ref{dual_transpose_commute}: This follows from \eqref{perp_coset}, since the permutation $w_{0,\lambda}$ is its own inverse and thus $\dot{w}_{0,\lambda} = \dot{w}_{0,\lambda}^{-1} = \transpose{\dot{w}_{0,\lambda}}$.

Proof of \ref{dual_transpose_involution}: The first equality follows from part \ref{dual_transpose_commute}. For the second equality, by part \ref{dual_transpose_flag} and \eqref{conjugate_transpose} we have
\[
\varphi_\lambda(\SF{\lambda}^\perp) = \varphi_\lambda(\SF{\transpose{N_\lambda}}) = \dot{w}_{0,\lambda}\SF{\transpose{N_\lambda}} = \SF{\dot{w}_{0,\lambda}\transpose{N_\lambda}\dot{w}_{0,\lambda}^{-1}} = \SF{N_\lambda} = \SF{\lambda}.
\]

Proof of \ref{dual_transpose_permutation}: The first equality follows from \eqref{perp_coset} since $(\transpose{\dot{w}})^{-1} = \dot{w}$. The second equality follows from the definition of $\varphi_\lambda$.

Proof of \ref{dual_transpose_schubert}: By \eqref{perp_coset}, we have
\[
\Scell{w}^\perp = (B\dot{w}B)^\perp = \left(\transpose{(B\dot{w})}\right)^{-1}\dot{w}_0B = (\transpose{\dot w} B_-)^{-1} \dot{w}_0 B = B_- \dot{w}\dot{w}_0B = \Scellopp{ww_0},
\]
which proves the first equality. The second equality then follows by taking the closure.

Proof of \ref{dual_transpose_translate}: To prove the first containment, it suffices to show that $\varphi_\lambda(\Scell{w}) \subseteq \Svar{w_{0,\lambda}w}$ and then take the closure. Since $w\in \mincoset{S_n}{\lambda}$ the product $w_{0,\lambda}w$ is length-additive, i.e., $\ell(w_{0,\lambda}w) = \ell(w_{0,\lambda}) + \ell(w)$. By definition, $\varphi_\lambda(\Scell{w}) = \dot{w}_{0,\lambda} \Scell{w}$. The containment $\varphi_\lambda(\Scell{w}) \subseteq \Svar{w_{0,\lambda}w}$ then follows from the general fact that $\dot{u}\Scell{v} \subseteq \Svar{uv}$ for all $u,v\in S_n$ such that $uv$ is length-additive \cite[Corollary (of proof) in Section 28.3]{humphreys75}. The second containment then follows by applying $(\cdot)^\perp$ to the first containment, using parts \ref{dual_transpose_commute} and \ref{dual_transpose_schubert}.
\end{proof}

\begin{eg}\label{eg_perp}
Let $F_\bullet = gB \in \Fl_4(\C)$ be the flag from \cref{eg_springer}, so that
\[
g = \begin{bmatrix}
2 & 1 & 0 & 0 \\
0 & 0 & 0 & 1 \\
1 & 0 & 1 & 0 \\
1 & 0 & 0 & 0
\end{bmatrix} \in \GL_4(\C)
\quad\text{ and }\quad
(\transpose{g})^{-1}\dot w_0 = \begin{bmatrix}
0 & 0 & 1 & 0 \\
1 & 0 & 0 & 0 \\
0 & 1 & 0 & 0 \\
0 & -1 & -2 & 1
\end{bmatrix}.
\]
The reader can check that $F_\bullet^\perp = (\transpose{g})^{-1}\dot w_0B$. Now recall from \cref{eg_springer} that $F_\bullet \in \SF{\lambda}$, where $\lambda = (2,1,1)$. Note that $F_\bullet^{\perp}\notin\SF{\lambda}$, but we do have $F_\bullet^\perp \in \SF{\transpose{N_\lambda}}$, in agreement with \cref{dual_transpose}\ref{dual_transpose_flag}. Finally, the flag $\varphi_\lambda(F_\bullet^{\perp})$ is represented by the matrix
\[
\dot{w}_{0,\lambda}(\transpose{g})^{-1}\dot w_0 =
\begin{bmatrix}
0 & 1 & 0 & 0 \\
1 & 0 & 0 & 0 \\
0 & 0 & 1 & 0 \\
0 & 0 & 0 & 1
\end{bmatrix}
\begin{bmatrix}
0 & 0 & 1 & 0 \\
1 & 0 & 0 & 0 \\
0 & 1 & 0 & 0 \\
0 & -1 & -2 & 1
\end{bmatrix} =
\begin{bmatrix}
1 & 0 & 0 & 0 \\
0 & 0 & 1 & 0 \\
0 & 1 & 0 & 0 \\
0 & -1 & -2 & 1
\end{bmatrix},
\]
and so $\varphi_\lambda(F_\bullet^{\perp}) \in \SF{\lambda}$, in agreement with \cref{dual_transpose}\ref{dual_transpose_involution}.
\end{eg}

By \cref{dual_transpose}\ref{dual_transpose_involution}, the involution $F_\bullet\mapsto\varphi_\lambda(F_\bullet^\perp)$ restricts to an automorphism of $\SF{\lambda}$, and hence permutes the irreducible components of $\SF{\lambda}$. Work of van Leeuwen \cite{van_leeuwen00} shows that the induced action on standard tableaux recovers evacuation:
\begin{thm}[{van Leeuwen \cite[Corollary 3.4]{van_leeuwen00}}]\label{dual_evacuation}
We have $\varphi_\lambda(\SF{\sigma}^\perp) = \SF{\sigma^\vee}$ for all $\sigma\in\SYT(\lambda)$.
\end{thm}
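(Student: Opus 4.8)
The plan is to identify the involution of $\SYT(\lambda)$ induced by the automorphism $\Theta\colon F_\bullet\mapsto\varphi_\lambda(F_\bullet^\perp)$ of $\SF{\lambda}$ (it is an automorphism by \cref{dual_transpose}\ref{dual_transpose_involution}) and to check that it is evacuation. Since $\Theta$ permutes the Spaltenstein cells $\SFcell{\sigma}$, there is a unique involution $\iota$ of $\SYT(\lambda)$ with $\Theta(\SFcell{\sigma})=\SFcell{\iota(\sigma)}$, and it suffices to show $\iota=(\cdot)^\vee$; equivalently, to compute the Spaltenstein label $\psi\bigl(\varphi_\lambda(F_\bullet^\perp)\bigr)$ for a flag $F_\bullet\in\SFcell{\sigma}$.

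First I would rewrite this label in terms of Jordan types on \emph{subspaces}. Fix $F_\bullet\in\SFcell{\sigma}$ and set $G_\bullet=\varphi_\lambda(F_\bullet^\perp)$. The bilinear pairing identifies $\C^n/F_j^\perp$ with $F_j^\ast$, and under this identification the endomorphism of $\C^n/F_j^\perp$ induced by $\transpose{N_\lambda}$ is the transpose of the restriction $N_\lambda|_{F_j}$ (note $N_\lambda F_j\subseteq F_{j-1}\subseteq F_j$), hence has the same Jordan type as $N_\lambda|_{F_j}$. Therefore the Spaltenstein label of $F_\bullet^\perp$ in $\SF{\transpose{N_\lambda}}$ is the standard tableau $\tau$ with $\tau[j]$ of shape equal to the Jordan type of $N_\lambda$ on the subspace $F_j$. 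Finally $\dot w_{0,\lambda}\transpose{N_\lambda}\dot w_{0,\lambda}^{-1}=N_\lambda$ by \eqref{conjugate_transpose}, so left translation by $\dot w_{0,\lambda}$ carries $\SF{\transpose{N_\lambda}}$ onto $\SF{\lambda}$ intertwining the two Spaltenstein maps, whence $\psi(G_\bullet)=\tau$. Thus $\iota(\sigma)$ is exactly the ``subspace'' (Steinberg) labeling of the cell $\SFcell{\sigma}$ discussed in \cref{remark_steinberg}, and the theorem reduces to the assertion that the Steinberg and Spaltenstein labelings of a flag differ by evacuation.

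To prove that assertion I would induct on $n=|\lambda|$, threading two recursions. On the Spaltenstein side, quotienting $F_\bullet\in\SF{N_\lambda}$ by the line $F_1$ produces a flag on $\C^n/F_1$ whose Spaltenstein label is $\bar\sigma=\sigma[n-1]$, with the box of $\sigma$ containing $n$ being the box $\lambda/\bar\lambda$, read off from how $F_1$ sits inside the Jordan structure of $N_\lambda$. On the evacuation side, \cref{lem.induction} says that $(\bar\sigma)^\vee$ is obtained from $\sigma^\vee$ by the first evacuation slide followed by decrementing, and that this first slide terminates at the box of $\sigma$ containing $n$. The inductive step is to verify that the Steinberg label of $F_\bullet$ is related to the Steinberg label of the quotient flag by precisely this first-slide operation (keeping the two inductions synchronized), together with the trivial base case $n\le 1$. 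Alternatively, one can bypass the slide bookkeeping via Steinberg's description of the generic Spaltenstein label: for generic $F_\bullet\in\SFcell{\sigma}$ one shows that $\psi(F_\bullet)$ and the Steinberg label of $F_\bullet$ are the two tableaux attached by Robinson--Schensted to a permutation $w$ and its inverse, a standard Robinson--Schensted symmetry involving evacuation (cf.\ \cref{RS}) then forces $\iota(\sigma)=\sigma^\vee$ on generic flags, and a closure argument promotes this to all of $\SFcell{\sigma}$.

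I expect the main obstacle to be this last combinatorial identification of the two labeling conventions with evacuation: because evacuation does not preserve the truncation shapes $\sigma[j]$, the induction must carefully track the interplay between an elementary flag modification (quotienting by a line, or intersecting with a hyperplane) and a single evacuation slide, including the precise row in which the moving box lands. Everything preceding it --- the reduction using $(\cdot)^\perp$ and $\varphi_\lambda$, the duality computation, and the conjugation identity \eqref{conjugate_transpose} --- is formal given the lemmas already in place.
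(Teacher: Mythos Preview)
The paper does not give its own proof of this theorem; it is stated with attribution to van Leeuwen \cite[Corollary 3.4]{van_leeuwen00} and used as a black box. So there is no paper argument to compare your proposal to, and your write-up is really a sketch of how one might reprove van Leeuwen's result from scratch.

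Your reduction in the second paragraph is correct and is precisely the content of van Leeuwen's theorem: the duality computation shows that the Spaltenstein (quotient) label of $\Theta(F_\bullet)$ equals the Steinberg (subspace) label of $F_\bullet$, so the statement $\Theta(\SF{\sigma})=\SF{\sigma^\vee}$ becomes ``the closure of the Steinberg cell labeled $\sigma$ is the Spaltenstein component $\SF{\sigma^\vee}$''. However, your first paragraph overstates what this buys you. You assert that $\Theta$ permutes the Spaltenstein \emph{cells} $\SFcell{\sigma}$, but what your own computation shows is that $\Theta$ sends the Spaltenstein cell labeled $\sigma$ to the \emph{Steinberg} cell labeled $\sigma$. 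These two stratifications of $\SF{\lambda}$ are genuinely different in general (the subspace Jordan types are not constant on a quotient-Jordan-type stratum), so $\Theta(\SFcell{\sigma})$ need not be a single Spaltenstein cell and your $\iota$ is not well-defined at the level of cells. The fix is easy: define $\iota$ on \emph{components} via $\Theta(\SF{\sigma})=\SF{\iota(\sigma)}$, which is legitimate since $\Theta$ is an automorphism, and then argue with generic (or dense) points as you do later.

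Once that is corrected, the substance lies entirely in your third paragraph, which you rightly identify as the hard part. Both routes you outline are viable: the induction on $n$ tracking a single evacuation slide against quotienting by a line is essentially how van Leeuwen proceeds, and the Robinson--Schensted route goes through Steinberg's relative-position theorem. Either way the combinatorial bookkeeping is nontrivial, and your sketch does not carry it out; you would need to pin down exactly which row the slide lands in and match it to how the Jordan type of $N_\lambda|_{F_1}$ determines the deleted box, or else invoke the specific RS symmetry $(P(w^{-1}),Q(w^{-1}))=(Q(w),P(w))$ together with Steinberg's identification. As written, the proposal is a correct roadmap with one stated gap (cells versus components) and one acknowledged gap (the slide/RS identification), but not yet a proof.
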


\begin{eg}
Consider the flags $F_\bullet$ and $\varphi_{\lambda}(F_{\bullet}^{\perp})$ from \cref{eg_perp}, where $\lambda = (2,1,1)$. Recall from \cref{eg_springer} that $F_\bullet \in \SFcell{\sigma}$, where $\sigma = \ytableausmall{1 & 3\\ 2\\ 4}$. Since $\sigma^{\vee}=\sigma$, we conclude that $\varphi_{(2,1,1)}(F_{\bullet}^{\perp}) \in \SF{\sigma}$ by \cref{dual_evacuation}. Note that this is not immediately obvious, since $\varphi_{(2,1,1)}(F_{\bullet}^{\perp})$ is not in $\SFcell{\sigma}$, but rather $\varphi_{(2,1,1)}(F_{\bullet}^{\perp})\in\SFcell{\tau^\vee}$, where $\tau = \ytableausmall{1 & 2\\ 3\\ 4}$ and $\tau^\vee = \ytableausmall{1 & 4 \\ 2 \\ 3}$. This is consistent with the fact that $F_\bullet \in \SF{\tau}$, as observed in \cref{eg_springer}.
\end{eg}


\subsection{Richardson envelope of an irreducible component}
We now determine the unique minimal Richardson variety containing a given irreducible component $\SF{\sigma}$, stated in \cref{main_geometric} below.  We begin with the following observation.

\begin{lem}\label{w_contained}
Let $\sigma\in\SYT(\lambda)$. Then $\dot{w}_\sigma B\in\SF{\lambda}$.
\end{lem}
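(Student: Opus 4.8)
The plan is to reduce the claim to Lemma~\ref{permutations_in_SF}, which characterizes the permutation flags lying in $\SF{\lambda}$ as exactly those $\dot{w}B$ with $w\in\mincoset{S_n}{\lambda}$. Thus it suffices to show that $w_\sigma\in\mincoset{S_n}{\lambda}$, i.e., that $w_\sigma^{-1}(j) < w_\sigma^{-1}(j+1)$ for all $j\in[n-1]\setminus\{\lambda_1,\lambda_1+\lambda_2,\ldots\}$.

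To see this I would use the defining property of $w_\sigma$: the one-line notation of $w_0 w_\sigma^{-1} w_0$ is the reading word of $\sigma$, obtained by reading the entries of $\sigma$ row by row from bottom to top and within each row from left to right. Since the entries of $\sigma$ strictly increase along each row, this word is increasing on each maximal block of positions coming from a single row, and may fail to be increasing only at the positions separating consecutive rows. Transporting this statement through the automorphism $x\mapsto w_0 x w_0$ of $S_n$ and then applying $x \mapsto x^{-1}$, the inter-row break positions become precisely the indices $\lambda_1,\lambda_1+\lambda_2,\ldots$ (the proper partial sums of $\lambda$), and the assertion that the reading word increases at every other position becomes exactly $w_\sigma^{-1}(j)<w_\sigma^{-1}(j+1)$ for $j\in[n-1]\setminus\{\lambda_1,\lambda_1+\lambda_2,\ldots\}$. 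Hence $w_\sigma\in\mincoset{S_n}{\lambda}$ — this is also recorded directly in Remark~\ref{reading_word_bruhat} — and Lemma~\ref{permutations_in_SF} gives $\dot{w}_\sigma B\in\SF{\lambda}$.

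There is no serious obstacle here; the computation is routine, and the only point requiring care is the bookkeeping of how the inversion $w\mapsto w^{-1}$ and the conjugation $w\mapsto w_0 w w_0$ built into the definition of $w_\sigma$ interact with the inequalities defining $\mincoset{S_n}{\lambda}$ — in particular, checking that the exceptional positions land on the set $\{\lambda_1,\lambda_1+\lambda_2,\ldots\}$ itself rather than on its complement or on a reflected copy. As an alternative, one could argue entirely concretely: the $j$-th subspace of the flag $\dot{w}_\sigma B$ is spanned by $e_{w_\sigma(1)},\dots,e_{w_\sigma(j)}$, and one checks directly from the formula~\eqref{nilpotent_formula} for $N_\lambda$ that $N_\lambda$ sends each such basis vector into the span of the earlier ones, which unwinds to the same coset-representative inequality for $w_\sigma$.
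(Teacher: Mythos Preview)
Your proposal is correct and follows exactly the paper's approach: the paper's proof simply cites Lemma~\ref{permutations_in_SF} together with the fact (from Remark~\ref{reading_word_bruhat}) that $w_\sigma\in\mincoset{S_n}{\lambda}$, and you have spelled out in detail why the latter holds.
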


\begin{proof}
This follows from \cref{permutations_in_SF}, since $w_\sigma\in\mincoset{S_n}{\lambda}$ by \cref{reading_word_bruhat}.
\end{proof}

We also need the following result which combines work of Pagnon \cite{pagnon03} and of Pagnon and Ressayre \cite{pagnon_ressayre06}, using \cref{w_PR}:

\begin{thm}[{Pagnon \cite[Theorem 6.1.2]{pagnon03}, Pagnon and Ressayre \cite[Theorem 5.1]{pagnon_ressayre06}}]\label{PR} Let $\sigma\in \SYT(\lambda)$. Then $\Scell{w_\sigma}$ is the unique Schubert cell such that $\SF{\lambda} \cap \Scell{w_\sigma}$ is a dense subset of $\SF{\sigma}$. In particular (using \cref{w_contained}), $\dot w_\sigma B\in \SF{\sigma}$ and $\SF{\sigma} \subseteq \Svar{w_\sigma}$.
\end{thm}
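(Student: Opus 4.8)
\medskip
\noindent\textbf{Proof proposal.} This statement combines results of Pagnon \cite{pagnon03} and Pagnon--Ressayre \cite{pagnon_ressayre06}; here is how I would approach it. The key preliminary observation is that the cocharacter $\gamma(t) := \diag(t,t^2,\dots,t^n)$ conjugates $N_\lambda$ to $tN_\lambda$, hence acts on $\SF{\lambda}$ (since $\fu$ is stable under scaling), and is regular, so that the Schubert cell decomposition of $\Fl_n(\C)$ is the Bialynicki--Birula decomposition for $\gamma$. Intersecting the closed, $\gamma$-invariant subvariety $\SF{\lambda}$ with this decomposition gives $\SF{\lambda} = \bigsqcup_w(\SF{\lambda}\cap\Scell{w})$, and if $\SF{\lambda}\cap\Scell{w}\ne\emptyset$ then its fixed point $\dot wB$, onto which the cell retracts equivariantly, also lies in $\SF{\lambda}$, forcing $w\in\mincoset{S_n}{\lambda}$ by \cref{permutations_in_SF}. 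Since $\SF{\sigma} = \overline{\SFcell{\sigma}}$ is irreducible of dimension $n(\lambda)$, I would then reduce the theorem to two claims: (a) for each $\sigma$ there exists $w\in\mincoset{S_n}{\lambda}$ with $\SF{\lambda}\cap\Scell{w}$ irreducible, of dimension $n(\lambda)$, and meeting $\SFcell{\sigma}$ densely; and (b) this $w$ is $w_\sigma$. Granting (a) and (b), uniqueness of the Schubert cell is automatic: two locally closed subsets each dense in the irreducible variety $\SF{\sigma}$ must meet (each contains a dense open subset), while distinct Schubert cells are disjoint.

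To prove (a) and (b) I would induct on $n = |\sigma|$, running the induction in parallel with \cref{lem.w.induction}. Say $n$ occupies row $k$ of $\sigma$, so that $\bar\sigma$ has shape $\mu$, the partition obtained from $\lambda$ by decreasing its $k$th part by $1$, and $n(\lambda) = n(\mu) + (k-1)$. Geometrically, the map $F_\bullet\mapsto F_1$ presents $\SFcell{\sigma}$ as a bundle with fiber $\SFcell{\bar\sigma}$ over the locally closed set of lines $F_1\subseteq\ker N_\lambda$ on which $N_\lambda$ acts on $\C^n/F_1$ with Jordan type $\mu$, a stratum of dimension $k-1$ (this is Spaltenstein's recursion). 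On the permutation side, \cref{lem.w.induction} tells us that deleting the first entry $w_\sigma(1) = \lambda_1+\dots+\lambda_{k-1}+1$ of the one-line notation of $w_\sigma$ (and renormalizing) recovers $w_{\bar\sigma}$, with $\codej{w_\sigma}{1} = \lambda_1+\dots+\lambda_{k-1}$ and $\codej{w_\sigma}{i} = \codej{w_{\bar\sigma}}{i-1}$ for $i\ge 2$. The heart of the argument is to show these two recursions are compatible: working in explicit affine coordinates on $\Scell{w}$, where $\SF{\lambda}\cap\Scell{w}$ is cut out by the vanishing of the strictly-lower-triangular part of $g^{-1}N_\lambda g$, I would check that for $w = w_\sigma$ a generic solution has $F_1$ a generic point of the correct base stratum (which forces $w_\sigma(1) = \lambda_1+\dots+\lambda_{k-1}+1$) and, conditioned on $F_1$, the induced flag on $\C^n/F_1$ a generic point of the cell for $\SF{\mu}$ associated to $w_{\bar\sigma}$. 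The inductive hypothesis then makes the latter a dense open affine space of dimension $n(\mu)$ on which $\psi\equiv\bar\sigma$, so $\SF{\lambda}\cap\Scell{w_\sigma}$ contains a dense open affine space of dimension $n(\mu)+(k-1)=n(\lambda)$ on which $\psi\equiv\sigma$; by irreducibility this forces $\overline{\SF{\lambda}\cap\Scell{w_\sigma}} = \SF{\sigma}$.

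The step I expect to be the main obstacle is exactly this compatibility check: solving the Schubert-cell equations explicitly enough to read off the bundle structure and to verify the generic Jordan-type computation on $\C^n/F_1$. A cleaner alternative, which I would try first, is to avoid the coordinate computation by invoking Steinberg's Robinson--Schensted description of $\psi$ on permutation flags (cf.\ \cref{RS}) to identify which component $\SF{\sigma}$ contains $\dot wB$ generically, together with a dimension estimate showing $\dim(\SF{\lambda}\cap\Scell{w}) = n(\lambda)$ precisely when $w = w_\sigma$ for some $\sigma$. Finally, the ``in particular'' clause then follows at once: $\dot w_\sigma B\in\SF{\lambda}$ by \cref{w_contained}, hence $\dot w_\sigma B\in\SF{\lambda}\cap\Scell{w_\sigma}\subseteq\SF{\sigma}$, and $\SF{\sigma} = \overline{\SF{\lambda}\cap\Scell{w_\sigma}}\subseteq\overline{\Scell{w_\sigma}} = \Svar{w_\sigma}$.
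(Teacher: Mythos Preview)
The paper does not prove this theorem at all: it is quoted directly from Pagnon's thesis \cite[Theorem 6.1.2]{pagnon03} and Pagnon--Ressayre \cite[Theorem 5.1]{pagnon_ressayre06}, with the only additional remark being that \cref{w_PR} (the Lehmer-code calculation \eqref{eqn.wsig.Lehmer}) shows the $w_\sigma$ defined here coincides with theirs. So there is nothing in the paper to compare your argument against.

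That said, your outline is a reasonable reconstruction of how such a proof goes. A few small comments. First, your cocharacter satisfies $\gamma(t)N_\lambda\gamma(t)^{-1} = t^{-1}N_\lambda$, not $tN_\lambda$; this is harmless for the conclusion that $\SF{\lambda}$ is $\gamma$-stable, but worth getting right. Second, your uniqueness argument (two dense locally closed subsets of an irreducible variety must meet) is clean and correct. Third, the genuine content, as you correctly flag, is the inductive compatibility of the Spaltenstein fibration $F_\bullet\mapsto F_1$ with the Schubert decomposition, and in particular the verification that a generic line in $\SF{\lambda}\cap\Scell{w_\sigma}$ lands in the correct Jordan-type stratum of $\ker N_\lambda$. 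You leave this as ``the main obstacle,'' which is honest; in the cited papers this is exactly where the work is done, and your alternative via Steinberg's Robinson--Schensted description (cf.\ \cref{RS} and the second equality of \eqref{RS_equation}) is in fact closer in spirit to the Pagnon--Ressayre argument than the direct coordinate computation. So your proposal is a sound plan, but it remains a plan: the step you identify as the obstacle is the theorem.
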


\begin{cor}\label{dual_PR}
Let $\sigma\in\SYT(\lambda)$. Then $\dot v_\sigma B\in\SF{\sigma}$ and $\SF{\sigma}\subseteq\Svaropp{v_\sigma}$.
\end{cor}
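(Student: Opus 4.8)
The plan is to deduce \cref{dual_PR} from \cref{PR} by applying the involution $F_\bullet \mapsto \varphi_\lambda(F_\bullet^\perp)$ of $\SF{\lambda}$, which realizes evacuation on components by \cref{dual_evacuation}. The idea is that \cref{PR} applied to the evacuated tableau $\sigma^\vee$ gives $\dot w_{\sigma^\vee} B \in \SF{\sigma^\vee}$ and $\SF{\sigma^\vee} \subseteq \Svar{w_{\sigma^\vee}}$, and transporting this statement back through the involution should produce exactly the claimed membership $\dot v_\sigma B \in \SF{\sigma}$ and containment $\SF{\sigma} \subseteq \Svaropp{v_\sigma}$, once we match up the relevant permutations.

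First I would record the behavior of the involution on the permutation flags and Schubert varieties involved, all of which is available in \cref{dual_transpose}: for $w \in \mincoset{S_n}{\lambda}$ we have $\varphi_\lambda((\dot w B)^\perp) = \dot w_{0,\lambda}\dot w \dot w_0 B$, and $\varphi_\lambda(\Svar{w}^\perp) \subseteq \Svaropp{w_{0,\lambda}ww_0}$. Applying these with $w = w_{\sigma^\vee}$, and using that $w_{\sigma^\vee} \in \mincoset{S_n}{\lambda}$ by \cref{reading_word_bruhat}, I would get that $\varphi_\lambda(\SF{\sigma^\vee}^\perp) \subseteq \Svaropp{w_{0,\lambda}w_{\sigma^\vee}w_0}$ and that $\varphi_\lambda((\dot w_{\sigma^\vee}B)^\perp)$, which lies in $\varphi_\lambda(\SF{\sigma^\vee}^\perp)$, equals the permutation flag $\dot w_{0,\lambda}\dot w_{\sigma^\vee}\dot w_0 B$. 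By \cref{dual_evacuation}, $\varphi_\lambda(\SF{\sigma^\vee}^\perp) = \SF{(\sigma^\vee)^\vee} = \SF{\sigma}$.

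The one remaining piece is the permutation identity $w_{0,\lambda}w_{\sigma^\vee}w_0 = v_\sigma$. This is exactly the content of \cref{v_w_relation}, $v_{\tau^\vee} = w_{0,\lambda}w_\tau w_0$, applied with $\tau = \sigma^\vee$: this gives $v_\sigma = v_{(\sigma^\vee)^\vee} = w_{0,\lambda}w_{\sigma^\vee}w_0$. (One should double-check the role of $w_0$ versus $w_0^{-1} = w_0$, but since $w_0$ is an involution this is immediate.) Substituting this identity into the containments from the previous paragraph yields $\SF{\sigma} \subseteq \Svaropp{v_\sigma}$ and $\dot v_\sigma B \in \SF{\sigma}$, which is the claim. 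The main obstacle is really just bookkeeping: making sure the conjugations and inverses line up so that the permutation produced by pushing $w_{\sigma^\vee}$ through $\varphi_\lambda \circ (\cdot)^\perp$ matches the $w_{0,\lambda}w_{\sigma^\vee}w_0$ appearing in \cref{v_w_relation} — there is no serious geometric difficulty beyond invoking \cref{dual_transpose,dual_evacuation,PR,v_w_relation}.
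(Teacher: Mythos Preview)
Your proposal is correct and follows essentially the same approach as the paper: apply \cref{PR} to $\sigma^\vee$, push the resulting containments through the involution $F_\bullet \mapsto \varphi_\lambda(F_\bullet^\perp)$ using \cref{dual_transpose} and \cref{dual_evacuation}, and then invoke \cref{v_w_relation} (with $\tau = \sigma^\vee$) to identify $w_{0,\lambda}w_{\sigma^\vee}w_0$ with $v_\sigma$. The bookkeeping you flag is exactly what the paper tracks, and there is no missing step.
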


\begin{proof}
By \cref{PR}, we have $\dot w_{\sigma^\vee}B\in \SF{\sigma^\vee}$ and $\SF{\sigma^\vee} \subseteq \Svar{w_{\sigma^\vee}}$. Applying the map $F_\bullet \mapsto \varphi_\lambda(F_\bullet^\perp)$ gives
\begin{align}\label{dual_PR_initial}
\varphi_\lambda((\dot w_{\sigma^\vee}B)^\perp) \in \varphi_\lambda(\SF{\sigma^\vee}^\perp) \quad \text{ and } \quad \varphi_\lambda(\SF{\sigma^\vee}^\perp) \subseteq \varphi_\lambda(\Svar{w_{\sigma^\vee}}^\perp).
\end{align}
Now note that:
\begin{itemize}[itemsep=2pt]
\item $\varphi_\lambda((\dot w_{\sigma^\vee}B)^\perp) = \dot{w}_{0,\lambda} \dot w_{\sigma^\vee} \dot w_0 B$ by \cref{dual_transpose}\ref{dual_transpose_permutation};
\item $w_{0,\lambda}w_{\sigma^\vee}w_0 = v_\sigma$ by \cref{v_w_relation};
\item $\varphi_\lambda(\SF{\sigma^\vee}^\perp) = \SF{\sigma}$ by \cref{dual_evacuation}; and
\item $\varphi_\lambda(\Svar{w_{\sigma^\vee}}^\perp) \subseteq \Svaropp{w_{0,\lambda}w_{\sigma^\vee}w_0}$ by \cref{dual_transpose}\ref{dual_transpose_translate}, since $w_{\sigma^{\vee}}\in\mincoset{S_n}{\lambda}$ by \cref{reading_word_bruhat}.
\end{itemize}
Thus \eqref{dual_PR_initial} implies that
\[
\dot v_\sigma B \in \SF{\sigma} \quad \text{ and } \quad \SF{\sigma} \subseteq \Svaropp{v_\sigma},
\]
as desired.
\end{proof}

\begin{thm}\label{main_geometric}
Let $\sigma\in\SYT(\lambda)$. Then the minimal Richardson variety containing $\SF{\sigma}$ is $\Rvar{v_\sigma}{w_\sigma}=\Svaropp{v_\sigma} \cap \Svar{w_\sigma} $. That is, we have:
\begin{itemize}[itemsep=2pt]
\item $\SF{\sigma}\subseteq\Rvar{v_\sigma}{w_\sigma}$; and
\item if $\SF{\sigma}\subseteq\Rvar{v}{w}$ for some $v,w\in S_n$, then $\Rvar{v_\sigma}{w_\sigma} \subseteq \Rvar{v}{w}$.
\end{itemize}
In particular, we have $v_\sigma \le w_\sigma$.
\end{thm}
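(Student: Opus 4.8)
The plan is to deduce everything from \cref{PR} and \cref{dual_PR}, which together already place $\dot{w}_\sigma B$ and $\dot{v}_\sigma B$ inside $\SF{\sigma}$ and give the containments $\SF{\sigma}\subseteq\Svar{w_\sigma}$ and $\SF{\sigma}\subseteq\Svaropp{v_\sigma}$. Intersecting these, $\SF{\sigma}\subseteq\Svaropp{v_\sigma}\cap\Svar{w_\sigma}$. Since $\SF{\sigma}$ is an irreducible component of $\SF{\lambda}$ it is nonempty, so this intersection is nonempty, and \eqref{richardson_nonempty} then forces $v_\sigma\le w_\sigma$ in Bruhat order; in particular $\Rvar{v_\sigma}{w_\sigma} = \Svaropp{v_\sigma}\cap\Svar{w_\sigma}$ is a genuine (nonempty) Richardson variety containing $\SF{\sigma}$. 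This settles the first containment and the final assertion $v_\sigma\le w_\sigma$ simultaneously.

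For minimality, suppose $\SF{\sigma}\subseteq\Rvar{v}{w} = \Svaropp{v}\cap\Svar{w}$ for some $v\le w$ in $S_n$. First I would use that $\dot{w}_\sigma B\in\SF{\sigma}\subseteq\Svar{w}$ by \cref{PR}. The flag $\dot{w}_\sigma B$ lies in the Schubert cell $\Scell{w_\sigma}$, and $\Svar{w}$ is the disjoint union of the cells $\Scell{u}$ over $u\le w$, so $\dot{w}_\sigma B\in\Svar{w}$ is equivalent to $w_\sigma\le w$. Dually, $\dot{v}_\sigma B\in\SF{\sigma}\subseteq\Svaropp{v}$ by \cref{dual_PR}; since $\dot{v}_\sigma B\in\Scellopp{v_\sigma}$ and $\Svaropp{v}$ is the disjoint union of the cells $\Scellopp{u}$ over $u\ge v$, this gives $v\le v_\sigma$. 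Combining with $v_\sigma\le w_\sigma$ from the previous paragraph yields $v\le v_\sigma\le w_\sigma\le w$, and then \eqref{richardson_closure} immediately gives $\Rvar{v_\sigma}{w_\sigma}\subseteq\Rvar{v}{w}$, as required.

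There is essentially no analytic obstacle remaining: all the substantive content has been front-loaded into \cref{PR} (due to Pagnon and Pagnon--Ressayre), into \cref{dual_PR} (whose proof transports \cref{PR} through the duality involution $F_\bullet\mapsto\varphi_\lambda(F_\bullet^\perp)$ using van Leeuwen's \cref{dual_evacuation} and the identity $w_{0,\lambda}w_{\sigma^\vee}w_0 = v_\sigma$ from \cref{v_w_relation}), and into the standard closure formulas $\Svar{w} = \bigsqcup_{u\le w}\Scell{u}$ and $\Svaropp{v} = \bigsqcup_{u\ge v}\Scellopp{u}$, which follow from the definitions in \cref{sec_background_schubert} together with \cref{dual_transpose}\ref{dual_transpose_schubert}. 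The only point requiring a little care is to avoid circularity in the statement: one cannot invoke \eqref{richardson_closure} for "the Richardson variety $\Rvar{v_\sigma}{w_\sigma}$" until one knows $v_\sigma\le w_\sigma$, so I would make sure to extract $v_\sigma\le w_\sigma$ first, from the nonemptiness of $\Svaropp{v_\sigma}\cap\Svar{w_\sigma}\supseteq\SF{\sigma}$ via \eqref{richardson_nonempty}, before running the minimality argument.
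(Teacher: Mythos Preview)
Your proof is correct and follows essentially the same approach as the paper: use \cref{PR} and \cref{dual_PR} for both the containment $\SF{\sigma}\subseteq\Rvar{v_\sigma}{w_\sigma}$ and the presence of the permutation flags $\dot{v}_\sigma B,\dot{w}_\sigma B\in\SF{\sigma}$, deduce $v_\sigma\le w_\sigma$ from nonemptiness via \eqref{richardson_nonempty}, and then obtain minimality from \eqref{richardson_closure}. The only cosmetic difference is that the paper reads off $v\le v_\sigma$ and $w_\sigma\le w$ by writing $\{\dot{v}_\sigma B\}=\Rvar{v_\sigma}{v_\sigma}$ and $\{\dot{w}_\sigma B\}=\Rvar{w_\sigma}{w_\sigma}$ and applying \eqref{richardson_closure} directly, whereas you argue via the cell decompositions of $\Svar{w}$ and $\Svaropp{v}$; these are equivalent.
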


\begin{proof}
By \cref{PR,dual_PR}, we have $\SF{\sigma} \subseteq \Svaropp{v_\sigma} \cap \Svar{w_\sigma} = \Rvar{v_\sigma}{w_\sigma}$. In particular $\Rvar{v_\sigma}{w_\sigma}$ is nonempty, and therefore $v_\sigma \le w_\sigma$ by \eqref{richardson_nonempty}. Conversely, suppose that $\SF{\sigma}\subseteq\Rvar{v}{w}$ for some $v,w\in S_n$. By \cref{PR,dual_PR} we have $\dot v_\sigma B, \dot w_\sigma B\in \SF{\sigma}$, and thus $\dot v_\sigma B, \dot w_\sigma B \in \Rvar{v}{w}$.  Since $\{\dot v_\sigma B\} = \Rvar{v_\sigma}{v_\sigma}$ and $\{\dot w_\sigma B\} = \Rvar{w_\sigma}{w_\sigma}$, applying \eqref{richardson_closure} we get $v \le v_\sigma \le w$ and $v \le w_\sigma \le w$. Applying \eqref{richardson_closure} again gives $\Rvar{v_\sigma}{w_\sigma} \subseteq \Rvar{v}{w}$, as desired.
\end{proof}

\cref{main_geometric} describes the unique minimal Richardson variety containing $\SF{\sigma}$. It would be interesting to solve the reverse problem, of finding the maximal Richardson varieties contained in $\SF{\sigma}$. As we will see in \cref{sec_positivity}, solving this problem is equivalent to finding the maximal cells of the totally nonnegative part of $\SF{\sigma}$.
\begin{prob}\label{prob_richardson_containment}
Let $\sigma$ be a standard tableau. Find all Richardson varieties contained in $\SF{\sigma}$ which are maximal by containment.
\end{prob}

\begin{eg}\label{eg_prob_richardson_containment}
Let $\sigma = \ytableausmall{1 & 2 \\ 3 & 4}$, which is not a Richardson tableau. By \cref{main_geometric}, the minimal Richardson variety containing $\SF{\sigma}$ is $\Rvar{v_\sigma}{w_\sigma} = \Rvar{1234}{3412}$. One can show (cf.\ \cref{figure_tnn} below) that the maximal Richardson varieties contained in $\SF{\sigma}$ are $\Rvar{1234}{1324}$ and $\Rvar{3142}{3412}$.
\end{eg}


\subsection{Main geometric result}
We finally have all the ingredients needed to prove the main geometric result of this paper:
\begin{thm}\label{main}
Let $\sigma\in\SYT(\lambda)$. The irreducible component $\SF{\sigma}$ of $\SF{\lambda}$ is a Richardson variety if and only if $\sigma$ is a Richardson tableau, in which case $\SF{\sigma} = \Rvar{v_\sigma}{w_\sigma}$.
\end{thm}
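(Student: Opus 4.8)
The plan is to combine the dimension count from \cref{thm-lengths} with the containment from \cref{main_geometric} and a standard irreducibility/dimension argument. We already know from \cref{main_geometric} that $\SF{\sigma} \subseteq \Rvar{v_\sigma}{w_\sigma} = \Svaropp{v_\sigma} \cap \Svar{w_\sigma}$, that $v_\sigma \le w_\sigma$, and hence that $\Rvar{v_\sigma}{w_\sigma}$ is an irreducible projective variety of dimension $\ell(w_\sigma) - \ell(v_\sigma)$. On the other hand, $\SF{\sigma}$ is an irreducible component of $\SF{\lambda}$ and therefore has dimension exactly $n(\lambda)$ by Spaltenstein's theorem. By \cref{thm-lengths}, we have $\ell(w_\sigma) - \ell(v_\sigma) \ge n(\lambda)$, with equality precisely when $\sigma$ is Richardson.

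First I would handle the ``if'' direction. Suppose $\sigma$ is Richardson. Then $\dim \Rvar{v_\sigma}{w_\sigma} = \ell(w_\sigma) - \ell(v_\sigma) = n(\lambda) = \dim \SF{\sigma}$. Since $\SF{\sigma} \subseteq \Rvar{v_\sigma}{w_\sigma}$ is a closed subvariety with $\SF{\sigma}$ irreducible, $\Rvar{v_\sigma}{w_\sigma}$ irreducible, and the two having equal dimension, we conclude $\SF{\sigma} = \Rvar{v_\sigma}{w_\sigma}$. (The general fact being used: if $Y$ is irreducible and $Z \subseteq Y$ is closed with $\dim Z = \dim Y < \infty$, then $Z = Y$.) In particular $\SF{\sigma}$ is a Richardson variety.

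For the ``only if'' direction, suppose $\SF{\sigma}$ is a Richardson variety, say $\SF{\sigma} = \Rvar{v}{w}$ for some $v \le w$ in $S_n$. Then $\Rvar{v}{w} = \SF{\sigma} \subseteq \Rvar{v_\sigma}{w_\sigma}$, and by the minimality clause of \cref{main_geometric}, also $\Rvar{v_\sigma}{w_\sigma} \subseteq \Rvar{v}{w} = \SF{\sigma}$. Combining the two containments gives $\SF{\sigma} = \Rvar{v_\sigma}{w_\sigma}$, so in particular $n(\lambda) = \dim \SF{\sigma} = \dim \Rvar{v_\sigma}{w_\sigma} = \ell(w_\sigma) - \ell(v_\sigma)$. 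By the equality clause of \cref{thm-lengths}, this forces $\sigma$ to be a Richardson tableau, and the displayed identity $\SF{\sigma} = \Rvar{v_\sigma}{w_\sigma}$ has already been established.

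I do not anticipate a serious obstacle here: once \cref{main_geometric} and \cref{thm-lengths} are in hand, the argument is essentially a dimension-and-irreducibility squeeze. The one point requiring a little care is the minimality step in the ``only if'' direction — one must know that $\SF{\sigma}$ being \emph{equal} to some Richardson variety $\Rvar{v}{w}$ (not merely contained in one) lets us invoke minimality to get the reverse containment $\Rvar{v_\sigma}{w_\sigma} \subseteq \SF{\sigma}$; this is immediate from \cref{main_geometric} applied with that $\Rvar{v}{w}$. Everything else is bookkeeping with the standard facts on dimensions of Richardson varieties and components of Springer fibers recalled in \cref{sec.notation}.
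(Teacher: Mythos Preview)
Your proposal is correct and follows essentially the same approach as the paper: both directions combine the containment and minimality from \cref{main_geometric} with the dimension equality criterion from \cref{thm-lengths}, using irreducibility to upgrade a same-dimension containment to an equality. The paper's ``only if'' direction is phrased slightly more tersely (it just says ``by \cref{main_geometric} we have $\SF{\sigma} = \Rvar{v_\sigma}{w_\sigma}$''), but your explicit unpacking via the minimality clause is exactly the underlying reasoning.
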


\begin{proof}
($\Rightarrow$) Suppose that $\SF{\sigma}$ is a Richardson variety. Then by \cref{main_geometric} we have $\SF{\sigma} = \Rvar{v_\sigma}{w_\sigma}$, and taking dimensions gives $n(\lambda) = \ell(w_\sigma) - \ell(v_\sigma)$. Hence $\sigma$ is a Richardson tableau by \cref{thm-lengths}.

($\Leftarrow$) Suppose that $\sigma$ is a Richardson tableau. Then $\dim(\Rvar{v_\sigma}{w_\sigma}) = n(\lambda)$ by \cref{thm-lengths}. Hence $\SF{\sigma}$ and $\Rvar{v_\sigma}{w_\sigma}$ are irreducible closed subvarieties of $\Fl_n(\C)$ of the same dimension, and by \cref{main_geometric} we have $\SF{\sigma} \subseteq \Rvar{v_\sigma}{w_\sigma}$. Thus $\SF{\sigma} = \Rvar{v_\sigma}{w_\sigma}$.
\end{proof}


\section{Totally nonnegative Springer fibers}\label{sec_positivity}

\noindent In this section we use Lusztig's work \cite{lusztig21} on totally nonnegative Springer fibers to obtain yet another characterization of Richardson tableaux in \cref{richardson_Z} below, this time in terms of the Bruhat order on $S_n$.  Conversely, we use our results on Richardson tableaux to deduce new enumerative statements about totally nonnegative Springer fibers in \cref{tp_main} below.

We begin by recalling some background on total positivity for flag varieties and Springer fibers in type $A$ from \cite{lusztig94,lusztig21}. We call a matrix $g\in\GL_n(\C)$ \boldit{totally positive} if the determinant of every square submatrix of $g$ is real and positive, and let $\GL_n^{>0}$ denote the set of all totally positive $g$. The \boldit{totally positive flag variety} $\Fl_n^{>0}$ is defined to be the image of $\GL_n^{>0}$ in $\Fl_n(\C) = \GL_n(\C)/B$, and the \boldit{totally nonnegative flag variety} $\Fl_n^{\ge 0}$ is defined to be the closure of $\Fl_n^{>0}$ in the Euclidean topology. Equivalently, $V_\bullet\in\Fl_n(\C)$ is totally nonnegative if and only if all of its \emph{Pl\"{u}cker coordinates} are nonnegative; see \cite[Section 1]{bloch_karp23} for further discussion.

For $v\le w$ in $S_n$, the \boldit{totally positive Richardson cell} is defined to be $\Rtp{v}{w} := \Rcell{v}{w}\cap\Fl_n^{\ge 0}$. Rietsch \cite{rietsch99} showed that $\Rtp{v}{w}$ is homeomorphic to $\mathbb{R}^{\ell(w) - \ell(v)}$, and hence we have the cell decomposition
\begin{align}\label{decomposition_flag}
\Fl_n^{\ge 0} = \bigsqcup_{v \le w}\Rtp{v}{w}.
\end{align}
In fact, the cell decomposition \eqref{decomposition_flag} is a regular CW complex homeomorphic to a closed ball \cite{galashin_karp_lam22}. The unique top-dimensional cell of $\Fl_n^{\ge 0}$ is $\Fl_n^{>0} = \Rtp{e}{w_0}$.

Now let $N$ be an $n\times n$ nilpotent matrix such that $I+N\in\GL_n^{\ge 0}$, i.e., the determinant of every square submatrix of $I+N$ is nonnegative. The \boldit{totally nonnegative Springer fiber} is defined to be $\SFtnn{N} := \SF{N}\cap\Fl_n^{\ge 0}$. Lusztig \cite[Corollary 1.16]{lusztig21} showed that, remarkably, each totally positive Richardson cell $\Rtp{v}{w}$ is either contained in $\SFtnn{N}$, or has empty intersection with $\SFtnn{N}$. In particular, $\SFtnn{N}$ has a cell decomposition into totally positive Richardson cells $\Rtp{v}{w}$ for certain pairs $(v,w)$, the set of which was described combinatorially by Lusztig using the Bruhat order. 

The combinatorics of Lusztig's description of the cells of $\SFtnn{N}$ depends on a block-diagonal decomposition of $N$ into upper-triangular and lower-triangular blocks, where two consecutive blocks are allowed to overlap in their corners if one is upper-triangular and the other is lower-triangular. We recall this description in the case that all such blocks are upper-triangular and weakly decrease in size along the diagonal. In particular, we consider the case $\SF{N} = \SF{\lambda}$, where $\lambda\vdash n$ (we indeed have $I+N_\lambda\in\GL_n^{\ge 0}$). By~\cite[Corollary 1.16]{lusztig21} there is a cell decomposition
\begin{align}\label{decomposition_springer}
\SFtnn{\lambda} = \bigsqcup_{(v,w)\in Z_\lambda}\Rtp{v}{w},
\end{align}
where
\[
Z_\lambda := \{(v,w) \in \mincoset{S_n}{\lambda} \times \mincoset{S_n}{\lambda} \mid v \le w \text{ and } s_jv\nleq w \text{ for all } j \in [n-1]\setminus\{\lambda_1, \lambda_1 + \lambda_2, \dots\}\}.
\]

Recall that $\dim_\C(\SF{\lambda}) = n(\lambda)$. Hence if $\Rtp{v}{w} \subseteq \SFtnn{\lambda}$, then taking real dimensions gives
\begin{align}\label{dimension_Z}
\ell(w) - \ell(v) \le n(\lambda) \quad \text{ for all } (v,w) \in Z_\lambda.
\end{align}
It will follow later from \cref{tp_main} that $\SFtnn{\lambda}$ in fact has real dimension $n(\lambda)$.

\begin{eg}\label{eg.TNNSpringer} Consider $\SFtnn{(2,2)}$. The set $Z_{(2,2)}$ is displayed on the left in \cref{figure_tnn} below, and each element labels the corresponding cell from the decomposition~\eqref{decomposition_springer} of $\SFtnn{(2,2)}$. The three maximal cells are labeled by the pairs $(1234,1324)$, $(1324,3142)$, and $(3142,3412)$, which index cells of dimensions $1$, $2$, and $1$, respectively. In particular, the totally nonnegative Springer fiber $\SFtnn{(2,2)}$ is not pure-dimensional. The set $Z_{(3,1)}$ for $\SFtnn{(3,1)}$ is similarly displayed on the right in \cref{figure_tnn}. From this picture it is clear that, unlike $\SFtnn{(2,2)}$, the totally nonnegative Springer fiber $\SFtnn{(3,1)}$ is pure-dimensional. 
\end{eg}

\begin{figure}[ht]
\begin{center}
\[
\begin{tikzpicture}[baseline=(current bounding box.center),scale=1.6]
\tikzstyle{vertex}=[inner sep=0,minimum size=1.2mm,circle,draw=teal!80!black,fill=teal!80!black,semithick]
\pgfmathsetmacro{\s}{0.84};
\pgfmathsetmacro{\sbig}{1.2};
\fill[color=teal!30](0,1)--(1,2)--(0,3)--(-1,2)--(0,1);
\node[vertex](1234)at(0,0)[label={[below=2pt]\scalebox{\s}{$(1234,1234)$}}]{};
\node[vertex](1324)at(0,1)[label={[right=1pt]\raisebox{-12pt}{\scalebox{\s}{$(1324,1324)$}}}]{};
\node[vertex](1342)at(-1,2)[label={[left=0pt]\raisebox{-12pt}{\scalebox{\s}{$(1342,1342)$}}}]{};
\node[vertex](3124)at(1,2)[label={[right=0pt]\raisebox{-12pt}{\scalebox{\s}{$(3124,3124)$}}}]{};
\node[vertex](3142)at(0,3)[label={[left=1pt]\raisebox{-12pt}{\scalebox{\s}{$(3142,3142)$}}}]{};
\node[vertex](3412)at(0,4)[label={[above=-3pt]\scalebox{\s}{$(3412,3412)$}}]{};
\path[very thick,color=teal!80!black](1234.center)edge node[left=-2pt]{\color{black}\scalebox{\s}{$(1234,1324)$}}(1324.center) (1324.center)edge node[left=1pt]{\color{black}\scalebox{\s}{$(1324,1342)$}}(1342.center) edge node[right=1pt]{\color{black}\scalebox{\s}{$(1324,3124)$}}(3124.center) (1342.center)edge node[left=1pt]{\color{black}\scalebox{\s}{$(1342,3142)$}}(3142.center) (3124.center)edge node[right=1pt]{\color{black}\scalebox{\s}{$(3124,3142)$}}(3142.center) (3142.center)edge node[right=-2pt]{\color{black}\scalebox{\s}{$(3142,3412)$}}(3412.center);
\node[inner sep=0]at(0,2.3){\scalebox{\sbig}{$\ytableausmall{1 & 3 \\ 2 & 4}$}};
\node[inner sep=0]at(0,1.78){\scalebox{\s}{$(1324,3142)$}};
\end{tikzpicture}
\hspace*{72pt}
\begin{tikzpicture}[baseline=(current bounding box.center),scale=2]
\tikzstyle{vertex}=[inner sep=0,minimum size=1.2mm,circle,draw=teal!80!black,fill=teal!80!black,semithick]
\pgfmathsetmacro{\s}{0.84};
\pgfmathsetmacro{\sbig}{1.2};
\node[vertex](1234)at(0,0)[label={[below=2pt]\scalebox{\s}{$(1234,1234)$}}]{};
\node[vertex](1243)at(0,1)[label={[left=0pt]\raisebox{-12pt}{\scalebox{\s}{$(1243,1243)$}}}]{};
\node[vertex](1423)at(0,2)[label={[left=0pt]\raisebox{-12pt}{\scalebox{\s}{$(1423,1423)$}}}]{};
\node[vertex](4123)at(0,3)[label={[above=-3pt]\raisebox{-12pt}{\scalebox{\s}{$(4123,4123)$}}}]{};
\path[very thick,color=teal!80!black](1234.center)edge node[left=-2pt]{\color{black}\scalebox{\s}{$(1234,1243)$}} node[right=-2pt]{\color{black}\scalebox{\sbig}{$\ytableausmall{1 & 3 & 4 \\ 2}$}}(1243.center) (1243.center)edge node[left=-2pt]{\color{black}\scalebox{\s}{$(1243,1423)$}} node[right=-2pt]{\color{black}\scalebox{\sbig}{$\ytableausmall{1 & 2 & 4 \\ 3}$}}(1423.center) (1423.center)edge node[left=-2pt]{\color{black}\scalebox{\s}{$(1423,4123)$}} node[right=-2pt]{\color{black}\scalebox{\sbig}{$\ytableausmall{1 & 2 & 3 \\ 4}$}}(4123.center);
\node[inner sep=0]at(0,-0.4){};
\end{tikzpicture}\vspace*{-12pt}
\]
\caption{Totally nonnegative Springer fibers $\SFtnn{\lambda} \subseteq \Fl_4(\C)$, with cells labeled by elements of $Z_\lambda$ and top-dimensional cells additionally labeled by Richardson tableaux of shape $\lambda$. Left: $\lambda = (2,2)$. Right: $\lambda = (3,1)$.}
\label{figure_tnn}
\end{center}
\end{figure}
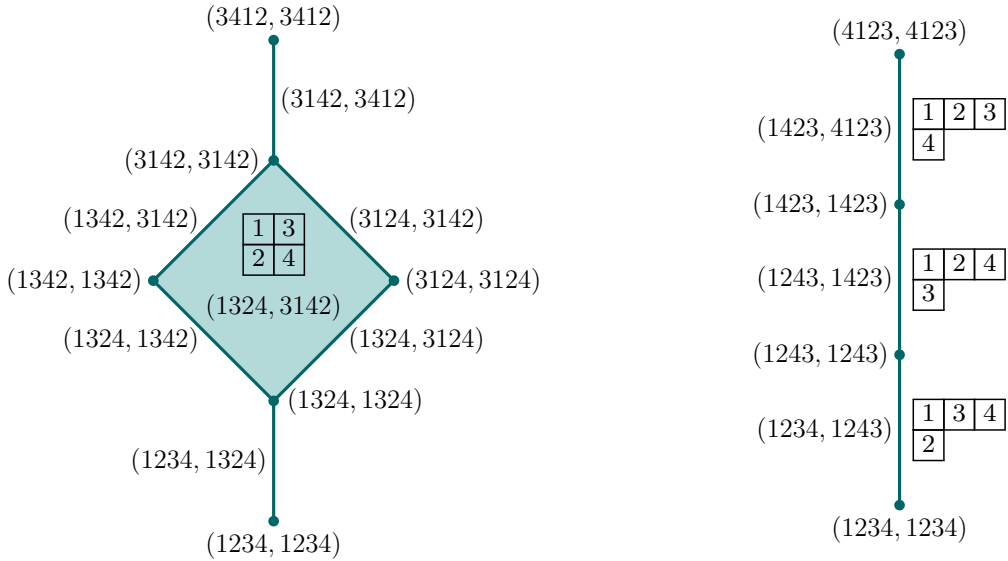

\begin{rmk}
In Lusztig's definition of $Z_\lambda$ from \cite[\S 1.15]{lusztig21}, the condition $w\in\mincoset{S_n}{\lambda}$ is omitted; it is only assumed that $w\in S_n$. However, the other conditions defining $Z_\lambda$ imply that $w\in\mincoset{S_n}{\lambda}$. To see this, proceed by contradiction and suppose that $w\notin\mincoset{S_n}{\lambda}$, i.e., $s_jw \le w$ for some $j\in [n-1]\setminus\{\lambda_1, \lambda_1 + \lambda_2, \dots\}$. Since $v\in\mincoset{S_n}{\lambda}$, we have $v \le s_jv$. Thus $s_jv \le w$ by the lifting property for Coxeter groups \cite[Proposition 2.2.7]{bjorner_brenti05}, contradicting $s_jv\nleq w$.
\end{rmk}

\begin{rmk}
Lusztig's work implies that $\SFtnn{N}$ is a subcomplex of \eqref{decomposition_flag}, and hence it is a regular CW complex by \cite{galashin_karp_lam22}. However, we emphasize that $\SFtnn{N}$ may have multiple top-dimensional cells and is not necessarily pure-dimensional; see \cref{eg.TNNSpringer} and \cref{figure_tnn}. In particular, $\SFtnn{N}$ is not in general homeomorphic to a closed ball. In the positive direction, Bao and He \cite[Theorem 4.2]{bao_heb} show that $\SFtnn{N}$ is contractible, verifying a conjecture of Lusztig \cite[\S 8.16]{lusztig94}.
\end{rmk}

The following observations connect $\SFtnn{\lambda}$ with the work of \cref{sec_geometric}:
\begin{prop}\label{geometry_positivity}
Let $v\le w$ in $S_n$.
\begin{enumerate}[label=(\roman*), leftmargin=*, itemsep=2pt]
\item\label{geometry_positivity_containment} The Richardson variety $\Rvar{v}{w}$ is contained in $\SF{\lambda}$ if and only if $(v,w)\in Z_\lambda$.
\item\label{geometry_positivity_component} The Richardson variety $\Rvar{v}{w}$ is an irreducible component of $\SF{\lambda}$ if and only if $(v,w)\in Z_\lambda$ and $\ell(w) - \ell(v) = n(\lambda)$ \textup{(}i.e.\ we have equality in \eqref{dimension_Z}\textup{)}.
\end{enumerate}
\end{prop}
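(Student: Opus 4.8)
\textbf{Proof plan for \cref{geometry_positivity}.}
The plan is to deduce both parts from the cell decomposition \eqref{decomposition_springer} together with the results of \cref{sec_geometric}. For part \ref{geometry_positivity_containment}, I would argue as follows. Suppose first that $(v,w)\in Z_\lambda$. Then $\Rtp{v}{w}$ is a cell of $\SFtnn{\lambda}$ by \eqref{decomposition_springer}, so in particular $\Rtp{v}{w}\subseteq\SF{\lambda}$. Since $\Rtp{v}{w}$ is a dense subset of $\Rvar{v}{w}$ in the Euclidean topology (as $\Rtp{v}{w}$ is dense in $\Rcell{v}{w}$ by Rietsch's theorem and $\Rcell{v}{w}$ is dense in $\Rvar{v}{w}$ by \eqref{richardson_defn}), and $\SF{\lambda}$ is closed, taking closures gives $\Rvar{v}{w}\subseteq\SF{\lambda}$. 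Conversely, suppose $\Rvar{v}{w}\subseteq\SF{\lambda}$. Then $\Rtp{v}{w}=\Rcell{v}{w}\cap\Fl_n^{\ge 0}\subseteq\SF{\lambda}\cap\Fl_n^{\ge 0}=\SFtnn{\lambda}$, and $\Rtp{v}{w}$ is nonempty (since $v\le w$). By Lusztig's dichotomy \cite[Corollary 1.16]{lusztig21}, a totally positive Richardson cell is either contained in or disjoint from $\SFtnn{\lambda}$; since $\Rtp{v}{w}$ is nonempty and contained in $\SFtnn{\lambda}$, it must be one of the cells in \eqref{decomposition_springer}, i.e.\ $(v,w)\in Z_\lambda$.

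For part \ref{geometry_positivity_component}, I would combine part \ref{geometry_positivity_containment} with a dimension count. If $\Rvar{v}{w}$ is an irreducible component of $\SF{\lambda}$, then in particular $\Rvar{v}{w}\subseteq\SF{\lambda}$, so $(v,w)\in Z_\lambda$ by part \ref{geometry_positivity_containment}; and since all irreducible components of $\SF{\lambda}$ have dimension $n(\lambda)$ by Spaltenstein's theorem, while $\dim_\C\Rvar{v}{w}=\ell(w)-\ell(v)$, we get $\ell(w)-\ell(v)=n(\lambda)$. Conversely, suppose $(v,w)\in Z_\lambda$ and $\ell(w)-\ell(v)=n(\lambda)$. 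By part \ref{geometry_positivity_containment}, $\Rvar{v}{w}\subseteq\SF{\lambda}$. Now $\Rvar{v}{w}$ is an irreducible closed subvariety of $\SF{\lambda}$ of dimension $n(\lambda)$, which is the common dimension of every irreducible component of $\SF{\lambda}$; hence $\Rvar{v}{w}$ is not properly contained in any irreducible component, so it is itself an irreducible component.

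The main subtlety to get right is the topological density claim used in part \ref{geometry_positivity_containment}: one needs that the Euclidean closure of $\Rtp{v}{w}$ equals $\Rvar{v}{w}$, which follows from Rietsch's parametrization of $\Rtp{v}{w}$ (so that $\Rtp{v}{w}$ is Euclidean-dense in $\Rcell{v}{w}$) together with the fact that $\Rcell{v}{w}$ is Zariski-dense, hence Euclidean-dense, in the irreducible variety $\Rvar{v}{w}$. Everything else is formal: the forward direction of \ref{geometry_positivity_containment} is closure-taking, the backward direction is Lusztig's dichotomy, and part \ref{geometry_positivity_component} is purely a comparison of dimensions of irreducible varieties using Spaltenstein's equidimensionality. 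No new combinatorics is needed here; this proposition is the bridge that lets the later sections translate between $Z_\lambda$, Richardson varieties, and components of $\SF{\lambda}$.
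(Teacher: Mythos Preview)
Your proposal is correct and follows essentially the same approach as the paper: both directions of part~\ref{geometry_positivity_containment} use the cell decomposition~\eqref{decomposition_springer} together with the fact that $\Rtp{v}{w}$ is Zariski-dense in $\Rvar{v}{w}$ (the paper cites Marsh--Rietsch for this, which is the clean way to state the ``density'' subtlety you flag), and part~\ref{geometry_positivity_component} is the same dimension comparison using Spaltenstein's equidimensionality. The reference to \cref{sec_geometric} in your opening line is unnecessary, as nothing from that section is actually used.
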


\begin{proof}
Proof of \ref{geometry_positivity_containment}: ($\Rightarrow$) Suppose that $\Rvar{v}{w} \subseteq \SF{\lambda}$, so that in particular we have $\Rcell{v}{w} \subseteq \SF{\lambda}$. Intersecting both sides with $\Fl_n^{\ge 0}$ gives $\Rtp{v}{w} \subseteq \SFtnn{\lambda}$. Then \eqref{decomposition_springer} implies $(v,w) \in Z_\lambda$.

($\Leftarrow$) Suppose that $(v,w)\in Z_\lambda$. Then $\Rtp{v}{w} \subseteq \SF{\lambda}$. Since the Zariski closure of $\Rtp{v}{w}$ equals $\Rvar{v}{w}$ (see, e.g., \cite[Section 4.4]{marsh_rietsch04}) and $\SF{\lambda}$ is Zariski-closed, we obtain $\Rvar{v}{w} \subseteq \SF{\lambda}$.

Proof of \ref{geometry_positivity_component}: ($\Rightarrow$) Suppose that $\Rvar{v}{w}$ is an irreducible component of $\SF{\lambda}$. Then $(v,w)\in Z_\lambda$ by part \ref{geometry_positivity_containment}, and $\dim(\Rvar{v}{w}) = \dim(\SF{\lambda})$ implies $\ell(w) - \ell(v) = n(\lambda)$.

($\Leftarrow$) Suppose that $(v,w)\in Z_\lambda$ and $\ell(w) - \ell(v) = n(\lambda)$. Then by part \ref{geometry_positivity_containment} we have $\Rvar{v}{w} \subseteq \SF{\lambda}$, and $\dim(\Rvar{v}{w}) = \dim(\SF{\lambda})$. Since $\Rvar{v}{w}$ is irreducible, it is an irreducible component of $\SF{\lambda}$.
\end{proof}

\begin{lem}\label{tp_cells}
Let $\lambda\vdash n$ and $v\le w$ in $S_n$. Then $\Rtp{v}{w}$ is an $n(\lambda)$-dimensional cell of $\SFtnn{\lambda}$ if and only if $(v,w) = (v_\sigma, w_\sigma)$ for some Richardson tableau $\sigma\in\SYT(\lambda)$.
\end{lem}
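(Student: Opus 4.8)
The plan is to combine \cref{main_geometric}, \cref{main}, and \cref{geometry_positivity} into a dimension-count argument. First I would establish the forward direction: suppose $\Rtp{v}{w}$ is an $n(\lambda)$-dimensional cell of $\SFtnn{\lambda}$. Then by the cell decomposition \eqref{decomposition_springer} we have $(v,w) \in Z_\lambda$, and since $\Rtp{v}{w}$ is homeomorphic to $\mathbb{R}^{\ell(w)-\ell(v)}$ by Rietsch's theorem, the cell having dimension $n(\lambda)$ forces $\ell(w) - \ell(v) = n(\lambda)$. By \cref{geometry_positivity}\ref{geometry_positivity_component}, this means $\Rvar{v}{w}$ is an irreducible component of $\SF{\lambda}$, hence $\Rvar{v}{w} = \SF{\sigma}$ for some $\sigma \in \SYT(\lambda)$ (as the components are precisely the $\SF{\sigma}$). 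Since $\SF{\sigma} = \Rvar{v}{w}$ is a Richardson variety, \cref{main} tells us $\sigma$ is a Richardson tableau and $\SF{\sigma} = \Rvar{v_\sigma}{w_\sigma}$. It remains to deduce $(v,w) = (v_\sigma, w_\sigma)$ from $\Rvar{v}{w} = \Rvar{v_\sigma}{w_\sigma}$; this follows from the general fact that a nonempty Richardson variety $\Rvar{v}{w}$ determines the pair $(v,w)$ uniquely (apply \eqref{richardson_closure} in both directions to get $v \le v_\sigma \le v$ and $w_\sigma \le w \le w_\sigma$), which I should state explicitly.

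For the reverse direction, let $\sigma \in \SYT(\lambda)$ be a Richardson tableau. By \cref{main}, $\SF{\sigma} = \Rvar{v_\sigma}{w_\sigma}$ is an irreducible component of $\SF{\lambda}$, so in particular $\Rvar{v_\sigma}{w_\sigma} \subseteq \SF{\lambda}$, and by \cref{thm-lengths} (or \cref{main}) we have $\ell(w_\sigma) - \ell(v_\sigma) = n(\lambda)$. Applying \cref{geometry_positivity}\ref{geometry_positivity_containment} gives $(v_\sigma, w_\sigma) \in Z_\lambda$, so $\Rtp{v_\sigma}{w_\sigma}$ appears as a cell in the decomposition \eqref{decomposition_springer} of $\SFtnn{\lambda}$. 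Its dimension is $\ell(w_\sigma) - \ell(v_\sigma) = n(\lambda)$, so it is an $n(\lambda)$-dimensional cell, as desired.

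I do not expect any serious obstacle here: every ingredient is already proved earlier in the paper, and the argument is essentially a bookkeeping exercise chaining together \cref{geometry_positivity}, \cref{main}, and the classification of components of $\SF{\lambda}$ by standard tableaux. The one point requiring a word of care is the injectivity of the map $(v,w) \mapsto \Rvar{v}{w}$ on pairs with $v \le w$, used in the forward direction to pin down $(v,w) = (v_\sigma, w_\sigma)$; this is immediate from \eqref{richardson_closure}, but it is worth noting explicitly rather than leaving implicit, since it is what upgrades "$\Rvar{v}{w}$ equals some component" to "$(v,w)$ equals the specific pair $(v_\sigma, w_\sigma)$."
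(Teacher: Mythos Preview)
Your proposal is correct and follows essentially the same approach as the paper's proof: both chain together the equivalences $(v,w)\in Z_\lambda$ with $\ell(w)-\ell(v)=n(\lambda)$ $\Leftrightarrow$ $\Rvar{v}{w}$ is an irreducible component of $\SF{\lambda}$ (via \cref{geometry_positivity}\ref{geometry_positivity_component}) $\Leftrightarrow$ $(v,w)=(v_\sigma,w_\sigma)$ for some Richardson $\sigma$ (via \cref{main}). Your version is more explicit about the last step, spelling out why $\Rvar{v}{w}=\Rvar{v_\sigma}{w_\sigma}$ forces $(v,w)=(v_\sigma,w_\sigma)$ via \eqref{richardson_closure}, which the paper leaves implicit.
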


\begin{proof}
Note that $\Rtp{v}{w}$ is an $n(\lambda)$-dimensional cell of $\SF{\lambda}$ if and only if $(v,w)\in Z_\lambda$ and $\ell(w) - \ell(v) = n(\lambda)$. In turn, this is equivalent to $\Rvar{v}{w}$ being an irreducible component of $\SF{\lambda}$, by \cref{geometry_positivity}\ref{geometry_positivity_component}. Finally, by \cref{main}, this is in turn equivalent to $(v,w)$ being equal to $(v_\sigma, w_\sigma)$ for some $\sigma\in\Rtabs{\lambda}$.
\end{proof}

\begin{thm}\label{tp_main}
Let $\lambda\vdash n$. Then the totally nonnegative Springer fiber $\SFtnn{\lambda}$ is a regular CW complex of dimension $n(\lambda)$, whose top-dimensional cells are precisely $\Rtp{v_\sigma}{w_\sigma}$ for all $\sigma\in\Rtabs{\lambda}$. In particular, the number of top-dimensional cells of $\SFtnn{\lambda}$ is
\[
|\Rtabs{\lambda}| = \binom{\lambda_{\ell-1}}{\lambda_\ell}\binom{\lambda_{\ell-2} + \lambda_\ell}{\lambda_{\ell-1} + \lambda_\ell}\binom{\lambda_{\ell-3} + \lambda_{\ell-1} + \lambda_\ell}{\lambda_{\ell-2} + \lambda_{\ell-1} + \lambda_\ell}\cdots\binom{\lambda_1 + \lambda_3 + \lambda_4 + \cdots + \lambda_\ell}{\lambda_2 + \lambda_3 + \lambda_4 + \cdots + \lambda_\ell},
\]
where $\ell$ denotes $\ell(\lambda)$.
\end{thm}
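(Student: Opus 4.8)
The plan is to assemble this statement from results already established, principally Lusztig's cell decomposition \eqref{decomposition_springer}, the dimension bound \eqref{dimension_Z}, \cref{tp_cells}, and the enumeration in \cref{q_count}. First I would recall that, as noted following \eqref{decomposition_springer}, $\SFtnn{\lambda}$ is a subcomplex of the totally nonnegative flag variety $\Fl_n^{\ge 0}$, which is a regular CW complex by \cite{galashin_karp_lam22}; hence $\SFtnn{\lambda}$ is itself a regular CW complex, whose cells are the totally positive Richardson cells $\Rtp{v}{w}$ for $(v,w)\in Z_\lambda$.

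Next I would pin down the dimension and the top-dimensional cells. By \eqref{dimension_Z}, every cell $\Rtp{v}{w}$ of $\SFtnn{\lambda}$ satisfies $\dim\Rtp{v}{w} = \ell(w)-\ell(v)\le n(\lambda)$, so $\dim\SFtnn{\lambda}\le n(\lambda)$, with a cell of dimension exactly $n(\lambda)$ occurring precisely when equality holds in \eqref{dimension_Z}. By \cref{tp_cells}, such cells are exactly the $\Rtp{v_\sigma}{w_\sigma}$ with $\sigma\in\Rtabs{\lambda}$. To upgrade the bound $\dim\SFtnn{\lambda}\le n(\lambda)$ to an equality, it suffices to know that $\Rtabs{\lambda}\ne\emptyset$, which is immediate from \eqref{q_count_1}: its right-hand side is a product of binomial coefficients $\binom{a}{b}$ with $a\ge b\ge 0$ and so is a positive integer. (Alternatively, one can directly exhibit a Richardson tableau of shape $\lambda$ — e.g.\ build one by induction on the number of rows using \cref{crop_tableau} — without invoking the enumeration.) Hence $\SFtnn{\lambda}$ has dimension exactly $n(\lambda)$, and its top-dimensional cells are precisely the $\Rtp{v_\sigma}{w_\sigma}$ for $\sigma\in\Rtabs{\lambda}$.

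Finally I would count these cells. The map $\sigma\mapsto(v_\sigma,w_\sigma)$ is injective on $\SYT(\lambda)$ — for example because $\sigma$ is recovered from $w_\sigma$ via the Robinson--Schensted correspondence (see \cref{RS}), or because the components $\SF{\sigma}=\Rvar{v_\sigma}{w_\sigma}$ of \cref{main} are pairwise distinct — so the number of top-dimensional cells of $\SFtnn{\lambda}$ is $|\Rtabs{\lambda}|$, which by \cref{q_count} (specialized in \eqref{q_count_1}) equals the displayed product of binomial coefficients. Since each step relies only on results already in hand, I do not anticipate a genuine obstacle; the only point requiring care is the passage from $\dim\le n(\lambda)$ to $\dim=n(\lambda)$, i.e.\ the nonemptiness of $\Rtabs{\lambda}$.
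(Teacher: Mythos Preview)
Your proposal is correct and follows essentially the same approach as the paper's proof: invoke the regular CW structure inherited from $\Fl_n^{\ge 0}$, use \cref{tp_cells} together with the bound \eqref{dimension_Z} to identify the top-dimensional cells, note $\Rtabs{\lambda}\neq\emptyset$ to obtain full dimension, and finish with \eqref{q_count_1}. Your explicit remark on the injectivity of $\sigma\mapsto(v_\sigma,w_\sigma)$ (to ensure the count is exactly $|\Rtabs{\lambda}|$) is a small refinement the paper leaves implicit.
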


\begin{proof}
We already know that $\SFtnn{\lambda}$ is a regular CW complex of dimension at most $n(\lambda)$. By \cref{tp_cells}, its $n(\lambda)$-dimensional cells are precisely $\Rtp{v_\sigma}{w_\sigma}$ for all $\sigma\in\Rtabs{\lambda}$. Since $\Rtabs{\lambda} \neq \emptyset$, we get that $\SFtnn{\lambda}$ has full dimension $n(\lambda)$. Finally, the formula for $|\Rtabs{\lambda}|$ is just \eqref{q_count_1} of \cref{q_count}.
\end{proof}

\begin{eg}
We have 
\[
\Rtabs{(2,2)} = \left\{\;\scalebox{0.8}{$\begin{ytableau}1 & 2\\ 3 & 4\end{ytableau}$}\;\right\} \; \text{ and } \; \Rtabs{(3,1)} = \left\{\;\scalebox{0.8}{$\begin{ytableau}1 & 2 & 3\\ 4\end{ytableau}$}\;,\; \scalebox{0.8}{$\begin{ytableau}1 &2 & 4\\ 3 \end{ytableau}$}\;,\; \scalebox{0.8}{$\begin{ytableau}1 & 3 & 4\\ 2\end{ytableau}$}\; \right\}.
\]
Thus by \cref{tp_main} there is one top-dimensional cell in $\SFtnn{(2,2)}$ and three top-dimensional cells in $\SFtnn{(3,1)}$. The reader can confirm that each Richardson tableau labels the corresponding top-dimensional cell in \cref{figure_tnn}.
\end{eg}

We emphasize that \cref{tp_main} only applies to a subset of the totally nonnegative Springer fibers studied by Lusztig, even in type $A$. It does not address $\SFtnn{N}$ when the diagonal blocks of $N$ are not weakly sorted by size or when $N$ has both upper- and lower-triangular blocks. The following example demonstrates that the cell decomposition of $\SFtnn{N}$ depends on $N$. We leave it as an open problem to study $\SFtnn{N}$ in greater generality.
\begin{eg}\label{eg_tnn}
Consider the following nilpotent matrices of Jordan type $\lambda = (2,1,1)$:
\[
N_\lambda = \begin{bmatrix}
0 & 1 & 0 & 0 \\
0 & 0 & 0 & 0 \\
0 & 0 & 0 & 0 \\
0 & 0 & 0 & 0
\end{bmatrix} \quad \text{ and } \quad
N = \begin{bmatrix}
0 & 0 & 0 & 0 \\
0 & 0 & 1 & 0 \\
0 & 0 & 0 & 0 \\
0 & 0 & 0 & 0
\end{bmatrix}, \quad \text{ where } I + N_\lambda,\, I + N \in \GL_4^{\ge 0}.
\]
Then $\SF{N}$ and $\SF{\lambda}$ are isomorphic as complex varieties, but their totally nonnegative parts are not isomorphic as CW complexes. Indeed, $\SFtnn{\lambda}$ is pure of dimension $3$ with maximal cells $\Rtp{1234}{1432}$, $\Rtp{1324}{4132}$, and $\Rtp{1342}{4312}$. On the other hand, $\SFtnn{N}$ is not pure, with maximal cells $\Rtp{1243}{4213}$ and $\Rtp{2134}{2431}$ of dimension $3$ as well as $\Rtp{1234}{2143}$ and $\Rtp{2413}{4231}$ of dimension $2$. (This calculation can be verified from \cite{lusztig21}; we omit the details.)
\end{eg}

As a consequence of the preceding results, we obtain a Bruhat-theoretic characterization of Richardson tableaux: 
\begin{cor}\label{richardson_Z}
Let $\sigma\in\SYT(\lambda)$, where $\lambda\vdash n$. Then the following are equivalent:
\begin{enumerate}[label=(\roman*), leftmargin=*, itemsep=2pt]
\item\label{richardson_Z_richardson} $\sigma$ is a Richardson tableau;
\item\label{richardson_Z_strong} $(v_\sigma, w_\sigma) \in Z_\lambda$; and
\item\label{richardson_Z_weak} $s_jv_\sigma \nleq w_\sigma$ for all $j \in [n-1]\setminus\{\lambda_1, \lambda_1 + \lambda_2, \dots\}$.
\end{enumerate}
\end{cor}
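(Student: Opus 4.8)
The plan is to assemble the statement from results already proved, with almost no new work. First I would note that the equivalence \ref{richardson_Z_strong} $\Leftrightarrow$ \ref{richardson_Z_weak} is essentially built into the definition of $Z_\lambda$. By \cref{reading_word_bruhat} we have $v_\sigma, w_\sigma \in \mincoset{S_n}{\lambda}$ for \emph{every} $\sigma\in\SYT(\lambda)$, and by \cref{main_geometric} we have $v_\sigma \le w_\sigma$ in Bruhat order for every such $\sigma$. Comparing with the definition $Z_\lambda = \{(v,w)\in\mincoset{S_n}{\lambda}\times\mincoset{S_n}{\lambda} \mid v\le w,\ s_jv\nleq w \text{ for all } j\in[n-1]\setminus\{\lambda_1,\lambda_1+\lambda_2,\dots\}\}$, the first three defining conditions are automatic for the pair $(v_\sigma, w_\sigma)$, so $(v_\sigma,w_\sigma)\in Z_\lambda$ holds precisely when $s_jv_\sigma\nleq w_\sigma$ for all $j\in[n-1]\setminus\{\lambda_1,\lambda_1+\lambda_2,\dots\}$, which is \ref{richardson_Z_weak}.

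Next I would prove \ref{richardson_Z_richardson} $\Rightarrow$ \ref{richardson_Z_strong}: if $\sigma$ is Richardson, then \cref{main} gives $\SF{\sigma} = \Rvar{v_\sigma}{w_\sigma}$, so in particular $\Rvar{v_\sigma}{w_\sigma}\subseteq\SF{\lambda}$, and \cref{geometry_positivity}\ref{geometry_positivity_containment} then yields $(v_\sigma,w_\sigma)\in Z_\lambda$. For the converse \ref{richardson_Z_strong} $\Rightarrow$ \ref{richardson_Z_richardson}, suppose $(v_\sigma,w_\sigma)\in Z_\lambda$. Then the dimension inequality \eqref{dimension_Z} gives $\ell(w_\sigma)-\ell(v_\sigma)\le n(\lambda)$. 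On the other hand, \cref{thm-lengths} gives the reverse inequality $\ell(w_\sigma)-\ell(v_\sigma)\ge n(\lambda)$, with equality if and only if $\sigma$ is Richardson. Hence equality holds and $\sigma$ is a Richardson tableau, completing the cycle of implications.

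I do not expect a genuine obstacle here; the one point needing care is to invoke at the outset the \emph{unconditional} facts that $v_\sigma, w_\sigma\in\mincoset{S_n}{\lambda}$ and $v_\sigma\le w_\sigma$ (from \cref{reading_word_bruhat} and \cref{main_geometric}), so that the passage between \ref{richardson_Z_strong} and \ref{richardson_Z_weak} is legitimate for every $\sigma$ rather than only for Richardson ones. Everything else is a direct citation of \cref{main}, \cref{geometry_positivity}, and \cref{thm-lengths}.
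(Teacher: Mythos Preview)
Your proposal is correct and follows essentially the same route as the paper. The only cosmetic difference is in \ref{richardson_Z_richardson} $\Rightarrow$ \ref{richardson_Z_strong}: the paper cites \cref{tp_main} directly, whereas you invoke \cref{main} together with \cref{geometry_positivity}\ref{geometry_positivity_containment}; since \cref{tp_main} is itself derived from those ingredients (via \cref{tp_cells}), the two arguments are the same in substance.
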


\begin{proof}
\ref{richardson_Z_richardson} $\Rightarrow$ \ref{richardson_Z_strong}: This follows from \cref{tp_main}.

\ref{richardson_Z_strong} $\Rightarrow$ \ref{richardson_Z_richardson}: Suppose that $(v_\sigma, w_\sigma) \in Z_\lambda$. Then combining \eqref{dimension_Z} and \eqref{dimension_bound}, we get $\ell(w_\sigma) - \ell(v_\sigma) = n(\lambda)$. Hence $\sigma$ is Richardson by \cref{thm-lengths}.

\ref{richardson_Z_strong} $\Leftrightarrow$ \ref{richardson_Z_weak}: This follows from \cref{reading_word_bruhat} and the definition of $Z_\lambda$.
\end{proof}

\begin{eg}\label{eg_richardson_Z}
We illustrate using \cref{richardson_Z}\ref{richardson_Z_weak} to verify the Richardson condition for tableaux. Let $\lambda = (4,2,2)$ and $\sigma, \tau \in \SYT(\lambda)$ be as in \cref{eg_richardson_tableau_intro}. Recall from \cref{eg_permutations_intro} that
\[
v_{\sigma} = 15726348 \quad \text{ and } \quad w_{\sigma} = 75182364.
\]
We can check that $\sigma$ is Richardson by verifying that $s_jv_\sigma \nleq w_\sigma$ for $j = 1, 2, 3, 5, 7$:
\begin{gather*}
25716348 \nleq 75182364, \quad 15736248 \nleq 75182364, \quad 15726438 \nleq 75182364, \\[2pt]
16725348 \nleq 75182364, \quad 15826347 \nleq 75182364.
\end{gather*}

On the other hand, we calculate that
\[
v_\tau = 15237684 \quad \text{ and } \quad w_\tau = 71582634.
\]
Notice that $s_2v_\tau \le w_\tau$ (i.e.\ $15327684 \le 71582634$), implying that $\tau$ is not Richardson.
\end{eg}

\begin{rmk}\label{multiply_reflection}
Let $\sigma\in\SYT(\lambda)$, where $\lambda\vdash n$. We point out that the permutations $s_jv_\sigma$ for $j \in [n-1]\setminus\{\lambda_1, \lambda_1 + \lambda_2, \dots\}$ are precisely the permutations obtained from the one-line notation of $v_\sigma$ by swapping two numbers which appear as adjacent entries in the same row of $\sigma^\vee$. We will use this fact in \cref{sec_smoothness}.
\end{rmk}


\section{Smoothness}\label{sec_smoothness}

\noindent Smoothness for irreducible components $\SF{\sigma}$ of type $A$ Springer fibers has been extensively studied. In this section, we show that $\SF{\sigma}$ is smooth for all Richardson tableaux $\sigma$:

\begin{thm}\label{thm.smooth}
All Richardson tableaux are smooth. In particular \textup{(}by \cref{main}\textup{)}, if the irreducible component $\SF{\sigma}$ is equal to a Richardson variety, then $\SF{\sigma}$ is smooth.
\end{thm}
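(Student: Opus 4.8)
The plan is to deduce the smoothness of $\SF{\sigma}=\Rvar{v_\sigma}{w_\sigma}$ (using \cref{main}) from a local analysis at the torus-fixed points. Since $\Rvar{v_\sigma}{w_\sigma}=\Svaropp{v_\sigma}\cap\Svar{w_\sigma}$ is an intersection of $T$-invariant subvarieties of $\Fl_n(\C)$, its singular locus is closed and $T$-invariant, and hence contains a $T$-fixed point whenever it is nonempty. The $T$-fixed points of $\Rvar{v_\sigma}{w_\sigma}$ are precisely the permutation flags $\dot{u}B$ with $v_\sigma\le u\le w_\sigma$, so it suffices to prove that $\Rvar{v_\sigma}{w_\sigma}$ is smooth at $\dot{u}B$ for every such $u$.

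First I would invoke the standard Kazhdan--Lusztig-type product description of a Richardson variety near a torus-fixed point: for $v\le u\le w$ there is a $T$-stable affine chart $U_u\cong\mathbb{A}^{\binom{n}{2}}$ about $\dot{u}B$, splitting according to the positive and negative root directions at $u$, in which $\Rvar{v}{w}\cap U_u$ is isomorphic to the product $\big(\Svar{w}\cap\Scellopp{u}\big)\times\big(\Svaropp{v}\cap\Scell{u}\big)$ of the two Kazhdan--Lusztig charts; this uses Richardson's theorem that $\Svaropp{v}\cap\Svar{w}$ is a reduced intersection (see \cite{billey_coskun12}). Consequently $\Rvar{v}{w}$ is smooth at $\dot{u}B$ if and only if both $\Svar{w}$ and $\Svaropp{v}$ are smooth there. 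Using the isomorphism $\Svaropp{v_\sigma}\cong\Svar{v_\sigma w_0}$ of \cref{dual_transpose}\ref{dual_transpose_schubert}, which carries $\dot{u}B$ to $\dot{(uw_0)}B$, the problem becomes two Schubert-variety assertions: that $\Svar{w_\sigma}$ is smooth at $\dot{u}B$ for all $u\in[v_\sigma,w_\sigma]$, and that $\Svar{v_\sigma w_0}$ is smooth at $\dot{(uw_0)}B$ for all such $u$.

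Each of these I would attack through the Lakshmibai--Seshadri / Carrell--Peterson smoothness criterion: $\Svar{w}$ is smooth at $\dot{u}B$ exactly when the number of reflections $t$ with $ut\le w$ equals $\ell(w)$, equivalently when no singular permutation $x\le w$ lies in the relevant Bruhat interval. The essential combinatorial input is the characterization \cref{richardson_Z}\ref{richardson_Z_weak} of Richardson tableaux, namely $s_j v_\sigma\nleq w_\sigma$ for all $j\in[n-1]\setminus\{\lambda_1,\lambda_1+\lambda_2,\dots\}$, together with \cref{reading_word_bruhat} (so that $v_\sigma,w_\sigma\in\mincoset{S_n}{\lambda}$) and the explicit formula for the Lehmer code of $w_\sigma$ in \cref{lem.w.induction}; by \cref{multiply_reflection} the permutations $s_j v_\sigma$ are obtained from $v_\sigma$ by swapping two entries that are adjacent in a row of $\sigma^\vee$. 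Mirroring the inductive structure of \cref{sec_reading} — removing the box containing $n$ keeps $\sigma$ Richardson and corresponds to an $L$-slide on $\sigma^\vee$ — one should be able to bound $\dim T_{\dot{u}B}\Rvar{v_\sigma}{w_\sigma}$ uniformly for $u$ in the interval and show it never exceeds $n(\lambda)=\ell(w_\sigma)-\ell(v_\sigma)$, which forces smoothness.

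The main obstacle is precisely this last step: controlling the tangent space (equivalently, the set of $T$-stable curves through $\dot{u}B$ contained in $\Rvar{v_\sigma}{w_\sigma}$) at \emph{all} interior fixed points $u$, not only at the extreme points $\dot{v_\sigma}B$ and $\dot{w_\sigma}B$ where the condition \cref{richardson_Z}\ref{richardson_Z_weak} is most directly visible. A cleaner alternative worth pursuing is to show that the interval $[v_\sigma,w_\sigma]$ avoids the forbidden interval patterns characterizing smooth Richardson varieties \cite{knutson_woo_yong13}; in either approach the Richardson hypothesis should enter only through \cref{richardson_Z}, and the reading-word and descent bookkeeping of \cref{sec_reading} is the natural toolkit for the computation.
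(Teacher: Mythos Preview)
Your approach is correct in spirit but misses the key simplification that dissolves what you call ``the main obstacle.'' You reduce smoothness of $\Rvar{v_\sigma}{w_\sigma}$ at $\dot{u}B$ to smoothness of $\Svar{w_\sigma}$ and of $\Svaropp{v_\sigma}$ at $\dot{u}B$, and then worry about controlling this over all $u\in[v_\sigma,w_\sigma]$. But the singular locus of a Schubert variety is $B$-stable, hence a union of Schubert varieties; so $\Svaropp{v_\sigma}\cap\sing(\Svar{w_\sigma})\neq\emptyset$ would force some $\Svar{x}\subseteq\sing(\Svar{w_\sigma})$ to meet $\Svaropp{v_\sigma}$, whence $v_\sigma\le x$ and $\dot{v}_\sigma B\in\sing(\Svar{w_\sigma})$. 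Thus $\Svar{w_\sigma}$ is smooth at \emph{every} $\dot{u}B$ with $u\ge v_\sigma$ if and only if it is smooth at the single point $\dot{v}_\sigma B$. Symmetrically, $\Svaropp{v_\sigma}$ is smooth at every $\dot{u}B$ with $u\le w_\sigma$ if and only if $\Svar{w_0v_\sigma}$ is smooth at $\dot{w}_0\dot{w}_\sigma B$. This is exactly the paper's \cref{smooth_point_check} and \cref{cor.sing_R}, and it collapses your problem to two fixed points rather than the whole interval.

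At those two extreme points the paper applies Deodhar's criterion (\cref{bruhat_graph}) directly, and the argument is clean: one shows that the only reflections $t_{i,j}$ with $v_\sigma<v_\sigma t_{i,j}\le w_\sigma$ are those for which, in $\sigma^\vee$, the entry $i$ is the largest element of $[j]$ in its row and lies above $j$ --- there are exactly $n(\lambda)$ of these, so equality holds in Deodhar's inequality. The forward direction is proved by contradiction using precisely the condition $s_jv_\sigma\nleq w_\sigma$ from \cref{richardson_Z}\ref{richardson_Z_weak} (via \cref{multiply_reflection}), and the reverse direction comes for free from the count matching Deodhar's lower bound. The second point is handled analogously with $\sigma$ in place of $\sigma^\vee$, applying \cref{richardson_Z} to $\sigma^\vee$ after rewriting via \cref{v_w_relation}. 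No induction on $n$ or interval-pattern avoidance is needed once you have the reduction to two points.
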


We begin with a brief review of previous work on this topic. For convenience, we say that the standard tableau $\sigma$ is \boldit{smooth} if $\SF{\sigma}$ is smooth, and \boldit{singular} otherwise. By \cref{dual_evacuation}, $\sigma$ is smooth if and only if $\sigma^\vee$ is smooth. Vargas \cite[Theorem 4.5]{vargas79} (cf.\ \cite{spaltenstein82}) showed that all tableaux of hook shape are smooth, and Fung \cite{fung03} showed that all tableaux with at most two rows are smooth. More generally, Fresse and Melnikov \cite[Section 1.2]{fresse_melnikov10} showed that for a given partition $\lambda$, all standard tableaux of shape $\lambda$ are smooth if and only if $\lambda$ is a hook, $\ell(\lambda) \le 3$ and $\lambda_3 \le 1$, or $\lambda = (2,2,2)$. To do so, they used the fact that if $\sigma$ is a smooth tableau then $\bar{\sigma}$ is a smooth tableau, and that the converse holds if $\bar{\sigma}$ is obtained from $\sigma$ by deleting a box in the last column. Fresse and Melnikov \cite[Theorem 1.2]{fresse_melnikov11} also characterized which standard tableaux with at most two columns are smooth.

Additional examples of smooth components include the Richardson components studied by Pagnon and Ressayre \cite{pagnon_ressayre06}, the generalized Richardson components of Fresse \cite{fresse11}, and the components coming from $K$-orbits studied by Graham and Zierau \cite{graham_zierau11}. We will discuss these further in \cref{sec_comparison}. In particular, we will see that Richardson components and the components coming from $K$-orbits are equal to Richardson varieties, and therefore the fact that these components are smooth is a special case of \cref{thm.smooth}. Additionally, since all tableaux of hook shape are Richardson, we also recover the result of Vargas mentioned above that all tableaux of hook shape are smooth.

Examples of singular tableaux include
\begin{align}\label{eg_singular_tableaux}
\;\begin{ytableau}
1 & 3 \\
2 & 5 \\
4 \\
6
\end{ytableau}\;
\quad \text{ and } \quad 
\;\begin{ytableau}
1 & 2 & 5 \\
3 & 4 \\
6 & 7
\end{ytableau}\;
\end{align}
due to Vargas \cite[Section 5]{vargas79} and, respectively, Fresse and Melnikov \cite[Section 2.3]{fresse_melnikov10}. Note that neither of these tableaux are Richardson, in agreement with \cref{thm.smooth}. 

The remainder of the section is devoted to the proof of \cref{thm.smooth}. We begin with two key results about singularities of Schubert and Richardson varieties. Let $\sing(X)$ denote the \boldit{singular locus} of a variety $X$, that is, the closed subvariety of singular points of $X$. Recall that $t_{i,j}\in S_n$ is the reflection which swaps $i$ and $j$ and fixes all other elements of $[n]$.

\begin{thm}[{Deodhar \cite{deodhar85}}]\label{bruhat_graph}
Let $v\le w$ in $S_n$.
\begin{enumerate}[label=(\roman*), leftmargin=*, itemsep=2pt]
\item\label{bruhat_graph_bound} We have
\begin{align}\label{deodhar_inequality}
|\{t_{i,j}\in S_n\mid v < vt_{i,j} \le w\}| \ge \ell(w) - \ell(v).
\end{align}
\item\label{bruhat_graph_equal} The Schubert variety $\Svar{w}$ is smooth at the point $\dot{v}B$ if and only if equality holds in~\eqref{deodhar_inequality}.
\end{enumerate}
\end{thm}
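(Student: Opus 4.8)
The plan is to deduce both parts of the theorem from a single geometric input: a closed formula for the dimension of the Zariski tangent space to the Schubert variety $\Svar{w}$ at the torus-fixed point $\dot v B$,
\begin{equation*}
\dim_\C T_{\dot v B}\Svar{w} \;=\; \ell(v) + |\{t_{i,j}\in S_n : v < vt_{i,j}\le w\}|. \tag{$\star$}
\end{equation*}
Granting $(\star)$, the rest is formal. The Schubert variety $\Svar{w}=\overline{\Scell{w}}$ is irreducible of dimension $\ell(w)$, so at every point $p\in\Svar{w}$ one has $\dim_\C T_p\Svar{w}\ge\dim_p\Svar{w}=\ell(w)$, with equality if and only if $\Svar{w}$ is smooth at $p$. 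Taking $p=\dot v B$ and combining with $(\star)$ gives part \ref{bruhat_graph_bound}, the inequality \eqref{deodhar_inequality}; and it shows moreover that $\Svar{w}$ is smooth at $\dot v B$ exactly when \eqref{deodhar_inequality} is an equality, which is part \ref{bruhat_graph_equal}. (Part \ref{bruhat_graph_bound} also admits a purely combinatorial proof by induction on $\ell(w)-\ell(v)$, inserting one simple reflection at a time into a reduced subword for $v$ inside a reduced word for $w$, cf.\ \cite{bjorner_brenti05}; but this is subsumed by $(\star)$, so I would present $(\star)$ and derive the theorem from it.)

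To prove $(\star)$, I would work in the affine chart $\Omega := \dot v\cdot U^- B/B\cong U^-$ of $\Fl_n(\C)$ centered at $\dot v B$, where $U^-\subseteq\GL_n(\C)$ is the group of lower-triangular unipotent matrices, with coordinates $x_{ij}$ ($i>j$) recording the entries of $u\in U^-$ and the origin $u=I$ corresponding to $\dot v B$. The torus $T$ acts linearly on $\Omega\cong\C^{\binom n2}$ with $\binom n2$ distinct characters, one per coordinate line, and each coordinate line is an open piece of the unique $T$-invariant curve joining $\dot v B$ to $\dot{vt_{i,j}}B$ for a corresponding reflection $t_{i,j}$. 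Since $T_{\dot v B}\Svar{w}$ is a $T$-stable subspace of the multiplicity-free $T$-module $T_{\dot v B}\Omega$, it is a coordinate subspace, so $(\star)$ reduces to identifying which coordinate lines it contains. For the lower bound: when $vt_{i,j}\le w$ the $T$-fixed point $\dot{vt_{i,j}}B$ lies in $\Svar{w}$, hence so does the whole $T$-invariant curve joining it to $\dot v B$ (a standard fact about $T$-curves in Schubert varieties; see, e.g., \cite{billey_lakshmibai00}), so the corresponding coordinate line lies in $T_{\dot v B}\Svar{w}$; and the number of reflections $t_{i,j}$ with $vt_{i,j}\le w$ equals $\ell(v)$ (those with $vt_{i,j}<v$, which are automatically $\le w$) plus $|\{t_{i,j}: v<vt_{i,j}\le w\}|$. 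For the upper bound I would use the description of $\Svar{w}\cap\Omega$ as the common vanishing locus of the minors of the matrix $\dot v u$ prescribed by the rank conditions of $w$: the tangent space at $u=I$ is cut out by the linear parts of these minors, and the crux is that for each reflection $t_{i,j}$ with $vt_{i,j}\not\le w$ there is a defining minor whose linear part involves the corresponding coordinate with nonzero coefficient, and that one may select $|\{t_{i,j}: vt_{i,j}\not\le w\}|$ such minors with linearly independent linear parts. This forces $\dim_\C T_{\dot v B}\Svar{w}\le\binom n2 - |\{t_{i,j}: vt_{i,j}\not\le w\}| = \ell(v)+|\{t_{i,j}: v<vt_{i,j}\le w\}|$, matching the lower bound.

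The main obstacle is the upper bound in $(\star)$ — equivalently, showing that when $vt_{i,j}\not\le w$ the variety $\Svar{w}$ fails to be tangent at $\dot v B$ to the coordinate line indexed by $(i,j)$, i.e.\ that the relevant rank-condition minor of $\dot v u$ is genuinely \emph{linear} (not merely vanishing to higher order) in the corresponding coordinate at $u=I$, with these linear forms independent as $t_{i,j}$ varies. This is a bookkeeping argument with the entries of $\dot v u$: because $\dot v$ is a permutation matrix and $u$ is unipotent lower-triangular, the minors of $\dot v u$ have a controlled leading-term structure, and a reflection $t_{i,j}$ with $vt_{i,j}\not\le w$ pinpoints the single coordinate that appears linearly in the smallest violated minor at the corresponding position; I would carry this out by comparing the rank matrices of $v$ and of $w$ entrywise and verifying independence position by position. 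Alternatively, one can avoid the explicit minors by an induction on $\ell(w)$ using the $\mathbb P^1$-fibrations $\Fl_n(\C)\to\GL_n(\C)/P$ over minimal parabolics $P\supsetneq B$, which are compatible with Schubert varieties (if $\Svar{w}$ is $P$-stable it is the preimage of a Schubert variety downstairs), reducing the tangent-space computation for $\Svar{w}$ at $\dot v B$ to that for a shorter element. In either approach the geometric content is concentrated entirely in $(\star)$; parts \ref{bruhat_graph_bound} and \ref{bruhat_graph_equal} then follow by the elementary smoothness criterion recalled in the first paragraph.
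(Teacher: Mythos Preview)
The paper does not prove this theorem; it is stated with attribution to Deodhar \cite{deodhar85} and used as a black box (the surrounding sentence points to \cite{billey_lakshmibai00} for background). So there is no in-paper argument to compare against.

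Your approach is correct and is in fact the standard geometric route. The formula $(\star)$ is precisely the type-$A$ description of the Zariski tangent space of a Schubert variety at a $T$-fixed point; the paper itself cites it later as \cite[Theorem 5.4.2]{billey_lakshmibai00} when discussing \cref{prob_equiv_classes}. Once $(\star)$ is in hand, both parts of the theorem follow exactly as you say, since $\Svar{w}$ is irreducible of dimension $\ell(w)$ and smoothness at a point is equivalent to the tangent space having the expected dimension. Your decomposition $\{t : vt \le w\} = \{t : vt < v\} \sqcup \{t : v < vt \le w\}$ with $|\{t : vt < v\}| = \ell(v)$ is correct, and the lower bound via $T$-curves is the right argument. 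The only place your sketch is genuinely incomplete is the upper bound in $(\star)$, as you yourself flag: showing that no tangent direction survives for $t$ with $vt \not\le w$ requires either the explicit minor analysis you outline or an inductive argument via minimal parabolics (both are in the literature, e.g.\ Lakshmibai--Seshadri or Polo). Since the paper treats the whole theorem as a citation, your sketch already goes further than the paper does.
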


The left-hand side of \eqref{deodhar_inequality} is the number of vertices adjacent to $v$ in the \emph{Bruhat graph} of $[v,w]$; see \cite{billey_lakshmibai00} for further background.
\begin{thm}[{Billey and Coskun \cite[Corollary 2.10]{billey_coskun12}; cf.\ \cite[Corollary 1.2]{knutson_woo_yong13}}]\label{sing_R}
Let $v\le w$ in $S_n$. Then
\[
\sing(\Rvar{v}{w}) = (\sing(\Svaropp{v})\cap\Svar{w}) \cup (\Svaropp{v}\cap\sing(\Svar{w})).
\]
\end{thm}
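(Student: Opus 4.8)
The plan is to establish the sharper pointwise assertion: for every $p\in\Rvar{v}{w}$, the variety $\Rvar{v}{w}$ is smooth at $p$ if and only if both $\Svar{w}$ and $\Svaropp{v}$ are smooth at $p$. Since $\Rvar{v}{w}=\Svar{w}\cap\Svaropp{v}$ and $\sing(\Svar{w})\subseteq\Svar{w}$, $\sing(\Svaropp{v})\subseteq\Svaropp{v}$, taking complements inside $\Rvar{v}{w}$ converts this pointwise statement into exactly the displayed formula. To analyze a given $p\in\Rvar{v}{w}$, I would first place it in the cell stratification $\Rvar{v}{w}=\bigsqcup_{v\le v'\le u\le w}\Rcell{v'}{u}$ (using \eqref{richardson_closure} to see these are the cells meeting $\Rvar{v}{w}$): there are unique $v',u$ with $v\le v'\le u\le w$ and $p\in\Rcell{v'}{u}=\Scell{u}\cap\Scellopp{v'}$.

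The first ingredient is the Kazhdan--Lusztig local structure lemma for Schubert varieties (see, e.g., \cite{billey_lakshmibai00}): since $\Scell{u}$ is a single $B$-orbit and $\Svar{w}$ is $B$-stable, there is a $T$-stable affine chart $\Omega\ni\dot uB$ in which $\Scell{u}\cap\Omega$ is a coordinate subspace, $\Svar{w}\cap\Omega$ is the product of $\Scell{u}\cap\Omega$ with a Kazhdan--Lusztig variety $\mathcal N_{u,w}$ embedded in the complementary coordinate subspace (with vertex $\dot uB$), and the $B$-action identifies the germ of $\Svar{w}$ at every point of $\Scell{u}$ with its germ at $\dot uB$. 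In particular $\Svar{w}$ is smooth at $p$ if and only if $\mathcal N_{u,w}$ is smooth at its vertex, if and only if $\Svar{w}$ is smooth at $\dot uB$ (which by \cref{bruhat_graph} is a Bruhat-graph condition on $[u,w]$, although I will not need this). The same statement applied to the $B_-$-orbit $\Scellopp{v'}$ and the $B_-$-stable variety $\Svaropp{v}$ produces a Kazhdan--Lusztig variety $\mathcal N'$ of dimension $\ell(v')-\ell(v)$ such that $\Svaropp{v}$ is smooth at $p$ if and only if $\mathcal N'$ is smooth at its vertex, if and only if $\Svaropp{v}$ is smooth at $\dot{v'}B$.

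The crux is to fit these two one-sided descriptions together. Because $B$ and $B_-$ are opposite Borel subgroups, one should be able to choose a single $T$-equivariant affine chart $\Omega'$ around $p$ and coordinates on it in which $\Scell{u}\cap\Omega'$ and $\Scellopp{v'}\cap\Omega'$ are coordinate subspaces in relative general position: their intersection is the smooth cell $\Rcell{v'}{u}\cap\Omega'$ of dimension $\ell(u)-\ell(v')$, and together they span $\Omega'$ (using $\ell(u)+(\binom n2-\ell(v'))\ge\binom n2$). Then the coordinates transverse to $\Scell{u}$ and those transverse to $\Scellopp{v'}$ form disjoint coordinate subspaces carrying $\mathcal N_{u,w}$ and $\mathcal N'$ respectively; since $\Svar{w}$ depends only on the first set of coordinates and $\Svaropp{v}$ only on the second, intersecting gives a local product decomposition
\[
\Rvar{v}{w}\cap\Omega'\;\cong\;\mathcal N'\,\times\,\bigl(\Rcell{v'}{u}\cap\Omega'\bigr)\,\times\,\mathcal N_{u,w},
\]
with $p$ corresponding to $(\text{vertex},\text{point},\text{vertex})$ and the middle factor smooth. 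I expect this compatibility step — producing coordinates in which the two one-sided Kazhdan--Lusztig structures become ``independent'' — to be the main obstacle; making it precise is essentially the content of the Billey--Coskun argument, and it is what forces one to exploit the opposite-Borel structure rather than an abstract transversality principle.

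Granting the displayed decomposition, the conclusion is immediate: $\Rvar{v}{w}$ is smooth at $p$ iff $\mathcal N'$ and $\mathcal N_{u,w}$ are each smooth at their vertices, iff (by the previous paragraph) $\Svaropp{v}$ is smooth at $\dot{v'}B$ and $\Svar{w}$ is smooth at $\dot uB$, iff (again by the one-orbit local structure lemmas) $\Svaropp{v}$ and $\Svar{w}$ are both smooth at $p$. This proves the pointwise statement, hence $\sing(\Rvar{v}{w})=(\sing(\Svaropp{v})\cap\Svar{w})\cup(\Svaropp{v}\cap\sing(\Svar{w}))$. Conceptually, the same argument says that wherever $\Svar{w}$ and $\Svaropp{v}$ are both smooth they meet transversally (their tangent spaces span $T_p\Fl_n(\C)$ by a dimension count using $\dim\Rvar{v}{w}=\ell(w)-\ell(v)$), so the intersection is smooth there; all the real work is in verifying that transversality, which is precisely the compatibility above.
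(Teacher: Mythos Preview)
The paper does not prove this theorem; it is quoted from the literature (Billey--Coskun, with a cross-reference to Knutson--Woo--Yong) and used as a black box to deduce \cref{cor.sing_R}. So there is no ``paper's own proof'' to compare against.

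Your sketch is the right outline and matches the approach in the cited references: reduce to a local product decomposition of the Richardson variety near a point $p\in\Rcell{v'}{u}$ into a smooth cell factor times two Kazhdan--Lusztig-type slices coming from $\Svar{w}$ and $\Svaropp{v}$, and then read off smoothness factorwise. You correctly identify the genuine content as the ``compatibility step'' showing the two one-sided slices can be taken in independent coordinate directions. As written, however, this is an acknowledged gap rather than a proof: you assert that one ``should be able to choose'' a chart with the desired properties, but do not produce it. The standard way to close this gap is to work in the affine chart $\dot{u}B_-B/B$ (or more symmetrically in $\dot{v'}B_-B/B\cap \dot{u}BB_-/B$) and use Deodhar/Kazhdan--Lusztig coordinates explicitly, or to invoke Kleiman transversality for the $B\times B_-$ action; either route makes precise the factorization you want. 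Until that step is carried out, the argument is a correct roadmap but not a complete proof.
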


We mention that an explicit description of $\sing(\Svar{w})$ (which we will not need here) appears in \cite{gasharov01,billey_warrington03, cortez03, kassel_lascoux_reutenauer03, manivel01a}. We will use the results above to obtain a criterion for smoothness of Richardson varieties (\cref{cor.sing_R}).
\begin{lem}\label{smooth_point_check}
Let $v\le w$ in $S_n$.
\begin{enumerate}[label=(\roman*), leftmargin=*, itemsep=2pt]
\item\label{smooth_point_check1} We have $\Svaropp{v}\cap\sing(\Svar{w}) = \emptyset$ if and only if $\Svar{w}$ is smooth at the point $\dot{v}B$.
\item\label{smooth_point_check2} We have $\sing(\Svaropp{v})\cap\Svar{w} = \emptyset$ if and only if $\Svar{w_0v}$ is smooth at the point $\dot{w}_0\dot{w}B$.
\end{enumerate}
\end{lem}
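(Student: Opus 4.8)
The plan is to deduce both parts from Deodhar-type information only at the level of point-sets: specifically, from the fact that the singular locus of a Schubert variety is a closed, $B$-invariant subvariety, together with the intersection criterion \eqref{richardson_nonempty} for Richardson varieties.

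For part \ref{smooth_point_check1}, the forward implication is immediate: since $\dot vB\in\Scellopp{v}\subseteq\Svaropp{v}$, if $\Svaropp{v}\cap\sing(\Svar{w})=\emptyset$ then $\dot vB\notin\sing(\Svar{w})$, i.e.\ $\Svar{w}$ is smooth at $\dot vB$. For the converse I would argue contrapositively. The variety $\Svar{w}$ is a union of Schubert cells, which are the $B$-orbits of $\Fl_n(\C)$, so $\Svar{w}$ is $B$-invariant; since $B$ acts by automorphisms and the singular locus is intrinsic, $\sing(\Svar{w})$ is a closed $B$-invariant subset of $\Fl_n(\C)$, hence a finite union $\sing(\Svar{w})=\bigcup_{u\in A}\Svar{u}$ of Schubert varieties. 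Now suppose $\Svaropp{v}\cap\sing(\Svar{w})\neq\emptyset$; then $\Svaropp{v}\cap\Svar{u}=\Rvar{v}{u}\neq\emptyset$ for some $u\in A$, so $v\le u$ by \eqref{richardson_nonempty}, whence $\dot vB\in\Svar{v}\subseteq\Svar{u}\subseteq\sing(\Svar{w})$, and $\Svar{w}$ is singular at $\dot vB$. This proves \ref{smooth_point_check1}.

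For part \ref{smooth_point_check2}, I would transport \ref{smooth_point_check1} along the left-translation automorphism $\phi\colon\Fl_n(\C)\to\Fl_n(\C)$, $F_\bullet\mapsto\dot{w}_0F_\bullet$, which is an involution because $\dot{w}_0^2=I$. Using $B_-=\dot{w}_0B\dot{w}_0$ one checks the three identities $\phi(\Svaropp{v})=\Svar{w_0v}$, $\phi(\Svar{w})=\Svaropp{w_0w}$, and $\phi(\dot{w}_0\dot wB)=\dot wB$ (equivalently $\phi(\dot{(w_0w)}B)=\dot{w}_0\dot wB$). Since $\phi$ is an isomorphism it carries singular loci to singular loci, so $\phi\bigl(\sing(\Svaropp{v})\cap\Svar{w}\bigr)=\sing(\Svar{w_0v})\cap\Svaropp{w_0w}$, and the two sets are empty or not together. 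Because $v\le w$ implies $w_0w\le w_0v$ (left multiplication by $w_0$ reverses Bruhat order), part \ref{smooth_point_check1} applies with $(v,w)$ replaced by $(w_0w,\,w_0v)$ and gives: $\Svaropp{w_0w}\cap\sing(\Svar{w_0v})=\emptyset$ if and only if $\Svar{w_0v}$ is smooth at $\dot{(w_0w)}B=\dot{w}_0\dot wB$. Chaining the two equivalences yields \ref{smooth_point_check2}. The only points requiring care are the $B$-invariance of $\sing(\Svar{w})$ (which is what lets one replace a point of $\Svaropp{v}\cap\sing(\Svar{w})$ by the permutation flag $\dot vB$) and keeping track of the order-reversal under $w\mapsto w_0w$; neither is a genuine obstacle, and the lemma is a bookkeeping step feeding into the smoothness criterion \cref{cor.sing_R} for Richardson varieties.
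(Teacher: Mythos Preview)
Your proof is correct and follows essentially the same approach as the paper: for part~\ref{smooth_point_check1} you use $B$-invariance of $\sing(\Svar{w})$ to write it as a union of Schubert varieties and then apply \eqref{richardson_nonempty}, and for part~\ref{smooth_point_check2} you transport along left translation by $\dot{w}_0$ to reduce to part~\ref{smooth_point_check1} with $(v,w)$ replaced by $(w_0w,w_0v)$. The only minor wrinkle is that your parenthetical ``$\phi(\dot{(w_0w)}B)=\dot{w}_0\dot wB$'' is garbled (it should read $\dot{(w_0w)}B=\dot{w}_0\dot wB$, which is what you correctly use in the final sentence), but this does not affect the argument.
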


\begin{proof}
Proof of \ref{smooth_point_check1}: ($\Rightarrow$) This follows from the fact that $\dot{v}B \in \Svaropp{v}$.

($\Leftarrow$ by contrapositive) Suppose that $\Svaropp{v}\cap\sing(\Svar{w}) \neq \emptyset$. Since the Schubert varieties are the closures of the $B$-orbits of $\Fl_n(\C)$, we see that $\sing(\Svar{w})$ is a union of Schubert varieties. Hence there exists $u\in S_n$ such that $\Svar{u}\subseteq\sing(\Svar{w})$ and $\Svaropp{v}\cap \Svar{u} \neq \emptyset$. Then \eqref{richardson_nonempty} implies $v \le u$, so $\dot{v}B\in \Svar{u}\subseteq\sing(\Svar{w})$. Thus $\dot{v}B$ is a singular point of $\Svar{w}$.

Proof of \ref{smooth_point_check2}: Since $\dot{w}_0B_-\dot{w}_0 = B$, we have $\dot{w}_0\Svaropp{u} = \Svar{w_0u}$ for all $u\in S_n$. Left translation by $\dot w_0$ is an automorphism of $\Fl_n(\C)$, so
\[
\dot w_0\left( \sing(\Svaropp{v})\cap \Svar{w} \right) = \sing(\Svar{w_0v})\cap \Svaropp{w_0w}.
\]
Hence
\[
\sing(\Svaropp{v})\cap\Svar{w} = \emptyset
\quad\Leftrightarrow\quad
\sing(\Svar{w_0v})\cap\Svaropp{w_0w} = \emptyset
\quad\Leftrightarrow\quad
\dot{w}_0\dot{w}B \notin \sing(\Svar{w_0v}),
\]
where the second equivalence is just part \ref{smooth_point_check1} with $(v,w)$ replaced by $(w_0w, w_0v$).
\end{proof}

\begin{cor}\label{cor.sing_R}
Let $v\le w$ in $S_n$. The Richardson variety $\Rvar{v}{w}$ is smooth if and only if $\Svar{w}$ is smooth at the point $\dot{v}B$ and $\Svar{w_0v}$ is smooth at the point $\dot{w}_0\dot{w}B$.
\end{cor}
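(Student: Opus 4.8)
The plan is to deduce the statement directly from \cref{sing_R} and \cref{smooth_point_check}, since the substantive work has already been carried out in those two results; \cref{cor.sing_R} is a purely formal consequence. First I would observe that $\Rvar{v}{w}$ is smooth if and only if its singular locus $\sing(\Rvar{v}{w})$ is empty. By \cref{sing_R}, this singular locus equals $(\sing(\Svaropp{v})\cap\Svar{w}) \cup (\Svaropp{v}\cap\sing(\Svar{w}))$, so $\Rvar{v}{w}$ is smooth if and only if both of these two sets are empty.

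Next I would apply \cref{smooth_point_check}: part \ref{smooth_point_check1} gives that $\Svaropp{v}\cap\sing(\Svar{w}) = \emptyset$ holds exactly when $\Svar{w}$ is smooth at the point $\dot{v}B$, and part \ref{smooth_point_check2} gives that $\sing(\Svaropp{v})\cap\Svar{w} = \emptyset$ holds exactly when $\Svar{w_0v}$ is smooth at the point $\dot{w}_0\dot{w}B$. Conjoining these two equivalences with the reduction of the previous paragraph yields precisely the claimed characterization: $\Rvar{v}{w}$ is smooth if and only if $\Svar{w}$ is smooth at $\dot{v}B$ and $\Svar{w_0v}$ is smooth at $\dot{w}_0\dot{w}B$.

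I do not expect any genuine obstacle here. The one point requiring mild care — the interaction with the involution $u \mapsto w_0u$ and the identity $\dot{w}_0\Svaropp{u} = \Svar{w_0 u}$, which is how the condition on $\sing(\Svaropp{v})$ gets rephrased as smoothness of a genuine Schubert variety at a torus-fixed point — is already absorbed into the proof of \cref{smooth_point_check}\ref{smooth_point_check2}. So the only remaining task is to chain the three equivalences together correctly, which is routine.
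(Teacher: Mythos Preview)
Your proposal is correct and takes essentially the same approach as the paper, which simply cites \cref{sing_R} and \cref{smooth_point_check} and leaves the chaining of equivalences implicit. You have spelled out that chaining in full, and it is exactly the intended argument.
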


\begin{proof}
This follows from \cref{sing_R,smooth_point_check}.
\end{proof}

We now show that $\Rvar{v_\sigma}{w_\sigma}$ is smooth for all Richardson tableaux $\sigma$ by using \cref{bruhat_graph} to verify the two conditions in \cref{cor.sing_R}. We encourage the reader to consult \cref{eg_reflections} to help follow the proofs of \cref{reflections1,reflections2}, which contain the key technical arguments. We also recall that, by definition of the Bruhat order, for all $v\in S_n$ and $(i,j)\in [n]^2$ with $i<j$, we have
\begin{eqnarray}\label{Br_inc}
v< vt_{i,j} \quad\Leftrightarrow\quad v(i)<v(j).
\end{eqnarray}

\begin{lem}\label{reflections1}
Let $\sigma\in\SYT(\lambda)$ be a Richardson tableau, where $\lambda\vdash n$.
\begin{enumerate}[label=(\roman*), leftmargin=*, itemsep=2pt]
\item\label{reflections1_pairs} For all $(i,j)\in [n]^2$ with $i < j$, we have
\begin{align}\label{eqn.R_bruhat1}
v_\sigma < v_{\sigma}t_{i,j} \leq w_\sigma
\end{align}
if and only if in the tableau $\sigma^\vee$,
\begin{align}\label{reflections1_tableau}
\text{$i$ appears in a row above $j$ and is the largest entry in $[j]$ in its row}.
\end{align}
\item\label{reflections1_count} There are exactly $n(\lambda)$ pairs $(i,j)$ satisfying \eqref{reflections1_tableau}.
\item\label{reflections1_smooth} The Schubert variety $\Svar{w_\sigma}$ is smooth at the point $\dot{v}_\sigma B$.
\end{enumerate}
\end{lem}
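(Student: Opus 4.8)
The plan is to obtain parts \ref{reflections1_count} and \ref{reflections1_smooth} as formal consequences of part \ref{reflections1_pairs} together with results already in hand, and to put all the real work into part \ref{reflections1_pairs}. Concretely: once part \ref{reflections1_pairs} is known, the set $\{t_{i,j}\in S_n : v_\sigma< v_\sigma t_{i,j}\le w_\sigma\}$ is exactly the set of pairs satisfying \eqref{reflections1_tableau}, which by part \ref{reflections1_count} has $n(\lambda)$ elements; since $\sigma$ is Richardson, $\ell(w_\sigma)-\ell(v_\sigma)=n(\lambda)$ by \cref{thm-lengths}, so equality holds in Deodhar's inequality \eqref{deodhar_inequality}, and \cref{bruhat_graph}\ref{bruhat_graph_equal} then gives part \ref{reflections1_smooth}. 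Moreover it suffices to prove only the ``only if'' direction of part \ref{reflections1_pairs}: knowing that every reflection $t_{i,j}$ with $v_\sigma<v_\sigma t_{i,j}\le w_\sigma$ satisfies \eqref{reflections1_tableau} bounds the number of such reflections by $n(\lambda)$ (part \ref{reflections1_count}), while \eqref{deodhar_inequality} bounds it below by $n(\lambda)$; these force the two sets to coincide, which yields the ``if'' direction and, as just noted, part \ref{reflections1_smooth}.

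Part \ref{reflections1_count} is a direct count. If $j$ lies in row $r\ge 2$ of $\sigma^\vee$, then for each row $r'<r$ the first-column entry of row $r'$ is strictly less than $j$ (the first-column entries strictly increase down column $1$, and the one in row $r$ is at most $j$), so row $r'$ has a largest entry lying in $[j]$. That entry gives the unique pair $(i,j)$ satisfying \eqref{reflections1_tableau} with $i$ in row $r'$, and conversely every pair satisfying \eqref{reflections1_tableau} with second coordinate $j$ arises this way. Hence there are exactly $\rowjword{\sigma^\vee}{j}-1$ such pairs, and summing over all entries $j$ and grouping by row gives $\sum_{r}(r-1)\lambda_r=n(\lambda)$, since $\lambda$ is also the shape of $\sigma^\vee$.

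For the ``only if'' direction of part \ref{reflections1_pairs}: by \eqref{Br_inc}, $v_\sigma<v_\sigma t_{i,j}$ holds iff $v_\sigma(i)<v_\sigma(j)$, i.e.\ iff $i$ precedes $j$ in the top-down reading word $v_\sigma^{-1}$ of $\sigma^\vee$, i.e.\ iff $i$ lies in a row of $\sigma^\vee$ weakly above the row of $j$ (and when they share a row, $i<j$ forces $i$ to be to the left of $j$). So, assuming $v_\sigma<v_\sigma t_{i,j}$, failure of \eqref{reflections1_tableau} means either (A) $i$ and $j$ lie in the same row of $\sigma^\vee$, or (B) $i$ lies strictly above $j$ but is not the largest entry of $\sigma^\vee$ in $[j]$ in its row; in each case we must show $v_\sigma t_{i,j}\nleq w_\sigma$. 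Here I would use the rank-matrix criterion for Bruhat order: writing $r_u(k,m):=|\{a\le k : u(a)\le m\}|$, one has $u\le w$ iff $r_u(k,m)\ge r_w(k,m)$ for all $k,m\in[n]$. Since $v_\sigma\le w_\sigma$ (\cref{main_geometric}) and since $r_{v_\sigma t_{i,j}}$ differs from $r_{v_\sigma}$ only by dropping $1$ on the rectangle $\{(k,m): i\le k\le j-1,\ v_\sigma(i)\le m\le v_\sigma(j)-1\}$, the comparison $v_\sigma t_{i,j}\le w_\sigma$ is equivalent to $r_{v_\sigma}(k,m)>r_{w_\sigma}(k,m)$ at every point of that rectangle. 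So the task reduces to exhibiting, in each of cases (A) and (B), a point of the rectangle where $r_{v_\sigma}$ and $r_{w_\sigma}$ agree.

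Producing those witness points is the main obstacle, and it is where the combinatorics of evacuation enters essentially. The idea is to read the rank functions directly off the tableaux: $r_{v_\sigma}(k,m)=|[1,k]\cap V_m|$, where $V_m$ is the set of entries occupying the first $m$ boxes of $\sigma^\vee$ in reading order, and (unwinding \cref{v_w_relation}, equivalently using the Lehmer-code formula of \cref{lem.w.induction}) $r_{w_\sigma}(k,m)=|[n+1-k,n]\cap W_m|$, where $W_m$ collects the entries in the last $m$ boxes of $\sigma$ in the bottom-up row reading. One then uses the relationship between $\sigma$ and $\sigma^\vee$, together with the $L$-slide structure of Richardson tableaux (\cref{thm-Lslides}) and \cref{lem.induction}, to locate an equality point $(k,m)$ inside the rectangle: in case (A), taking $m=v_\sigma(i)$ and $k=j-1$, so that the prefix $[1,k]$ and the reading-prefix $V_m$ of $\sigma^\vee$ ``see'' $i$ but neither $j$ nor the box just after $i$ in its row; in case (B), taking $m$ just below the entry $i'$ of $\sigma^\vee$ lying immediately right of $i$ in its row (so $i<i'\le j$), forcing an equality there because $i'$ is pushed out of $W_m$ on the $w_\sigma$ side. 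Once the witnesses are in place the ``only if'' direction follows, and parts \ref{reflections1_pairs}(``if''), \ref{reflections1_count}, and \ref{reflections1_smooth} are assembled as in the first paragraph. I expect the careful description of the equality locus of $r_{v_\sigma}$ versus $r_{w_\sigma}$ — and verifying that the chosen $(k,m)$ genuinely lie in the rectangle in cases (A) and (B) — to be the technically delicate part.
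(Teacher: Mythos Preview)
Your overall architecture is exactly the paper's: prove part \ref{reflections1_count} by the direct row-by-row count you give, prove only the ``only if'' direction of part \ref{reflections1_pairs}, and then let Deodhar's inequality \eqref{deodhar_inequality} together with $\ell(w_\sigma)-\ell(v_\sigma)=n(\lambda)$ from \cref{thm-lengths} force equality, yielding the ``if'' direction and part \ref{reflections1_smooth}. That reduction, and your proof of part \ref{reflections1_count}, match the paper verbatim.

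The divergence is entirely in the forward direction of part \ref{reflections1_pairs}. The paper does not touch rank matrices at all. Instead it observes that if \eqref{reflections1_tableau} fails then the entry $k$ immediately to the right of $i$ in $\sigma^\vee$ satisfies $i<k\le j$, and by \cref{richardson_Z}\ref{richardson_Z_weak} (via \cref{multiply_reflection}) one has $v_\sigma t_{i,k}\nleq w_\sigma$. If $k=j$ this already contradicts $v_\sigma t_{i,j}\le w_\sigma$; otherwise $v_\sigma(i)<v_\sigma(k)<v_\sigma(j)$, and the two-step chain
\[
v_\sigma t_{i,k}\ <\ v_\sigma t_{i,k}t_{i,j}\ <\ v_\sigma t_{i,k}t_{i,j}t_{k,j}=v_\sigma t_{i,j}
\]
shows $v_\sigma t_{i,k}<v_\sigma t_{i,j}\le w_\sigma$, again a contradiction. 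This handles your cases (A) and (B) simultaneously (in case (A), $i$ is automatically not the largest entry of $[j]$ in its row, since $j$ is there), and it is three lines.

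Your rank-matrix route is in principle sound --- the description of the rectangle where $r_{v_\sigma t_{i,j}}=r_{v_\sigma}-1$ is correct, and exhibiting a point of that rectangle where $r_{v_\sigma}=r_{w_\sigma}$ would indeed give $v_\sigma t_{i,j}\nleq w_\sigma$. But the step you flag as ``technically delicate'' is the entire content, and your proposed witnesses are not yet justified: verifying $r_{v_\sigma}(k,m)=r_{w_\sigma}(k,m)$ at the specific $(k,m)$ you name would require controlling both $\sigma$ and $\sigma^\vee$ simultaneously, and the appeal to $L$-slides and \cref{lem.induction} is only a gesture. You are essentially proposing to reprove \cref{richardson_Z}\ref{richardson_Z_weak} by hand in each case rather than invoking it. The paper's approach buys you the forward direction almost for free once \cref{richardson_Z} is in hand; your approach would be an independent (and longer) argument that avoids Lusztig's input, which might be of separate interest but is not what is needed here.
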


\begin{proof}
Since $v_\sigma^{-1}$ is the top-down reading word of $\sigma^{\vee}$, \eqref{Br_inc} becomes (for $i < j$)
\[
v_\sigma < v_\sigma t_{i,j}
\quad\Leftrightarrow\quad
v_\sigma(i)<v_\sigma(j)
\quad\Leftrightarrow\quad
\rowjword{\sigma^\vee}{i}\leq \rowjword{\sigma^\vee}{j}. 
\]
In other words, $v_\sigma < v_\sigma t_{i,j}$ if and only if $i$ appears in a row weakly above the row containing $j$ in $\sigma^\vee$. We use this characterization of the inversions of $v_\sigma$ several times below. 

Proof of the forward direction of \ref{reflections1_pairs}: Given $(i,j)\in [n]^2$ with $i<j$ such that \eqref{eqn.R_bruhat1} holds, we show that \eqref{reflections1_tableau} holds. Since $v_\sigma < v_\sigma t_{i,j}$, we have that $i$ appears in a row weakly above $j$ in $\sigma^\vee$.

Now proceed by contradiction and suppose that $i$ is not the largest entry in $[j]$ in its row. Let $k$ denote the entry immediately to the right of $i$ in $\sigma^\vee$, so that $i < k \le j$. By \cref{richardson_Z}\ref{richardson_Z_weak} and \cref{multiply_reflection}, we get $v_\sigma t_{i,k} \nleq w_\sigma$. Since $v_\sigma t_{i,j} \le w_\sigma$, we have $k \neq j$.  Since $k>i$ occurs in the same row of $\sigma^{\vee}$ as $i$ and since this row is weakly above the row containing $j$, we get $v_\sigma(i)<v_\sigma(k)<v_{\sigma}(j)$. Then we calculate that
\begin{eqnarray*}
(v_\sigma t_{i,k}) (i) = v_\sigma (k) &<&v_\sigma(j) = (v_\sigma t_{i,k})(j), \\
(v_\sigma t_{i,k}t_{i,j}) (k) = v_\sigma (i)&<&v_\sigma(k) = (v_\sigma t_{i,k}t_{i,j})(j),
\end{eqnarray*}
and applying~\eqref{Br_inc} for each inequality we conclude 
\[
v_\sigma t_{i,k} < v_\sigma t_{i,k}t_{i,j} < v_\sigma t_{i,k}t_{i,j}t_{k,j}. 
\]
Now $t_{i,k}t_{i,j}t_{k,j} = t_{i,j}$, so $v_\sigma t_{i,k} < v_\sigma t_{i,j}$. By~\eqref{eqn.R_bruhat1} we have $v_\sigma t_{i,j} \le w_\sigma$, so $v_\sigma t_{i,k} \le w_\sigma$, a contradiction.

Proof of \ref{reflections1_count}: Note that for a fixed $j\in [n]$, the number of $i < j$ such that $(i,j)$ satisfies \eqref{reflections1_tableau} is exactly $\rowjword{\sigma^\vee}{j} - 1$ (since we get one such $i$ for each row strictly above $j$ in $\sigma^\vee$). Hence the total number of such pairs $(i,j)$ is
\[
\sum_{j=1}^n(\rowjword{\sigma^\vee}{j} - 1) = \sum_{i=1}^{\ell(\lambda)}(i-1)\lambda_i = n(\lambda).
\]

Proof of \ref{reflections1_pairs}: Note that the reflections $t_{i,j}$ satisfying \eqref{eqn.R_bruhat1} are precisely those enumerated on the left-hand side of Deodhar's inequality \eqref{deodhar_inequality} when $(v,w) = (v_\sigma,w_\sigma)$. Also, the right-hand side of \eqref{deodhar_inequality} is $\ell(w_\sigma)-\ell(v_\sigma) = n(\lambda)$ by \cref{thm-lengths}. Hence \eqref{deodhar_inequality} implies that there are at least $n(\lambda)$ reflections $t_{i,j}$ satisfying \eqref{eqn.R_bruhat1}. On the other hand, we know from part \ref{reflections1_count} that exactly $n(\lambda)$ pairs $(i,j)$ satisfy \eqref{reflections1_tableau}, and by the forward direction of part \ref{reflections1_pairs}, no other pairs satisfy \eqref{eqn.R_bruhat1}. Thus \eqref{eqn.R_bruhat1} and \eqref{reflections1_tableau} are equivalent, and moreover, equality holds in \eqref{deodhar_inequality}.

Proof of \ref{reflections1_smooth}: This follows from \cref{bruhat_graph}\ref{bruhat_graph_equal}, since equality holds in \eqref{deodhar_inequality} (as we just proved).
\end{proof}

\begin{lem}\label{reflections2}
Let $\sigma\in\SYT(\lambda)$ be a Richardson tableau, where $\lambda\vdash n$.
\begin{enumerate}[label=(\roman*), leftmargin=*, itemsep=2pt]
\item\label{reflections2_pairs} For all $(i,j)\in [n]^2$ with $i < j$, we have
\begin{align}\label{eqn.R_bruhat2}
w_\sigma w_0 < w_\sigma w_0t_{i,j} \leq v_\sigma w_0
\end{align}
if and only if in the tableau $\sigma$,
\begin{align}\label{reflections2_tableau}
\text{$i$ appears in a row above $j$ and is the largest entry in $[j]$ in its row}.
\end{align}
\item\label{reflections2_count} There are exactly $n(\lambda)$ pairs $(i,j)$ satisfying \eqref{reflections2_tableau}.
\item\label{reflections2_smooth} The Schubert variety $\Svar{w_0v_\sigma}$ is smooth at the point $\dot{w}_0\dot{w}_\sigma B$.
\end{enumerate}
\end{lem}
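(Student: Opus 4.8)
The plan is to prove \cref{reflections2} in close parallel with \cref{reflections1}, with the triple $(\sigma,\,w_\sigma w_0,\,v_\sigma w_0)$ playing the role that $(\sigma^\vee,\,v_\sigma,\,w_\sigma)$ plays there, also exploiting that $\sigma^\vee$ is again Richardson (\cref{evacuation_closed}), so that \cref{reflections1}, \cref{richardson_Z} and \cref{multiply_reflection} are available for $\sigma^\vee$ and can be transported through the identities $v_{\sigma^\vee}=w_{0,\lambda}w_\sigma w_0$ and $w_{\sigma^\vee}=w_{0,\lambda}v_\sigma w_0$ of \cref{v_w_relation}. Part \ref{reflections2_pairs} is the combinatorial heart; part \ref{reflections2_count} is the same count as \cref{reflections1}\ref{reflections1_count}; and part \ref{reflections2_smooth} then follows quickly from Deodhar's criterion.

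For part \ref{reflections2_pairs} I would first analyze the inequality $w_\sigma w_0<w_\sigma w_0 t_{i,j}$. By \eqref{Br_inc} this is $(w_\sigma w_0)(i)<(w_\sigma w_0)(j)$, and since $w_0w_\sigma^{-1}w_0$ is the reading word of $\sigma$, unwinding the definitions (as in the opening of the proof of \cref{reflections1}) shows that in one-line notation $(w_\sigma w_0)^{-1}$ lists the entries of $\sigma$ row by row from top to bottom, but from right to left within each row; hence $w_\sigma w_0<w_\sigma w_0 t_{i,j}$ holds exactly when $i$ lies strictly above $j$ in $\sigma$, which is the ``row above'' part of \eqref{reflections2_tableau}. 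It then remains to show that, given $i$ strictly above $j$ in $\sigma$, one has $w_\sigma w_0 t_{i,j}\le v_\sigma w_0$ if and only if $i$ is the largest entry of $\sigma[j]$ in its row; the forward implication is by contradiction as in \cref{reflections1}\ref{reflections1_pairs} — if $i$ is not maximal, a suitable nearby entry $k$ of $\sigma[j]$ yields a chain of Bruhat relations contradicting a ``key inequality'' of the form $(w_\sigma w_0)\,t\nleq v_\sigma w_0$. Granting the forward implication, the equivalence and the count follow formally: for each $j$ there is exactly one pair $(i,j)$ satisfying \eqref{reflections2_tableau} per row strictly above $j$, so there are $\sum_{j=1}^{n}(\rowjword{\sigma}{j}-1)=n(\lambda)$ of them in total (this is \ref{reflections2_count}), while $\ell(v_\sigma w_0)-\ell(w_\sigma w_0)=\ell(w_\sigma)-\ell(v_\sigma)=n(\lambda)$ by \cref{thm-lengths}; feeding these into Deodhar's inequality \eqref{deodhar_inequality} for the pair $(w_\sigma w_0,\,v_\sigma w_0)$ forces equality everywhere, so \eqref{eqn.R_bruhat2} and \eqref{reflections2_tableau} are equivalent.

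Part \ref{reflections2_smooth} then follows from parts \ref{reflections2_pairs} and \ref{reflections2_count}. Since $v_\sigma\le w_\sigma$ (\cref{main_geometric}) and $w\mapsto w_0w$ reverses Bruhat order, $w_0w_\sigma\le w_0v_\sigma$, and $\ell(w_0v_\sigma)-\ell(w_0w_\sigma)=\ell(w_\sigma)-\ell(v_\sigma)=n(\lambda)$ by \cref{thm-lengths}. Conjugation by $w_0$ is an automorphism of $(S_n,\le)$ sending $w_0w_\sigma\mapsto w_\sigma w_0$, $w_0v_\sigma\mapsto v_\sigma w_0$ and $w_0w_\sigma t_{i,j}\mapsto w_\sigma w_0\,t_{n+1-i,\,n+1-j}$, so it puts $\{t_{i,j}:w_0w_\sigma<w_0w_\sigma t_{i,j}\le w_0v_\sigma\}$ in bijection with a set of the same size as $\{t_{a,b}:w_\sigma w_0<w_\sigma w_0 t_{a,b}\le v_\sigma w_0\}$, which has size $n(\lambda)$ by parts \ref{reflections2_pairs} and \ref{reflections2_count}. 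Hence equality holds in \eqref{deodhar_inequality} for $(v,w)=(w_0w_\sigma,\,w_0v_\sigma)$, and \cref{bruhat_graph}\ref{bruhat_graph_equal} yields that $\Svar{w_0v_\sigma}$ is smooth at $\dot{(w_0w_\sigma)}B=\dot{w}_0\dot{w}_\sigma B$.

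I expect the main obstacle to be the ``key inequality'' in the forward implication of part \ref{reflections2_pairs}, together with the precise choice of the auxiliary reflection making the Bruhat chain close. In \cref{reflections1} the corresponding input ``$v_\sigma t_{i,k}\nleq w_\sigma$'' for horizontally adjacent entries $i,k$ of $\sigma^\vee$ was immediate from Lusztig's condition \cref{richardson_Z}\ref{richardson_Z_weak} and \cref{multiply_reflection}. The analogue for $(w_\sigma w_0,\,v_\sigma w_0)$ is more delicate because left multiplication by $w_{0,\lambda}$ is not order-preserving, so the statements obtained by applying \cref{richardson_Z}\ref{richardson_Z_weak} and \cref{multiply_reflection} to $\sigma^\vee$ cannot simply be moved across the identities of \cref{v_w_relation} by monotonicity; one has to argue through lengths and the $S_\lambda$-coset structure — noting that $w_\sigma w_0$ and $v_\sigma w_0$ are the maximal-length representatives of their respective cosets and that $w_{0,\lambda}$ interchanges maximal and minimal representatives — and it may in fact be cleaner to reprove the key inequality directly for the pair $(w_\sigma w_0,\,v_\sigma w_0)$ using the explicit Lehmer code of $w_\sigma$ from \cref{lem.w.induction}. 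Sorting out exactly which reflection makes the chain close, and establishing the correct key inequality, is where the real work lies.
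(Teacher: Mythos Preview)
Your overall architecture is correct and matches the paper: the reading-word analysis of $w_\sigma w_0$, the count in \ref{reflections2_count}, the Deodhar squeeze to upgrade the forward implication to an equivalence, and the conjugation-by-$w_0$ deduction of \ref{reflections2_smooth} are all exactly what the paper does. You have also correctly identified that the only real content is the forward implication of \ref{reflections2_pairs}.

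Where you diverge from the paper is in the mechanism for that forward implication. Your first instinct --- to run the chain argument of \cref{reflections1} verbatim, choosing the entry $k$ immediately to the right of $i$ in $\sigma$ and invoking an inequality of the form $(w_\sigma w_0)t_{i,k}\nleq v_\sigma w_0$ --- does not go through directly, for the reason you already suspect: the relevant ``forbidden'' inequalities from \cref{richardson_Z}\ref{richardson_Z_weak} live on the pair $(v_{\sigma^\vee},w_{\sigma^\vee})$, and left multiplication by $w_{0,\lambda}$ does not preserve Bruhat order, so you cannot simply transport them to $(w_\sigma w_0,v_\sigma w_0)$. The paper does \emph{not} build a chain of transpositions at all. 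Instead it rewrites \eqref{eqn.R_bruhat2} via \cref{v_w_relation} as $w_{0,\lambda}v_{\sigma^\vee}<w_{0,\lambda}v_{\sigma^\vee}t_{i,j}\le w_{0,\lambda}w_{\sigma^\vee}$, then applies the order-preserving projection $w\mapsto\mincoset{w}{\lambda}$ of \cref{bruhat_projection} to the second inequality to obtain $\mincoset{(v_{\sigma^\vee}t_{i,j})}{\lambda}\le w_{\sigma^\vee}$. The work is then a concrete computation of this minimal coset representative: if $i$ is not maximal in $[j]$ in its row of $\sigma$, one writes $\mincoset{(v_{\sigma^\vee}t_{i,j})}{\lambda}=xy\,v_{\sigma^\vee}t_{i,j}$ for explicit commuting cycles $x,y\in S_\lambda$ (coming from the blocks of consecutive entries in the rows of $i$ and of $j$), checks that $s_{i^*}\le x$ where $i^*=v_{\sigma^\vee}(i)$, and deduces $s_{i^*}v_{\sigma^\vee}\le xy\,v_{\sigma^\vee}\le xy\,v_{\sigma^\vee}t_{i,j}\le w_{\sigma^\vee}$, contradicting \cref{richardson_Z}\ref{richardson_Z_weak} for $\sigma^\vee$. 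So the ``coset structure'' idea you flag is indeed the right one, but the precise tool is \cref{bruhat_projection} followed by an explicit sorting computation, not a Lehmer-code argument.
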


\begin{proof}
We follow a similar approach to the proof of \cref{reflections1}. Recall from \cref{v_w_relation} that $w_\sigma w_0 = w_{0,\lambda} v_{\sigma^{\vee}}$. Because $v_{\sigma^{\vee}}^{-1}$ is the top-down reading word of $\sigma$, the permutation $(w_{0,\lambda} v_{\sigma^{\vee}})^{-1} = v_{\sigma^\vee}^{-1}w_{0,\lambda}$ is obtained by reading the entries of $\sigma$ row by row from top to bottom, and within each row from right to left (rather than left to right). As a result, from \eqref{Br_inc} we obtain (for $i < j$)
\begin{align}\label{Br_inc_rewritten2}
w_{0,\lambda}v_{\sigma^\vee} < w_{0,\lambda}v_{\sigma^\vee}t_{i,j}
\quad\Leftrightarrow\quad
w_{0,\lambda}v_{\sigma^\vee}(i)< w_{0,\lambda}v_{\sigma^\vee}(j)
\quad\Leftrightarrow\quad
\rowjword{\sigma}{i}< \rowjword{\sigma}{j}.
\end{align}
In other words, $w_{0,\lambda}v_{\sigma^\vee} < w_{0,\lambda}v_{\sigma^\vee}t_{i,j}$ if and only if $i$ appears in a row strictly above $j$ in $\sigma$.

Proof of the forward direction of \ref{reflections2_pairs}: Given $(i,j)\in [n]^2$ with $i<j$ such that \eqref{eqn.R_bruhat2} holds, we show that \eqref{reflections2_tableau} holds.  Using \cref{v_w_relation}, we rewrite \eqref{eqn.R_bruhat2} as
\begin{align}\label{eqn.R_bruhat2_rewritten}
w_{0,\lambda}v_{\sigma^\vee} < w_{0,\lambda}v_{\sigma^\vee}t_{i,j} \le w_{0,\lambda}w_{\sigma^\vee}.
\end{align}
By \eqref{Br_inc_rewritten2}, we have that $i$ appears in a row strictly above $j$ in $\sigma$.

We consider the projection of the second inequality of \eqref{eqn.R_bruhat2_rewritten} to $\mincoset{S_n}{\lambda}$, which preserves the inequality by \cref{bruhat_projection}.  Since $w_{0,\lambda}\in S_\lambda$ and $w_{\sigma^\vee}\in\mincoset{S_n}{\lambda}$ (see \cref{reading_word_bruhat}), we obtain
\begin{align}\label{eqn.R_bruhat2_rewritten2}
\mincoset{(v_{\sigma^\vee}t_{i,j})}{\lambda} \le w_{\sigma^\vee}.
\end{align}
The one-line notation of $\mincoset{(v_{\sigma^\vee}t_{i,j})}{\lambda}$ is obtained from that of $v_{\sigma^{\vee}}t_{i,j}$ by swapping the entries of $v_{\sigma^{\vee}}t_{i,j}$ in positions $i$ and $j$ and then reordering the elements within each set~\eqref{eqn.Young} so that they increase from left to right (cf.\ \cref{eg_mincoset}). 

In order to give a concrete formula for $\mincoset{(v_{\sigma^\vee}t_{i,j})}{\lambda}$, we introduce some notation. Let $i < i_2 < \cdots < i_a$ denote the entries of $\sigma$ in the same row as $i$ in the interval $[i,j]$. Our assumption that $i$ is the not the largest entry in $[j]$ in its row implies $a\ge 2$. Similarly, let $j > j_2 > \cdots > j_b$ denote the entries of $\sigma$ in the same row as $j$ in the interval $[i,j]$, where $b\ge 1$. We also set $i^* = v_{\sigma^{\vee}}(i)$ and $j^* = v_{\sigma^{\vee}}(j)$. As $i<i_2<\cdots<i_a$ are consecutive entries in the same row of $\sigma$, we have
\begin{align}\label{eqn.x.set}
\{v_{{\sigma}^{\vee}}(i), v_{{\sigma}^{\vee}}(i_2), \ldots, v_{\sigma^\vee}(i_a) \} = \{i^*, i^*+1, \ldots, i^*+a-1\}
\end{align}
and $s_{i^*}, s_{i^* + 1}, \ldots, s_{i^*+a-2} \in S_\lambda$. Similarly, 
\begin{align}\label{eqn.y.set}
\{v_{\sigma^\vee}(j_b), \ldots, v_{\sigma^\vee}(j_2), v_{\sigma^\vee}(j) \} = \{ j^*-b+1, \ldots, j^*-1, j^* \}
\end{align}
and $s_{j^*-b+1}, \dots, s_{j^*-2}, s_{j^*-1} \in S_\lambda$. 
The permutation 
\[
x = s_{i^*+a-2}\cdots s_{i^*+1}s_{i^*} \in S_\lambda
\]
is the cycle $(i^*+a-1, i^*+a-2, \ldots, i^*+1, i^* )$, and 
\[
y: = s_{j^*-b+1}\cdots s_{j^*-2}s_{j^*-1} \in S_\lambda
\]
is the cycle $(j^*-b+1, j^*-b+2, \ldots, j^*-1, j^*)$, where $y = e$ if $b=1$. Because $i$ appears in a row strictly above $j$ in $\sigma$, the sets \eqref{eqn.x.set} and \eqref{eqn.y.set} are disjoint and $i^*+a-1 < j^*-b+1$. In particular, $x$ and $y$ commute and $\ell(xy) = \ell(x) + \ell(y)$. 
With this notation in place, we get
\begin{align}\label{eqn.cosetrep}
\mincoset{(v_{\sigma^\vee}t_{i,j})}{\lambda} = x y v_{\sigma^{\vee}} t_{i,j}. 
\end{align}

Since $a\geq 2$ we have $s_{i^*}\leq x$, and as $v_{\sigma^{\vee}}\in\mincoset{S_n}{\lambda}$, we conclude
\begin{align}\label{eqn.Br_leq}
s_{i^*} v_{\sigma^{\vee}} \leq x y v_{\sigma^{\vee}}.
\end{align}
Also, because $x$ and $y$ commute, $x(j^*)=j^*$, and $y(i^*)=i^*$, we get
\begin{align}\label{last_step}
xy v_{\sigma^{\vee}} (i) = x (i^*) = i^*+a -1 < j^*-b+1 = y(j^*) = xy v_{\sigma^{\vee}} (j),
\end{align}
which implies $ xy v_{\sigma^{\vee}} \leq xy v_{\sigma^{\vee}} t_{i,j}$ by \eqref{Br_inc}. Combining \eqref{eqn.Br_leq}, \eqref{last_step}, \eqref{eqn.cosetrep}, and \eqref{eqn.R_bruhat2_rewritten2} yields
\[
s_{i^*} v_{\sigma^{\vee}} \leq xy v_{\sigma^{\vee}} \leq xy v_{\sigma^{\vee}} t_{i,j} = \mincoset{(v_{\sigma^\vee}t_{i,j})}{\lambda} \leq w_{\sigma^\vee}.
\]
The fact that $s_{i^*} v_{\sigma^{\vee}}\leq w_{\sigma^\vee}$ contradicts \cref{richardson_Z}\ref{richardson_Z_weak}, since $s_{i^*}\in S_\lambda$. This completes the proof of the forward direction of part \ref{reflections2_pairs}.

Now just as in the proof of \cref{reflections1} (since $\sigma$ and $\sigma^\vee$ have the same shape), we obtain parts \ref{reflections2_pairs} and \ref{reflections2_count}, along with the fact that equality holds in \eqref{deodhar_inequality} when $(v,w) = (w_\sigma w_0, v_\sigma w_0)$. Since conjugating by $w_0$ is an automorphism of the Bruhat order on $S_n$, equality also holds in \eqref{deodhar_inequality} when $(v,w) = (w_0 w_\sigma, w_0 v_\sigma)$. Then part \ref{reflections2_smooth} follows from \cref{bruhat_graph}\ref{bruhat_graph_equal}.
\end{proof}

\begin{eg}\label{eg_reflections}
We illustrate \cref{reflections1,reflections2} along with their proofs. We adopt the setup of \cref{eg_permutations_intro,eg_permutations}. Recall that $\lambda = (4,2,2)$ and
\[
\sigma = \;\begin{ytableau}
1 & 3 & 4 & 6 \\
2 & 7 \\
5 & 8
\end{ytableau}\;
,\quad
\sigma^{\vee} = 
\;\begin{ytableau}
1 & 4 & 6 & 7 \\
2 & 5 \\
3 & 8
\end{ytableau}\;
,\quad
v_{\sigma} = 15726348,
\quad
w_{\sigma} = 75182364.
\]
We calculate that the reflections $t_{i,j}$ satisfying $v_\sigma < v_{\sigma}t_{i,j} \leq w_\sigma$ are indexed by
\[
(i,j) = (1,2),\, (1,3),\, (2,3),\, (4,5),\, (5,8),\, (7,8),
\]
in agreement with \cref{reflections1}\ref{reflections1_pairs}. Note that there are precisely $n(\lambda) = 6$ such reflections, in agreement with \cref{reflections1}\ref{reflections1_count}. 

Let us illustrate why $(i,j) = (4,8)$ is not such a reflection, following the proof of \cref{reflections1}. In this case, $4$ is not the largest entry of $[8]$ in its row in $\sigma^\vee$, and $k=6$ is immediately to its right. Since $i=4$ and $k=6$ are in the same row, we know from \cref{richardson_Z}\ref{richardson_Z_weak} and \cref{multiply_reflection} that $15736248=v_\sigma t_{i,k} \nleq w_\sigma$. On the other hand, $v_\sigma t_{i,j} = 15786342$ is obtained from $v_\sigma t_{i,k}=15736248$ by multiplying on the right by $t_{i,j}$ and then $t_{k,j}$, both of which move up in the Bruhat order. Thus, we have $v_\sigma t_{i,k}=15736248 < 15786342= v_\sigma t_{i,j} $, and this implies $v_\sigma t_{i,j} = 15786342 \nleq w_\sigma$ (otherwise we would obtain a contradiction to $v_\sigma t_{i,k} \nleq w_\sigma$).

Now we consider the setup of \cref{reflections2}. We have
\[
w_\sigma w_0 = w_{0,\lambda}v_{\sigma^\vee} = 46328157 \quad \text{ and } \quad v_\sigma w_0 = w_{0,\lambda}w_{\sigma^\vee} = 84362751.
\]
There are precisely $n(\lambda) = 6$ reflections $t_{i,j}$ satisfying $w_\sigma w_0 < w_\sigma w_0t_{i,j} \leq v_\sigma w_0$, indexed by
\[
(i,j) = (1,2),\, (2,5),\, (4,5),\, (6,7),\, (6,8),\, (7,8),
\]
in agreement with \cref{reflections2}. To see that $(i,j) = (3,8)$ is not such a reflection as in the proof of \cref{reflections2}, suppose for a contradiction that $w_\sigma w_0t_{3,8} \leq v_\sigma w_0$. As in the proof, projecting to $\mincoset{S_8}{\lambda}$ implies that $\mincoset{(v_{\sigma^\vee}t_{3,8})}{\lambda} \leq w_{\sigma^\vee}$. The key idea now is to express $\mincoset{(v_{\sigma^\vee}t_{3,8})}{\lambda}$ in terms of $v_{\sigma^\vee}t_{3,8}$.

We have $v_{\sigma^{\vee}} = 15237468$ and $v_{\sigma^{\vee}} t_{3,8} = 15837462$, so (as in \cref{eg_mincoset}) we calculate that
\[
\mincoset{(v_{\sigma^\vee}t_{3,8})}{\lambda} = \mincoset{(15837462)}{\lambda} = 15728364 = xy v_{\sigma^{\vee}} t_{3,8},
\]
where $x = s_3s_2$ and $y = s_7$. By assumption, $3$ is not the largest element in $[8]$ in its row in $\sigma$; the entry immediately to its right is $4$. Hence in the permutation $v_{\sigma^{\vee}} t_{3,8} = 15837462$, the numbers that were originally in positions $3$ and $4$ in $v_{\sigma^\vee}$ (i.e.\ $2$ and $3$), are out of order and need to be sorted to obtain $\mincoset{(v_{\sigma^\vee}t_{3,8})}{\lambda}$. This means that $s_2 \leq x = s_3s_2$, and we obtain
\[
s_2 v_{\sigma^{\vee}} \leq xy v_{\sigma^\vee} \leq xy v_{\sigma^{\vee}} t_{3,8} = \mincoset{(v_{\sigma^\vee}t_{3,8})}{\lambda} \leq w_{\sigma^\vee}.
\]
This contradicts \cref{richardson_Z}\ref{richardson_Z_weak}, since $s_2\in S_\lambda$.
\end{eg}

The proof of~\cref{thm.smooth} follows handily from the previous two lemmas.

\begin{proof}[Proof of~\cref{thm.smooth}]
Let $\sigma$ be a Richardson tableau. By \cref{cor.sing_R}, we must check that $\Svar{w_\sigma}$ is smooth at the point $\dot{v}_\sigma B$ and $\Svar{w_0v_\sigma}$ is smooth at the point $\dot{w}_0\dot{w}_\sigma B$. These statements are verified in \cref{reflections1}\ref{reflections1_smooth} and \cref{reflections2}\ref{reflections2_smooth}.
\end{proof}


\section{Cohomology classes}\label{sec_cohomologyclass}

\noindent In this section we consider the cohomology classes of the irreducible components $\SF{\sigma}$ for Richardson tableaux $\sigma$. We show that each of these classes is a product of Schubert classes in \cref{cor.Schubert}. We then make this formula more combinatorially explicit in the case of hook-shaped tableaux $\sigma$ in \cref{thm.Guemes}, following work of G\"{u}emes \cite{guemes89}.

We begin by recalling some background. Each subvariety $X$ of $\Fl_n(\C)$ determines a cohomology class $[X]\in H^{\mathrm{codim}(X)}(\Fl_n(\C);\mathbb{Z})$. The \boldit{Borel presentation} identifies the cohomology ring of $\Fl_n(\C)$ as
\begin{eqnarray}\label{eqn.Borel}
H^*(\Fl_n(\C);\mathbb{Z}) \cong \mathbb{Z}[x_1, \ldots, x_n]/ I^+,
\end{eqnarray}
where $I^+$ denotes the ideal in $\mathbb{Z}[x_1, \ldots, x_n]$ generated by constant-free symmetric polynomials.  The collection of classes of the Schubert varieties $\Svar{w}$ for $w\in S_n$ form an integral basis of $H^*(\Fl_n(\C);\mathbb{Z})$. Furthermore, the assignment $[\Svar{w}]\mapsto \mathfrak{S}_{w_0w}(x_1, \ldots, x_n)$ of each Schubert class to the \boldit{Schubert polynomial} $\mathfrak{S}_{w_0w}(x_1, \ldots, x_n)$ yields the isomorphism~\eqref{eqn.Borel}. The literature on Schubert polynomials and their remarkable combinatorial properties is vast (see, for example, \cite{manivel01, abe_billey16}); we will not define them here. The study of Schubert polynomials, and in particular how to expand a product $\mathfrak{S}_v\cdot\mathfrak{S}_w$ in the basis of Schubert polynomials, forms the combinatorial foundation for the \boldit{Schubert calculus} of the flag variety $\Fl_n(\C)$.

The following is a restatement of a problem posed by Springer~\cite[Problem 4]{springer83}, which was originally phrased in terms of homology classes:

\begin{prob}\label{prob_cohomology}
For each standard tableaux $\sigma$, find a combinatorial rule for expanding the cohomology class $[\SF{\sigma}]$ in the Schubert basis.
\end{prob}

As far as we are aware, only two partial solutions to Springer's problem appear in the literature. First, G\"uemes \cite{guemes89} gave a combinatorial rule for the Schubert expansion of $[\SF{\sigma}]$ for all tableaux $\sigma$ of hook shape, which we explore in detail in \cref{thm.Guemes} below.
Second, Graham and Zierau \cite[Theorems 4.6 and 4.7]{graham_zierau11} apply localization techniques in equivariant cohomology and $K$-theory to obtain the cohomology classes for Springer fiber components coming from $K$-orbits. We discuss these components further, and the approach of Graham--Zierau for computing $[\SF{\sigma}]$, in \cref{GZ_components}.

The rest of this section focuses on \cref{prob_cohomology} in the case that $\sigma$ is a Richardson tableau. Since $\SF{\sigma} = \Rvar{v_\sigma}{w_\sigma}$ by \cref{main}, \cref{prob_cohomology} is equivalent to computing $[\Rvar{v_\sigma}{w_\sigma}]$. The following result explains the importance of Richardson varieties to Schubert calculus:
\begin{prop}[{cf.\ \cite[Section 6.5]{speyer}}]\label{prop.Richardson.class}
Let $v\le w$ in $S_n$. The isomorphism \eqref{eqn.Borel} sends the cohomology class $[\Rvar{v}{w}]$ to the product of Schubert polynomials $\mathfrak{S}_{v}\cdot \mathfrak{S}_{w_0w}$.
\end{prop}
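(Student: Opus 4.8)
The statement to prove is \cref{prop.Richardson.class}: the isomorphism \eqref{eqn.Borel} sends $[\Rvar{v}{w}]$ to $\mathfrak{S}_{v}\cdot\mathfrak{S}_{w_0w}$. The plan is to reduce this to the two facts already recalled in the excerpt: that $[\Svar{w}]$ corresponds to $\mathfrak{S}_{w_0w}$ under \eqref{eqn.Borel}, and that $\Svaropp{v} = \Svar{w_0v}^{\perp}$ (as in \cref{dual_transpose}\ref{dual_transpose_schubert}, with $w_0v$ in place of $ww_0$), so that $[\Svaropp{v}]$ is the image of $[\Svar{w_0v}]$ under the automorphism of $\Fl_n(\C)$ given by $F_\bullet \mapsto F_\bullet^{\perp}$. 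Since that automorphism agrees with left-translation by $\dot w_0$ up to the action of the transpose-inverse (both of which act trivially on cohomology, as $\Fl_n(\C)$ is connected and $\GL_n(\C)$ is connected), pulling back under it fixes every cohomology class; hence $[\Svaropp{v}] = [\Svar{w_0v}]$, which corresponds to $\mathfrak{S}_{w_0(w_0v)} = \mathfrak{S}_{v}$. Alternatively, and more cleanly, one invokes the standard fact that opposite Schubert classes are dual to Schubert classes in the Schubert basis, so that $[\Svaropp{v}]$ is represented by $\mathfrak{S}_{v}$; this is recorded in \cite[Section 6.5]{speyer}.

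The second and main step is to show that the cohomology class of the intersection is the product of the classes: $[\Rvar{v}{w}] = [\Svaropp{v}\cap\Svar{w}] = [\Svaropp{v}]\cdot[\Svar{w}]$. First I would note that $\Rvar{v}{w}$ has the expected codimension, namely $\binom{n}{2} - (\ell(w)-\ell(v)) = \operatorname{codim}(\Svaropp{v}) + \operatorname{codim}(\Svar{w})$, since $\Rvar{v}{w}$ is irreducible of dimension $\ell(w)-\ell(v)$ by \eqref{richardson_nonempty} and the discussion following it. The cleanest route is to use the Kleiman--Bertini-type transversality statement for Schubert varieties: for generic $g\in\GL_n(\C)$, the translate $g\Svar{w}$ meets $\Svaropp{v}$ properly (indeed generically transversally along the open dense locus), and $[g\Svar{w}] = [\Svar{w}]$ since $\GL_n(\C)$ is connected. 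Then $[\Svaropp{v}\cap g\Svar{w}] = [\Svaropp{v}]\cdot[g\Svar{w}] = [\Svaropp{v}]\cdot[\Svar{w}]$ by the standard fact that cohomology classes of properly intersecting subvarieties multiply, together with the fact that $\Svaropp{v}\cap g\Svar{w}$ is a reduced subscheme of the correct dimension whose generic point lies on a unique top-dimensional component (both $\Svaropp{v}$ and $g\Svar{w}$ being varieties, and the intersection being generically transverse). Since $\Rvar{v}{w} = \Svaropp{v}\cap\Svar{w}$ is itself reduced, irreducible, and of the expected dimension, and cohomology classes are deformation invariant, we get $[\Rvar{v}{w}] = [\Svaropp{v}\cap g\Svar{w}]$ for generic $g$, completing the identification. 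I would cite \cite{speyer} for the precise form of these statements rather than reprove them, since \cref{prop.Richardson.class} is explicitly stated there as a known result.

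Putting the two steps together: under \eqref{eqn.Borel}, $[\Svar{w}]\mapsto \mathfrak{S}_{w_0w}$ by the definition of the isomorphism recalled in the excerpt, $[\Svaropp{v}]\mapsto \mathfrak{S}_{v}$ by the opposite-Schubert duality, and $[\Rvar{v}{w}] = [\Svaropp{v}]\cdot[\Svar{w}]$ by the transversality argument, so $[\Rvar{v}{w}]\mapsto \mathfrak{S}_{v}\cdot\mathfrak{S}_{w_0w}$, as desired. The main obstacle is purely expository rather than mathematical: the result is entirely standard (it is the statement ``$[\Rvar{v}{w}] = \mathfrak{S}_{v}\cdot\mathfrak{S}_{w_0w}$'' that underlies the role of Richardson varieties in Schubert calculus), so the real work is deciding how much of the transversality machinery to quote versus include. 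I would keep the proof short, citing \cite[Section 6.5]{speyer} for the product formula $[\Rvar{v}{w}] = [\Svar{w}]\cdot[\Svaropp{v}]$ and for the identifications of the Schubert and opposite Schubert classes, and simply assemble these, exactly as the hypothesis ``cf.\ \cite[Section 6.5]{speyer}'' in the proposition statement anticipates.
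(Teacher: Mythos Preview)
Your proposal is correct in spirit and matches the paper's treatment: the paper gives no proof of \cref{prop.Richardson.class} at all, simply stating it with the citation ``cf.\ \cite[Section 6.5]{speyer}'' and moving on. Your final recommendation---to cite \cite[Section 6.5]{speyer} for both the product formula $[\Rvar{v}{w}] = [\Svaropp{v}]\cdot[\Svar{w}]$ and the identification of opposite Schubert classes, and then assemble---is exactly what the paper does (indeed, less).
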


Thus, combining \cref{main} and \cref{prop.Richardson.class}, we obtain the following result and subsequent reformulation of \cref{prob_cohomology} in terms of Schubert calculus:
\begin{cor}\label{cor.Schubert}
Let $\sigma$ be a Richardson tableau of size $n$. The isomorphism \eqref{eqn.Borel} sends the cohomology class $[\SF{\sigma}]$ to the product of Schubert polynomials $\mathfrak{S}_{v_\sigma}\cdot \mathfrak{S}_{w_0w_\sigma}$.
\end{cor}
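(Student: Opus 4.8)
The proof is a direct combination of two results already established in the paper. The plan is to first invoke \cref{main}: since $\sigma$ is a Richardson tableau, the irreducible component $\SF{\sigma}$ of the Springer fiber $\SF{\lambda}$ equals the Richardson variety $\Rvar{v_\sigma}{w_\sigma}$ (and in particular $v_\sigma \le w_\sigma$, so this Richardson variety is nonempty and of the expected dimension $n(\lambda)$, by \cref{main_geometric} and \cref{thm-lengths}). Then the class $[\SF{\sigma}]$ in $H^*(\Fl_n(\C);\mathbb{Z})$ is literally the class $[\Rvar{v_\sigma}{w_\sigma}]$.

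Second, I would apply \cref{prop.Richardson.class} with $v = v_\sigma$ and $w = w_\sigma$, which says that the Borel isomorphism \eqref{eqn.Borel} sends $[\Rvar{v_\sigma}{w_\sigma}]$ to the product of Schubert polynomials $\mathfrak{S}_{v_\sigma}\cdot\mathfrak{S}_{w_0 w_\sigma}$. Combining the two displayed identities gives exactly the claimed formula for $[\SF{\sigma}]$. As a consistency check worth mentioning (but not belaboring), the codimension of $\SF{\sigma}$ in $\Fl_n(\C)$ is $\binom{n}{2} - n(\lambda) = \ell(v_\sigma) + \bigl(\binom{n}{2} - \ell(w_\sigma)\bigr) = \deg \mathfrak{S}_{v_\sigma} + \deg \mathfrak{S}_{w_0 w_\sigma}$, so the degrees match up as they should.

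\textbf{Main obstacle.} There is essentially no obstacle at this stage: all the substantive work has already been done, namely the identification $\SF{\sigma} = \Rvar{v_\sigma}{w_\sigma}$ in \cref{main} (which rests on the geometric input of \cref{main_geometric}, Lusztig's results, and the length computation of \cref{thm-lengths}) and the standard Schubert-calculus fact \cref{prop.Richardson.class} that the class of a Richardson variety factors as a product of Schubert classes. The only thing to be careful about is bookkeeping with the two conventions for indexing Schubert varieties — that $[\Svar{w}]$ corresponds to $\mathfrak{S}_{w_0 w}$ under \eqref{eqn.Borel}, so that $[\Svaropp{v}] = [\Svar{w_0 v}]$ corresponds to $\mathfrak{S}_{v}$ and $[\Rvar{v}{w}] = [\Svaropp{v}]\cdot[\Svar{w}]$ corresponds to $\mathfrak{S}_{v}\cdot\mathfrak{S}_{w_0 w}$ — which is exactly the content of \cref{prop.Richardson.class}. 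Thus the proof is a one-line citation of \cref{main} followed by \cref{prop.Richardson.class}.
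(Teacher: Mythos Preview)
Your proposal is correct and matches the paper's own argument: the paper simply states that the corollary follows by combining \cref{main} (which gives $\SF{\sigma} = \Rvar{v_\sigma}{w_\sigma}$) with \cref{prop.Richardson.class}. Your additional degree-check is a nice sanity remark but is not needed for the proof.
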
 

\begin{prob}[Reformulation of \cref{prob_cohomology} for Richardson tableaux]\label{prob_Rcohomology}
Let $\sigma$ be a Richardson tableau. Find a combinatorial rule for expanding the product $\mathfrak{S}_{v_\sigma}\cdot \mathfrak{S}_{w_0w_\sigma}$ in the basis of Schubert polynomials. 
\end{prob}

\begin{rmk}
After hearing a talk about a preliminary version of this paper, Hunter Spink and Vasu Tewari informed us they found an answer to \cref{prob_Rcohomology}, achieved by counting certain chains in the $k$-Bruhat order of $S_n$~\cite{spink_tewari25}.
\end{rmk}

As noted in \cref{explicit}\ref{explicit_hook}, every tableau of hook shape is Richardson. Furthermore, we will show in \cref{GZ_components} that the components from \cite{graham_zierau11} are a subset of those corresponding to Richardson tableaux. Thus, the work of both \cite{guemes89,graham_zierau11} answering \cref{prob_cohomology} in special cases also yields solutions to the Schubert calculus problem of \cref{prob_Rcohomology} in these cases.

We conclude this section by reformulating the main result of G\"uemes~\cite{guemes89} in terms of Schubert polynomials (see \cref{thm.Guemes}). First we set up some notation. For $m\in\mathbb{N}$, let $\delta^{(m)} \vdash \binom{m}{2}$ denote the partition of staircase shape $(m-1, m-2, \dots, 1)$. A \boldit{G\"{u}emes tableau} $\tau$ is a filling of the boxes of the diagram of $\delta^{(m)}$ with (possibly repeated) entries in $[n-1]$, such that entries strictly increase across rows from left to right and down columns from top to bottom, and also weakly increase along diagonals from southwest to northeast.
Letting $\tau_{i,j}$ denote the entry of $\tau$ in row $i$ and column $j$, we define
\[
x_{\tau}= c_1c_2\cdots c_m \in S_n, \quad \text{ where } c_j = s_{\tau_{m-j,j}} s_{\tau_{m-j-1,j}} \cdots s_{\tau_{1,j}}.
\]
We call $\tau$ \boldit{reduced} if $\ell(x_\tau) = \binom{m}{2}$, i.e., $c_1c_2\cdots c_m$ is a reduced word for $x_\tau$.

\begin{eg}\label{eg_strict}
Consider the two G\"{u}emes tableaux of shape $\delta^{(4)}$ below.
\[
\tau = \;\begin{ytableau}
1 & 3 & 4 \\
2 & 4 \\
3
\end{ytableau}\;
\quad\quad\quad
\tau' = \;\begin{ytableau}
1 & 3 & 4 \\
2 & 4 \\
4
\end{ytableau}\;
\]
Then $\tau$ is reduced, since $x_\tau = s_3s_2s_1s_4s_3s_4$ has length $6$, whereas $\tau'$ is not reduced, since $x_{\tau'} = s_4s_2s_1s_4s_3s_4 = s_2s_1s_3s_4$ has length $4$.
\end{eg}

\begin{thm}[{G\"uemes \cite[Theorem 3.1.1]{guemes89}}]\label{thm.Guemes}
Let $\sigma$ be a standard tableau of hook shape $(k, 1^{n-k}) \vdash n$. Then
\[
\mathfrak{S}_{v_\sigma} \cdot \mathfrak{S}_{w_0w_\sigma} = \sum_{\tau} \mathfrak{S}_{w_0x_\tau},
\]
where the sum is over all reduced G\"{u}emes tableaux of shape $\delta^{(n-k+1)}$ with entries in $[n-1]$, whose first row consists of the entries $w_\sigma^{-1}(n) < w_{\sigma}^{-1}(n-1) < \cdots < w_\sigma^{-1}(n-k+1)$.
\end{thm}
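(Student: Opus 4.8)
The plan is to reduce \cref{thm.Guemes} to G\"uemes's original result \cite[Theorem 3.1.1]{guemes89}, which computes the product of cohomology classes $[\SF{\sigma}]$ for hook-shaped $\sigma$ in the Schubert basis, and then translate that statement through the dictionary provided by \cref{cor.Schubert} and \cref{prop.Richardson.class}. The key point is that for a hook shape $(k,1^{n-k})$, every standard tableau $\sigma$ is Richardson (\cref{explicit}\ref{explicit_hook}), so $\SF{\sigma} = \Rvar{v_\sigma}{w_\sigma}$ by \cref{main}, and its cohomology class is $\mathfrak{S}_{v_\sigma}\cdot\mathfrak{S}_{w_0w_\sigma}$ by \cref{cor.Schubert}. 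Thus the left-hand side of the claimed identity is literally $[\SF{\sigma}]$ expressed in $H^*(\Fl_n(\C);\mathbb{Z})$, and it remains only to match G\"uemes's combinatorial expansion of $[\SF{\sigma}]$ with the sum over reduced G\"uemes tableaux on the right.

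First I would recall G\"uemes's theorem in its original formulation: for $\sigma$ of hook shape $(k,1^{n-k})$, the homology (equivalently, cohomology) class of $\SF{\sigma}$ expands in the Schubert basis as a sum indexed by certain reduced fillings of a staircase $\delta^{(n-k+1)}$, where the permutation attached to a filling $\tau$ is $x_\tau = c_1\cdots c_m$ as defined in the statement, and the class contributed is $[\Svar{w_0 x_\tau}]$ (or, under the Borel presentation, $\mathfrak{S}_{w_0(w_0x_\tau)} = \mathfrak{S}_{x_\tau}$ — one must be careful about which normalization G\"uemes uses and reconcile it with the convention $[\Svar{w}]\mapsto\mathfrak{S}_{w_0w}$ fixed in \cref{sec_cohomologyclass}). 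Second, I would pin down exactly which hook tableaux $\sigma$ give rise to which staircase fillings in G\"uemes's indexing; the key is to verify that the constraint "$\tau$ has first row $w_\sigma^{-1}(n) < w_\sigma^{-1}(n-1) < \cdots < w_\sigma^{-1}(n-k+1)$" is precisely the data recording the hook tableau $\sigma$. Here I would use \cref{lem.w.induction}, specifically the formula \eqref{eqn.wsig.Lehmer} for the Lehmer code of $w_\sigma$, to express the positions $w_\sigma^{-1}(n-i)$ in terms of the entries of $\sigma$ in its single row of length $k$; for a hook, $w_\sigma$ is essentially a Grassmannian-type permutation and this data is transparent. Third, I would check the degree/length bookkeeping: $\mathrm{codim}(\SF{\sigma}) = \binom{n}{2} - n(\lambda)$ with $n(\lambda) = \binom{n-k+1}{2}$ for the hook, so the Schubert classes appearing have $\ell(w_0 x_\tau) = \binom{n}{2} - \binom{n-k+1}{2}$, i.e. $\ell(x_\tau) = \binom{n-k+1}{2} = \binom{m}{2}$, which is exactly the reducedness condition on $\tau$ — confirming that only reduced G\"uemes tableaux contribute and that the indexing sets on both sides have matching cardinality constraints.

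The main obstacle I anticipate is the bookkeeping of conventions: G\"uemes works with a specific (and somewhat dated) normalization of Schubert classes, reading words, and possibly a transpose or $w_0$-twist of the labeling of Springer components, and one must carefully compose all the relevant involutions ($w\mapsto w_0w$, $w\mapsto w^{-1}$, evacuation, the choice of $\psi$ in \cref{remark_steinberg}) to see that her indexing permutation $x_\tau$ lands in the right spot so that $\mathfrak{S}_{w_0 x_\tau}$ matches the product $\mathfrak{S}_{v_\sigma}\cdot\mathfrak{S}_{w_0 w_\sigma}$ term-by-term. A secondary subtlety is confirming that the "first row equals $w_\sigma^{-1}(n) < \cdots < w_\sigma^{-1}(n-k+1)$" condition is the correct repackaging of "$\SF{\sigma}$ is the component labeled by the hook tableau $\sigma$" rather than some other hook tableau (e.g. $\sigma^\vee$); this is where I would lean on \cref{RS}, which records how $\sigma$ is recovered from $w_\sigma$ via Robinson--Schensted, together with the explicit hook computation. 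Once these identifications are nailed down, the theorem follows immediately by comparing G\"uemes's expansion with the right-hand side, with no further computation required.
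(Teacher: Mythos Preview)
Your proposal is correct and matches the paper's approach: the paper's proof is precisely a reformulation of G\"uemes's original result, obtained by translating conventions (G\"uemes uses decreasing tableaux, related to ours by $w_0\cdot\tilde\sigma$), identifying the first-row entries via the reading word $w_\sigma^{-1}w_0$, and invoking \cref{prop.Richardson.class} to equate $[\SF{\sigma}]$ with $\mathfrak{S}_{v_\sigma}\cdot\mathfrak{S}_{w_0w_\sigma}$. Your anticipated obstacle---the bookkeeping of normalizations and $w_0$-twists---is exactly the content of the paper's short proof.
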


\begin{proof}
The statement is a reformulation of \cite[Theorem 3.1.1]{guemes89}. Indeed, the conventions of \cite{guemes89} are that the tableaux indexing components of $\SF{\lambda}$ are standard in the sense that their entries strictly decrease across rows and down columns.  Replacing the `decreasing' standard tableau $\tilde{\sigma}$ with $w_0\cdot \tilde{\sigma}$ returns us to the conventions in this paper. Furthermore, the entries of the first row of $\tau$ are exactly as stated in \cite[Theorem 3.1.1]{guemes89}, using the fact that $w_\sigma^{-1}w_0$ is the reading word of $\tilde{\sigma}$. Finally, \cite[Theorem 3.1.1]{guemes89} computes the homology class of each component of $\SF{N}$. We obtain the reformulation above by combining the fact that the cohomology class of $\Svar{x_\tau}$ is represented by $\mathfrak{S}_{w_0x_\tau}$ with \cref{prop.Richardson.class}.
\end{proof}

\begin{rmk}\label{evacuation_hook}
We point out that the evacuation of a tableau $\sigma$ of hook shape $(k, 1^{\ell-1}) \vdash n$ is particularly simple to compute. Namely, for $2 \le j \le n$ let $j'$ denote $n+2-j$. If the entries of the first row of $\sigma$ are $1 < a_2 < a_3 < \cdots < a_k$, then the first row of $\sigma^\vee$ is $1 < a_k' < \cdots < a_3' < a_2'$. Similarly, if the entries of the first column of $\sigma$ are $1 < b_2 < b_3 < \cdots < b_\ell$, then the first column of $\sigma^\vee$ is $1 < b_\ell' < \cdots < b_3' < b_2'$.
\end{rmk}

\begin{eg}\label{eg.hook.Schubert}
Consider the standard tableaux of hook shape $(4,1^3)\vdash 7$ below:
\[
\sigma = \;\begin{ytableau}
1 & 4 & 5 & 6 \\
2 \\
3 \\
7\end{ytableau}\;
\quad\text{ and }\quad
\sigma^{\vee} = 
\;\begin{ytableau}
1 & 3 & 4 & 5 \\
2 \\
6 \\
7
\end{ytableau}\;.
\]
We have $v_\sigma = 1523467$ and $w_\sigma = 7123654$. We use \cref{thm.Guemes} to expand $[\SF{\sigma}]$ in the Schubert basis, by summing over all reduced G\"{u}emes tableaux $\tau$ of shape $\delta^{(4)}$ with first row equal to $1 < 5 < 6$. There are four such $\tau$'s, shown below along with the associated permutation $x_\tau$.\vspace*{4pt}
\begin{center}\setlength{\tabcolsep}{12pt}
\begin{tabular}{cccc}
\;\begin{ytableau}
1 & 5 & 6 \\
2 & 6 \\
3
\end{ytableau}\;
&
\;\begin{ytableau}
1 & 5 & 6 \\
2 & 6 \\
4
\end{ytableau}\;
&
\;\begin{ytableau}
1 & 5 & 6 \\
3 & 6 \\
4
\end{ytableau}\;
&
\;\begin{ytableau}
1 & 5 & 6 \\
4 & 6 \\
5
\end{ytableau}\;\\
\rule[24pt]{0pt}{0pt}$\begin{aligned}
x_\tau &= s_3s_2s_1s_6s_5s_6\\
&= 4123765
\end{aligned}$
&
$\begin{aligned}
x_\tau &= s_4s_2s_1s_6s_5s_6\\
&= 3125764
\end{aligned}$
&
$\begin{aligned}
x_\tau &= s_4s_3s_1s_6s_5s_6\\
&= 2153764
\end{aligned}$
&
$\begin{aligned}
x_\tau &= s_5s_4s_1s_6s_5s_6\\
&= 2136754
\end{aligned}$
\end{tabular}
\end{center}
Multiplying each $x_\tau$ on the left by $w_0$, we obtain
\[
\mathfrak{S}_{v_\sigma} \cdot \mathfrak{S}_{w_0w_\sigma}
=
\mathfrak{S}_{1523467} \cdot \mathfrak{S}_{1765234}
=
\mathfrak{S}_{4765123} + \mathfrak{S}_{5763124} + \mathfrak{S}_{6735124} + \mathfrak{S}_{6752134}.
\]
We can also verify that $\sigma$ and $\sigma^\vee$ are related as in \cref{evacuation_hook}.
\end{eg}


\section{Comparison with other special components}\label{sec_comparison}

\noindent In this section, we compare the irreducible components of Springer fibers which are equal to Richardson varieties with three other special families of irreducible components: the Richardson components of Pagnon and Ressayre \cite{pagnon_ressayre06}, the generalized Richardson components of Fresse \cite{fresse11}, and the components associated to $K$-orbits studied by Graham and Zierau \cite{graham_zierau11}.


\subsection{Richardson components}
Pagnon and Ressayre \cite{pagnon_ressayre06} studied \boldit{Richardson components} of $\SF{\lambda}$. By definition, these are irreducible components $\SF{\sigma}$ of $\SF{\lambda}$ which are homogeneous under the action of a parabolic subgroup of $\GL_n(\C)$.  
Equivalently, they are components isomorphic to products of complete flag varieties $\Fl_m(\C)$. (See the discussion in \cite[Section 2.1.3]{fresse11} for more detail.)
We emphasize that this is a \emph{strictly weaker} condition than $\SF{\sigma}$ being a Richardson variety. It follows from \cite[Theorem 7.2]{pagnon_ressayre06} that the component $\SF{\sigma}$ is a Richardson component if and only if the standard tableaux $\sigma$ is a concatenation of single columns, such as
\begin{equation}\label{eqn.Richardson_component}
\sigma = \;\begin{ytableau}
1 \\
2 \\
3
\end{ytableau}\;
\circ
\;\begin{ytableau}
1 \\
2
\end{ytableau}\;
\circ
\;\begin{ytableau}
1 \\
2 \\
3 \\
4
\end{ytableau}\;
\circ
\;\begin{ytableau}
1 \\
2 \\
3
\end{ytableau}\; = \;\begin{ytableau}
1 & 4 & 6 & 10 \\
2 & 5 & 7 & 11 \\
3 & 8 & 12\\
9
\end{ytableau}\;.
\end{equation}
In particular, by \cref{prime_concatenation}\ref{prime_concatenation_forward}, if $\SF{\sigma}$ is a Richardson component then $\sigma$ is a Richardson tableau. Conversely, if $\sigma$ denotes the prime Richardson tableau
\[
\sigma = \;\begin{ytableau}
1 & 3 \\
2 \\
4
\end{ytableau}\;
\]
from \eqref{nonexample}, then $\SF{\sigma}$ is not a Richardson component. Although $\SF{\sigma}$ is equal to a Richardson variety, it is not homogeneous under the action of a parabolic subgroup of $\GL_4(\C)$. 

By definition, every Richardson component is smooth. More generally, Pagnon and Ressayre \cite[Theorem 4]{pagnon_ressayre06} show that if $\SF{\sigma}$ is a Richardson component and $2 \le j \le n$ is such that $\rowjword{\sigma}{j} < \rowjword{\sigma}{j-1}$, then $\SF{\tau}$ is also smooth, where $\tau$ is obtained from $\sigma$ by swapping the entries $j-1$ and $j$. Note that such a $\tau$ need not be a Richardson tableau (for example, let $\sigma$ be as in \eqref{eqn.Richardson_component} and take $j=6$). Thus, \cite[Theorem 4]{pagnon_ressayre06} is not a special case of our \cref{thm.smooth}. It would be interesting to consider components corresponding to tableaux obtained similarly from any Richardson tableau:
\begin{prob}\label{prob_swap}
Let $\sigma$ be a Richardson tableau of size $n$, and let $\tau$ be obtained from $\sigma$ by swapping the entries $j-1$ and $j$, where $2 \le j \le n$. Under what conditions \textup{(}on $j$ and $\sigma$\textup{)} is $\SF{\tau}$ smooth?
\end{prob}


\subsection{Generalized Richardson components}
Fresse \cite[Section 2.3]{fresse11} studied a family of irreducible components generalizing the Richardson components. We briefly describe these components here. In \cite[Section 2.3.1]{fresse11}, Fresse introduces particular orbits of the centralizer subgroup $Z(N_\lambda) = \{g\in \GL_n(\C) \mid gN_\lambda g^{-1} = N_\lambda\}$ in $\GL_n(\C)$ on the Springer fiber $\SF{\lambda}$, called \emph{Jordan orbits}. We call $\SF{\sigma}$ a \boldit{generalized Bala--Carter component} if it contains a dense Jordan orbit.  Furthermore, we call $\SF{\sigma}$ a \boldit{generalized Richardson component} if $\SF{\transpose{\sigma}}$ is a generalized Bala--Carter component, where $\transpose{\sigma}$ denotes the conjugate (or transpose) of the tableau $\sigma$. Every Richardson component is a generalized Richardson component. Also, generalized Richardson components are all smooth, since they are iterated bundles over projective spaces \cite[Theorem 2.7]{fresse11}.

The family of generalized Richardson components is different from the family of irreducible components equal to Richardson varieties, and neither is contained in the other. Indeed, Fresse showed that $\SF{\sigma}$ is a generalized Richardson component for all standard tableaux $\sigma$ with at most two rows, but not every tableau with at most two rows is a Richardson tableau (see \cref{explicit}\ref{explicit_two_row}). Conversely, one can verify from \cite[Proposition 2.6]{fresse11} that if $\sigma$ denotes the Richardson tableau from \eqref{nonexample}, then $\SF{\sigma}$ is not a generalized Richardson component.


\subsection{Components associated to \texorpdfstring{$K$}{K}-orbits}\label{GZ_components}

Graham and Zierau \cite{graham_zierau11} studied components of Springer fibers $\SF{\lambda}$ associated to closed orbits of the action of $K=\GL_p(\C) \times \GL_q(\C)$ on $\Fl_n(\C) \cong G/B$ (where $p+q= n$ and $G = \GL_n(\C)$). These components were introduced by Barchini and Zierau \cite{barchini_zierau08}, who studied their interaction with the representation theory of $\mathrm{SU}(p,q)$ \cite{barchini_zierau08}. We briefly recall the definition of these components for $G=\GL_n(\C)$. Let $\mu: T^*(G/B) \to \mathcal{N}$ denote the Springer resolution, where $\mathcal{N}$ is the nilpotent cone of $\mathfrak{gl}_n(\C)$; see \cite[Section 6]{jantzen04} for an expository introduction to this topic. Then $\mu^{-1}(N_\lambda) \cong \SF{\lambda}$. The group $K$ acts with finitely many orbits on $G/B$, and each closed $K$-orbit $\mathcal{Q}$ is isomorphic to the flag variety of $K$. The image of $T_{\mathcal{Q}}^*(G/B)$ under the restriction of the Springer resolution $\mu_\mathcal{Q}$ to $T_{\mathcal{Q}}^*(G/B)$ is known to be a single $K$-orbit $K\cdot N$ in $\mathcal{N}$ for some nilpotent matrix $N$, and moreover $\mu_{\mathcal{Q}}^{-1}(N)$ is a single component of the Springer fiber $\mu^{-1}(N)$.  We call such components \boldit{$K$-components}.

The closed $K$-orbits in $G/B$ are in bijection with the $p$-element subsets $I\subseteq [n]$. We recall from \cite[Appendix A]{graham_zierau11} how, starting from such a subset $I$, we a construct a standard tableau $\sigma(I)$ indexing a $K$-component $\SF{\sigma(I)} \subseteq \SF{\lambda}$ (where $\lambda$ depends on $I$). We $2$-color the elements of $[n]$, where the elements of $I$ are black and the elements of $[n]\setminus I$ are white. The first row of $\sigma(I)^\vee$ consists of $1$ and all $i\in [n]$ such that $i-1$ is colored differently than $i$. The subsequent rows of $\sigma(I)^\vee$ are obtained by deleting the entries in its first row from $[n]$ and repeating the procedure above. That is, the second row of $\sigma(I)^\vee$ consists of the smallest element of $[n]$ not in the first row along with all $i\in [n]$ not in the first row such that the next smallest element of $[n]$ not in the first row is colored differently than $i$, etc. (The reason this procedure defines $\sigma(I)^\vee$ rather than $\sigma(I)$ is because the conventions of \cite{graham_zierau11} differ from ours, in that they index components of $\SF{N}$ by considering the action of $N$ on subspaces $F_j$ rather than quotients $\C^n/F_{n-j}$; cf.\ \cref{remark_steinberg}. By \cref{dual_transpose} and \cref{dual_evacuation}, translating between the two conventions amounts to applying evacuation.)

By \cite[Lemma A.7]{graham_zierau11} we have $\sigma(I) = \sigma(J)$ if and only if $I=J$ or $I=[n]\setminus J$. In particular, for a fixed $n\geq 1$, there are exactly $2^{n-1}$ $K$-components.
\begin{eg}\label{eg_GZ}
Let $n = 7$ and $I = \{3,4,6,7\}$. Then we calculate (see \cref{figure_eg_GZ}) that
\[
\sigma(I)^\vee = \;\begin{ytableau}
1 & 3 & 5 & 6 \\
2 & 4 \\
7
\end{ytableau}\; \quad \text{ and } \quad \sigma(I) = \;\begin{ytableau}
1 & 3 & 4 & 6 \\
2 & 7 \\
5
\end{ytableau}\;,
\]
and the corresponding $K$-component is $\SF{\sigma(I)}$.
\end{eg}
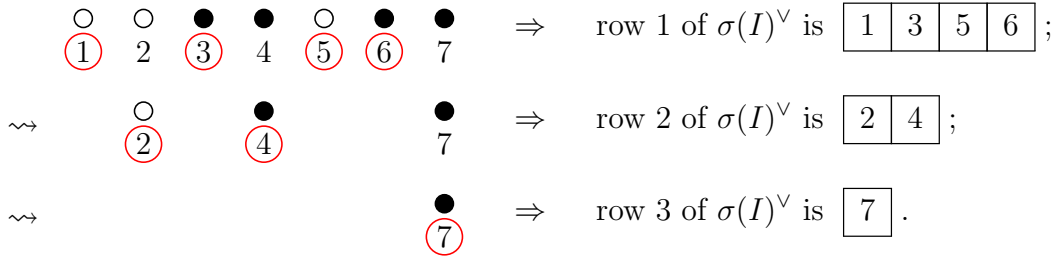
\begin{figure}[ht]
\begin{center}
\begin{align*}
&\begin{tikzpicture}[baseline=(current bounding box.center)]
\tikzstyle{out1}=[inner sep=0,minimum size=2.4mm,circle,draw=black,fill=black,semithick]
\tikzstyle{in1}=[inner sep=0,minimum size=2.4mm,circle,draw=black,fill=white,semithick]
\pgfmathsetmacro{\r}{0.24};
\pgfmathsetmacro{\hs}{0.80};
\pgfmathsetmacro{\vs}{0.44};
\useasboundingbox(0,0)rectangle(7.5*\hs,-\vs);
\foreach \x in {3,4,6,7}{
\node[out1]at(\x*\hs,0){};
\node at(\x*\hs,-\vs){$\x$};}
\foreach \x in {1,2,5}{
\node[in1]at(\x*\hs,0){};
\node at(\x*\hs,-\vs){$\x$};}
\foreach \x in {1,3,5,6}{
\draw[semithick,color=red](\x*\hs,-\vs)circle[radius=\r];}
\end{tikzpicture}
\quad \Rightarrow \quad
\text{ row $1$ of $\sigma(I)^\vee$ is }
\;\begin{ytableau}
1 & 3 & 5 & 6
\end{ytableau}\;;
\\[12pt]
&\begin{tikzpicture}[baseline=(current bounding box.center)]
\tikzstyle{out1}=[inner sep=0,minimum size=2.4mm,circle,draw=black,fill=black,semithick]
\tikzstyle{in1}=[inner sep=0,minimum size=2.4mm,circle,draw=black,fill=white,semithick]
\pgfmathsetmacro{\r}{0.24};
\pgfmathsetmacro{\hs}{0.80};
\pgfmathsetmacro{\vs}{0.44};
\useasboundingbox(0,0)rectangle(7.5*\hs,-\vs);
\foreach \x in {4,7}{
\node[out1]at(\x*\hs,0){};
\node at(\x*\hs,-\vs){$\x$};}
\foreach \x in {2}{
\node[in1]at(\x*\hs,0){};
\node at(\x*\hs,-\vs){$\x$};}
\foreach \x in {2,4}{
\draw[semithick,color=red](\x*\hs,-\vs)circle[radius=\r];}
\node[inner sep=0]at(0,-\vs/2){$\rightsquigarrow$};
\end{tikzpicture}
\quad \Rightarrow \quad
\text{ row $2$ of $\sigma(I)^\vee$ is }
\;\begin{ytableau}
2 & 4
\end{ytableau}\;;
\\[12pt]
&\begin{tikzpicture}[baseline=(current bounding box.center)]
\tikzstyle{out1}=[inner sep=0,minimum size=2.4mm,circle,draw=black,fill=black,semithick]
\tikzstyle{in1}=[inner sep=0,minimum size=2.4mm,circle,draw=black,fill=white,semithick]
\pgfmathsetmacro{\r}{0.24};
\pgfmathsetmacro{\hs}{0.80};
\pgfmathsetmacro{\vs}{0.44};
\useasboundingbox(0,0)rectangle(7.5*\hs,-\vs);
\foreach \x in {7}{
\node[out1]at(\x*\hs,0){};
\node at(\x*\hs,-\vs){$\x$};}
\foreach \x in {}{
\node[in1]at(\x*\hs,0){};
\node at(\x*\hs,-\vs){$\x$};}
\foreach \x in {7}{
\draw[semithick,color=red](\x*\hs,-\vs)circle[radius=\r];}
\node[inner sep=0]at(0,-\vs/2){$\rightsquigarrow$};
\end{tikzpicture}
\quad \Rightarrow \quad
\text{ row $3$ of $\sigma(I)^\vee$ is }
\;\begin{ytableau}
7
\end{ytableau}\;.
\end{align*}
\caption{Calculating $\sigma(I)^\vee$ from $I = \{3,4,6,7\}$, when $n=7$; see \cref{eg_GZ}.}
\label{figure_eg_GZ}
\end{center}
\end{figure}

Every tableau $\sigma(I)^\vee$ is Richardson, which we can verify using \cref{crop_tableau}. Hence by \cref{evacuation_closed,main}, every $K$-component is a Richardson variety. Conversely, there exist irreducible components $\SF{\sigma}$ equal to Richardson varieties which are not $K$-components. For example, there are $2^3 = 8$ $K$-components when $n=4$, but there are $9$ Richardson tableaux of size $4$ (see \cref{eg.4}). The reader can verify that the unique Richardson tableau of size $4$ which does not index a $K$-component is \eqref{nonexample}.

Graham and Zierau prove that each component $\SF{\sigma(I)}$ is invariant under the action of the maximal torus $T\subseteq G$ of diagonal matrices and is an iterated bundle~\cite[Section 2]{graham_zierau11}. They show that $\SF{\sigma(I)}$ is smooth, describe its set of $T$-fixed points, and compute the weights of $T$ acting on the tangent space of $\SF{\sigma(I)}$ at each fixed point. They use these results to compute the equivariant and $K$-theory classes of $\SF{\sigma(I)}$ by applying computational techniques of nil-Hecke algebras (cf.\ \cite[Chapter 11]{kumar02} and \cite{richmond_zainoulline}).  We note that the resulting formulas are not `combinatorially positive' since they may include signs.

\cref{main,thm.smooth} yield generalizations of the key geometric facts needed to apply localization techniques (e.g.~\cite[Proposition 4.5]{graham_zierau11}) to the components $\SF{\sigma}$ for Richardson tableaux $\sigma$. In particular, each $\SF{\sigma}$ is a smooth Richardson variety. Every Richardson variety $\Rvar{v}{w}$ is $T$-invariant, and its set of $T$-fixed points can be identified with the Bruhat interval $[v,w]=\{x\in S_n\mid v\leq x \leq w \}$. The weights of $T$ acting on the tangent space $T_{\dot x B} (\Rvar{v}{w})$ are uniquely determined by the reflections $t_{i,j}$ such that $v\leq xt_{i,j} \leq w$, i.e., the edges of the \emph{Bruhat graph} of $[v,w]$ incident to $x$. (This follows from a combinatorial description of the tangent space of a Schubert variety \cite[Theorem 5.4.2]{billey_lakshmibai00}, using the definition \eqref{richardson_defn} of $\Rvar{v}{w}$.) In order to compute $[\SF{\sigma}]$ via localization, we need a concrete description of such weights:
\begin{prob}\label{prob_equiv_classes}
Let $\sigma$ be a Richardson tableau. Use localization techniques in equivariant cohomology and $K$-theory to compute the equivariant and $K$-theory classes of $\SF{\sigma}$. In particular, given $x\in [v,w]$, is there combinatorial description \textup{(}in terms of $\sigma$\textup{)} of the set of reflections $t_{i,j}$ such that $v_\sigma \leq xt_{i,j} \leq w_\sigma$?
\end{prob}


\bibliographystyle{alpha}
\bibliography{ref}

\def\cprime{$'$} \def\cprime{$'$} \def\cprime{$'$}
\begin{thebibliography}{{Sag}25}

\bibitem[AB16]{abe_billey16}
Hiraku Abe and Sara Billey.
\newblock Consequences of the {L}akshmibai--{S}andhya theorem: the ubiquity of
  permutation patterns in {S}chubert calculus and related geometry.
\newblock In {\em Schubert calculus---{O}saka 2012}, volume~71 of {\em Adv.
  Stud. Pure Math.}, pages 1--52. Math. Soc. Japan, 2016.

\bibitem[BB05]{bjorner_brenti05}
Anders Bj{\"o}rner and Francesco Brenti.
\newblock {\em Combinatorics of {C}oxeter groups}, volume 231 of {\em Graduate
  Texts in Mathematics}.
\newblock Springer, New York, 2005.

\bibitem[BC12]{billey_coskun12}
Sara Billey and Izzet Coskun.
\newblock Singularities of generalized {R}ichardson varieties.
\newblock {\em Comm. Algebra}, 40(4):1466--1495, 2012.

\bibitem[BH24]{bao_heb}
Huanchen Bao and Xuhua He.
\newblock Acyclic matchings on {B}ruhat intervals and applications to total
  positivity.
\newblock {\em
  \textup{\href{https://arxiv.org/abs/2401.15933}{\texttt{arXiv:2401.15933}}}},
  2024.

\bibitem[BK23]{bloch_karp23}
Anthony~M. Bloch and Steven~N. Karp.
\newblock On two notions of total positivity for partial flag varieties.
\newblock {\em Adv. Math.}, 414:Paper No. 108855, 24, 2023.

\bibitem[BL00]{billey_lakshmibai00}
Sara Billey and V.~Lakshmibai.
\newblock {\em Singular loci of {S}chubert varieties}, volume 182 of {\em
  Progress in Mathematics}.
\newblock Birkh\"auser Boston, Inc., Boston, MA, 2000.

\bibitem[BW03]{billey_warrington03}
Sara~C. Billey and Gregory~S. Warrington.
\newblock Maximal singular loci of {S}chubert varieties in {${\rm SL}(n)/B$}.
\newblock {\em Trans. Amer. Math. Soc.}, 355(10):3915--3945, 2003.

\bibitem[BW25]{billey_weaver25}
Sara~C. Billey and Jordan~E. Weaver.
\newblock Criteria for smoothness of positroid varieties via pattern avoidance,
  {J}ohnson graphs, and spirographs.
\newblock {\em Trans. Amer. Math. Soc. Ser. B}, 12:112--164, 2025.
\newblock With an appendix by Christian Krattenthaler.

\bibitem[BZ08]{barchini_zierau08}
L.~Barchini and R.~Zierau.
\newblock Certain components of {S}pringer fibers and associated cycles for
  discrete series representations of {${\rm SU}(p,q)$}.
\newblock {\em Represent. Theory}, 12:403--434, 2008.
\newblock With an appendix by Peter E. Trapa.

\bibitem[CHM51]{chowla_herstein_moore51}
S.~Chowla, I.~N. Herstein, and W.~K. Moore.
\newblock On recursions connected with symmetric groups {I}.
\newblock {\em Canad. J. Math.}, 3:328--334, 1951.

\bibitem[Cor03]{cortez03}
Aur\'elie Cortez.
\newblock Singularit\'es g\'en\'eriques et quasi-r\'esolutions des vari\'et\'es
  de {S}chubert pour le groupe lin\'eaire.
\newblock {\em Adv. Math.}, 178(2):396--445, 2003.

\bibitem[Deo85a]{deodhar85}
Vinay~V. Deodhar.
\newblock Local {P}oincar\'e{} duality and nonsingularity of {S}chubert
  varieties.
\newblock {\em Comm. Algebra}, 13(6):1379--1388, 1985.

\bibitem[Deo85b]{deodhar85a}
Vinay~V. Deodhar.
\newblock On some geometric aspects of {B}ruhat orderings. {I}. {A} finer
  decomposition of {B}ruhat cells.
\newblock {\em Invent. Math.}, 79(3):499--511, 1985.

\bibitem[DS77]{donaghey_shapiro77}
Robert Donaghey and Louis~W. Shapiro.
\newblock Motzkin numbers.
\newblock {\em J. Combinatorial Theory Ser. A}, 23(3):291--301, 1977.

\bibitem[FM10]{fresse_melnikov10}
Lucas Fresse and Anna Melnikov.
\newblock On the singularity of the irreducible components of a {S}pringer
  fiber in {$\mathfrak{sl}_n$}.
\newblock {\em Selecta Math. (N.S.)}, 16(3):393--418, 2010.

\bibitem[FM11]{fresse_melnikov11}
Lucas Fresse and Anna Melnikov.
\newblock Some characterizations of singular components of {S}pringer fibers in
  the two-column case.
\newblock {\em Algebr. Represent. Theory}, 14(6):1063--1086, 2011.

\bibitem[Fre11]{fresse11}
Lucas Fresse.
\newblock On the singularity of some special components of {S}pringer fibers.
\newblock {\em J. Lie Theory}, 21(1):205--242, 2011.

\bibitem[FS09]{flajolet_sedgewick09}
Philippe Flajolet and Robert Sedgewick.
\newblock {\em Analytic combinatorics}.
\newblock Cambridge University Press, Cambridge, 2009.

\bibitem[Ful97]{fulton97}
William Fulton.
\newblock {\em Young tableaux}, volume~35 of {\em London Mathematical Society
  Student Texts}.
\newblock Cambridge University Press, Cambridge, 1997.

\bibitem[Fun03]{fung03}
Francis Y.~C. Fung.
\newblock On the topology of components of some {S}pringer fibers and their
  relation to {K}azhdan--{L}usztig theory.
\newblock {\em Adv. Math.}, 178(2):244--276, 2003.

\bibitem[Gas01]{gasharov01}
Vesselin Gasharov.
\newblock Sufficiency of {L}akshmibai--{S}andhya singularity conditions for
  {S}chubert varieties.
\newblock {\em Compositio Math.}, 126(1):47--56, 2001.

\bibitem[GKL22]{galashin_karp_lam22}
Pavel Galashin, Steven~N. Karp, and Thomas Lam.
\newblock Regularity theorem for totally nonnegative flag varieties.
\newblock {\em J. Amer. Math. Soc.}, 35(2):513--579, 2022.

\bibitem[G{\"u}e89]{guemes89}
J.~J. G{\"u}emes.
\newblock On the homology classes for the components of some fibres of
  {S}pringer's resolution.
\newblock {\em Ast\'{e}risque}, (173-174):10, 257--269, 1989.
\newblock Orbites unipotentes et repr\'{e}sentations, III.

\bibitem[GZ11]{graham_zierau11}
William Graham and R.~Zierau.
\newblock Smooth components of {S}pringer fibers.
\newblock {\em Ann. Inst. Fourier (Grenoble)}, 61(5):2139--2182, 2011.

\bibitem[Hum75]{humphreys75}
James~E. Humphreys.
\newblock {\em Linear algebraic groups}, volume No. 21 of {\em Graduate Texts
  in Mathematics}.
\newblock Springer-Verlag, New York-Heidelberg, 1975.

\bibitem[Jan04]{jantzen04}
Jens~Carsten Jantzen.
\newblock Nilpotent orbits in representation theory.
\newblock In {\em Lie theory}, volume 228 of {\em Progr. Math.}, pages 1--211.
  Birkh\"auser Boston, Boston, MA, 2004.

\bibitem[KL79]{kazhdan_lusztig79}
David Kazhdan and George Lusztig.
\newblock Representations of {C}oxeter groups and {H}ecke algebras.
\newblock {\em Invent. Math.}, 53(2):165--184, 1979.

\bibitem[KLR03]{kassel_lascoux_reutenauer03}
Christian Kassel, Alain Lascoux, and Christophe Reutenauer.
\newblock The singular locus of a {S}chubert variety.
\newblock {\em J. Algebra}, 269(1):74--108, 2003.

\bibitem[KT22]{karp_thomas}
Steven~N. Karp and Hugh Thomas.
\newblock {$q$}-{W}hittaker functions, finite fields, and {J}ordan forms.
\newblock {\em
  \textup{\href{https://arxiv.org/abs/2207.12590}{\texttt{arXiv:2207.12590}}}},
  2022.

\bibitem[Kum02]{kumar02}
Shrawan Kumar.
\newblock {\em Kac--{M}oody groups, their flag varieties and representation
  theory}, volume 204 of {\em Progress in Mathematics}.
\newblock Birkh\"auser Boston, Inc., Boston, MA, 2002.

\bibitem[KWY13]{knutson_woo_yong13}
Allen Knutson, Alexander Woo, and Alexander Yong.
\newblock Singularities of {R}ichardson varieties.
\newblock {\em Math. Res. Lett.}, 20(2):391--400, 2013.

\bibitem[LS90]{lakshmibai_sandhya90}
V.~Lakshmibai and B.~Sandhya.
\newblock Criterion for smoothness of {S}chubert varieties in {${\rm
  Sl}(n)/B$}.
\newblock {\em Proc. Indian Acad. Sci. Math. Sci.}, 100(1):45--52, 1990.

\bibitem[Lus94]{lusztig94}
G.~Lusztig.
\newblock Total positivity in reductive groups.
\newblock In {\em Lie theory and geometry}, volume 123 of {\em Progr. Math.},
  pages 531--568. Birkh\"auser Boston, Boston, MA, 1994.

\bibitem[Lus21]{lusztig21}
G.~Lusztig.
\newblock Total positivity in {S}pringer fibres.
\newblock {\em Q. J. Math.}, 72(1-2):31--49, 2021.

\bibitem[Man01a]{manivel01a}
L.~Manivel.
\newblock Le lieu singulier des vari\'et\'es de {S}chubert.
\newblock {\em Internat. Math. Res. Notices}, (16):849--871, 2001.

\bibitem[Man01b]{manivel01}
Laurent Manivel.
\newblock {\em Symmetric functions, {S}chubert polynomials and degeneracy
  loci}, volume~6 of {\em SMF/AMS Texts and Monographs}.
\newblock American Mathematical Society, Providence, RI; Soci\'et\'e{}
  Math\'ematique de France, Paris, 2001.
\newblock Translated from the 1998 French original by John R. Swallow.

\bibitem[MR04]{marsh_rietsch04}
B.~R. Marsh and K.~Rietsch.
\newblock Parametrizations of flag varieties.
\newblock {\em Represent. Theory}, 8:212--242, 2004.

\bibitem[Pag03]{pagnon03}
Ngoc Gioan~Jean Pagnon.
\newblock {\em Etude des orbites nilpotentes par l'application de Springer
  g\'{e}n\'{e}ralis\'{e}e}.
\newblock Thesis (Ph.D.)--Universit\'{e} de Provence, 2003.

\bibitem[PR06]{pagnon_ressayre06}
N.~G.~J. Pagnon and N.~Ressayre.
\newblock Adjacency of {Y}oung tableaux and the {S}pringer fibers.
\newblock {\em Selecta Math. (N.S.)}, 12(3-4):517--540, 2006.

\bibitem[Rie99]{rietsch99}
Konstanze Rietsch.
\newblock An algebraic cell decomposition of the nonnegative part of a flag
  variety.
\newblock {\em J. Algebra}, 213(1):144--154, 1999.

\bibitem[RZ23]{richmond_zainoulline}
Edward Richmond and Kirill Zainoulline.
\newblock Nil-{H}ecke rings and the {S}chubert calculus.
\newblock {\em
  \textup{\href{https://arxiv.org/abs/2310.01167}{\texttt{arXiv:2310.01167}}}},
  2023.

\bibitem[{Sag}25]{sagemath}
{Sage developers}.
\newblock {\em {S}ageMath, the {S}age {M}athematics {S}oftware {S}ystem
  ({V}ersion 10.6)}, 2025.
\newblock
  \textup{\href{https://www.sagemath.org}{\texttt{https://www.sagemath.org}}}.

\bibitem[Spa76]{spaltenstein76}
N.~Spaltenstein.
\newblock The fixed point set of a unipotent transformation on the flag
  manifold.
\newblock {\em Nederl. Akad. Wetensch. Proc. Ser. A {\bf 79}=Indag. Math.},
  38(5):452--456, 1976.

\bibitem[Spa82]{spaltenstein82}
Nicolas Spaltenstein.
\newblock {\em Classes unipotentes et sous-groupes de {B}orel}, volume 946 of
  {\em Lecture Notes in Mathematics}.
\newblock Springer-Verlag, Berlin-New York, 1982.

\bibitem[Spe23]{speyer}
David~E Speyer.
\newblock Richardson varieties, projected {R}ichardson varieties and positroid
  varieties.
\newblock {\em
  \textup{\href{https://arxiv.org/abs/2303.04831}{\texttt{arXiv:2303.04831}}}},
  2023.

\bibitem[Spr69]{springer69}
T.~A. Springer.
\newblock The unipotent variety of a semi-simple group.
\newblock In {\em Algebraic {G}eometry ({I}nternat. {C}olloq., {T}ata {I}nst.
  {F}und. {R}es., {B}ombay, 1968)}, volume~4 of {\em Tata Inst. Fundam. Res.
  Stud. Math.}, pages 373--391. Tata Inst. Fund. Res., Bombay, 1969.

\bibitem[Spr78]{springer78}
T.~A. Springer.
\newblock A construction of representations of {W}eyl groups.
\newblock {\em Invent. Math.}, 44(3):279--293, 1978.

\bibitem[Spr83]{springer83}
T.~A. Springer.
\newblock In {\em Open problems in algebraic groups}, page~20. Taniguchi
  Foundation, 1983.
\newblock Conference on ``Algebraic groups and their representations'' held at
  Katata, August 29--September 3, 1983.

\bibitem[ST25]{spink_tewari25}
Hunter Spink and Vasu Tewari.
\newblock Richardson tableaux and {S}chubert positivity.
\newblock {\em \textup{Personal communication}}, 2025.

\bibitem[Sta12]{stanley12}
Richard~P. Stanley.
\newblock {\em Enumerative combinatorics. {V}olume 1}, volume~49 of {\em
  Cambridge Studies in Advanced Mathematics}.
\newblock Cambridge University Press, Cambridge, second edition, 2012.

\bibitem[Sta24]{stanley24}
Richard~P. Stanley.
\newblock {\em Enumerative combinatorics. {V}ol. 2}, volume 208 of {\em
  Cambridge Studies in Advanced Mathematics}.
\newblock Cambridge University Press, Cambridge, second edition, 2024.
\newblock With an appendix by Sergey Fomin.

\bibitem[Ste76]{steinberg76}
Robert Steinberg.
\newblock On the desingularization of the unipotent variety.
\newblock {\em Invent. Math.}, 36:209--224, 1976.

\bibitem[Ste88]{steinberg88}
Robert Steinberg.
\newblock An occurrence of the {R}obinson--{S}chensted correspondence.
\newblock {\em J. Algebra}, 113(2):523--528, 1988.

\bibitem[Tir16]{tirrell16}
Jordan~Olliver Tirrell.
\newblock {\em Orthogonal polynomials, lattice paths, and skew {Y}oung
  tableaux}.
\newblock ProQuest LLC, Ann Arbor, MI, 2016.
\newblock Thesis (Ph.D.)--Brandeis University.

\bibitem[Tym17]{tymoczko17}
Julianna Tymoczko.
\newblock The geometry and combinatorics of {S}pringer fibers.
\newblock In {\em Around {L}anglands correspondences}, volume 691 of {\em
  Contemp. Math.}, pages 359--376. Amer. Math. Soc., Providence, RI, 2017.

\bibitem[Var79]{vargas79}
J.~A. Vargas.
\newblock Fixed points under the action of unipotent elements of {${\rm
  SL}\sb{n}$} in the flag variety.
\newblock {\em Bol. Soc. Mat. Mexicana (2)}, 24(1):1--14, 1979.

\bibitem[vL00]{van_leeuwen00}
Marc A.~A. van Leeuwen.
\newblock Flag varieties and interpretations of {Y}oung tableau algorithms.
\newblock {\em J. Algebra}, 224(2):397--426, 2000.

\end{thebibliography}

\end{document}